\crefname{subsection}{Subsection}{Subsections}
\crefname{subsubsection}{Subsubsection}{Subsubsections}
\crefname{table}{Table}{Tables}
\theoremstyle{definition}
\newtheorem{theorem}{Theorem}[subsection]
\newtheorem{defn}[theorem]{Definition}
\newtheorem{lemma}[theorem]{Lemma}
\newtheorem{prop}[theorem]{Proposition}
\newtheorem{rmk}[theorem]{Remark}
\newtheorem{construction}[theorem]{Construction}
\newcommand{\bbZ}{\mathbb{Z}}
\newcommand{\bbG}{\mathbb{G}}
\newcommand{\bbF}{\mathbb{F}}
\newcommand{\bbR}{\mathbb{R}}
\newcommand{\bbC}{\mathbb{C}}
\newcommand{\calA}{\mathcal{A}}
\newcommand{\calY}{\mathcal{Y}}
\newcommand{\Map}{\operatorname{Map}}
\newcommand{\Ext}{\operatorname{Ext}}
\newcommand{\Cof}{\operatorname{Cof}}
\newcommand{\coker}{\operatorname{Coker}}
\newcommand{\colim}{\operatorname*{colim}}
\newcommand{\im}{\operatorname{Im}}
\renewcommand{\ker}{\operatorname{Ker}}
\newcommand{\Sq}{\mathrm{Sq}}
\newcommand{\Spf}{\operatorname{Spf}}
\newcommand{\res}{\operatorname{res}}
\newcommand{\tr}{\operatorname{tr}}
\newcommand{\h}{\mathrm{h}}
\newcommand{\cl}{\mathrm{cl}}
\newcommand{\bs}{{-}}
\newcommand{\ul}{\underline}
\newcommand{\ol}[1]{[#1]}
\def\hyp{{\hbox{-}}}\newcommand{\per}{v}
\newcommand\xqed[1]{%
  \leavevmode\unskip\penalty9999 \hbox{}\nobreak\hfill
  \quad\hbox{#1}}
\newcommand\tqed{\xqed{$\triangleleft$}}
\DeclareRobustCommand{\tvdots}{%
  \vbox{\baselineskip4\p@\lineskiplimit\z@\kern0\p@\hbox{.}\hbox{.}\hbox{.}}}
\begin{document}

\title[The $C_2$-equivariant $K(1)$-local sphere]{The $C_2$-equivariant $K(1)$-local sphere}
\author[William Balderrama]{William Balderrama}
\subjclass[2020]{
Primary 55Q91, 
55Q51; 
Secondary 55Q10, 
19L20. 
}

\begin{abstract}
We compute the $RO(C_2)$-graded Green functor $\ul{\pi}_\star L_{KU_{C_2}/(2)}S_{C_2}$.
\end{abstract}

\maketitle

\section{Introduction}

The $C_2$-equivariant stable stems are comprised of the abelian groups
\[
\pi_{c+w\sigma}S_{C_2} = \colim_{n\rightarrow\infty}\pi_0\Map_\ast^{C_2}(S^{n(1+\sigma)+(c+w\sigma)},S^{n(1+\sigma)})
\]
for $c,w\in\bbZ$, where $\sigma$ is the sign representation of $C_2$. These groups assemble to form the $RO(C_2)$-graded coefficient ring $\pi_\star S_{C_2}$ of the $C_2$-equivariant sphere spectrum.

The first systematic computations of $\pi_{\star}S_{C_2}$ were carried out by Araki and Iriye \cite{arakiiriye1982equivariant, iriye1982equivariant}, building on initial investigations of Bredon \cite{bredon1967equivariant} and Landweber \cite{landweber1969equivariant}. Recently there has been a renewed interest in computations of $\pi_{\star}S_{C_2}$, owing in part to its connection with Mahowald's root invariant \cite{mahowald1967metastable, mahowaldravenel1993root, brunergreenlees1995bredon} and $\bbR$-motivic homotopy theory \cite{duggerisaksen2017z2, behrensshah2020c2, belmontguillouisaksen2021c2}, and work of Belmont, Dugger, Guillou, Hill, Isaksen, and Ravenel \cite{duggerisaksen2017low, guillouisaksen2019bredonlandweber, guillouhillisaksenravenel2020cohomology, belmontisaksen2020rmotivic, belmontguillouisaksen2021c2} has begun to shed light on the computation of $\pi_\star S_{C_2}$ via $\bbR$-motivic and Adams spectral sequence methods.

The classical Adams spectral sequence and variants remain the most effective tool for $2$-primary stem-by-stem computations. However, our understanding of the broad-scale structure of the stable homotopy category is informed by the chromatic perspective, which filters stable homotopy theory by certain periodic phenomena organized by height. After rational stable homotopy theory at height $0$, this story begins in earnest at height $1$, which corresponds to the portion of stable homotopy theory detected by topological $K$-theory. In the stable stems, this is the image of $J$ and related classes.

We are interested in chromatic equivariant homotopy theory, where much less is known and far fewer computations have been carried out. An inspection of the $\bbR$-motivic Adams charts \cite{belmontisaksen2020rcharts} reveals that height $1$ classes constitute a large piece of $\pi_\star S_\bbR$, and the close relationship between $\bbR$-motivic and $C_2$-equivariant homotopy theory indicates that the same patterns should occur in $\pi_\star S_{C_2}$. After $2$-completion, the height $1$ portion of the sphere spectrum is captured by $S_{K(1)} = L_{KU/(2)}S$, the Bousfield localization of the sphere spectrum with respect to $K(1) = KU/(2)$ \cite{bousfield1979localization, ravenel1984localization}. The purpose of this paper is to compute the following $C_2$-equivariant analogue.

\begin{theorem}\label{thm:main}
The $RO(C_2)$-graded Green functor
\[
\ul{\pi}_\star L_{KU_{C_2}/(2)}S_{C_2}
\]
is as computed in \cref{sec:sphere} and summarized in \cref{ssec:tables}.
\tqed
\end{theorem}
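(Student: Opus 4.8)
The plan is to realize $L_{KU_{C_2}/(2)}S_{C_2}$ as the fiber of an Adams operation on $C_2$-equivariant $K$-theory and then to extract $\ul{\pi}_\star$ from the resulting long exact sequence of $RO(C_2)$-graded Green functors. First one establishes the $C_2$-equivariant refinement of Bousfield's identification $S_{K(1)}\simeq\Fib(\psi^3-1\colon (KO)^\wedge_2\to(KO)^\wedge_2)$: that $L_{KU_{C_2}/(2)}$ behaves as a height-$1$ localization with $L_{KU_{C_2}/(2)}KU_{C_2}\simeq (KU_{C_2})^\wedge_2$ a faithful, descendable — indeed Galois-type — $C_2$-equivariant $E_\infty$-ring over the sphere; that complex conjugation $\psi^{-1}$ together with a topological generator $\psi^3$ of $\bbZ_2^\times/\{\pm1\}$ generate the relevant profinite Galois group; and that $(KO_{C_2})^\wedge_2\simeq\bigl((KU_{C_2})^\wedge_2\bigr)^{h\{\psi^{\pm1}\}}$, where $KO_{C_2}$ denotes genuine $C_2$-equivariant real $K$-theory. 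This yields
\[
L_{KU_{C_2}/(2)}S_{C_2}\;\simeq\;\Fib\bigl(\psi^3-1\colon (KO_{C_2})^\wedge_2\to(KO_{C_2})^\wedge_2\bigr),
\]
reducing the problem to computing $\ul{\pi}_\star(KO_{C_2})^\wedge_2$ together with the self-operation $\psi^3$.

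Next one computes $\ul{\pi}_\star KO_{C_2}$ and then $2$-completes. The input is $\ul{\pi}_\star KU_{C_2}$, which is classical: its integer-graded part is $R(C_2)[\beta^{\pm1}]$ with $R(C_2)=\bbZ[t]/(t^2-1)$, and the full $RO(C_2)$-graded ring is generated over this by the Euler class $a_\sigma\in\ul{\pi}_{-\sigma}$ and the orientation/Bott classes attached to complex representations, subject to the Atiyah--Segal and Thom relations. Passing to $KO_{C_2}$ is carried out via an equivariant refinement of the Wood cofiber sequence $C\eta\wedge KO_{C_2}\simeq KU_{C_2}$, i.e.\ via the $\eta$-Bockstein spectral sequence, whose input and differentials are governed by the above class data; the output is an explicit $RO(C_2)$-graded Green functor carrying the expected $\bbZ/2$'s generated by powers of $\eta$. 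Completion at $2$ affects only the free summands.

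One then determines $\psi^3$ on $\ul{\pi}_\star(KO_{C_2})^\wedge_2$. On integer-graded classes it is the usual stable Adams operation ($\psi^3$ fixes $t\in R(C_2)$ and acts by $3^n$ on $\beta^n$-multiples), and on the $\sigma$-graded classes and the negative cone it is pinned down by multiplicativity together with compatibility with restriction to the underlying $(KO)^\wedge_2$. With $\psi^3-1$ in hand one computes its kernel and cokernel on $\ul{\pi}_{c+w\sigma}$ for each $(c,w)\in\bbZ^2$ — essentially linear algebra over $\bbZ_2$ and $\bbZ/2$, the recurring arithmetic input being the $2$-adic valuations $\nu_2(3^n-1)$ (equal to $1$ for $n$ odd and $2+\nu_2(n)$ for $n$ even) — so that each short exact sequence
\[
0\to\coker(\psi^3-1)\to\ul{\pi}_\star L_{KU_{C_2}/(2)}S_{C_2}\to\ker(\psi^3-1)\to 0
\]
(with cokernel in degree $\star+1$ and kernel in degree $\star$) can be assembled. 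The remaining extension problems are resolved using the Green functor ring structure, the restriction and transfer maps, agreement on underlying spectra with Bousfield's non-equivariant answer, and agreement of the geometric fixed points $\Phi^{C_2}L_{KU_{C_2}/(2)}S_{C_2}$ with an independently computed value (via $\Phi^{C_2}KU_{C_2}$); the latter also serves as a consistency check through the Tate square $X^{C_2}\simeq\Phi^{C_2}X\times_{(X^e)^{tC_2}}(X^e)^{hC_2}$.

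The principal obstacles are twofold. The first is the descent input: one must set up $K(1)$-local $C_2$-equivariant homotopy theory carefully enough to be certain nothing exotic is missed — the prime $2$ is the bad prime, and equivariantly one must also verify that the $\{\psi^{\pm1}\}$-descent and the procyclic descent interact as in the non-equivariant split case. The second, and the bulk of the labor, is the size and delicacy of the $RO(C_2)$-graded bookkeeping: the negative cone of $\ul{\pi}_\star KO_{C_2}$ is intricate, $\psi^3$ on it is not formal, and the long exact sequence produces a web of extension problems that the ring structure alone does not settle, which must therefore be pushed through case by case and organized into the computation of \cref{sec:sphere} and the tables of \cref{ssec:tables}.
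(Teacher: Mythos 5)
Your central step---the identification of $L_{KU_{C_2}/(2)}S_{C_2}$ with $\Fib\bigl(\psi^3-1\colon (KO_{C_2})^\wedge_2\to(KO_{C_2})^\wedge_2\bigr)$ for the \emph{genuine} $2$-completed equivariant theory, fed by $\ul{\pi}_\star KU_{C_2}\supset R(C_2)[\beta^{\pm1}]$---is not correct, and the error propagates through the entire computation. Because $KU_{C_2}/(2)$ has contractible geometric fixed points, it is Bousfield equivalent to $C_{2+}\otimes KU/(2)$; hence any $C_2$-spectrum with $K(1)$-acyclic underlying spectrum is acyclic for this localization, and in particular $F(\widetilde{E}C_2\otimes W,L)\simeq \ast$ for every local $L$. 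So every $KU_{C_2}/(2)$-local spectrum is cofree (Borel-complete), and $L_{KU_{C_2}/(2)}S_{C_2}\simeq F(EC_{2+},i_\ast S_{K(1)})$ (\cref{lem:kuloc}). The correct $K$-theoretic receptacle is therefore the Borel theory $b(KO)=F(EC_{2+},i_\ast KO_2^\wedge)$, whose $RO(C_2)$-graded homotopy is $KO^{-c}P_w$, the $KO$-cohomology of stunted projective spectra---not $\ul{\pi}_\star (KO_{C_2})^\wedge_2$. Your claim $L_{KU_{C_2}/(2)}KU_{C_2}\simeq (KU_{C_2})^\wedge_2$ fails: the genuine $2$-completion is not Borel-complete (its geometric fixed points do not agree with the Tate construction of its underlying spectrum; this discrepancy is exactly what the Atiyah--Segal completion theorem measures), so it is not $KU_{C_2}/(2)$-local, and your fiber sequence computes the homotopy of a different spectrum. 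Concretely, your input carries Euler-class towers terminating in the representation-theoretic data of $\Phi^{C_2}KU_{C_2}$, whereas the correct input is $\rho$-complete and governed by $P_w$ as $w\to-\infty$; these disagree in infinitely many degrees.

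If one repairs this by replacing genuine equivariant $K$-theory with the Borel theories throughout, your outline becomes essentially the paper's: the fiber sequence $b(S)\to b(KO)\xrightarrow{\psi^k-1}b(KO)$, descent from $b(KU)$ (the paper uses the HFPSS for $KO\simeq KU^{\h C_2}$; your Wood-sequence/$\eta$-Bockstein route is a minor variant), then kernels and cokernels of $\psi^k-1$ and extension problems. But even then, the tools you propose for the extensions are too weak. The geometric-fixed-point consistency check is vacuous here, since the answer is cofree and the $K(1)$-local Tate construction vanishes. The paper instead needs, and you do not supply: the exotic element $T$ of the $K(1)$-local Picard group to identify the localized $P_{2n\pm1}$; explicit two-cell structures on $P_{2n}$ with their attaching maps pinned down by $e$-invariants; James periodicity refined to $\tau$-power self-maps of $\Cof(\rho^{n+1})$ with controlled indeterminacy; and low-degree $C_2$-equivariant (or $\bbR$-motivic) stable stems to determine $\tr_1(1)$ and hence the transfers. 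These inputs are not recoverable from the Green functor ring structure and restriction/transfer formalism alone.
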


\begin{rmk}\label{rmk:kr}
$KU_{C_2}$ refers to the $C_2$-equvariant spectrum representing the $K$-theory of $C_2$-equivariant complex vector bundles, not to Atiyah's $K\bbR$-theory. However, $KU_{C_2}/(2)$ is Bousfield equivalent to $K\bbR/(2)$: the Bousfield class of a $C_2$-spectrum is determined by the Bousfield classes of its underlying spectrum and geometric fixed points, and both $KU_{C_2}/(2)$ and $K\bbR/(2)$ have underlying spectrum $KU/(2)$ and contractible geometric fixed points. Thus our computation may also be interpreted as describing $\ul{\pi}_\star L_{K\bbR/(2)}S_{C_2}$.
\tqed
\end{rmk}

\begin{rmk}
Bhattacharya--Guillou--Li \cite{bhattacharyaguillouli2022rmotivic} have produced a $C_2$-equivariant complex $\calY = \smash{\calY^{C_2}_{(\mathsf{h},1)}}$ lifting the nonequivariant complex $Y = \bbR P^2 \wedge \bbC P^2$, together with a lift of the $v_1$-self map of $Y$ to a map $v\colon \Sigma^{1+\sigma}\calY \rightarrow \calY$ which is nilpotent on geometric fixed points. The positive resolution of the nonequivariant telescope conjecture at height $1$ implies, as in \cref{rmk:kr}, that $\calY[v^{-1}]$ is Bousfield equivalent to $KU_{C_2}/(2)$. Thus our computation may also be interpreted telescopically as describing $\ul{\pi}_\star L_{\calY[v^{-1}]}S_{C_2}$.
\tqed
\end{rmk}

The structure of $\ul{\pi}_\star L_{KU_{C_2}/(2)}S_{C_2}$ is rather more complex than the classical object $\pi_\ast S_{K(1)}$, and our methods are just robust enough that we are able to completely describe it, not merely up to extensions. This computation serves as a template for future work at different groups, primes, and heights. Our starting point is the following observation.

\begin{lemma}\label{lem:kuloc}
There is an equivalence
$
L_{KU_{C_2}/(2)}S_{C_2}\simeq F(EC_{2+},i_\ast S_{K(1)}).
$
\end{lemma}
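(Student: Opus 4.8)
The plan is to exhibit the right-hand side as a $KU_{C_2}/(2)$-localization by verifying the two defining properties: that $F(EC_{2+}, i_\ast S_{K(1)})$ is $KU_{C_2}/(2)$-local, and that the unit map $S_{C_2} \to F(EC_{2+}, i_\ast S_{K(1)})$ is a $KU_{C_2}/(2)$-equivalence. Here $i_\ast$ denotes the right adjoint to restriction $i^\ast\colon \Sp_{C_2}\to\Sp$ along $C_2 \to \ast$ (equivalently, the cofree/Borel completion functor, since $F(EC_{2+},-)$ appears), and the map $S_{C_2}\to i_\ast i^\ast S_{C_2} = i_\ast S$ is the Borel completion, followed by applying $i_\ast$ to the nonequivariant localization $S \to S_{K(1)}$.

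First I would identify what $KU_{C_2}/(2)$-localization does via geometric isotropy. By \cref{rmk:kr}, $KU_{C_2}/(2)$ has contractible geometric fixed points and underlying spectrum $KU/(2) = K(1)$; in particular it is a cofree $C_2$-spectrum, equivalently $KU_{C_2}/(2) \simeq F(EC_{2+}, KU_{C_2}/(2))$. A standard consequence (the "$F(EC_{2+},-)$ is smashing on cofree spectra" type of argument, or directly: $EC_{2+}\wedge X \to X$ is a $KU_{C_2}/(2)$-equivalence for all $X$ since its cofiber $\widetilde{EC_2}\wedge X$ has geometric fixed points killed by $KU_{C_2}/(2)$ and underlying spectrum contractible) is that $KU_{C_2}/(2)$-localization factors as Borel completion followed by localization at the underlying $K(1)$. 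Concretely: a map $f$ is a $KU_{C_2}/(2)$-equivalence iff $i^\ast f$ is a $K(1)$-equivalence, and a cofree $C_2$-spectrum $Y$ is $KU_{C_2}/(2)$-local iff $i^\ast Y$ is $K(1)$-local. This is the conceptual heart of the lemma.

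Granting that, both properties follow. For locality: $i^\ast F(EC_{2+}, i_\ast S_{K(1)}) \simeq F((EC_2)^{i^\ast}_+, S_{K(1)})$ — the underlying spectrum of a cofree construction on $i_\ast S_{K(1)}$ is $F(EC_2{}_+, S_{K(1)})$ with $EC_2$ a contractible space with free $C_2$-action, hence $EC_2{}_+ \simeq S^0$ nonequivariantly and this is just $S_{K(1)}$, which is $K(1)$-local; and $F(EC_{2+},-)$ lands in cofree spectra. For the unit being a $KU_{C_2}/(2)$-equivalence: it suffices to check on underlying spectra, where the map becomes $S \to F(EC_{2+}, S_{K(1)})^{i^\ast} \simeq S_{K(1)}$, the nonequivariant $K(1)$-localization, which is visibly a $K(1)$-equivalence. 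Putting these together identifies $F(EC_{2+}, i_\ast S_{K(1)})$ with $L_{KU_{C_2}/(2)}S_{C_2}$.

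The main obstacle — really the only nonformal input — is the first step: showing that $\widetilde{EC_2} \wedge KU_{C_2}/(2) \simeq \ast$, i.e. that $KU_{C_2}/(2)$ has trivial geometric fixed points, and leveraging this to pin down the shape of $KU_{C_2}/(2)$-localization. The geometric fixed point computation is where the specific nature of $KU_{C_2}$ (as opposed to $K\bbR$) matters: $\Phi^{C_2}KU_{C_2} = KU^{tC_2}$-ish behavior must be analyzed, or one invokes that $\Phi^{C_2}KU_{C_2}$ is rational (a known computation, $\Phi^{C_2}KU_{C_2} \simeq KU \vee \Sigma KU$ or similar, which becomes contractible after smashing with $S/2$ — actually one needs $\Phi^{C_2}(KU_{C_2}/2) = \Phi^{C_2}(KU_{C_2})/2 \simeq \ast$ because $2$ acts invertibly, the relevant homotopy being $2$-divisible). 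Everything downstream — the factorization of the localization, the locality of cofree spectra with $K(1)$-local underlying, the equivalence on the unit — is then formal manipulation with the recollement $(\widetilde{EC_2}\wedge -,\ i^\ast,\ F(EC_{2+},-))$.
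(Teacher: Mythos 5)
Your proof is correct and follows essentially the same route as the paper: the paper's two inputs are the vanishing of $\Phi^{C_2}(KU_{C_2}/(2))$ (\cref{rmk:kr}) and a citation to \cite[Proposition 3.22]{carrick2022smashing}, whose content is precisely the recollement argument you carry out by hand (acyclicity and locality for a localizing spectrum with trivial geometric fixed points are detected on underlying spectra, so the localization is the Borel construction on the underlying $K(1)$-localization). The one inaccuracy is the parenthetical guess $\Phi^{C_2}KU_{C_2}\simeq KU\vee\Sigma KU$; in fact $\pi_\ast\Phi^{C_2}KU_{C_2}\cong \bbZ[\tfrac{1}{2}][\beta^{\pm 1}]$, but the property you actually use --- that $2$ acts invertibly on it, so $\Phi^{C_2}(KU_{C_2}/(2))\simeq \ast$ --- is correct and is all the argument needs.
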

\begin{proof}
As in \cref{rmk:kr}, $KU_{C_2}/(2)$ is Bousfield equivalent to $C_{2+}\otimes KU/(2)$. The lemma then follows from \cite[Proposition 3.22]{carrick2022smashing}.
\end{proof}

Abbreviate
\[
b(X) = F(EC_{2+},i_\ast X),
\]
so that $b(X)$ represents the Borel cohomology theory associated to $X$, and in particular
\[
\pi_{c+w\sigma}b(X) = [(S^{c+w\sigma})_{\h C_2},X] = [\Sigma^c P_w,X] = X^{-c}P_w,
\]
where $P_w = \bbR P^\infty_w$ is a stunted projective spectrum. The Segal conjecture for $C_2$, proved by Lin \cite{lin1980conjectures}, implies that $(S_{C_2})_2^\wedge\simeq b(S_2^\wedge)$. From this perspective, \cref{lem:kuloc} gives a quantitative description of the portion of $S_{C_2}$ detected by $L_{KU_{C_2}/(2)}S_{C_2}$. As a basic example, if $\alpha\in\pi_\star S_{C_2}$ lifts a class in $\pi_\ast S$ detected by $S_{K(1)}$, then $\alpha$ is itself detected by $L_{KU_{C_2}/(2)}S_{C_2}$. In general, we expect that knowledge of $\ul{\pi}_{\star}L_{KU_{C_2}/(2)}S_{C_2}$ will play a similar but more involved role in the study of $\ul{\pi}_\star S_{C_2}$ as knowledge of $\pi_\ast S_{K(1)}$ plays in the study of $\pi_\ast S$.

The groups $\pi_\star L_{KU_{C_2}/(2)} S_{C_2} \cong \pi_\star b(S_{K(1)}) \cong S_{K(1)}^\ast P_\ast$ may be computed by the familiar homotopy fixed point spectral sequences (HFPSSs)
\[
H^\ast(C_2;\pi_\star b(KU_2^\wedge))\Rightarrow \pi_{\star}b(KO_2^\wedge),\qquad H^\ast(\bbZ\{\psi^3\};\pi_\star b(KO_2^\wedge))\Rightarrow \pi_\star b(S_{K(1)}).
\]
These spectral sequences mostly determine the additive structure of $\pi_\star b(KO_2^\wedge)$ and $\pi_\star b(S_{K(1)})$, but hide all of the most interesting multiplicative structure. In fact, much of this additive structure is essentially classical. The groups $\pi_\star b(KO_2^\wedge)$ record the $KO_2^\wedge$-theory of stunted projective spectra, and the groups $\pi_\star b(S_{K(1)})$ may be read off these given their action by $\psi^3$, and both of these go back to the original work of Adams \cite{adams1962vector}. The groups $\pi_\star b(S_{K(1)})$ are also closely related to the $C_2$-equivariant $J$-homomorphism, which has been studied by several people \cite{loffler1977equivariant, crabb1980z2, minami1983equivariant}, as well as to the unstable $v_1$-periodic homotopy groups of spheres, as studied extensively by Davis, Mahowald, and others \cite{mahowald1982image, davismahowald1987homotopy}.

Thus the primary contribution of this paper is a complete description of the Green functor structure of $\ul{\pi}_\star b(S_{K(1)})$, including all products, restrictions, and transfers. Our main tools are the cofiber sequence $C_{2+}\rightarrow S^0\rightarrow S^\sigma$, James periodicity, and a good understanding of cell structures for stunted projective spectra and their $K(1)$-localizations.

\subsection{Conventions and notation}\label{ssec:conventions}

\subsubsection{Implicit localization}

Except where otherwise noted, everything we consider will be implicitly $K(1) = KU/(2)$-localized, so that for example $S = S_{K(1)}$ and $KO = KO_2^\wedge$.

\subsubsection{\texorpdfstring{$S$}{S}-duals}
We sometimes write $D(X) = F(X,S)$.

\subsubsection{Bigrading}

We find it most convenient to organize our computation by stem and coweight, and for this reason employ the grading convention
\[
\pi_{s,c} = \pi_{c+(s-c)\sigma}.
\]
This choice naturally arises from viewing a $C_2$-spectrum $X$ as sandwiched between its underlying spectrum and geometric fixed points, through the maps
\begin{center}
\begin{tikzcd}
\pi_{a+b}^eX&\ar[l,"\res"']\pi_{a+b\sigma}X\ar[r,"\Phi^{C_2}"]&\pi_a \Phi^{C_2}X
\end{tikzcd},
\end{center}
which with our conventions get reindexed to simply
\begin{center}
\begin{tikzcd}
\pi_s^e X&\ar[l,"\res"']\pi_{s,c}X\ar[r,"\Phi^{C_2}"]&\pi_c\Phi^{C_2}X
\end{tikzcd}.\end{center}
We will not make any direct use of geometric fixed points, but have found that the structure of $\pi_{\ast,\ast}b(S_{K(1)})$ is still most conveniently organized in this way.

\subsubsection{The element $\rho$}

There is an element $\rho\in\pi_{-1,0}S_{C_2}$ represented by the inclusion of poles $S^0\rightarrow S^\sigma$. This class plays an important role in $C_2$-equivariant homotopy theory. For example, $\rho$-completion models Borel completion and $\rho$-inversion models geometric fixed points, i.e.\ $X_\rho^\wedge\simeq F(EC_{2+},X)$ and $(X[\rho^{-1}])^{C_2} \simeq \Phi^{C_2}X$.

For our purposes, the most important aspect of $\rho$ is the cofiber sequence
\begin{center}\begin{tikzcd}
C_{2+}\ar[r]&S^0\ar[r,"\rho"]&S^\sigma
\end{tikzcd},$\qquad$ or dually,$\qquad$
\begin{tikzcd}
S^{-\sigma}\ar[r,"\rho"]&S^0\ar[r]&C_{2+}
\end{tikzcd}.
\end{center}
Smashing this with any $C_2$-spectrum $X$ yields a long exact sequence
\begin{center}\begin{tikzcd}
\cdots\ar[r]&\pi_{\ast}^e X\ar[r,"\tr"]&\pi_{\ast,c}^{C_2} X\ar[r,"\rho"]&\pi_{\ast-1,c}^{C_2}X\ar[r,"\res"]&\pi_{\ast-1}X\ar[r]&\cdots
\end{tikzcd},\end{center}
and thus
\begin{gather*}
\ker(\res\colon\pi_{\ast,c}X\rightarrow \pi_{\ast}^e X) = \im(\rho\colon \pi_{\ast+1,c}^{C_2}X\rightarrow \pi_{\ast,c}^{C_2} X)\\
\ker(\rho\colon \pi_{\ast,c}^{C_2}X\rightarrow \pi_{\ast-1,c}^{C_2}X) = \im(\tr\colon \pi_{\ast}^e X \rightarrow \pi_{\ast,c}^{C_2}X).
\end{gather*}
We will use these, particularly the first, throughout the paper.

\begin{rmk}\label{rmk:rhoborel}
If $X$ is an ordinary spectrum, then the action of 
\[
X^{-\ast} P_{w+1} = \pi_{w+1+\ast,\ast}b(X)  \xrightarrow{\rho}\pi_{w+\ast,\ast}b(X) = X^{-\ast} P_{w}
\]
is induced by the collapse map $P_{w}\rightarrow P_{w+1}$. In particular, $b(X)$ is $\rho$-complete, and if $
\Cof(\rho^{n+1}) = \Cof(\rho^{n+1}\colon S^{-(n+1)\sigma}\rightarrow S^0),
$
then
\[
\pi_{s,c}b(X)\otimes \Cof(\rho^{n+1}) \cong X^{-c-1}(P_{s-c}^{s-c+n})\cong X_{c}(P^{c-s-1}_{c-s-1-n}).
\]
Here, if $n\geq 0$ then $P_m^{m+n}$ is the stunted projective spectrum with one cell in each dimension from $m$ to $m+n$.
\tqed
\end{rmk}

\begin{rmk}
The class $\rho$ is sometimes denoted $a_\sigma$; our notation is borrowed from the $\bbR$-motivic context. Likewise, we will write $\tau$ for what would sometimes be denoted $u_\sigma^{-1}$.

Our computation of $\pi_{\ast,\ast}b(S_{K(1)})$ will place $\rho$ into a larger family of elements $\omega_a\in\pi_{8a-1,0}b(S_{K(1)})$, and in that context we will use $\omega_0$ and $\rho$ interchangeably.
\tqed
\end{rmk}

\section{Real \texorpdfstring{$K$}{K}-theory}

This section computes $\pi_{\ast,\ast}b(KO)$. The first step is to understand $\pi_{\ast,\ast}b(KU)$, and we begin in \cref{ssec:coc} by describing $\pi_{\ast,\ast}b(E)$ for more general complex-oriented theories $E$. We descend to $\pi_{\ast,\ast}b(KO)$ in \cref{ssec:ko} using the HFPSS associated to $KO\simeq KU^{\h C_2}$.

\subsection{The complex-oriented cohomology of stunted projective spectra}\label{ssec:coc}

In this subsection, we drop the assumption that everything is $K(1)$-local. Let $E$ be an even-periodic Landweber exact ring spectrum. We wish to describe $\pi_{\ast,\ast}b(E)$ together with its action by the Adams operations $\psi^k$.

Choose a periodic complex orientation $z\in E^0 BU(1)$, so that $E^\ast BU(1)\cong E_\ast[[z]]$ and $E_\ast \cong E_0[u^{\pm 1}]$, where $u\in \pi_2 E$ is obtained from $z$ by restricting to the bottom cell of $BU(1)$. For $k\in\bbZ$, write $[k](z) \in E_0[[z]]$ for the $k$-series of the formal group law associated to $E$. This satisfies $[k](z) = kz+O(z^2)$, and we write $\langle k \rangle(z) = \tfrac{[k](z)}{z}$.

\begin{lemma}\label{lem:bc2}
There is an isomorphism
\[
E^\ast BC_2\cong E_\ast[[z]]/([2](z)),
\]
and $\langle 2 \rangle(z) \in E^0 BC_2$ is the class of the transfer $BC_{2+}\rightarrow S^0\rightarrow E$.
\end{lemma}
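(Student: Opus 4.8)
The plan is to compute $E^\ast BC_2$ via the Atiyah–Hirzebruch or, more efficiently, the Gysin sequence for the sphere bundle $S(L) \to BC_2$, where $L$ is the real line bundle whose Euler class we want, combined with the complex-oriented calculation of $E^\ast BU(1)$. Concretely, $BC_2 = BU(1)[2]$ sits in a cofiber sequence $BC_{2+} \to BU(1)_+ \xrightarrow{[2]} BU(1)_+$ (the multiplication-by-$2$ map on $BU(1) = \bbC P^\infty$, realized by $L \mapsto L^{\otimes 2}$), so smashing with $E$ and taking homotopy gives a long exact sequence. Since $E^\ast BU(1) \cong E_\ast[[z]]$ is concentrated in even degrees and the map induced by $[2]$ on $BU(1)$ is, by definition of the formal group law, multiplication by the power series $[2](z)$, this long exact sequence degenerates into a short exact sequence
\[
0 \to E_\ast[[z]] \xrightarrow{[2](z)} E_\ast[[z]] \to E^\ast BC_2 \to 0,
\]
using that multiplication by $[2](z) = 2z + O(z^2)$ is injective on $E_\ast[[z]]$ (it is a nonzerodivisor, being $z$ times the unit-leading-coefficient series $\langle 2\rangle(z)$ up to the $E_0$-regularity of $2$, but more simply because $E_\ast[[z]]$ is a domain when... in any case injectivity of multiplication by a power series with a nonzerodivisor leading term is standard). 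This yields the claimed isomorphism $E^\ast BC_2 \cong E_\ast[[z]]/([2](z))$.

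For the second assertion, I would identify the transfer class explicitly. The transfer $\tr\colon BC_{2+} \to S^0$ composed with the unit $S^0 \to E$ is, by the projection formula and the construction of the cofiber sequence above, the image under the connecting-type identification of the diagonal/Euler class of the relevant bundle. More precisely: the cofiber sequence $BC_{2+} \to BU(1)_+ \to BU(1)_+$ identifies $BC_{2+}$ with the Thom spectrum of $-L'$ for the tautological line bundle, shifted; tracing through, the composite $S^0 \xleftarrow{\tr} BC_{2+} \to BU(1)_+$ represents, in $E^0 BC_2$, precisely the Euler class of the bundle classified by the quotient map, which is $z$ pulled back along $BC_2 \to BU(1)$. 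But in the presentation $E^\ast BC_2 = E_\ast[[z]]/([2](z))$ the transfer is characterized by the fact that $\tr \circ (\text{restriction to }BC_2)$ of $1$ equals the image of the class inducing multiplication on the complementary summand, which unwinds to $\langle 2\rangle(z) = [2](z)/z$. Alternatively, and perhaps cleanest: the composite $E^0 \to E^0 BC_2 \xrightarrow{\tr} E^0$ followed by comparison with $E^0 BU(1)$ shows the transfer class $t$ satisfies $z \cdot t = [2](z)$ in $E_\ast[[z]]$ (before quotienting), because the transfer for the double cover $S(L) \to BC_2$ times the Euler class is the $2$-series by the standard Gysin/transfer identity; hence $t = \langle 2\rangle(z)$.

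The main obstacle I anticipate is pinning down the transfer class with the correct normalization — i.e., proving $z \cdot \tr = [2](z)$ rather than merely $z \cdot \tr = u \cdot [2](z)$ for some unit $u$, or getting the right power series up to the ideal $([2](z))$. The cleanest route is probably to use the double-cover structure: $BC_2$ classifies the real sign line bundle, its complexification is a complex line bundle $L$ with $w_1$-squared... rather, one uses that $EC_2 \times_{C_2} \ast \to BC_2$ and the Euler-class/transfer compatibility $e(L) \cdot \tr_{BC_2}^{ } (1) $ relates to the self-intersection; for a complex-orientable $E$ and the complex line bundle $L$ on $BC_2$ pulled back from the universal one on $BU(1)$ via the squaring, the transfer satisfies precisely $\tr^\ast(1) = \langle 2\rangle(z)$ by the projection formula applied to the cofiber sequence $S(L)_+ \to BC_{2+} \xrightarrow{e(L)} (BC_2)^{L}$ together with $S(L) \simeq EC_2 \simeq \ast$ (so $S(L)_+ \simeq S^0$, and this cofiber sequence literally IS a desuspension of $C_{2+} \to S^0 \to S^\sigma$-type data at the level of $BC_2$). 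Making this identification precise and checking the multiple-of-$z$ normalization against the defining property $[2](z) = 2z + O(z^2)$ is where I would spend the care; everything else is formal manipulation of the long exact sequence.
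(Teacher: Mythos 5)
The paper does not prove this lemma from scratch; it cites \cite[Lemma 5.7]{hopkinskuhnravenel2000generalized}, whose argument is the one you are reconstructing. Your first half is essentially that argument and is correct in substance: $BC_2$ is the sphere bundle $S(L^{\otimes 2})\rightarrow BU(1)$, the Gysin cofiber sequence $S(L^{\otimes 2})_+\rightarrow BU(1)_+\rightarrow BU(1)^{L^{\otimes 2}}$ together with the Thom isomorphism turns the connecting map into multiplication by $e(L^{\otimes 2})=[2](z)$, and injectivity of that multiplication (which uses that $2$ is a non-zero-divisor in $E_0$, guaranteed here by Landweber exactness) collapses the long exact sequence. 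Note only that your ``cofiber sequence $BC_{2+}\rightarrow BU(1)_+\xrightarrow{[2]}BU(1)_+$'' is not literally one: the third term is the Thom spectrum $BU(1)^{L^{\otimes 2}}$, identified with $\Sigma^2 BU(1)_+$ only after smashing with $E$, and the map is multiplication by the Euler class, not the space-level map $[2]$.

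The genuine gap is in the transfer identification, and it sits exactly where you predicted trouble. Your key step rests on ``$S(L)\simeq EC_2\simeq\ast$'' for the complex line bundle $L$ on $BC_2$, and on the claim that $S(L)_+\rightarrow BC_{2+}\rightarrow BC_2^L$ is a form of $C_{2+}\rightarrow S^0\rightarrow S^\sigma$. Both are false: $L$ is the \emph{complexification} $\sigma\otimes\bbC$ of the real sign bundle, so $S(L)$ is a circle bundle with total space $EC_2\times_{C_2}S(\sigma\otimes\bbC)\simeq S^1$, not a point, and the cofiber sequence you write is the Borel quotient of $S(2\sigma)_+\rightarrow S^0\rightarrow S^{2\sigma}$. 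The double cover $EC_2\rightarrow BC_2$ is the sphere bundle of the \emph{real} bundle $\sigma$, whose Thom spectrum $BC_2^\sigma$ admits no $E$-Thom isomorphism; this mismatch between $\rho$ and $z=\rho^2\tau^{-2}u$ is precisely the subtlety the paper has to manage in \cref{prop:b}. Moreover, even the corrected relation $z\cdot\tr(1)=0$ (which does hold, by Frobenius reciprocity, since $z$ restricts to $0$ on $EC_2\simeq\ast$) is not enough: it only places $\tr(1)$ in the annihilator of $z$ in $E_0[[z]]/([2](z))$, which one computes to be the free $E_0$-module generated by $\langle 2\rangle(z)$. To fix the coefficient you must additionally evaluate the augmentation $E^0BC_2\rightarrow E^0(\ast)$: naturality of the transfer for the pullback cover $C_2\rightarrow\ast$ gives $\tr(1)\mapsto 2$, while $\langle 2\rangle(z)\mapsto\langle 2\rangle(0)=2$, and since $2$ is a non-zero-divisor in $E_0$ this forces $\tr(1)=\langle 2\rangle(z)$. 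Your write-up never performs this augmentation comparison, and the mechanism you propose in its place does not work as stated.
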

\begin{proof}
This is well known, see for instance \cite[Lemma 5.7]{hopkinskuhnravenel2000generalized}; note that our $z$ relates to their $x$ by $z = u^{-1} x$, and that the proofs only require that $E$ is a complex-oriented theory with $[2](x)$ a non-zero-divisor in $E^\ast BU(1)$.
\end{proof}

Now write $\tau^{-2} \in \pi_{0,-2}b(E) = [\Sigma^{-2}P_2,E]$ for the Thom class of the complex bundle $2\sigma = \sigma\otimes\bbC$, so that $z = \rho^2\tau^{-2} u \in E^0 BC_2 = E^0 P_0$, and abbreviate $h = \langle 2 \rangle(z)$ so that $E^\ast BC_2 = E_\ast[[z]]/(z\cdot h)$.

\begin{prop}\label{prop:b}
There is an isomorphism
\[
\pi_{\ast,\ast}b(E) \cong E_\ast[\rho,\tau^{\pm 2}]_\rho^\wedge/(\rho\cdot h).
\]
\end{prop}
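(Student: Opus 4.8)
The plan is to compute $\pi_{\ast,\ast}b(E)$ via the long exact sequence coming from the $\rho$-cofiber sequence together with the identification of $\pi_{\ast,\ast}b(E)$ with $E$-cohomology of stunted projective spectra. Recall from \cref{rmk:rhoborel} that $\pi_{s,c}b(E) = E^{-c}P_{s-c}$ and that multiplication by $\rho$ is induced by the collapse maps $P_w \to P_{w+1}$. First I would observe that inverting $\rho$ computes $\pi_{\ast,\ast}b(E)[\rho^{-1}] \cong E_\ast[\rho^{\pm 1},\tau^{\pm 2}]$: this is the ``geometric fixed points'' direction, where $\colim_w E^\ast P_w = E^\ast S^0$ after accounting for the periodicity, and here $h$ becomes invertible since $\langle 2\rangle(z) = 2 + O(z)$ and $z = \rho^2\tau^{-2}u$ becomes a unit. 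Since $b(E)$ is $\rho$-complete, the strategy is to identify $\pi_{\ast,\ast}b(E)$ inside its $\rho$-localization as the $\rho$-completion of the stated subring, so the main content is checking that the map $E_\ast[\rho,\tau^{\pm 2}]/(\rho\cdot h) \to \pi_{\ast,\ast}b(E)$ is an isomorphism after modding out by $\rho^{n+1}$ for each $n$.

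The key computational step is therefore the identification modulo $\rho^{n+1}$. By \cref{rmk:rhoborel}, $\pi_{s,c}b(E)\otimes\Cof(\rho^{n+1}) \cong E^{-c-1}(P_{s-c}^{s-c+n})$, the $E$-cohomology of a length-$(n+1)$ stunted projective spectrum. I would compute this using the Atiyah--Hirzebruch or (equivalently here) the collapse-map filtration: \cref{lem:bc2} gives $E^\ast BC_2 = E_\ast[[z]]/(z\cdot h)$ with $z$ in cohomological degree corresponding to $\rho^2\tau^{-2}u$, and the stunted spectra $P_m^{m+n}$ have $E$-cohomology obtained from this by the standard James-periodicity / Thom-isomorphism identifications. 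Concretely, $P_0^{n}$ has $\widetilde E^\ast$ a free $E_\ast$-module on the images of $1, z, z^2, \dots$ truncated appropriately, with the top relation governed by $h$; twisting by $\tau$ shifts which cells appear, and the compatibility of these descriptions as $n$ varies is exactly James periodicity. Matching generators: $\tau^{\pm 2}$ accounts for Thom classes of multiples of $2\sigma$, $\rho$ for the collapse maps, and the single relation $\rho\cdot h = 0$ in $\pi_{-2,0}b(E)\cdot$(unit) comes from the relation $z\cdot h = 0$ in $E^\ast BC_2$ via $z = \rho^2\tau^{-2}u$, i.e.\ $\rho\cdot(\rho\tau^{-2}u\cdot h) = 0$ with $\rho\tau^{-2}u$ a unit multiple after inverting the even-periodicity class.

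Having pinned down both sides modulo each $\rho^{n+1}$, I would assemble the isomorphism: the natural ring map from $E_\ast[\rho,\tau^{\pm 2}]_\rho^\wedge/(\rho\cdot h)$ to $\pi_{\ast,\ast}b(E)$ (which is $\rho$-complete) is an isomorphism because it is so after reduction mod $\rho^{n+1}$ for all $n$ and both sides are $\rho$-complete with $\rho$-torsion that is finitely generated in each bidegree, so a Milnor-sequence / $\lim^1$-vanishing argument finishes it. One bookkeeping point to handle carefully is the interplay between the two gradings: the index shifts in \cref{rmk:rhoborel} ($P_{s-c}^{s-c+n}$ versus $P^{c-s-1}_{c-s-1-n}$) mean one should fix conventions once and verify the degree of $\tau^{-2}$ as $(0,-2)$ and of $\rho$ as $(-1,0)$ are consistent with $z\in E^0 BC_2 = E^0 P_0 = \pi_{0,0}b(E)$ lying in degree $(0,0)$, forcing $z = \rho^2\tau^{-2}u$ with $u\in\pi_2 E$ of degree $(2,2)$; getting this right is what makes the relation $\rho\cdot h$ land in the correct bidegree.

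The main obstacle I anticipate is not conceptual but organizational: correctly describing $E^\ast$ of the finite stunted projective spectra $P_m^{m+n}$ uniformly in $m$ and $n$ and showing the collapse maps act as multiplication by $\rho$ compatibly, i.e.\ assembling the finite computations into the clean closed form with its single relation. This is where James periodicity does the real work, and where one must be careful that the ``obvious'' generators $z^j$ (or $\tau$-twisted versions) are genuinely the images of $\rho$-power multiples of $\tau$-powers and not merely abstractly isomorphic. Once that dictionary is set up, the $\rho$-completion statement is a formal consequence.
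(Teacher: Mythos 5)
Your core strategy---reduce modulo $\rho^{n+1}$ to the $E$-cohomology of finite stunted projective spectra, compute those via \cref{lem:bc2}, Thom classes for multiples of $2\sigma$, and the collapse maps, then reassemble using $\rho$-completeness of $b(E)$---is a genuinely different route from the paper's and could be made to work. The paper instead exploits invertibility of $\tau^2$ to reduce at once to the two rows $0\leq s-c\leq 1$, i.e.\ to $E^\ast P_0 = E^\ast BC_{2+}$ (which is exactly \cref{lem:bc2}) and $E^\ast P_1 = \widetilde{E}{}^\ast BC_2$, identified as the augmentation ideal via one application of the $\rho$-sequence; this avoids finite stunted spectra, James periodicity, and all $\lim^1$ bookkeeping. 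If you pursue your route, note that $\pi_{s,c}(b(E)\otimes\Cof(\rho^{n+1}))$ is not literally $\pi_{s,c}b(E)/(\rho^{n+1})$---there is a contribution from $\rho^{n+1}$-torsion in a shifted degree, and the target ring has plenty of $\rho$-torsion (e.g.\ $h$ itself)---so the mod-$\rho^{n+1}$ comparison must be phrased at the level of the cofibers. You also gloss over the odd rows: $\sigma$ is not complex, so the Thom isomorphism only handles even multiples of $\sigma$, and the cells in odd position require a separate argument (this is precisely what the paper's augmentation-ideal observation supplies).

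There are, however, two concrete errors. First, the $\rho$-inversion claim is false: since $\rho\cdot h = 0$, inverting $\rho$ forces $h = \langle 2\rangle(z) = 0$ rather than making $h$ invertible ($h = 2 + O(z)$, and making $z$ a unit does not make $2+O(z)$ a unit; for $KU$ one has $h = 2-z$). The colimit $\colim_{w\to-\infty}E^\ast P_w$ is the Tate construction $\pi_{-\ast}E^{tC_2}\cong E_\ast[[z]][z^{-1}]/([2](z))$, not $E^\ast S^0$ (that identification is the Segal conjecture, special to $E=S$); for $E = KU$ it is $2$-adically rational, since $z\mapsto 2$ becomes invertible. In particular the map from $\pi_{\ast,\ast}b(E)$ to its $\rho$-localization kills all $\rho$-torsion, so the framing ``identify $\pi_{\ast,\ast}b(E)$ inside its $\rho$-localization'' is a non-starter; fortunately your actual argument never uses it. Second, you cannot derive $\rho\cdot h=0$ from $z\cdot h=0$ by calling $\rho\tau^{-2}u$ ``a unit multiple'': only $\tau^{\pm2}$ and $u^{\pm1}$ are invertible, so $z h=0$ yields only $\rho^2 h=0$. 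The relation $\rho\cdot h = 0$ is nonetheless free, because $h$ is the transfer class by \cref{lem:bc2} and $\rho$ kills transfers (consecutive maps in the cofiber sequence $C_{2+}\to S^0\xrightarrow{\rho}S^\sigma$ compose to zero); this is how the paper obtains the map $E_\ast[\rho,\tau^{\pm 2}]_\rho^\wedge/(\rho\cdot h)\to\pi_{\ast,\ast}b(E)$ in the first place.
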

\begin{proof}
The relation $\rho\cdot h = 0$ holds in $\pi_\star S_{C_2}$, and this induces a map
\[
E_\ast[\rho,\tau^{\pm 2}]_\rho^\wedge/(\rho\cdot h)\rightarrow \pi_{\ast,\ast}b(E).
\]
As $\tau^2 \in \pi_{0,2}b(E)$ is invertible, to show that this map is an isomorphism we may reduce to considering bidegrees $(s,c)$ with $0\leq s-c \leq 1$. When $s-c = 0$, this amounts to the computation of $E^\ast BC_2 = E^\ast P_0$ given in \cref{lem:bc2}. The case $s-c=1$ follows from this and the observation that $\rho$ identifies $\pi_{1,0}b(E)$ as the augmentation ideal of $\pi_{0,0}b(E)$.
\end{proof}

\begin{rmk}
It is possible to give a coordinate-free formulation of \cref{prop:b}. Given a ring $R$ and invertible ideal $I\subset R$, define $R[I^{\pm 1}] = \bigoplus_{n\in\bbZ}I^{\otimes n}$. Given an ideal $J\subset I$, define the graded ring $A$ by $A_{2n} = I^{\otimes n}/(J\cdot I^{\otimes n})$ and $A_{2n-1} = I^{\otimes n}/((J\otimes I^{-1})\cdot I^{\otimes n})$. Thus $A_{2\ast} = R/J\otimes_R R[I^{\pm 1}]$, and the rest of $A$ records the fact that $J\subset I$. The obvious maps $A_n\rightarrow A_{n-1}$ make $A$ into an algebra over the graded polynomial ring $(R/J)[\rho]$, where $|\rho| = -1$.

Now let $R = E^0 BU(1)$ and $\bbG = \Spf R$. Let $I$ be the ideal of functions on $\bbG$ vanishing at $0$, so $R[I^{\pm 1}]$ deforms the ring of meromorphic functions on $\bbG$ \cite[Definition 5.20]{strickland1999formal}. Let $J\subset R$ be the ideal carving out the $2$-torsion points $\bbG[2]\subset\bbG$. Then $J\subset I$ as $0\in\bbG[2]$, and applying the above construction returns $\pi_{\ast,0}b(E)$.
\tqed
\end{rmk}

Because $E$ is Landweber exact, every automorphism of the formal group $\bbG = \Spf E^0 BU(1)$ induces a multiplicative automorphism of $E$. In particular, given $k\in \bbZ$ invertible in $E_0$, formal multiplication by $k$ is an automorphism of $\bbG$, and this gives rise to the Adams operation $\psi^k\colon E\rightarrow E$. If $E$ is $2$-complete, then this extends to any $2$-adic unit $k\in\bbZ_2^\times$.

\begin{prop}\label{lem:adams}
The Adams operations $\psi^k$ act by $E_0$-algebra maps, and their action on $\pi_{\ast,\ast}b(E)$ is determined by
\[
\psi^k(u^j) = k^j u^j,\qquad \psi^k(\tau^{2j}) = \tau^{2j}(1+\tfrac{1}{2}(k^j-1)(2-h)),\qquad\psi^k(\rho) = \rho.
\]
\end{prop}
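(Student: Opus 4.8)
The plan is to verify each of the three formulas on the generators $u^j$, $\tau^{2j}$, $\rho$ separately, using the fact (from \cref{prop:b}) that $\pi_{\ast,\ast}b(E)$ is generated over $E_0$ by these elements together with $h$, and that $\psi^k$ is a ring map. First I would recall that $\psi^k\colon E\to E$ is by construction the multiplicative automorphism of $E$ induced by formal multiplication $[k]\colon\bbG\to\bbG$; since this automorphism fixes $E_0 = \pi_0 E$ (it is the identity on the base of the formal group), $\psi^k$ is an $E_0$-algebra map, and the induced map on $\pi_{\ast,\ast}b(E) = E^\ast BC_{2+}[\tau^{\pm 2}]$-type rings is the map on $E$-cohomology of the fixed space $BC_2$ induced functorially. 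So it suffices to track what $[k]$ does to each generator.

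For $u\in\pi_2 E$: $u$ is the image of the orientation $z\in E^0 BU(1)$ under restriction to the bottom cell $S^2\hookrightarrow BU(1)$, and $[k](z) = kz + O(z^2)$, so on the bottom cell $\psi^k$ multiplies $u$ by $k$; hence $\psi^k(u^j) = k^j u^j$ by multiplicativity. For $\rho\in\pi_{-1,0}S_{C_2}$: this class comes from the sphere, is independent of $E$, and is fixed by any ring self-map of $E$ induced by an automorphism of $\bbG$ — more concretely, $\rho$ is detected on the zero-cell of $BC_2$ where the orientation plays no role — so $\psi^k(\rho) = \rho$. The one genuinely computational formula is $\psi^k(\tau^{2j})$. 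Here I would use the defining relation $z = \rho^2\tau^{-2}u$ from just before \cref{prop:b}, which gives $\tau^{-2} = z\rho^{-2}u^{-1}$ formally; equivalently $\tau^2 u^{-1} = \rho^2 z^{-1}$ after inverting $z$. Applying $\psi^k$ and using $\psi^k(\rho)=\rho$, $\psi^k(u) = ku$: one gets $\psi^k(\tau^2) = \tau^2\cdot k\cdot\frac{z}{[k](z)} = \tau^2\cdot k\cdot\langle k\rangle(z)^{-1}$ as an identity after inverting $z$, and then one must rewrite this inside the honest ring $E^\ast BC_2 = E_\ast[[z]]/([2](z))$, where $\langle k\rangle(z)$ becomes a unit because $k$ is a $2$-adic unit. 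The task is then to show $k\langle k\rangle(z)^{-1} \equiv 1 + \tfrac12(k-1)(2-h) \pmod{[2](z)}$, where $h = \langle 2\rangle(z)$, and then iterate/multiply to get the formula for general $j$ from the $j=1$ case.

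The main obstacle is precisely that last congruence: showing that $k\langle k\rangle(z)^{-1}$ agrees modulo the relation $[2](z) = z\cdot h$ with the stated closed form $1 + \tfrac12(k-1)(2-h)$. The key input is that modulo $[2](z)$ one has $z\cdot h = 0$, so $h$ behaves like ``$2$ on the image of the transfer'' and the ring $E^\ast BC_2$ splits, after inverting $2$ and the augmentation ideal appropriately, into a piece where $z$ is a zero divisor killed by $h$ and a piece where $h = 2$. On the $z=0$ part (the bottom of $BC_2$, i.e.\ the part seen by $\res$ to $\pi_\ast^e E$), $\langle k\rangle(0) = k$ so $k\langle k\rangle^{-1} = 1$, matching $1 + \tfrac12(k-1)(2-h)$ at $h=2$; on the transfer part one has $z h = 0$ hence (away from the relevant primes) $h = 0$ and one must check $k\langle k\rangle(z)^{-1} = 1 + (k-1) = k$ there, which follows since $\langle k\rangle(z)\cdot(\text{transfer class}) = \langle k\rangle(z)\langle 2\rangle(z)$-type manipulations collapse appropriately — concretely, on $E_\ast[[z]]/([2](z))$ one has the identity $\langle k\rangle(z)\cdot z = [k](z)$ and one computes directly with $2$-series and $k$-series of a formal group law, using $[k]([2](z)) = [2]([k](z))$ and that $[2](z)=0$. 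Once the $j=1$ case is pinned down, the general $j$ follows by writing $\tau^{2j} = (\tau^2)^j$, applying the ring map, and simplifying the resulting product $\prod$ (or power) of the $j=1$ expression modulo $z\cdot h = 0$; since $(2-h)^2 = 4 - 4h + h^2$ and $z h^2 = h\cdot(zh) = 0$ forces $h^2 \equiv 2h$ in the relevant quotient (as $h\cdot([2](z)) = 0$ and $[2](z) = zh$ give $h^2 z = 2hz - zh\cdot(\dots)$, i.e.\ $h$ satisfies $h^2 = 2h$ after the localization implicit in the structure of $\pi_{\ast,\ast}b(E)$), the cross terms telescope and one recovers exactly $\tau^{2j}(1 + \tfrac12(k^j-1)(2-h))$. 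I expect the bookkeeping around which localization makes $h^2 = 2h$ hold to be the subtle point worth spelling out carefully.
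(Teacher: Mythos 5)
There is a genuine gap in the derivation of $\psi^k(\tau^{2})$. You obtain the formula $\psi^k(\tau^2) = k\,\tau^2\langle k\rangle(z)^{-1}$ by ``solving'' the relation $z = \rho^2\tau^{-2}u$ for $\tau^2$ after inverting $z$. But $z$ is a zero divisor in $E^\ast BC_2$ (it kills $h$), and what the relation actually gives after applying $\psi^k$ is only $\rho^2 u\bigl(k\,\psi^k(\tau^{-2}) - \tau^{-2}\langle k\rangle(z)\bigr) = 0$, i.e.\ it pins down $\psi^k(\tau^{\pm 2})$ only modulo $\ker(\rho^2)$, which by \cref{prop:b} is the free $E_0$-module generated by $\tau^{\pm 2}h$. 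That ambiguity is exactly where the content of the statement lives: the claimed answer is $k\tau^2 - \tfrac{1}{2}(k-1)\tau^2 h$, and the $h$-component is precisely the part your manipulation cannot see. (The constraint $\psi^k(\tau^2)\psi^k(\tau^{-2})=1$ does not rescue this either; it gives one equation in two unknown $h$-coefficients.) The missing input is the \emph{definition} of $\tau^{-2}$ as the Thom class of $2\sigma = \sigma\otimes\bbC$: the paper computes $\psi^k(\tau^{-2}) = k^{-1}\tau^{-2}\langle k\rangle(z)$ on the nose by naturality of $\psi^k$ along the map of Thom spectra $\Sigma^{-2}P_2\rightarrow\Sigma^{\infty-2}BU(1)$, under which $\tau^{-2}$ is the image of $u^{-1}z$ and $\psi^k(u^{-1}z) = k^{-1}u^{-1}[k](z)$. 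Some such geometric identification is unavoidable here.

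The remaining step — the congruence $\langle k\rangle(z)\equiv k + \tfrac{1}{2}(1-k)(2-h)\pmod{[2](z)}$, equivalently your $k\langle k\rangle(z)^{-1}\equiv 1+\tfrac12(k-1)(2-h)$ — you correctly identify as the crux, but your sketch (splitting into a ``$z=0$ part'' and a ``transfer part'' after inverting $2$ and letting things ``collapse appropriately'') is not an argument. The clean proof is: since $k$ is odd, $[k](z)\equiv z\pmod{[2](z)}$, so both $\langle k\rangle(z)-1$ and $p_k-1$ (where $p_k = k+\tfrac12(1-k)(2-h)$) are killed by $z$ and have constant coefficient $k-1$; and the kernel of multiplication by $z$ on $E_0[[z]]/([2](z))$ is free of rank one on $h$, so an element killed by $z$ is determined modulo $[2](z)$ by its constant coefficient. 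Finally, your worry about ``which localization makes $h^2=2h$ hold'' is unfounded: $h-2 = zg(z)$ for some $g$, so $h(h-2) = [2](z)g(z)\equiv 0$ already in $E_0[[z]]/([2](z))$ (alternatively, $h^2 = \tr(\res(\tr(1))) = \tr(2) = 2h$ by Frobenius reciprocity), and with that the reduction from general $j$ to $j=\pm1$ by multiplicativity goes through as you and the paper both indicate.
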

\begin{proof}
That $\psi^k(u^j) = k^j u^j$ is standard, and $\psi^k(\rho) = \rho$ as $\rho$ is in the Hurewicz image, so it suffices to compute $\psi^k(\tau^{2j})$. By multiplicativity, using the relation $h^2 = 2h$, one easily reduces to the case $j = -1$. Observe that the map $\Sigma^{-2}P_{2}\rightarrow\Sigma^{\infty-2}BU(1)$ acts in $E^0$-cohomology by
\[
u^{-1}zE_0[[z]]\rightarrow E^2 P_2,\qquad u^{-1}zf(z)\mapsto \tau^{-2} f(z).
\]
The operation $\psi^k$ is defined so that $\psi^k(z) = [k](z)$ in $E^0 BU(1)$, and we deduce that
\[
\psi^k(\tau^{-2}) = k^{-1}\tau^{-2}\cdot \langle k\rangle(z)
\]
in $\pi_{0,-2}b(E) = E^2 P_2$. Now abbreviate $p_k = k + \tfrac{1}{2}(1-k)(2-h)$. We claim that $\langle k \rangle (z) \equiv p_k\pmod{[2](z)}$. To prove this, we proceed as in \cite[Remark 6.15]{hopkinskuhnravenel2000generalized}. As $k$ is odd, we have $[k](z)\equiv z \pmod{[2](z)}$. Thus
\begin{align*}
z\cdot (\langle k \rangle(z) - 1) \equiv 0 \pmod{[2](z)},&\qquad \langle k \rangle(z) - 1 = (k-1)+O(z), \\
z\cdot (p_k-1) \equiv 0 \pmod{[2](z)},&\qquad p_k-1 = (k-1) + O(z),
\end{align*}
the second row following from the relations $z h = 0$ and $h = 2 + O(z)$. As the kernel of multiplication by $z$ on $E_0[[z]]/([2](z))$ is the free $E_0$-module generated by $h$, any $f(z)\in E_0[[z]]$ satisfying $z f(z)\equiv 0 \pmod{[2](z)}$ is determined modulo $[2](z)$ by its constant coefficient, and so the above identities imply that $\langle k \rangle(z) - 1 \equiv p_k-1\pmod{[2](z)}$. Thus
\begin{align*}
\psi^k(\tau^{-2}) &= k^{-1}\tau^{-2}\cdot \langle k \rangle(z), \\
&= k^{-1}\tau^{-2}\cdot (k+\tfrac{1}{2}(1-k)(2-h)) = \tau^{-2}(1+\tfrac{1}{2}(k^{-1}-1)(2-h))
\end{align*}
in $\pi_{0,-2}b(E)$ as claimed.
\end{proof}

\subsection{The homotopy fixed point spectral sequence}\label{ssec:ko}

We now return to the $K(1)$-local setting, specializing the above discussion to $KU = KU_2^\wedge$. We consider $KU$ to be oriented with formal group law $x+y-xy$, so that $[2](z) = 2z-z^2$, and write $\beta \in \pi_2 KU$ for the associated Bott element. 

\begin{lemma}\label{lem:bku}
There is an isomorphism
\[
\pi_{\ast,\ast}b(KU) = \bbZ_2[\beta^{\pm 1},\tau^{\pm 2},\rho]/(\rho\cdot h),\qquad h = 2 - \rho^2\tau^{-2}\beta.
\]
The Adams operations $\psi^k$ for $k\in\bbZ_2^\times$ act on $\pi_{\ast,\ast}b(KU)$ by multiplicative automorphisms, and are determined by
\[
\psi^k(\beta^j) = k^j \beta^j,\qquad \psi^k(\tau^{2j}) = \tau^{2j}(1+\tfrac{1}{2}(k^j-1)\cdot\rho^2\tau^{-2}\beta),\qquad\psi^k(\rho) = \rho.
\]
\end{lemma}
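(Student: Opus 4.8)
The plan is to specialize \cref{prop:b} and \cref{lem:adams} to the case $E = KU_2^\wedge$, matching up the abstract parameters with the concrete ones. First I would invoke \cref{prop:b} to get $\pi_{\ast,\ast}b(KU) \cong (KU_2^\wedge)_\ast[\rho,\tau^{\pm 2}]_\rho^\wedge/(\rho\cdot h)$. Since $KU_2^\wedge$ is $2$-complete and $2$-periodic with $(KU_2^\wedge)_\ast = \bbZ_2[\beta^{\pm 1}]$, the completed polynomial ring $\bbZ_2[\beta^{\pm 1}][\rho,\tau^{\pm 2}]_\rho^\wedge$ is just the polynomial ring $\bbZ_2[\beta^{\pm 1},\tau^{\pm 2},\rho]$ — the $\rho$-completion is visible because $\rho\cdot h = 0$ means $\rho^2\tau^{-2}\beta = 2$ up to the relation, so high powers of $\rho$ are controlled; more honestly, each bidegree is a finitely generated $\bbZ_2$-module so the completion does nothing new beyond what is already written. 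I should double-check that claim: in a fixed bidegree $(s,c)$, $\pi_{s,c}b(KU) = KU^{-c}(P_{s-c})$, which is a finitely generated $\bbZ_2$-module by Atiyah's computation, so the $\rho$-adic topology is already complete and the presentation is as a genuine polynomial ring modulo $(\rho h)$.

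Next I would compute $h$ in these coordinates. With the orientation giving formal group law $x + y - xy$, one has $[2](z) = 2z - z^2$, hence $\langle 2\rangle(z) = [2](z)/z = 2 - z$. Since $h = \langle 2\rangle(z)$ and $z = \rho^2\tau^{-2}u$ with $u = \beta$ here (the Bott element being the restriction of the orientation to the bottom cell), this gives $h = 2 - \rho^2\tau^{-2}\beta$, as claimed. This also confirms the relation $\rho\cdot h = 2\rho - \rho^3\tau^{-2}\beta$, i.e.\ $2\rho = \rho^3\tau^{-2}\beta$ in $\pi_{\ast,\ast}b(KU)$.

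For the Adams operations, I would quote \cref{lem:adams}: the $\psi^k$ act by $(KU_2^\wedge)_0 = \bbZ_2$-algebra maps, and the formulas $\psi^k(u^j) = k^j u^j$, $\psi^k(\tau^{2j}) = \tau^{2j}(1 + \tfrac12(k^j-1)(2-h))$, $\psi^k(\rho) = \rho$ translate directly upon substituting $u = \beta$ and $2 - h = \rho^2\tau^{-2}\beta$, yielding exactly the three displayed formulas. The one point requiring a sentence of justification is the extension from integer $k$ invertible mod $2$ to all $k \in \bbZ_2^\times$: this is the remark at the end of \cref{ssec:coc} that for $2$-complete $E$ the Adams operation construction extends to $2$-adic units, applied here since $KU_2^\wedge$ is $2$-complete, together with the observation that the formulas are continuous in $k$ for the $2$-adic topology (each coefficient $k^j$ and $\tfrac12(k^j-1)$ lies in $\bbZ_2$ and depends continuously on $k$).

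The only genuine obstacle — really a bookkeeping point rather than a difficulty — is verifying that the $\rho$-completion in \cref{prop:b} collapses to an honest polynomial ring in this case, so that the clean presentation $\bbZ_2[\beta^{\pm 1},\tau^{\pm 2},\rho]/(\rho\cdot h)$ is correct rather than its completion. As noted above this follows because each graded piece is a finitely generated module over the Noetherian complete local ring $\bbZ_2$, hence already complete; alternatively one observes that $\rho$ acts on $\pi_{\ast,\ast}b(KU)$ via the collapse maps $P_w \to P_{w+1}$ (\cref{rmk:rhoborel}) and Atiyah's computation of $KU^\ast P_w$ shows these groups and maps are finitely generated and the inverse limit behaves well. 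Everything else is direct substitution into results already proved.
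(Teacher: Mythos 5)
Your proposal is correct and matches the paper's proof, which is precisely the one-line observation that the lemma is a direct specialization of \cref{prop:b} and \cref{lem:adams} to $E = KU_2^\wedge$ with the orientation $x+y-xy$. Your extra care about the $\rho$-completion collapsing (each bidegree being a finitely generated $\bbZ_2$-module, e.g.\ $KU^0BC_2 \cong \bbZ_2\{1,z\}$ since $z^2 = 2z$) and about extending $\psi^k$ to $2$-adic units is sound, just more explicit than the paper bothers to be.
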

\begin{proof}
This is a direct specialization of \cref{prop:b} and \cref{lem:adams}.
\end{proof}

\cref{lem:bku} is pictured in the following:

\begin{center}
$\pi_{\ast,\ast}b(KU)$
\includegraphics[page=1,width=\textwidth]{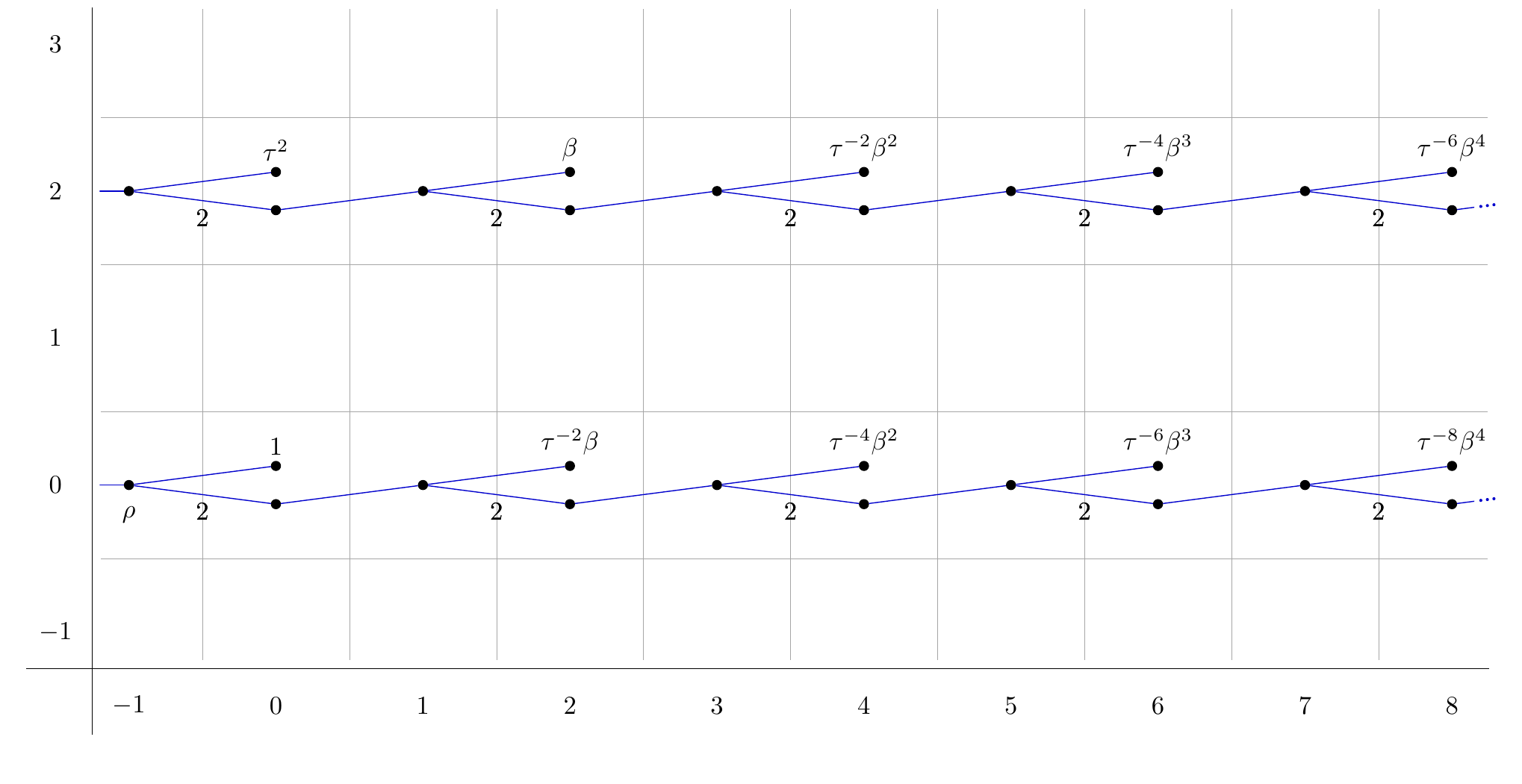}
\end{center}
\vspace{-.25cm}
Each black dot is a copy of $\bbZ_2$, with coordinate $(x,y)$ corresponding to the group $\pi_{x,y}b(KU)$. The blue lines point to the left, and depict multiplication by $\rho$.

We now begin the descent to $b(KO)$. The involution $\psi^{-1}$ is associated to a coherent action of $C_2$ on $KU$ satisfying $KU^{\h C_2}\simeq KO$. As $b(\bs) = F(EC_{2+},i_\ast(\bs))$ preserves limits, we have $b(KU)^{\h C_2}\simeq b(KU^{\h C_2})\simeq b(KO)$, and so there is an HFPSS
\[
E_2 = H^\ast(C_2;\pi_{\ast,\ast}b(KU))\Rightarrow\pi_{\ast,\ast}b(KO)
\]
combining all the individual HFPSSs for each $KO^\ast P_w$. Write
\[
H^1(C_2;\pi_2 KU) = \bbZ/(2)\{\mu\},\qquad \eta_0 = \rho\tau^{-2}\beta \in \pi_\sigma b(KU).
\]
Thus $\mu$ will survive to the nonequivariant Hopf map in $\pi_1 KO \subset \pi_{1,1}b(KO)$, and $\eta_0$ is up to a sign the restriction of the transfer $\Sigma^\infty_+ BC_2\rightarrow S^0\rightarrow KU$ to $P_1 \simeq \Sigma^\infty BC_2$. In particular, $\eta_0$ is in the image of the Hurewicz map $\pi_{\ast,\ast}S_{C_2}\rightarrow \pi_{\ast,\ast}b(KU)$.

\begin{rmk}\label{rmk:eta}
One may identify $\eta_0$ as the Hurewicz image of $-\eta_{C_2}$, where $\eta_{C_2}$ is the $C_2$-equivariant Hopf map defined with conventions as in \cite{guillouisaksen2019bredonlandweber}. The necessary sign is most easily detected by comparing the relations $\rho^2\eta_0 = 2\rho$ and $\rho^2\eta_{C_2} = -2\rho$. Likewise one may identify $-\eta_{C_2} = \hat{\eta}$, where $\hat{\eta}$ is as defined in \cite{arakiiriye1982equivariant}.
\tqed
\end{rmk}

\begin{lemma}\label{prop:e2}
$H^\ast(C_2;\pi_{\ast,\ast}b(KU))\cong\bbZ_2[\beta^{\pm 2},\tau^{\pm 4},\rho,\tau^2 h,\eta_0,\mu]/I$, where
\begin{gather*}
I = \left(\begin{array}{c}
\eta_0^2 - \rho^2\tau^{-4}\beta^2,\quad\rho^2 \eta_0 - 2 \rho,\quad\rho \eta_0^2 - 2 \eta_0,\quad\tau^2 h \cdot \tau^2 h - 2\tau^4 (2 - \rho \eta_0),\\
\rho\cdot\tau^2 h,\quad\eta_0\cdot\tau^2 h,\quad 2\mu,\quad\mu\cdot\tau^2 h,\quad \mu\cdot\rho^2
\end{array}\right).
\end{gather*}
The residual action of $\psi^k$ for $k\in\bbZ_2^\times/\{\pm 1\}$ is given by
\begin{gather*}
\psi^k(\beta^{2j}) = k^{2j} \beta^2,\qquad \psi^k(\tau^{4j}) = \tau^{4j}(1+\tfrac{1}{2}(k^{2j}-1)\rho\eta_0),\\
\psi^k(\rho) = \rho,\qquad \psi^k(\tau^2h) = \tau^2h,\qquad \psi^k(\eta_0) = \eta_0,\qquad\psi^k(\mu) = \mu.
\end{gather*}
\end{lemma}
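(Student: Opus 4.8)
The plan is to compute the group cohomology $H^\ast(C_2;R)$ directly, where $R:=\pi_{\ast,\ast}b(KU)$ carries the $C_2$-action generated by $\psi^{-1}$. First I would record this action on generators from \cref{lem:bku}:
\[
\psi^{-1}(\beta)=-\beta,\qquad \psi^{-1}(\rho)=\rho,\qquad \psi^{-1}(\tau^{\pm2})=\tau^{\pm2}\mp\rho^2\tau^{\pm2-2}\beta,
\]
and deduce that $\psi^{-1}$ also fixes $h=2-\rho\eta_0$ and $\eta_0=\rho\tau^{-2}\beta$ (the latter using $\rho\eta_0^2=2\eta_0$, a restatement of $\rho h=0$), while the norm $N=1+\psi^{-1}$ satisfies $N(\tau^2)=2\tau^2-\rho^2\beta=\tau^2 h$, $N(\rho)=2\rho$, and $N(\eta_0)=2\eta_0$.

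The structural input that makes the computation tractable is: $\rho$ is $C_2$-invariant, $\operatorname{ann}_R(\rho)=(h)$, and $R/(h)\cong\bbZ_2[\beta^{\pm1},\tau^{\pm2}]\{1,\rho\}$ is free of rank $2$ over $A:=\bbZ_2[\beta^{\pm1},\tau^{\pm2}]$ (with $\rho^2=2\tau^2\beta^{-1}$), on which $\rho$ acts as a non-zero-divisor. This produces two short exact sequences of $\bbZ_2[C_2]$-modules,
\[
0\to R/(h)\xrightarrow{\ \rho\ }R\to R/\rho\to 0,\qquad 0\to R/(h)\xrightarrow{\ \rho\ }R/(h)\to\bbF_2[\beta^{\pm1},\tau^{\pm2}]\to 0,
\]
where in the first the left-hand term is $\rho R$ (so the inclusion shifts the $(s,c)$-grading) and $R/\rho=A$ with $\psi^{-1}(\beta)=-\beta$, $\psi^{-1}(\tau^2)=\tau^2$; in the second the cokernel carries the trivial action, since on $R/(h)$ the operation $\psi^{-1}$ acts by $-1$ on both $\beta$ and $\tau^2$, so that $R/(h)$ is a direct sum of trivial and sign $C_2$-modules. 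The remaining inputs are classical: $H^\ast(C_2;A)$ for the twisted action of the first sequence is the $E_2$-page of $H^\ast(C_2;KU_\ast)\Rightarrow KO_\ast$ tensored up along $\bbZ_2[\tau^{\pm2}]$, and $H^\ast(C_2;\bbF_2[\beta^{\pm1},\tau^{\pm2}])=\bbF_2[\beta^{\pm1},\tau^{\pm2}][v]$ with $|v|=1$. Feeding these into the two long exact sequences --- with connecting maps read off from the cocycle formulas above, for instance $\delta(\tau^2)=[-\rho\beta]$ in $H^1(C_2;R/(h))$, the class correcting the failure of $R^{C_2}$ to surject onto $(R/\rho)^{C_2}$ in $\tau^2$-odd degrees --- determines $H^\ast(C_2;R)$ additively.

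With the additive structure in hand I would identify the named classes ($\beta^2,\tau^4,\rho,\tau^2 h=2\tau^2-\rho^2\beta,\eta_0=\rho\tau^{-2}\beta$ all lie in $H^0=R^{C_2}$, while $\mu$ is the image of the classical $\mu\in H^1(C_2;\pi_2 KU)$ along $\pi_2 KU\hookrightarrow\pi_{2,2}b(KU)$) and then verify the relations generating $I$: $\eta_0^2=\rho^2\tau^{-4}\beta^2$ is the definition of $\eta_0$; $\rho^2\eta_0=2\rho$, $\rho\eta_0^2=2\eta_0$, $\rho\cdot\tau^2 h=0$, and $\eta_0\cdot\tau^2 h=0$ all follow from $\rho h=0$ (the last since $\eta_0 h=\tau^{-2}\beta\cdot\rho h$); $(\tau^2 h)^2=2\tau^4(2-\rho\eta_0)$ is $\tau^4$ times $h^2=2h$; and $2\mu=0$ since $\mu\in H^1$, while $\mu\cdot\tau^2 h=0$ and $\mu\cdot\rho^2=0$ follow from $2\mu=0$ together with $\tau^2 h\cdot\rho R=0$, $\tau^2 h\equiv 2\pmod\rho$, and $\rho^2|_{R/(h)}=2\tau^2\beta^{-1}$. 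This produces a surjection from the presented ring onto $H^\ast(C_2;R)$, which a bidegreewise rank comparison against the additive answer shows is an isomorphism. For the residual Adams operations, $\psi^k$ with $k\in\bbZ_2^\times$ commutes with $\psi^{-1}$, hence acts on the spectral sequence and depends only on $k$ modulo $\{\pm1\}$; evaluating on the generators using \cref{lem:bku} and simplifying with $\rho\eta_0^2=2\eta_0$ and $\eta_0\cdot\tau^2 h=\rho\cdot\tau^2 h=0$ yields the stated formulas, e.g.\ $\psi^k(\eta_0)=k\eta_0+\tfrac12(1-k)\rho\eta_0^2=\eta_0$, $\psi^k(\tau^2 h)=2\tau^2-\rho^2\beta=\tau^2 h$, and $\psi^k(\mu)=\mu$ because $\psi^k$ acts by the odd unit $k$ on $\pi_2 KU$ and hence trivially on $H^1(C_2;\pi_2 KU)\cong\bbZ/2$.

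I expect the main obstacle to be the bookkeeping inside the two long exact sequences: pinning down every connecting homomorphism, and hence which of the $\bbF_2$-classes coming from the $\rho$-torsion quotients survive to $H^\ast(C_2;R)$ and which are hit, so as to obtain the additive structure exactly; and then verifying that the listed relations generate all of the ideal $I$, which reduces to an exact bidegreewise rank count. The single most delicate point is determining the products of $\mu$ --- that $\mu\rho\neq 0$ while $\mu\rho^2=0$ and $\mu\cdot\tau^2 h=0$.
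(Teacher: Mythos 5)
Your proposal is correct and is essentially the paper's (entirely unstated) argument: the paper's proof is literally ``a direct computation from \cref{lem:bku},'' and your two short exact sequences of $C_2$-modules are a sound way to organize that computation, with the relation checks and the $\mu$-product cocycle arguments all valid. The only slip is the sign in $\psi^{-1}(\tau^{-2})$, which should be $\tau^{-2}-\rho^2\tau^{-4}\beta$ (one checks this is the inverse of $\tau^2-\rho^2\beta$ using $\rho^4\tau^{-4}\beta^2=2\rho^2\tau^{-2}\beta$), not $\tau^{-2}+\rho^2\tau^{-4}\beta$ as your ``$\mp$'' convention suggests; this is inconsequential since you only use $N(\tau^2)=\tau^2h$ and the invariance of $\eta_0$, both of which are unaffected.
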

\begin{proof}
This follows by a direct computation from \cref{lem:bku}.
\end{proof}

Abbreviate $\per = \tau^{-8}\beta^4 \in \pi_{8,0}b(KU)$, and note that $\sqrt{\per}\in H^0(C_2;\pi_{4,0}b(KU))$. \cref{prop:e2} is now pictured in the following:

\begin{center}
$H^\ast(C_2;\pi_{\ast,\ast}b(KU))$
\includegraphics[page=1,width=\textwidth]{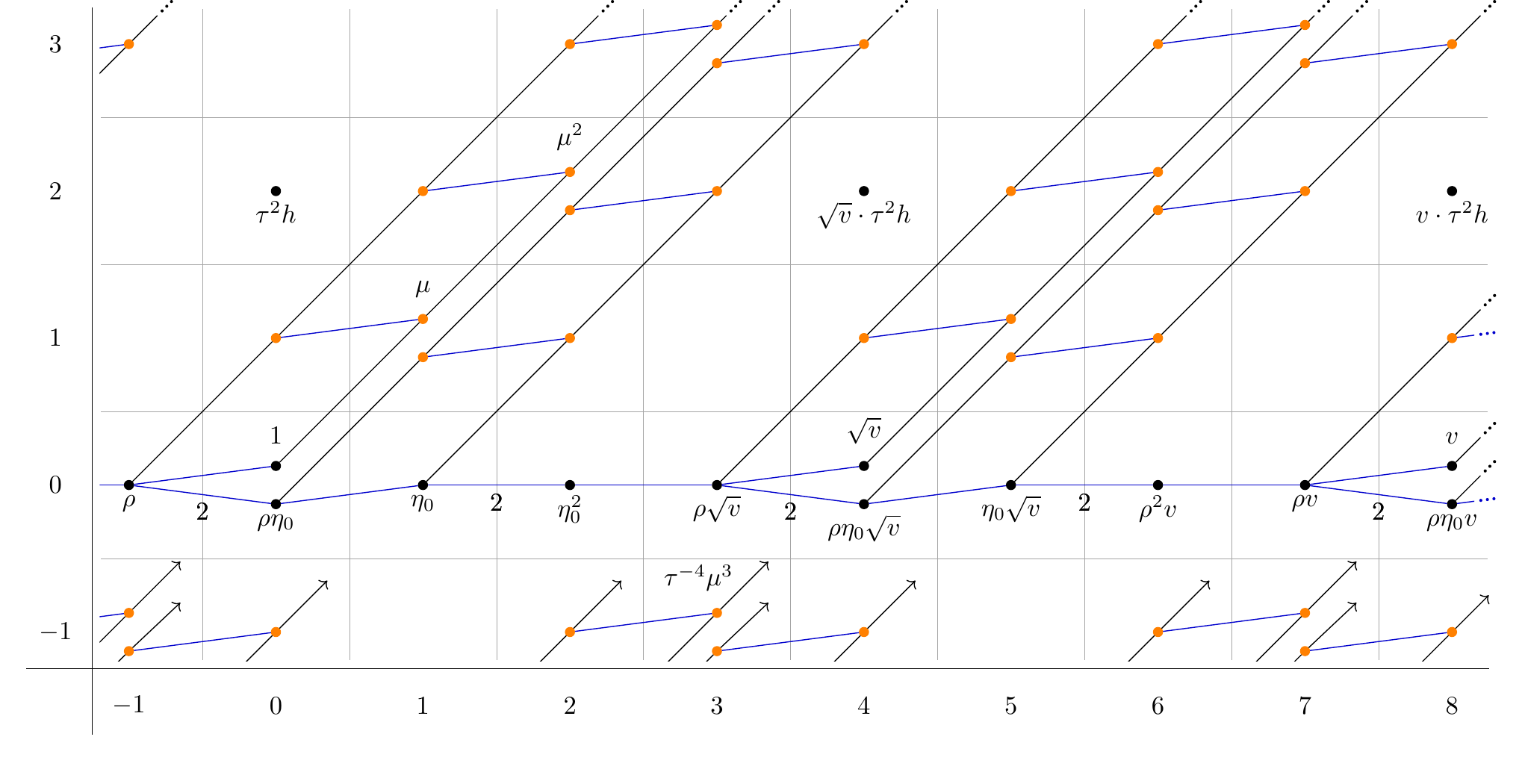}
\end{center}
\vspace{-.25cm}
Each black dot is a copy of $\bbZ_2$, each orange dot is a copy of $\bbZ/(2)$, and the blue lines point to the left and depict multiplication by $\rho$. We have truncated all $\mu$-towers at $\mu^3$, so that coordinate $(x,y)$ corresponds to $H^n(C_2;\pi_{x+n,y+n}b(KU))$ for $0\leq n \leq 3$.

\begin{lemma}\label{prop:d3}
Differentials in the HFPSS for $\pi_{\ast,\ast}b(KO)$ are generated by
\[
d_3(\beta^2) = \mu^3,\quad d_3(\mu) = 0,\quad d_3(\rho) = 0,\quad d_3(\tau^2 h) = 0,\quad d_3(\eta_0) = 0,\quad d_3(\tau^4) = 0.
\]
\end{lemma}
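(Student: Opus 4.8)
The plan is to compute the HFPSS for $\pi_{\ast,\ast}b(KO)$ by comparison with the classical (nonequivariant) HFPSS $H^\ast(C_2;\pi_\ast KU)\Rightarrow \pi_\ast KO$, in which the only differential is $d_3(\beta^2) = \eta^3$, transported via the identifications established above. First I would observe that since $\rho$, $\eta_0$, $\mu$, and $\tau^2 h$ all lie in the image of the Hurewicz map $\pi_{\ast,\ast}S_{C_2}\to\pi_{\ast,\ast}b(KU)$ — $\rho$ by definition, $\eta_0 = -\eta_{C_2}$ by \cref{rmk:eta}, $\mu$ as a permanent cycle detecting the nonequivariant $\eta$, and $\tau^2 h$ being $\rho$-divisibly related to transfer classes — they are permanent cycles, so $d_3$ vanishes on each. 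Similarly $\tau^4$ is a permanent cycle: the relevant argument is that $\tau^4\cdot\tau^4 h = \tau^2 h\cdot\tau^6 h$ together with the relation in \cref{prop:e2} pins down $d_3(\tau^4)$ in terms of classes already shown to be cycles, or more directly one uses that $\tau^{\pm 4}$ is detected on the bottom cell where the computation reduces to $KO^\ast$ of a point, in which $\tau^{\pm 4}$ corresponds to a unit times a power of the periodicity class.

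The one genuinely nonzero differential is $d_3(\beta^2) = \mu^3$. To get this I would localize the whole setup: inverting $\rho$ kills all the torsion and the differential becomes invisible (consistent with geometric fixed points being contractible), but $\rho$-completely, or equivalently after restriction to the underlying spectrum, the class $\beta^2$ maps to the nonequivariant $\beta^2\in\pi_4 KU$ and $\mu^3$ maps to $\eta^3\in\pi_3 KU$, so the classical relation $d_3(\beta^2)=\eta^3$ forces $d_3(\beta^2) = \mu^3$ in our spectral sequence up to a term in the kernel of restriction. That kernel is the image of $\rho$ by the long exact sequence recalled in \cref{ssec:conventions}, so a priori $d_3(\beta^2) = \mu^3 + \rho\cdot(\text{something})$; but $d_3(\beta^2)$ has $\rho$-multiplication acting as zero on it (since $\mu\rho^2 = 0$ and a degree count shows the only possible correction term is $\rho$-torsion), which combined with checking that $\rho\mu^3 = 0$ forces the correction term itself to vanish. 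Alternatively one can simply note that the target bidegree of $d_3(\beta^2)$ in the charted region contains only $\mu^3$ and its $\rho$-multiples, and $\rho\mu^3 = 0$, so there is nothing to correct.

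Finally I would argue that these differentials \emph{generate} all $d_3$'s, i.e.\ that the Leibniz rule propagates them across the whole $E_3$-page: every class in $E_3$ is, by \cref{prop:e2}, a product of the generators $\beta^{\pm 2},\tau^{\pm 4},\rho,\tau^2 h,\eta_0,\mu$, so $d_3$ is determined on all of $E_3$ by its values on these generators plus multiplicativity. One must check that the stated values are consistent with the relations in the ideal $I$ — e.g.\ applying $d_3$ to $\eta_0^2 - \rho^2\tau^{-4}\beta^2$ gives $0$ on the left and $\rho^2\tau^{-4}\mu^3 = 0$ on the right since $\rho^2\mu = 0$, and similarly for $\rho\cdot\tau^2 h$, $\tau^2 h\cdot\tau^2 h - 2\tau^4(2-\rho\eta_0)$, and so on — which is the routine bookkeeping I would not spell out in full. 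The main obstacle is the $d_3(\beta^2) = \mu^3$ computation and specifically ruling out a $\rho$-multiple correction term; everything else is either a Hurewicz-image permanent-cycle argument or formal Leibniz-rule propagation. One subtlety worth flagging: because we have truncated $\mu$-towers at $\mu^3$ in the chart, the statement "$d_3$ is generated by these" should be understood as a statement about the full (untruncated) $E_3$-page, where the $\mu$-towers are infinite and $d_3(\mu^i\beta^2) = \mu^{i+3}$, so that $E_4 = E_\infty$ has $\mu$-towers of length exactly three emanating from the surviving classes — this is exactly the classical $\eta^3 = 0$ phenomenon in $KO$, now fibered over stunted projective spectra.
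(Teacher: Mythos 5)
Your treatment of $\rho$, $\eta_0$, $\mu$, and $\tau^2 h$ is fine (for $\tau^2h$ your Hurewicz/transfer argument is a valid alternative to the paper's ``no possible target''), and the Leibniz propagation is the right closing step. But the argument for $d_3(\tau^4)=0$ has a genuine gap. The target bidegree is $H^3(C_2;\pi_{2,6}b(KU))$, and $\pi_{2,6}b(KU)=\bbZ_2\{\beta\tau^4\}\oplus\bbZ_2\{\rho^2\beta^2\tau^2\}$ with $\psi^{-1}$ acting by $-1$, so this group is $(\bbZ/2)^2$, generated by $\mu^3\beta^{-2}\tau^4$ and $\mu^3\rho\eta_0\beta^{-2}\tau^4$ (the latter is nonzero: the only relations are $\mu\rho^2=0$ and $\mu\cdot\tau^2h=0$, and $\rho\eta_0$ involves only one $\rho$). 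Restriction to the bottom cell kills $\rho$, so your naturality argument only excludes the first generator and leaves open $d_3(\tau^4)=\mu^3\rho\eta_0\beta^{-2}\tau^4$. Your first suggested argument does not repair this: the identity $\tau^4\cdot\tau^4h=\tau^2h\cdot\tau^6h$ is false (since $h^2=2h$, the right side is $2\tau^4\cdot\tau^4h$), and even the corrected Leibniz relation gives $d_3(\tau^4)\cdot\tau^4h=0$, which is vacuous because both potential targets are already annihilated by $\tau^2h$ (via $\mu\cdot\tau^2h=0$). The paper's argument is different and is the missing ingredient: $4\sigma$ is $KO$-orientable, so $\tau^{\pm4}$ lifts to an honest Thom class in $\pi_{0,\pm4}b(KO)$, making it a permanent cycle outright.

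A smaller but related issue: in pinning down $d_3(\beta^2)=\mu^3$ exactly, the correction term you must exclude is $\mu^3\rho\eta_0$ (the $\rho$-divisible generator of $H^3(C_2;\pi_{6,6}b(KU))\cong(\bbZ/2)^2$), not ``$\rho\mu^3$'', which lives in a different bidegree; and the claim $\rho\mu^3=0$ is false in any case ($\mu\rho^2=0$ but $\mu\rho\neq 0$). The clean fix is to run the comparison in the other direction: $\beta^2\in H^0(C_2;KU^{-4}P_0)$ is the image of the classical $\beta^2\in H^0(C_2;KU^{-4}(\mathrm{pt}))$ under the collapse $P_0=\Sigma^\infty_+BC_2\to S^0$, so $d_3(\beta^2)$ is the image of $\eta^3$ on the nose, with no possible correction. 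This is what the paper means by ``classical''.
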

\begin{proof}
The differential $d_3(\beta^2) = \mu^3$ is classical. The differentials $d_3(\rho) = 0$ and $d_3(\eta_0) = 0$ hold as these classes are in the Hurewicz image, and $d_3(\tau^2h) = 0$ as there is no possible target. Finally, $d_3(\tau^4) = 0$ as $4\sigma$ is $KO$-orientable.
\end{proof}

The classes $\tau^4\in\pi_{0,4}b(KU)$ and $v\in \pi_{8,0}b(KU)$ survive to give $\pi_{\ast,\ast}b(KO)$ a $4$-fold vertical periodicity and $8$-fold horizontal periodicity. These groups are depicted in the following:
\begin{center}
$\pi_{\ast,\ast}b(KO)$
\includegraphics[page=1,width=\textwidth]{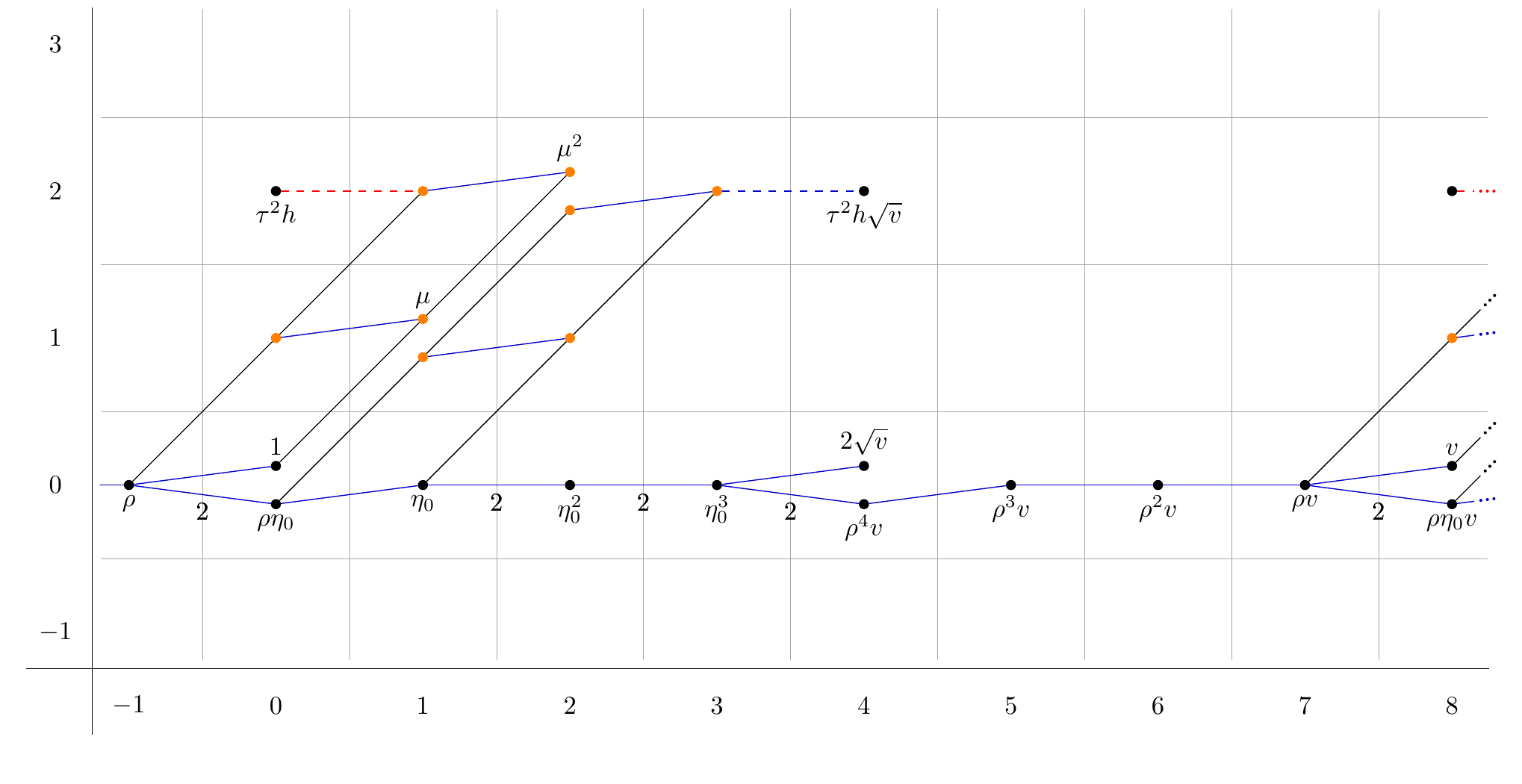}
\end{center}
The dashed red and blue lines are hidden $\eta_0$ and $\rho$-extensions, which we now resolve.

\begin{lemma}\label{lem:koext2}
$
\rho\cdot\tau^2h \sqrt{v}= \eta_0\mu^2.
$
\end{lemma}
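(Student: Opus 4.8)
The plan is to verify the hidden extension $\rho\cdot\tau^2h\sqrt{v}=\eta_0\mu^2$ by transporting the question out of the HFPSS for $b(KO)$ and back into the known ring $\pi_{\ast,\ast}b(KU)$, where everything is explicit by \cref{lem:bku}. First I would pin down the bidegree: $\tau^2h\sqrt v$ lives in $\pi_{4,2}b(KO)$ (it detects the $E_\infty$-class of $\tau^2h\cdot\sqrt v\in H^0$), so $\rho\cdot\tau^2h\sqrt v\in\pi_{3,2}b(KO)$, and this is precisely the bidegree of $\eta_0\mu^2$ (note $\eta_0\in\pi_{1,0}$, $\mu\in\pi_{1,1}$). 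By the chart of $\pi_{\ast,\ast}b(KO)$ this group is $\bbZ/(2)$ with $\eta_0\mu^2$ the nonzero element, lying in HFPSS filtration $2$; so the claim is equivalent to saying that $\rho\cdot\tau^2h\sqrt v$ is the nonzero class, i.e.\ that it has filtration exactly $2$ rather than being zero. The filtration-$0$ and filtration-$1$ contributions to $\pi_{3,2}b(KO)$ vanish (from \cref{prop:e2}/\cref{prop:d3}: $\rho\cdot\tau^2h=0$ on $E_2$, killing the naive filtration-$0$ guess), so the product lands in filtration $\geq 2$ and the only question is whether it is $0$ or $\eta_0\mu^2$.

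The cleanest route is to resolve this via the cofiber sequence $S^{-\sigma}\xrightarrow{\rho}S^0\to C_{2+}$ and the first exactness relation from \cref{ssec:conventions}: $\ker(\res)=\im(\rho)$. So $\rho\cdot\tau^2h\sqrt v$ is nonzero in $\pi_{3,2}b(KO)$ if and only if $\tau^2h\sqrt v$ does \emph{not} come from the underlying homotopy $\pi_3^eb(KO)=KO^{-3}P_?$ — equivalently, if and only if some lift of $\tau^2h\sqrt v$ (or rather, of a class mapping to it) to $b(KU)$ fails to be in the image of $\rho$ there. Concretely: the HFPSS for $b(KO)$ is the $C_2$-Tate/fixed-point construction applied to $\psi^{-1}$ on $b(KU)$, and I would work in the algebraic Bockstein/$\eta_0$-Bockstein setup where hidden $\eta_0$- and $\rho$-extensions in $\pi_{\ast,\ast}b(KO)$ are computed from the $\psi^{-1}$-action on $\pi_{\ast,\ast}b(KU)$ recorded in \cref{lem:bku}. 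In that formalism, $\tau^2h$ and $\mu$ and $\eta_0$ are all permanent cycles, $\sqrt v$ detects an element whose square is $v=\tau^{-8}\beta^4$, and the relation $\mu\cdot\rho^2=0$ together with $\eta_0^2=\rho^2\tau^{-4}\beta^2$ from \cref{prop:e2} forces the Massey-product-type identity: $\rho\cdot\tau^2h\sqrt v$ is computed as a bracket involving $d_3(\beta^2)=\mu^3$. Specifically $\tau^2h$ times the $d_3$-relation $d_3(\tau^{-4}\beta^2)=\tau^{-4}\mu^3$ gives, after multiplying by $\rho\sqrt v$ and using $\rho\cdot\tau^2h=0$ on $E_2$, that $\rho\tau^2h\sqrt v$ is detected by $\langle \rho\tau^2h,\ \beta^2,\ \text{(the }d_3\text{)}\rangle$, which unwinds to $\eta_0\mu^2$ using $\eta_0\mu^2 = \rho\tau^{-4}\beta^2\mu^2$-type manipulations (recall $\rho\eta_0^2=2\eta_0$ and $2\mu=0$).

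The main obstacle is making the Massey product / hidden-extension bookkeeping rigorous rather than heuristic: one must check that the relevant bracket has no indeterminacy in this bidegree (it doesn't, since $\pi_{3,2}b(KO)$ is $\bbZ/(2)$ and the competing filtration pieces vanish) and that the May-convergence/Moss-type hypotheses for reading a hidden extension off a $d_3$-relation are met here — these are standard but need a sentence. An alternative, perhaps more transparent, implementation avoids brackets entirely: smash with $\Cof(\rho^2)$ and use \cref{rmk:rhoborel} to reinterpret $\pi_{\ast,\ast}b(KO)\otimes\Cof(\rho^2)$ as $KO$-homology of a two-cell stunted projective spectrum $P_m^{m+1}$; the extension $\rho\tau^2h\sqrt v=\eta_0\mu^2$ then becomes the (classical, Adams-era) statement about the attaching map in $KO_\ast(P_m^{m+1})$, namely that the relevant cell is attached by $\eta$, which is exactly the content of $KO^\ast\bbR P^n$ computations. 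I would present the Massey-product argument as the main proof and remark on the $\Cof(\rho^2)$ interpretation as a sanity check, since the latter is what makes the result believable and ties it to \cite{adams1962vector}.
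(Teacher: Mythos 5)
Your proposal correctly pins down the target group $\pi_{3,2}b(KO)\cong\bbZ/(2)\{\eta_0\mu^2\}$ and correctly reduces the lemma to deciding whether $\rho\cdot\tau^2h\sqrt{v}$ is zero or the generator, but the argument you offer for deciding this is incomplete, and the exact-sequence reasoning in the middle is off. The relevant criterion for $\rho\cdot x=0$ is $x\in\im(\tr\colon\pi_4 KO\to\pi_{4,2}b(KO))$, i.e.\ the relation $\ker(\rho)=\im(\tr)$, not anything about $\pi_3^e b(KO)$; as written, your ``if and only if'' is in the wrong degree and conflates the two exactness relations from \cref{ssec:conventions}. More seriously, the Massey-product argument you designate as the main proof is left at the level of a heuristic --- ``unwinds to $\eta_0\mu^2$ using \dots\ manipulations'' is precisely the step that would need a Moss-type convergence justification, and you flag this yourself as the main obstacle without resolving it. The $\Cof(\rho^2)$/stunted-projective-spectrum alternative is plausible and could likely be completed from Adams's computations, but it too is only sketched.

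The paper's proof turns the question around and becomes a one-liner: apply $\ker(\res)=\im(\rho)$ to the \emph{target} class rather than trying to compute the product from the source. Since $\eta_0\mu^2\in\pi_{3,2}b(KO)$ restricts to $\pi_3 KO=0$, it is automatically in the image of $\rho\colon\pi_{4,2}b(KO)\to\pi_{3,2}b(KO)$; inspecting $\pi_{4,2}b(KO)$, the element $\tau^2h\sqrt{v}$ is the only one whose $\rho$-multiple could be the nonzero class $\eta_0\mu^2$, so the relation follows. This observation --- that divisibility of $\eta_0\mu^2$ by $\rho$ is forced by $\pi_3 KO=0$ --- is the missing idea in your write-up; with it, no bracket manipulation, Moss convergence argument, or transfer computation is needed.
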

\begin{proof}
Note that $\eta_0\mu^2\in\pi_{3,2}b(KO)$. As $\pi_3 KO = 0$, necessarily $\eta_0\mu^2$ is in the kernel of the forgetful map $\pi_{3,2}b(KO)\rightarrow \pi_3 KO$. Thus $\eta_0\mu^2$ is in the image of $\rho$, and the indicated relation is the only possibility.
\end{proof}

\begin{lemma}\label{lem:koext1}
$
\eta_0\cdot\tau^{2}h = \rho\mu^2.
$
\end{lemma}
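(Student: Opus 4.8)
The plan is to use \cref{lem:koext2} to cut the problem down to a single sign, and then to settle that sign by a direct computation with the $K$-theory of a stunted projective spectrum.

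Both $\eta_0\tau^2h$ and $\rho\mu^2$ lie in $\pi_{1,2}b(KO)$. First I would multiply the relation $\rho\cdot\tau^2h\sqrt v = \eta_0\mu^2$ of \cref{lem:koext2} by $\rho$ and substitute $\eta_0^2 = \rho^2\tau^{-4}\beta^2 = \rho^2\sqrt v$, a relation valid on the $E_2$-page of \cref{prop:e2} which, both sides being permanent cycles for \cref{prop:d3}, persists to $\pi_{\ast,\ast}b(KO)$. This yields $\eta_0\cdot(\eta_0\tau^2h) = \rho^2\tau^2h\sqrt v = \eta_0\cdot(\rho\mu^2)$, so that $\eta_0\cdot(\eta_0\tau^2h - \rho\mu^2) = 0$. (One should check that $\eta_0^2 = \rho^2\sqrt v$ holds on the nose and not merely modulo higher filtration, which the running computation of $\pi_{\ast,\ast}b(KO)$ provides.)

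Next I would locate $\eta_0\tau^2h$ in the HFPSS. By \cref{lem:bku} it vanishes already in $\pi_{\ast,\ast}b(KU)$, since $\eta_0\tau^2h = \rho\tau^{-2}\beta\cdot\tau^2h = \rho\beta h$ and $\rho h = 0$; hence it has positive HFPSS filtration. Inspecting \cref{prop:e2} and \cref{prop:d3}, the only positive-filtration contribution to $\pi_{1,2}b(KO)$ is a single $\bbZ/(2)$ in filtration $2$ detected by $\rho\mu^2$, which is a $d_3$-cycle and survives. Thus $\eta_0\tau^2h\in\{0,\rho\mu^2\}$: equivalently $\rho\mu^2\neq 0$, and the lemma is precisely the assertion $\eta_0\tau^2h\neq 0$. (That $\eta_0\cdot(\eta_0\tau^2h-\rho\mu^2)=0$ is then automatic, since $\eta_0\cdot\rho\mu^2$ has filtration $\geq 2$ in $\pi_{2,2}b(KO)$ where it is forced to vanish; so the displayed relation above is a consistency check, not the crux. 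One can also see the ambiguity is the image of $\tr\colon\pi_1KO\to\pi_{1,2}b(KO)$, using $\res(\eta_0)=0$ and $\rho^2\mu^2=0$.)

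It remains to rule out $\eta_0\tau^2h = 0$, which I would do by honest computation in $\pi_{1,2}b(KO) = KO^{-2}(P_{-1})$. Using James periodicity to replace $P_{-1}$ by a connective stunted projective spectrum, or equivalently by running the $\rho$-Bockstein against the cells of $P_{-1}$ — where $b(KO)\otimes\Cof(\rho^2)$ computes $KO^{\ast}(\Sigma^{-2}\bbR P^2)$ via \cref{rmk:rhoborel} — one identifies $\tau^2h$ and $\eta_0$ cell-by-cell through \cref{prop:b} and checks that the product $\eta_0\cdot\tau^2h$ hits the nonzero class. This last step is the main obstacle: the multiplicative structure, even with \cref{lem:koext2} in hand, only pins $\eta_0\tau^2h$ down up to a $\bbZ/(2)$ transfer ambiguity, and the fact that the hidden extension is genuinely nonzero is not forced by the $E_\infty$-page or by formal relations among $\rho$, $\eta_0$, $\mu$, and $\tau^2h$ — it needs real input from the $K$-theory of stunted projective spectra (Adams) or the companion cell-by-cell Bockstein analysis.
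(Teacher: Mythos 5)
Your reduction is correct and carefully argued: $\pi_{1,2}b(KO)=\bbZ/(2)\{\rho\mu^2\}$ with everything in positive filtration, so the lemma is exactly the assertion that the hidden extension $\eta_0\cdot\tau^2h$ is nonzero, and you are right that multiplying \cref{lem:koext2} by $\rho$ and using $\eta_0^2=\rho^2\sqrt{v}$ is only a consistency check. The problem is that the decisive step --- ruling out $\eta_0\cdot\tau^2h=0$ --- is never actually carried out. You describe what one \emph{would} do (identify $\eta_0$ and $\tau^2h$ cell-by-cell in $KO^{-2}(P_{-1})$ via James periodicity and the $\rho$-Bockstein, then ``check that the product hits the nonzero class''), and you yourself flag this as ``the main obstacle.'' As written, the nonvanishing is asserted, not proved; the cell-by-cell identification of $\tau^2h$ and the evaluation of the product in the $KO$-theory of a stunted projective spectrum is precisely the nontrivial content of the lemma, and it is missing. (The paper's remark following this lemma does confirm that the Adams spectral sequence over $\calA(1)$ sees the relation on its $E_2$-page, so your proposed route is viable in principle --- but it has to be executed.)

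The paper closes this gap by a different and essentially formal device that you did not consider: the Wood-type cofibering
$\Sigma^\sigma b(KO)\xrightarrow{\;\eta_0\;}b(KO)\rightarrow K\bbR$
of \cite[Proposition 10.13]{guillouhillisaksenravenel2020cohomology}, where $K\bbR$ is Atiyah's Real $K$-theory. Since $K\bbR$ is $(2,1)$-periodic, $\pi_{1,2}K\bbR\cong\pi_{3,3}K\bbR=\pi_3KO=0$, so $\rho\mu^2$ maps to zero in $K\bbR$ and therefore lies in the image of multiplication by $\eta_0$ on $\pi_{0,2}b(KO)=\bbZ_2\{\tau^2h\}$. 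As $\rho\mu^2\neq 0$, this forces $\eta_0\cdot\tau^2h=\rho\mu^2$. This converts the nonvanishing you needed into a vanishing statement about $\pi_3KO$, with no stunted-projective-spectrum input required; if you want to salvage your approach, you should either adopt this cofibering argument or actually supply the Bockstein/Adams computation you sketch.
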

\begin{proof}
Following \cite[Proposition 10.13]{guillouhillisaksenravenel2020cohomology}, there is a Wood-type cofibering
\begin{center}\begin{tikzcd}
\Sigma^\sigma b(KO)\ar[r,"\eta_0"]&b(KO)\ar[r]&K\bbR
\end{tikzcd},\end{center}
where $K\bbR$ is Atiyah's Real $K$-theory. As $K\bbR$ is $(2,1)$-periodic, we have $\pi_{1,2}K\bbR \cong \pi_{3,3}K\bbR = \pi_3 KO = 0$. Thus $\rho\mu^2 \mapsto 0$ in $\pi_{1,2}K\bbR$, implying that $\rho\mu^2$ is in the image of $\eta_0$, and the indicated relation is the only possibility.
\end{proof}

\begin{rmk}
One may also compute $\pi_{\ast,\ast}b(ko)$ via the Adams spectral sequence
\[
\Ext_{\calA(1)}(\bbF_2;H^\ast P_\ast)\Rightarrow ko^\ast P_\ast = \pi_{\ast,\ast}b(ko).
\]
This spectral sequence collapses with no differentials, and the relations $\rho\cdot\tau^2h\sqrt{v} = \eta_0\mu^2$ and $\eta_0\cdot \tau^2 h = \rho \mu^2$ are seen on the $E_2$-page. We leave the details to the interested reader.
\tqed
\end{rmk}

We also record the following here for later use.

\begin{lemma}\label{lem:koforget}
The forgetful map $\pi_{1,0}b(KO)\rightarrow\pi_1 KO$ sends $\eta_0\mapsto\mu$.
\end{lemma}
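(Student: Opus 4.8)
The plan is to identify $\eta_0\in\pi_{1,0}b(KO)$ with (a sign on) the Hurewicz image of the $C_2$-equivariant Hopf map, and then track it through the forgetful map using compatibility with the unit $S_{C_2}\rightarrow b(KO)$. First I would record the relevant identifications: $\pi_{1,0}b(KO) = KO^0 P_1 = \widetilde{KO}^0(\bbR P^\infty)$ since $P_1\simeq\Sigma^\infty\bbR P^\infty$, and under this the forgetful map $\pi_{1,0}b(KO)\rightarrow\pi_1 KO$ is restriction along the bottom-cell inclusion $S^1 = \bbR P^1\hookrightarrow\bbR P^\infty$, landing in $\widetilde{KO}^0(S^1) = \pi_1 KO\cong\bbZ/(2) = \{0,\mu\}$, with $\mu$ the class of the M\"obius bundle (equivalently the image of the stable Hopf map). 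By \cref{rmk:eta}, $\eta_0 = \rho\tau^{-2}\beta\in\pi_{1,0}b(KU)$ is the Hurewicz image of $-\eta_{C_2}$; since complexification $\widetilde{KO}^0(\bbR P^\infty)\rightarrow\widetilde{KU}^0(\bbR P^\infty)$ is injective, the map $\pi_{1,0}b(KO)\rightarrow\pi_{1,0}b(KU)$ is injective, and hence the class $\eta_0\in\pi_{1,0}b(KO)$ is itself the Hurewicz image of $-\eta_{C_2}$.

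Granting this, the proof is short. The forgetful map is $\res$ for the $C_2$-spectrum $b(KO)$, which is compatible with the unit map $S_{C_2}\rightarrow b(KO)$, so $\res(\eta_0)$ is the image of $\res(-\eta_{C_2})\in\pi_1 S$ under $\pi_1 S\rightarrow\pi_1 KO$. The underlying map of $\eta_{C_2}$ is the classical Hopf map \cite{guillouisaksen2019bredonlandweber,arakiiriye1982equivariant}, which maps to the generator $\mu$ of $\pi_1 KO = \bbZ/(2)$; as $2\mu = 0$ we get $\res(\eta_0) = -\mu = \mu$. One can also see this entirely $K$-theoretically, bypassing $\eta_{C_2}$: by \cref{rmk:rhoborel} the map $\rho\colon\pi_{1,0}b(KU)\rightarrow\pi_{0,0}b(KU)$ is the inclusion of the reduced summand of $KU^0 BC_2 = \widetilde{KU}^0(\bbR P^\infty)\oplus\bbZ_2$, so the relation $\rho\cdot\eta_0 = \rho^2\tau^{-2}\beta = z$ from \cref{lem:bku} identifies $\eta_0$ with the Euler class $\pm([\xi_\bbC] - 1)$ of the tautological complex line bundle; injectivity of complexification then gives $\eta_0 = \pm([\xi_\bbR] - 1)\in\widetilde{KO}^0(\bbR P^\infty)$, which restricts to $\pm([\text{M\"obius}] - 1) = \pm\mu = \mu$ on the bottom cell.

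The only input requiring genuine care is pinning down the lift of $\eta_0$ to $\pi_{1,0}b(KO)$ — equivalently, the injectivity of complexification on $\widetilde{KO}^0(\bbR P^\infty)$ — together with the identification of the forgetful map with restriction to the bottom cell. Both are classical, flowing from Adams' computation of the $KO$-theory of stunted projective spectra, which the rest of \cref{ssec:coc,ssec:ko} already uses.
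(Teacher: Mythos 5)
Your proof is correct, but it takes a genuinely different route from the paper. The paper's argument is purely formal and two lines long: from the long exact sequence of $S^{-\sigma}\xrightarrow{\rho}S^0\rightarrow C_{2+}$ one has $\ker(\res)=\im(\rho)$, the computed ring structure of $\pi_{\ast,\ast}b(KO)$ shows $\im(\rho)\cap\pi_{1,0}b(KO)=2\bbZ_2\{\eta_0\}$ so $\eta_0\notin\im(\rho)$, and since the target is $\bbZ/(2)\{\mu\}$ the only possibility is $\eta_0\mapsto\mu$. You instead pin down $\eta_0$ explicitly --- either as the Hurewicz image of $-\eta_{C_2}$ via \cref{rmk:eta} and injectivity of complexification on $\widetilde{KO}^0(\bbR P^\infty)$, or as $\pm([\xi_\bbR]-1)$ via the relation $\rho\cdot\eta_0=z$ --- and then restrict to the bottom cell directly. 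Both routes are sound; note that the identification of the forgetful map with bottom-cell restriction, the injectivity of $\pi_{1,0}b(KO)\rightarrow\pi_{1,0}b(KU)$, and (for your first route) the facts that $\res(\eta_{C_2})=\eta_{\cl}$ and that \cref{rmk:eta} is itself only sketched in the paper, are all extra inputs the paper's argument never needs. What your approach buys is an explicit geometric identification of $\eta_0$ as a line-bundle class, which is more illuminating and reproves \cref{rmk:eta} along the way (your second, purely $K$-theoretic variant is the cleaner of the two for this reason); what the paper's approach buys is that it follows for free from structure already in hand, with no appeal to the equivariant Hopf map or to Adams' computations beyond what \cref{ssec:ko} has established.
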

\begin{proof}
As $\eta_0$ is not in the image of $\rho$, it is not in the kernel of the forgetful map. As $\pi_1 KO = \bbZ/(2)\{\mu\}$, it must be that $\eta_0\mapsto\mu$.
\end{proof}

We may summarize our computation with the following.

\begin{prop}
$\pi_{\ast,\ast}b(KO) = \bbZ_2[v^{\pm 1},\tau^{\pm 4},\rho,\eta_0,\tau^2 h,\tau^2 h\sqrt{v},2\sqrt{v},\mu]/I$, where $I$ is generated by the following relations:
\begin{enumerate}
\item Relations involving $\mu$:
\begin{gather*}
\mu^3 = 0,\quad
2\mu = 0,\quad
\mu\cdot 2\sqrt{v} = 0,\\
\mu\cdot \tau^2h = 0,\quad
\mu\cdot\tau^2h\sqrt{v} = 0,\quad
\mu\cdot\rho^2 = 0, \quad
\mu\cdot\eta_0^2 = 0;
\end{gather*}
\item Relations describing how $\rho$ and $\eta_0$ interact:
\[
\rho^2 \eta_0 = 2 \rho, \quad
\rho \eta_0^2 = 2 \eta_0, \quad
\eta_0^4 = \rho^4 v, \quad
\rho\cdot 2 \sqrt{v} = \eta_0^3;
\]
\item Relations lifting $\rho\cdot h = 0$: 
\[
\rho\cdot\tau^2h = 0,\quad\eta_0\cdot\tau^2h\sqrt{v} = 0;
\]
\item Hidden extensions lifting $\rho\cdot  h = 0$:
\[
\eta_0\cdot\tau^2 h = \rho \mu^2, \quad
\rho\cdot\tau^2 h \sqrt{v} =  \eta_0 \mu^2;
\]
\item Relations lifting $h^2 = 2 h$:
\begin{gather*}
\tau^2h\cdot\tau^2 h = 2\tau^4(2-\rho\eta_0),\quad
\tau^2h\cdot\tau^2h\sqrt{v} = 2\sqrt{v}\cdot\tau^4(2-\rho\eta_0),\\
\tau^2h\sqrt{v} \cdot\tau^2h\sqrt{v} = 2 v\tau^4(2-\rho\eta_0);
\end{gather*}
\item Relations implicit in the notation:
\[ 
\tau^2h\cdot 2\sqrt{v} = 2 \cdot\tau^2h\sqrt{v}, \quad
\tau^2h\sqrt{v}\cdot 2 \sqrt{v} = 2 v \cdot \tau^2h,\quad
2\sqrt{v}\cdot 2\sqrt{v} = 4 v.
\]
\end{enumerate}
\end{prop}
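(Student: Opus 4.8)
The plan is to read the presentation off the homotopy fixed point spectral sequence of \cref{ssec:ko}. First I would compute $E_\infty = E_4$ as a bigraded ring. Since $d_3$ is a derivation, \cref{prop:d3} determines it everywhere by the Leibniz rule: its only nonzero values arise from $d_3(\beta^{2j}) = j\,\beta^{2(j-1)}\mu^3$ with $j$ odd (using $2\mu = 0$). Chasing this through the presentation of \cref{prop:e2}, the surviving subquotient is generated by $v^{\pm1} = \tau^{-8}\beta^4$, $\tau^{\pm 4}$, $\rho$, $\eta_0$, $\tau^2 h$, $2\sqrt{v}$, and $\tau^2 h\sqrt{v}$. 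Here $\rho,\eta_0,\tau^2 h,\tau^4$ and $v = (\sqrt{v})^2$ are permanent cycles by \cref{prop:d3}, while $2\sqrt{v}$ and $\tau^2 h\sqrt{v}$ survive because $2\mu^3 = 0$ and $\mu\cdot\tau^2 h = 0$, even though $\sqrt{v} = \tau^{-4}\beta^2$ itself satisfies $d_3(\sqrt{v}) = \tau^{-4}\mu^3 \ne 0$; this is why these two appear as extra ring generators. Correspondingly, the relations on $E_\infty$ are: the relations of the ideal $I$ of \cref{prop:e2} that survive, which account for items (1) (except $\mu^3=0$), (2), (3), and (5); the new relation $\mu^3 = 0$, which also forces $\mu\cdot 2\sqrt{v} = 0$ and $\mu\cdot\tau^2 h\sqrt{v} = 0$; and item (6), which is built into the meaning of the symbols $2\sqrt{v}$ and $\tau^2 h\sqrt{v}$.

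Next I would lift to homotopy. After choosing homotopy-class lifts of the generators, each relation not in item (4) holds already in $\pi_{\ast,\ast}b(KO)$: a priori it could fail by a term of strictly higher HFPSS filtration, but in each relevant bidegree $E_\infty$ vanishes in the higher filtrations, and the $(4,0)$- and $(8,0)$-periodicities supplied by $\tau^4$ and $v$ reduce this to a finite check. The two hidden extensions $\eta_0\cdot\tau^2 h = \rho\mu^2$ and $\rho\cdot\tau^2 h\sqrt{v} = \eta_0\mu^2$ in item (4) are precisely \cref{lem:koext1,lem:koext2}, and one also confirms from the chart that no further hidden extensions occur. This produces a well-defined ring homomorphism $\varphi\colon R\to\pi_{\ast,\ast}b(KO)$ from the presented ring $R$.

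Finally I would prove that $\varphi$ is an isomorphism. Surjectivity is immediate, since the images of the generators already generate $E_\infty$. For injectivity I would use the relations to put an arbitrary element of $R$ into a normal form — a $\bbZ_2$-combination of monomials $v^a\tau^{4b}$ times a short word in $\rho,\eta_0,\tau^2 h,\tau^2 h\sqrt{v},2\sqrt{v},\mu$ — and then verify bidegree by bidegree (again using periodicity to reduce to a bounded region) that the resulting spanning set has exactly the size of $\pi_{\ast,\ast}b(KO)$ recorded by $E_\infty$. I expect this last step to be the main obstacle: the $d_3$-differential is classical and the extension problems are already settled by \cref{lem:koext1,lem:koext2}, so the real labor is the bookkeeping needed to confirm that the listed relations form a complete set, with no further relations hidden among the generators.
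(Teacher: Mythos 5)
Your proposal is correct and follows essentially the same route as the paper: the paper's proof of this proposition is simply ``this summarizes the computation carried out above,'' namely the $E_2$-page of \cref{prop:e2}, the differentials of \cref{prop:d3}, and the two hidden extensions of \cref{lem:koext1,lem:koext2}, with the completeness of the presentation left as the implicit bookkeeping you describe. Your write-up makes explicit (correctly) the points the paper leaves to the chart — which monomials survive $d_3$, why $2\sqrt{v}$ and $\tau^2h\sqrt{v}$ must be adjoined as separate generators, and that the two listed hidden extensions are the only ones.
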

\begin{proof}
This summarizes the computation carried out above.
\end{proof}

\section{Some preliminaries}

This section sets up some tools that we will need in our computation of $\ul{\pi}_{\ast,\ast}b(S)$. In \cref{ssec:cell}, we use our computation of $\pi_{\ast,\ast}b(KO)$ to deduce information about the $K(1)$-local spectra $P_j$. This partially relies on \cref{ssec:classic}, where we recall the homotopy ring of the nonequivariant $K(1)$-local sphere. Finally, in \cref{ssec:james} we recall how James periodicity manifests as $\tau$-periodicity in $C_2$-equivariant stable homotopy theory.

\subsection{\texorpdfstring{$K(1)$}{K(1)}-local cell structures}
\label{ssec:cell}

The $K(1)$-local Picard group at $p=2$ carries a unique exotic element, i.e.\ there is up to equivalence a unique $K(1)$-local spectrum $T$ such that $KU_\ast T \simeq KU_\ast$ as $\bbZ_2^\times$-modules despite $T\not\simeq S$. This is shown in \cite{hopkinsmahowaldsadofsky1994constructions}, and much of the following may be found there in some form as well.

The spectrum $T$ can be detected using $KO$-theory, through an equivalence $KO\otimes T\simeq \Sigma^4 KO$. In practical terms, this means the following. Let $L$ be a spectrum satisfying $KU_\ast L \cong KU_\ast$ as $\bbZ_2^\times$-modules, so that either $L\simeq S$ or $L\simeq T$. Choose a generator $\iota\in KU_0 L$, and consider the HFPSS
\[
H^\ast(C_2;KU_\ast L) \cong \bbZ_2[\beta^{\pm 2},\mu]/(2\mu)\otimes\bbZ\{\iota\}\Rightarrow KO_\ast L.
\]
In this spectral sequence, one must have either $d_3(\iota) = 0$, in which case $L\simeq S$, or else $d_3(\iota) = \mu^3\beta^{2}\iota$, in which case $L\simeq T$. Applied to the spectra $P_{2n+1}$, this reveals the following.

\begin{lemma}\label{prop:pic}
There are equivalences
\[
P_{2n+1} \simeq
\begin{cases}
 S & n \equiv 0,3\pmod{4}, \\
 T & n \equiv 1,2\pmod{4}.
\end{cases}
\]
\end{lemma}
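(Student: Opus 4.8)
The plan is to apply the $KO$-theoretic detection criterion for the exotic element $T$, stated in the paragraph preceding the lemma, to each stunted projective spectrum $P_{2n+1}$. First I would recall that $P_{2n+1}$ is $K(1)$-locally a Picard element: since $KU_\ast P_{2n+1} = \widetilde{KU}^{-\ast}(\bbR P^\infty_{2n+1})$ is free of rank one over $KU_\ast$ with a twisted $\bbZ_2^\times$-action, we have $P_{2n+1}\simeq S$ or $P_{2n+1}\simeq T$, and which one occurs is detected by whether $d_3(\iota)=0$ or $d_3(\iota)=\mu^3\beta^2\iota$ in the HFPSS computing $KO_\ast P_{2n+1}$. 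So the whole question reduces to computing a single $d_3$ for each residue class of $n$ modulo $4$.

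The key computational input is already in hand. By \cref{rmk:rhoborel}, the HFPSS for $\pi_{\ast,\ast}b(KU)$ restricted to the appropriate cells computes $KU_\ast P_{2n+1}$ together with its $C_2$-action, and the differential $d_3(\beta^2)=\mu^3$ from \cref{prop:d3} propagates multiplicatively. Concretely, inside $\pi_{\ast,\ast}b(KU)$ the class of $\iota\in KU_0 P_{2n+1}$ is a unit multiple of $\rho^{2n+1}\tau^{-2n-2}\beta^{n+1}$ (up to the usual identification of the bottom cell of $P_{2n+1}$), and its $d_3$ is governed by $d_3(\beta^{n+1})$. Using $d_3(\beta^2)=\mu^3$, $d_3(\rho)=0$, $d_3(\tau^4)=0$, and $\mu^2\rho^2=0$ (so that $\mu^3$ times $\rho^{2n+1}$ vanishes unless the $\rho$-power is small enough) together with the Leibniz rule, one finds that $d_3(\iota)$ is $\mu^3\beta^2\iota$ times a $2$-adic unit depending only on $n\bmod 4$, and it is nonzero precisely when $n\equiv 1,2\pmod 4$. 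The cleanest way to organize this is to note $\beta^{n+1}=(\beta^2)^{\lfloor (n+1)/2\rfloor}\cdot\beta^{\epsilon}$ with $\epsilon\in\{0,1\}$, apply Leibniz, and observe that the relevant coefficient is $\binom{\lfloor(n+1)/2\rfloor}{1}$ reduced appropriately — equivalently, track the parity of $\lfloor(n+1)/2\rfloor$, which cycles as $n$ runs through $0,1,2,3$ giving the stated pattern.

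The main obstacle, such as it is, is bookkeeping rather than substance: one must pin down the precise degree in which $\iota$ lives and confirm that no $\rho$-torsion relation (like $\mu\rho^2=0$) accidentally kills the putative target $\mu^3\beta^2\iota$ in the range of cells defining $P_{2n+1}$ — i.e.\ that the target genuinely survives to the relevant subquotient of the HFPSS for $b(KU)$. Since $P_{2n+1}$ has its bottom cell in a positive dimension and we only need the behavior on the bottom-cell class $\iota$, the relevant $\rho$-powers are bounded and the $\mu^3$-target is nonzero in $KO_\ast P_{2n+1}$ exactly when the Leibniz coefficient is odd; checking this is a finite verification. Once the four cases $n\equiv 0,1,2,3\pmod 4$ are dispatched, the lemma follows immediately from the $S$-vs-$T$ dichotomy.
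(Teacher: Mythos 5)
Your overall strategy is exactly the paper's: identify the $\bbZ_2^\times$-fixed generator $\iota$ of $KU^\ast P_{2n+1}$ as an explicit monomial in $\pi_{\ast,\ast}b(KU)$ and decide between $S$ and $T$ by computing $d_3(\iota)$ via the Leibniz rule from \cref{prop:d3}. However, there is a concrete error at the one point where precision actually matters: the generator of $\pi_{2n+1,0}b(KU)=KU^0P_{2n+1}$ is $\rho\,\beta^{n+1}\tau^{-2(n+1)}$, with a \emph{single} factor of $\rho$, not $\rho^{2n+1}\tau^{-2n-2}\beta^{n+1}$. The element you wrote down lives in $\pi_{1,0}$, not $\pi_{2n+1,0}$ (check: $\rho^a\beta^b\tau^{2c}$ has stem $2b-a$), and by the relation $\rho^3\tau^{-2}\beta=2\rho$ it equals $2^n\eta_0$, so it is not even a generator. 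This is not a harmless bookkeeping slip, because the entire dichotomy hinges on the exponent of $\rho$: since $\mu\rho^2=0$ on the $E_2$-page, a generator carrying $\rho^{2n+1}$ would have its putative $d_3$-target $\mu^3\rho^{2n+1}(\cdots)$ killed for every $n\geq 1$, and your "finite verification" would then return $P_{2n+1}\simeq S$ for all $n$, contradicting the statement. You flag exactly this worry ("no $\rho$-torsion relation accidentally kills the putative target") but defer it, and with your formula the deferred check fails.

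With the correct generator the argument closes as you intend: for $n$ odd write $\iota=\rho\,(\beta^2\tau^{-4})^{(n+1)/2}$, so $d_3(\iota)=\tfrac{n+1}{2}\,\mu^3\rho\,(\beta^2\tau^{-4})^{(n+1)/2}\tau^4\beta^{-2}$, nonzero iff $\tfrac{n+1}{2}$ is odd (i.e.\ $n\equiv 1$, using $\mu\rho\neq 0$); for $n$ even write $\iota=\eta_0\,(\beta^2\tau^{-4})^{n/2}$, nonzero target iff $\tfrac{n}{2}$ is odd (i.e.\ $n\equiv 2$, using $\mu\eta_0\neq 0$ but $\mu\eta_0^2=0$). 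Note also that your factorization $\beta^{n+1}=(\beta^2)^{\lfloor(n+1)/2\rfloor}\beta^\epsilon$ leaves a stray $\beta$ which is not itself a class on the $E_2$-page of \cref{prop:e2}; the leftover factor must be absorbed into $\eta_0=\rho\beta\tau^{-2}$ as above before Leibniz can be applied.
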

\begin{proof}
The element
\[
\rho\beta^{n+1}\tau^{-2(n+1)} \in \pi_{2n+1,0}b(KU) = KU^0 P_{2n+1}
\]
gives a generator of $KU^\ast P_{2n+1}\cong KU_\ast$ fixed under the action of $\bbZ_2^\times$. Thus $P_{2n+1}\simeq S$ when $\rho\beta^{n+1}\tau^{-2(n+1)}$ is a permanent cycle in the HFPSS of \cref{ssec:ko}, and otherwise $P_{2n+1}\simeq T$. The lemma now follows from \cref{prop:d3}.
\end{proof}

To go further, we require some knowledge of $\pi_\ast S$ and $\pi_\ast T$. We will recall these in \cref{ssec:classic}, but give the following now.

\begin{prop}\label{prop:cell}
The spectrum $P_{2n}$ participates in cofiberings
\begin{gather*}
 S^{2n}\rightarrow P_{2n}\rightarrow P_{2n+1}\\
 S^{2n-1}\rightarrow P_{2n-1}\rightarrow P_{2n}
\end{gather*}
realizing $P_{2n}$ as a $2$-cell complex in the $K(1)$-local category in two ways, where we consider the Picard element $T$ to be a single cell. The attaching map vanishes if $n = 0$, and is otherwise a nonzero simple $2$-torsion map. Explicitly, if we choose equivalences $P_{2n+1}\simeq S$ or $T$, then the attaching map may be regarded as an element of $\pi_\ast S$ or $\pi_\ast T$, and with notation from \cref{ssec:classic} it is as described in the following tables.

\begin{center}
\begin{tabular}{ll}
\multicolumn{2}{c}{First cofibering}\\
\midrule
$n$ & Attaching map\\
\midrule
$4c$&$2^{j_{-2c}-1}\rho_{-c}$\\
$4c-1$&$(\text{unit})\cdot\mu_{-c}$\\
$4c-2$&$4\rho_{-c+1/2}$\\
$4c-3$&$(\text{unit})\cdot\mu_{-c+1/2}$
\end{tabular}
$\qquad\qquad$
\begin{tabular}{ll}
\multicolumn{2}{c}{Second cofibering}\\
\midrule
$n$ & Attaching map \\
\midrule
$4c$&$2^{j_{2c}-1}\rho_{c}$\\
$4c+1$&$(\text{unit})\cdot\mu_{c}$\\
$4c+2$&$4\rho_{c+1/2}$\\
$4c+3$&$(\text{unit})\cdot\mu_{c+1/2}$
\end{tabular}
\end{center}
\end{prop}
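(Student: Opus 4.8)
The plan is to produce the two cofiberings directly from the standard cofiber sequences relating stunted projective spectra, and then to pin down each attaching map by mapping into $KO$ (plus $KU$ when needed) and using the computations of Section 2 together with the classical description of $\pi_\ast S$ and $\pi_\ast T$ to be recalled in \cref{ssec:classic}.

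\textbf{Constructing the cofiberings.} For any $m$ there is a cofiber sequence $S^m \to P_m \to P_{m+1}$ coming from the cell filtration of $\bbR P^\infty_m$ (equivalently, the bottom-cell inclusion); localizing at $K(1)$ preserves it. Taking $m = 2n$ gives the first cofibering $S^{2n}\to P_{2n}\to P_{2n+1}$, and taking $m = 2n-1$ gives $S^{2n-1}\to P_{2n-1}\to P_{2n}$, which after rotation presents $P_{2n}$ as the cofiber of a map $S^{2n-2}\to P_{2n-1}$; since we are localizing, $P_{2n\pm 1}$ is either $S$ or the Picard element $T$ by \cref{prop:pic}, so in both cases $P_{2n}$ is a two-cell complex in the sense stated (with $T$ counted as one cell). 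That the attaching map vanishes when $n=0$ is because $P_0 = BC_{2+}\simeq S^0 \vee P_1$ after $K(1)$-localization (the transfer splits, as $2$ is invertible on the reduced summand up to the relevant periodicity — more precisely this is visible from \cref{lem:bc2} since $\langle 2\rangle(z)$ is a unit multiple of $2-z$, hence a unit times $2$ modulo the augmentation ideal is not the point; rather one argues directly that $\rho\colon P_0\to P_1$ splits $K(1)$-locally). For $n\neq 0$ the attaching map lies in $\pi_{2n-1}P_{2n+1}$ or $\pi_{2n-2}P_{2n-1}$, and one reads off from \cref{ssec:classic} that the only simple $2$-torsion classes in the relevant stems of $S$ and of $T$ are the asserted ones; that it is nonzero and simple $2$-torsion follows because $P_{2n}$ is not a wedge (its $KO$-homology, computable from \cref{prop:b}/\cref{prop:e2}/\cref{prop:d3} restricted to the appropriate bidegrees, is not free over the relevant coefficients) yet $2$ kills the Picard generator.

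\textbf{Identifying the attaching maps.} The four residue classes of $n\bmod 4$ are handled uniformly: the attaching map for the first cofibering in case $n$ and for the second cofibering in case $n$ differ only by which end of the two-cell complex $P_{2n}$ one collapses, and James periodicity ($\tau$-periodicity, \cref{ssec:james}) together with \cref{prop:pic} reduces everything to finitely many cases, say $n\in\{0,1,2,3\}$ and their negatives, shifted by $8$. For each such $n$, I would compute $KO^\ast P_{2n}$ and $KO^\ast P_{2n+1}$ from \cref{prop:d3} and the summary proposition, identify the connecting map $KO^\ast P_{2n+1}\to KO^{\ast+1}S^{2n}$ in the long exact sequence, and match it against the $KO$-Hurewicz image of the candidate classes $\rho_j, \mu_j$ (whose $KO$-theory behavior is recorded in \cref{ssec:classic}). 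The powers of $2$ — the exponent $j_{\pm 2c}$ and the literal $4$ in the $4\rho$ entries — come out of the order of the image of $\rho$ in $\pi_{\ast,\ast}b(KO)$ in the appropriate bidegree, i.e. from the relations $\rho^2\eta_0 = 2\rho$, $\eta_0^4 = \rho^4 v$, $\rho\cdot 2\sqrt v = \eta_0^3$ of the summary proposition, which control exactly how divisible the relevant $\rho$-multiple is; this matches the image-of-$J$ denominators appearing in $j_m$. That a $KO$-theory detection suffices — i.e. that no class is left ambiguous — is because in these low stems $KO_\ast$ (or $KO_\ast$ together with the $\psi^3$-action, equivalently $\pi_\ast S_{K(1)}$) already separates the simple $2$-torsion elements.

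\textbf{Main obstacle.} The delicate point is bookkeeping the powers of $2$, i.e. proving the exact exponents $j_{\pm 2c}-1$ and distinguishing the $4\rho$ cases from the $2^{j}\rho$ cases, consistently with the normalization of $j_m$ and $\rho_j$, $\mu_j$ fixed in \cref{ssec:classic}; this requires care with James periodicity shifts (a $\Sigma^{8}$-periodicity does not act trivially on these $2$-adic valuations in general, but the correction is governed by $v_1$-periodicity and matches the classical image-of-$J$ pattern). A secondary subtlety is orienting the cofiberings so that the ``unit'' ambiguity in the $\mu_j$ entries is genuinely only a unit and the $\rho_j$ entries have no sign ambiguity; this is handled by choosing the equivalences $P_{2n+1}\simeq S$ or $T$ compatibly with the $KU$-theory generator $\rho\beta^{n+1}\tau^{-2(n+1)}$ used in the proof of \cref{prop:pic}, after which the attaching map is determined on the nose up to the stated indeterminacy.
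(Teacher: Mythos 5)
Your construction of the two cofiberings, the appeal to \cref{prop:pic}, and the overall strategy of detecting the attaching maps $KO$-theoretically are all in line with the paper. But the identification of the attaching maps has a genuine gap, concentrated in the even-$n$ cases (the $\rho$-entries, which carry all the delicate powers of $2$). For $n$ even the attaching map is a multiple of $\rho_{\pm c}$, whose $KO$-Hurewicz image ($d$-invariant) is \emph{zero}; correspondingly the connecting map $KO^\ast P_{2n\pm1}\to KO^{\ast+1}S^{2n}$ vanishes and the long exact sequence degenerates to a short exact sequence. So your proposed mechanism --- ``identify the connecting map \ldots and match it against the $KO$-Hurewicz image of the candidate classes'' --- returns $0=0$ and detects nothing, and the relations $\rho^2\eta_0=2\rho$, $\eta_0^4=\rho^4 v$, etc.\ only control the additive structure of $KO^\ast P_\ast$, not which multiple of $\rho_{\pm c}$ occurs. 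The paper's actual argument is different: it regards the resulting short exact sequence as an extension of $KO_\ast$-modules with $\bbZ_2^\times/\{\pm1\}$-action, identifies the group classifying such extensions with $H^1(\bbZ\{\psi^3\};KO_\ast)$, recognizes the extension class as the ($2$-complete) $e$-invariant of the attaching map, and computes it from the explicit $\psi^k$-action on $KO^\ast P_{2n}$ (\cref{lem:adams}, \cref{prop:e2}). This is also what proves the attaching map is \emph{simple} $2$-torsion and trivial exactly when $n=0$; your justification (``$P_{2n}$ is not a wedge yet $2$ kills the Picard generator'') does not establish $2f=0$ in the cyclic groups $\bbZ_2/(2^{j_{2c}})$, where a nonzero element need not be simple $2$-torsion.

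Two smaller points. First, your claim that ``the only simple $2$-torsion classes in the relevant stems are the asserted ones'' fails for odd $n$: there $\pi_{2n-1}$ of the relevant invertible spectrum is $(\bbZ/2)^2=\{0,\mu_c,\mu_0^2\rho_c,\mu_c+\mu_0^2\rho_c\}$, and $\mu_0^2\rho_c$ is not a unit multiple of $\mu_c$. The paper rules it out by observing that $F(P_{2n},KO)\simeq KU$ for $n$ odd, which forces the connecting map in $KO$-theory to be nonzero and hence forces the attaching map to have nontrivial $KO$-Hurewicz image; the surviving ambiguity between $\mu_c$ and $\mu_c+\mu_0^2\rho_c=(1+\mu_0\rho_0)\mu_c$ is exactly the stated unit. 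Second, your $n=0$ discussion is garbled (the parenthetical about $\langle 2\rangle(z)$ trails off); the clean statement is that the first cofibering at $n=0$ is the basepoint splitting $P_0\simeq S^0\vee P_1$, while the second still requires the extension-class argument (or the duality $P_0\simeq D(P_0)$) to see that it, too, splits.
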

\begin{proof}
These cofiberings exist before $K(1)$-localization, and \cref{prop:pic} shows that $K(1)$-locally they exhibit $P_{2n}$ as a $2$-cell complex in the stated sense. The attaching map in either case may therefore be viewed as an element in $\pi_\ast S$ or $\pi_\ast T$. 

If $n$ is odd, then our computation of $\pi_{\ast,\ast}b(KO)$ shows that $F(P_{2n},KO)\simeq KU$, and so the attaching map is detected in $KO$-theory. The listed classes are the only possibilities in their degrees; the particular unit cannot be identified as it is sensitive to the choice of equivalence $P_{2n+1}\simeq S$ or $T$.

If $n$ is even, then our computation of $\pi_{\ast,\ast}b(KO)$ shows that the cofiberings induce short exact sequences
\begin{center}\begin{tikzcd}[column sep=small, row sep=tiny]
0\ar[r]&KO^\ast P_{2n+1}\ar[r]&KO^\ast P_{2n}\ar[r]&KO^\ast S^{2n}\ar[r]&0\\
0\ar[r]&KO^\ast S^{2n}\ar[r]&KO^\ast P_{2n}\ar[r]&KO^\ast P_{2n-1}\ar[r]&0
\end{tikzcd}.\end{center}
These are extensions of $KO_\ast\hyp\bbZ_2^\times/\{\pm 1\}$-modules, the $\Ext$ groups classifying such extensions may be identified as the cohomology groups $\smash{H^1_c(\bbZ_2^\times/\{\pm 1\};KO_\ast)}\cong H^1(\bbZ\{\psi^3\};KO_\ast)$ used to compute $\pi_\ast S$ and $\pi_\ast T$, and the cohomology class corresponding to these extensions is the $2$-complete analogue of the $e$-invariant \cite{adams1966on} of the attaching map. Thus the behavior of the attaching map is entirely governed by this extension. The action of the Adams operations on $KO^\ast P_{2n}$ reveals that this extension is trivial if $n=0$, and nontrivial but simple $2$-torsion if $n\neq 0$, and the listed class is the only possibility in its degree.
\end{proof}

\begin{rmk}
The two $2$-cell structures on $P_{2n}$ given in \cref{prop:cell} are an example of a fundamental duality present in $K(n)$-local equivariant homotopy theory. As the Tate construction vanishes $K(1)$-locally \cite{hoveysadofsky1996tate,greenleessadofsky1996tate}, the norm
\[
P_n = (S^{n\sigma})_{\h C_2}\rightarrow (S^{n\sigma})^{\h C_2}\simeq D((S^{-n\sigma})_{\h C_2}) = D(P_{-n})
\]
is an equivalence. Thus, dualizing the cofiber sequence 
\[
S^{2n}\rightarrow P_{2n}\rightarrow P_{2n+1}
\]
yields a cofiber sequence of the form
\[
S^{-2n}\leftarrow P_{-2n}\leftarrow P_{-2n-1}.
\]
Considering these sequences for both $n$ and $-n$ yields the two $2$-cell structures on $P_{2n}$.
\tqed
\end{rmk}

\subsection{The nonequivariant \texorpdfstring{$K(1)$}{K(1)}-local sphere}\label{ssec:classic}

We now recall the structure of and fix notation for $\pi_\ast S$ and $\pi_\ast T$, as has already been used in \cref{prop:cell} and will be needed in our computation of $\pi_{\ast,\ast}b(S)$. For $a\in\bbZ$ and $b\in\tfrac{1}{2}\bbZ$, abbreviate
\[
S_a = \pi_a S,\qquad S_{a,b} = \pi_a T^{\otimes 2b}.
\]
As $T\otimes T\simeq S$, the group $S_{a,b}$ only depends on the class of $b$ in $\tfrac{1}{2}\bbZ/\bbZ\cong\bbZ/(2)$. The notation $S_{\ast,\ast}$ will only be used in this subsection.

Define $j_a$ to be three more than the $2$-adic valuation of $a$, i.e.\ so that $\bbZ_2/(2^{j_a})\cong \bbZ_2/(8a) \cong \bbZ_2/(3^{2a}-1)$. When $a = 0$, we take $j_a = \infty$ with the understanding that $2^\infty = 0$ in $\bbZ_2$.

\begin{lemma}\label{lem:sgroups}
For $a\in \bbZ$ and $\epsilon\in \{0,\tfrac{1}{2}\}$, there are elements
\begin{gather*}
1\in S_{0,0},\qquad g\in S_{0,1/2},\\
\rho_{a+\epsilon}\in S_{8(a+\epsilon)-1,\epsilon},\qquad \mu_{a+\epsilon}\in S_{8(a+\epsilon)+1,a+\epsilon},\qquad \xi_{a+\epsilon}\in S_{8(a+\epsilon)+3,\epsilon}.
\end{gather*}
The groups $S_{\ast,\ast}$ are given by
\begin{gather*}
S_{0,0} = \bbZ_2\{1\}\oplus\bbZ/(2)\{\mu_0\rho_0\},\qquad S_{0,1/2} = \bbZ_2\{g\},
\end{gather*}
and otherwise
\begin{gather*}
S_{8(a+\epsilon)-1,\epsilon} = \bbZ_2/(2^{j_{2(a+\epsilon)}})\{\rho_{a+\epsilon}\},\qquad S_{8(a+\epsilon),\epsilon} = \bbZ/(2)\{\mu_0\rho_{a+\epsilon}\}, \\
S_{8(a+\epsilon)+1,\epsilon} = \bbZ/(2)\{\mu_{a+\epsilon},\mu_0^2\rho_{a+\epsilon}\},\qquad S_{8(a+\epsilon)+2,\epsilon} = \bbZ/(2)\{\mu_0\mu_{a+\epsilon}\}, \\
 S_{8(a+\epsilon)+3,\epsilon} = \bbZ_2/(2^{j_{2(a+\epsilon)+1}})\{\xi_{a+\epsilon}\},
\end{gather*}
all other groups being zero. In addition, there are relations
\begin{gather*}
\mu_{a+b}\mu_c = \mu_a\mu_{b+c},\qquad \mu_{a+b}\rho_c = \mu_a\rho_{b+c} ,\qquad \mu_a\mu_b\mu_c = 2^{j_{2(a+b+c)+1}-1}\xi_{a+b+c}, \\
\rho_a\xi_b = 0,\qquad g^2 = 4,\qquad g \xi_a = 2\rho_{a+1/2},\qquad g\rho_a = 2\xi_{a-1/2},\qquad g\mu_a = 0,
\end{gather*}
for $a,b,c\in \frac{1}{2}\bbZ$.
\end{lemma}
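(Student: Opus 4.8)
The plan is to derive both $\pi_\ast S$ and $\pi_\ast T$ from the fiber-sequence presentation of the $2$-adic $K(1)$-local sphere, and then extract the multiplicative structure by filtration considerations together with $KO$- and $KU$-theoretic detection. Recall that $S = S_{K(1)} \simeq KO^{\h\bbZ}$, where $\bbZ$ is generated by $\psi^3$ acting on $KO = KO_2^\wedge$ (so $S \simeq \Fib(\psi^3 - 1\colon KO\to KO)$), and that the exotic Picard element $T$, characterized by $KO\otimes T\simeq\Sigma^4 KO$ as in \cref{ssec:cell}, satisfies $T\simeq(KO\otimes T)^{\h\bbZ}$; under $KO\otimes T\simeq\Sigma^4 KO$ the residual $\psi^3$-action on $\pi_\ast(\Sigma^4 KO)\cong\pi_{\ast-4}KO$ differs from the standard one by the scalar $3^2$, which is precisely the content of ``$T$ is exotic'' (\cite{hopkinsmahowaldsadofsky1994constructions}). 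Since $T\otimes T\simeq S$, the spectrum $T^{\otimes 2b}$ depends only on $b\bmod\bbZ$. In each case the homotopy fixed point spectral sequence has only the two lines $s=0,1$, yielding natural short exact sequences
\[
0 \to \coker\!\big(\psi^3-1 \mid \pi_{a+1}(-)\big) \to \pi_a(-) \to \ker\!\big(\psi^3-1\mid\pi_a(-)\big) \to 0,
\]
where $(-)$ is $KO$, resp.\ $KO\otimes T\simeq\Sigma^4 KO$ with the twisted action.

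First I would carry out the additive computation. Using $\pi_\ast KO_2^\wedge = \bbZ_2[\eta,\alpha,\lambda^{\pm1}]/(2\eta,\eta^3,\eta\alpha,\alpha^2-4\lambda)$ with $|\eta|=1$, $|\alpha|=4$, $|\lambda|=8$ and $\psi^3(\eta)=\eta$, $\psi^3(\alpha)=3^2\alpha$, $\psi^3(\lambda)=3^4\lambda$ (the last two seen by complexification to $\pi_\ast KU$, under which $\alpha\mapsto 2\beta^2$ and $\lambda\mapsto\beta^4$), together with the arithmetic input $v_2(3^{2m}-1)=j_m$ for $m\neq 0$ (lifting the exponent) and the vanishing of $\psi^3-1$ in degree $0$, one reads off the groups $S_a$ and $S_{a,1/2}$ degree by degree exactly as stated. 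This also fixes the generators: $\rho_{a+\epsilon}$ and $\xi_{a+\epsilon}$ are the connecting-map images of the (twisted) $\lambda$- and $\alpha\lambda$-power generators; $\mu_0$ is the image of $\eta$; $\mu_{a+\epsilon}$ lifts an $\eta$-multiple of a periodicity generator, well defined modulo $\mu_0^2\rho_{a+\epsilon}$ (an ambiguity affecting none of the stated relations, since $\mu_0^4=0$ and $(\mu_0\mu_b)(\mu_0\mu_c)$ lies in a zero group); $g$ generates $\pi_0 T=\bbZ_2$; and $1,\mu_0\rho_0$ span $\pi_0 S$. The half-integer indices in the $T$-part come out correctly precisely because of the $3^2$-twist, so I would keep careful track of this bookkeeping.

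The multiplicative relations I would organize by filtration. The relations $\mu_{a+b}\mu_c=\mu_a\mu_{b+c}$ and $\mu_{a+b}\rho_c=\mu_a\rho_{b+c}$ reflect $\lambda$-linearity of the $E_2$-page and of its connecting maps, reducing to tautologies in $\pi_\ast KO$; likewise $g\mu_a=0$ would follow from $\eta\alpha=0$, while $2\mu=0$, $\mu\rho^2=0$, and $\rho_a\xi_b=0$ are forced by sparseness, a product of two filtration-$1$ classes necessarily landing on the vanishing filtration-$2$ line. The hidden extension $\mu_a\mu_b\mu_c=2^{j_{2(a+b+c)+1}-1}\xi_{a+b+c}$ arises because $\eta^3=0$ in $\pi_\ast KO$, so $\mu_0^3$ jumps to filtration $1$ where it is detected in $\coker(\psi^3-1\mid\pi_4 KO)=\bbZ_2/8$; its precise value is the $K(1)$-localization of the classical identity $\eta^3=4\nu$, propagated by multiplying with $\lambda$-powers. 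Finally, for the relations involving $g$ I would use that $g\colon S\to T$ is detected by $KU$-theory as multiplication by $2$ on $KU_0$ — equivalently, $g$ is detected by $\alpha\in\pi_4 KO_2^\wedge$, which complexifies to $2\beta^2$. This gives $g^2=4$ (from $\alpha^2=4\lambda$, once a possible $\mu_0\rho_0$-correction is excluded via the $KU$-Hurewicz map) and $g\rho_a=2\xi_{a-1/2}$, $g\xi_a=2\rho_{a+1/2}$ by comparing the relevant cokernel quotients, the factor $2$ being exactly the discrepancy introduced by $g$; alternatively these relations may simply be cited from \cite{hopkinsmahowaldsadofsky1994constructions}.

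The main obstacle I anticipate is pinning down these constants — in particular the precise $3^2$-twist in the presentation of $T$ (so that the $a+\tfrac12$ indices come out exactly right), the factors of $2$ in the $g$-relations, and the exponent $j_{2(a+b+c)+1}-1$ — rather than merely identifying everything up to units and filtration jumps; this is where $KO$- and $KU$-detection must be used in tandem. The purely additive computation is routine once the two fiber sequences and the $\psi^3$-action on $\pi_\ast KO$ are in hand, and the whole statement is essentially classical, going back to Adams \cite{adams1962vector,adams1966on} and \cite{hopkinsmahowaldsadofsky1994constructions}.
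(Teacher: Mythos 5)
Your proposal is correct and follows essentially the same route as the paper: descent along $\psi^3-1$ from $KO_\ast$ (resp.\ from $KO_\ast T$ via $KO\otimes T\simeq\Sigma^4 KO$ with the $3^2$-twisted action, which is just a repackaging of the paper's first HFPSS with its nontrivial $d_3$), with the additive structure read off the resulting two-line spectral sequence and the hidden extension $\mu_a\mu_b\mu_c=2^{j_{2(a+b+c)+1}-1}\xi_{a+b+c}$ ultimately imported from the classical literature. The one loose point is the claim that this extension is ``propagated by multiplying with $\lambda$-powers'' --- $\lambda$ is not a homotopy element of the sphere, so that step genuinely requires a $v_1$-self-map/Toda-bracket argument or the citation you offer as a fallback; the paper itself simply cites \cite[Theorem 8.15]{ravenel1984localization}.
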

\begin{proof}
This is classical, so let us just briefly indicate the proof. The structure of $S_{\ast,\epsilon}$ may be computed via the HFPSSs
\[
H^\ast(C_2;KU_\ast T^{\otimes \epsilon})\Rightarrow KO_\ast T^{\otimes\epsilon},\qquad H^\ast(\bbZ\{\psi^3\};KO_\ast T^{\otimes\epsilon})\Rightarrow \pi_\ast T^{\otimes\epsilon}.
\]
See the beginning of \cref{ssec:cell} for a description of the first HFPSS. Most of the structure of $S_{\ast,\ast}$ may be immediately read off of these, only one must rule out the existence of hidden additive extensions and produce the hidden multiplicative extension $\mu_a\mu_b\mu_c = 2^{j_{2(a+b+c)+1}-1}\xi_{a+b+c}$. We cite \cite[Theorem 8.15]{ravenel1984localization} for these facts in the case of $S_\ast = S_{\ast,0}$. The corresponding facts involving $T$ are analogous; as they will not be needed in our computation of $\pi_{\ast,\ast}b(S)$, we omit the details.
\end{proof}

There is one additional subtlety that we must address. \cref{lem:sgroups} asserts the existence of classes $\mu_a \in S_{8a+1}$, and the proof further reveals that $\mu_a$ should be some element detected by $\beta^{4a}\mu\in \pi_{8a+1}KO$. However, if $\mu_a$ is such an element, then so is $\mu_a\cdot (1+\mu_0\rho_0)$, and it is difficult to distinguish between these two elements by purely formal means.

We resolve this issue in the usual way, following \cite[Theorem 12.13]{adams1966on}. There is a canonical choice for $\mu_0$, given by the Hurewicz image of the nonequivariant Hopf map $\eta_\cl$. We now consider as fixed throughout the paper some $\beta^4$-self map $\Sigma^8 S/(2)\rightarrow S/(2)$. This induces a secondary operation $\beta^4\colon \pi_n\rightharpoonup\pi_{n+8}$ defined on $2$-torsion elements, which can be regarded as the Toda bracket $\langle 8\sigma,2,\bs\rangle$ formed with respect to a specified nullhomotopy of $8\sigma\cdot 2$. In particular its indeterminacy is divisible by $2$, and so we may unambiguously define $\mu_a = \beta^{4a}\mu_0$.

\subsection{James periodicity}\label{ssec:james}

In this subsection, we drop the assumption that everything is $K(1)$-local. We have seen that $b(KO_2^\wedge)$ is $\tau^4$-periodic, and will see that much of this is visible in $b(S_{K(1)})$. This $\tau$-periodicity is a manifestation of James periodicity \cite{james1958cross}: if
\[
\gamma(n) = |\{k \in\bbZ : 0 < k \leq n,\, k\equiv 0,1,2,4\pmod{8}\}|
\]
is the $n$th Radon--Hurwitz number, then there are equivalences
\[
P_m^{m+n}\simeq \Sigma^{-2^{\gamma(n)}}P_{m+2^{\gamma(m)}}^{m+n+2^{\gamma(n)}}.
\]
This itself can be seen as a consequence of the fact that $2^{\gamma(n)}\rho\eta_0$ is divisible by $\rho^{n+1}$ in $\pi_{\ast,\ast}b(KO)$, see for instance \cite{mahowald1965short}. To make use of this periodicity in our setting, it is convenient to use the following refinement.

\begin{lemma}
There are equivalences $\tau^{2^{\gamma(n)}}\colon \Sigma^{0,2^{\gamma(n)}}\Cof(\rho^{n+1})\simeq\Cof(\rho^{n+1})$.
\end{lemma}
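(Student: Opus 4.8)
The plan is to construct the self-equivalence $\tau^{2^{\gamma(n)}}$ explicitly from the element $\tau^2 \in \pi_{0,2}b(S_{C_2})$ (the Thom class of $2\sigma$), reducing the question to whether the relevant power of $\tau$ becomes a unit after smashing with $\Cof(\rho^{n+1})$. Recall from \cref{rmk:rhoborel} that for an ordinary spectrum $X$ one has $\pi_{s,c}b(X) \otimes \Cof(\rho^{n+1}) \cong X^{-c-1}(P_{s-c}^{s-c+n})$, so $\Cof(\rho^{n+1})$ sees precisely the $K(1)$-local stunted projective spectra with $n+1$ cells. Thus an equivalence $\tau^{2^{\gamma(n)}}\colon \Sigma^{0,2^{\gamma(n)}}\Cof(\rho^{n+1})\simeq \Cof(\rho^{n+1})$ is, on the nose, a coherent enhancement of the classical James periodicity equivalences $P_m^{m+n}\simeq \Sigma^{-2^{\gamma(n)}} P_{m+2^{\gamma(n)}}^{m+n+2^{\gamma(n)}}$, uniform in $m$.

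First I would set up the algebra: smashing $\Cof(\rho^{n+1})$ with itself and using the ring structure, multiplication by $\tau^2$ gives a map $\Sigma^{0,2}\Cof(\rho^{n+1})\to \Cof(\rho^{n+1})$, and it suffices to show that some power $\tau^{2N}$ with $N = 2^{\gamma(n)-1}$ (i.e.\ $\tau^{2^{\gamma(n)}}$) induces an isomorphism on, say, $\ul{\pi}_{\ast,\ast}(\bs)$, hence is an equivalence since $\Cof(\rho^{n+1})$ is a finite (bounded, $2$-complete-friendly) $C_2$-spectrum and $b(\bs)$-detection suffices after $K(1)$-localization. To check the isomorphism it is enough to check it on the associated graded for the $\rho$-adic filtration — equivalently, after smashing further with $C_{2+}$ and with $S^\sigma$-cells — which reduces everything to the underlying spectrum, where $\tau^2$ acts by a Thom class and multiplication by $\tau^{2^{\gamma(n)}}$ is exactly the classical James periodicity isomorphism $K$-locally. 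More efficiently: by \cref{prop:b} (or \cref{lem:bku}) applied to $E = KU$, the class $\tau^{2^{\gamma(n)}}$ acts invertibly on $\pi_{\ast,\ast}b(KU)\otimes\Cof(\rho^{n+1})$ for degree reasons once $2^{\gamma(n)}$ kills the relevant $\rho$-torsion — this is precisely the statement that $2^{\gamma(n)}\rho\eta_0$ is divisible by $\rho^{n+1}$, cited above from \cite{mahowald1965short} — and the same then holds for $b(S)$ since $b(S)\to b(KU)$ is a $K(1)$-local equivalence after smashing with a type-$1$ complex, or more simply since $\Cof(\rho^{n+1})$ is built from finitely many cells and one runs the Bockstein/$\rho$-Bockstein spectral sequence from the $KU$-statement.

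Concretely, the steps I would carry out are: (1) produce the map $\tau^{2^{\gamma(n)}}\colon \Sigma^{0,2^{\gamma(n)}}\Cof(\rho^{n+1})\to \Cof(\rho^{n+1})$ as multiplication by the corresponding power of the Thom class $\tau^2$, using that $\Cof(\rho^{n+1})$ is a module over $S_{C_2}$ (or just over the Thom spectrum of $(2\sigma)$-bundles); (2) observe it is an equivalence iff it induces an isomorphism on $\ul\pi_{\ast,\ast}$; (3) filter $\Cof(\rho^{n+1})$ by its $\rho$-cell structure $S^0 \to \Cof(\rho^{n+1})\to$ (cone of $\rho^{n+1}$), or equivalently use the cofiber sequences relating $\Cof(\rho^{n+1})$, $C_{2+}$, and stunted projective spectra, to reduce the isomorphism statement to the underlying (nonequivariant) level; (4) invoke classical James periodicity — in the sharp form that $2^{\gamma(n)}$ is exactly the order needed for $P_m^{m+n}$ to be (stably, $K(1)$-locally) $2^{\gamma(n)}$-periodically shift-equivalent, equivalently the divisibility of $2^{\gamma(n)}\rho\eta_0$ by $\rho^{n+1}$ in $\pi_{\ast,\ast}b(KO)$ from \cref{prop:e2}/\cite{mahowald1965short} — to conclude multiplication by $\tau^{2^{\gamma(n)}}$ is an isomorphism there; (5) lift back up the filtration by the five lemma.

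The main obstacle is Step (4): pinning down that $2^{\gamma(n)}$ is precisely the right exponent (not merely a sufficient one) requires the sharp form of James periodicity, and one must be careful that the $C_2$-equivariant/$\tau$-periodic reformulation matches the classical $P_m^{m+n}$ statement with the correct suspension bookkeeping — in particular that the shift $\Sigma^{0,2^{\gamma(n)}}$ on the $C_2$ side corresponds to the dimension shift $2^{\gamma(n)}$ on stunted projective spectra, and that this is uniform in $m$ (which is automatic here since $\tau^2$ is a single global class, but worth remarking). Everything else is a routine dévissage along the $\rho$-cell tower, powered by the already-established computations of $\pi_{\ast,\ast}b(KU)$ and $\pi_{\ast,\ast}b(KO)$.
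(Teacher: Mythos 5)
The paper's own proof of this lemma is a one-line citation to \cite[Theorem 7.7]{behrensshah2020c2}, so any self-contained argument has to reconstruct that result, and your proposal has a genuine gap at its very first step. You propose to construct the self-map as multiplication by a power of "the element $\tau^2\in\pi_{0,2}b(S_{C_2})$ (the Thom class of $2\sigma$)" and then verify invertibility. But no such class exists: $2\sigma$ is not $S$-orientable over $BC_2$, and the paper only defines $\tau^{\pm 2}$ as an element of $\pi_{0,\pm 2}b(E)$ for a complex-oriented theory $E$ (\cref{ssec:coc}), where it exists because $2\sigma=\sigma\otimes\bbC$ is a complex bundle. If a Thom class of $2\sigma$ existed over the sphere, every $\Cof(\rho^{n+1})$ would be $\Sigma^{0,2}$-periodic and all stunted projective spectra $P_m^{m+n}$ with $n$ fixed and $m$ of fixed parity would be equivalent --- contradicting the fact that the James period is $2^{\gamma(n)}$ rather than $2$ (e.g.\ $P_1^3\not\simeq\Sigma^{-2}P_3^5$). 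The existence of a map $\Sigma^{0,2^{\gamma(n)}}\Cof(\rho^{n+1})\to\Cof(\rho^{n+1})$ restricting to the identity on the bottom cell is exactly equivalent to the $J$-triviality of $2^{\gamma(n)}(\sigma-1)$ over $\bbR P^{n}$, i.e.\ it \emph{is} the nontrivial content of the lemma; once such a map exists, your steps (2)--(5) (checking it is an isomorphism on $\ul{\pi}_{\ast,\ast}$ by d\'evissage along the $\rho$-cell filtration) are essentially automatic and do no real work. So the logical structure --- formal construction followed by an invertibility check --- is inverted relative to where the difficulty actually lies.

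To your credit, you do point at the correct classical input: the divisibility of $2^{\gamma(n)}\rho\eta_0$ by $\rho^{n+1}$, equivalently Adams' computation $\widetilde{KO}(\bbR P^n)\cong\bbZ/(2^{\gamma(n)})$ together with the fact that $J(\bbR P^n)$ is a quotient of $\widetilde{KO}(\bbR P^n)$. A correct argument would use this to \emph{produce} the equivalence (trivialize $2^{\gamma(n)}\sigma$ as a stable spherical fibration over the $n$-skeleton of $BC_2$, equivariantly, as in Behrens--Shah), rather than to certify that an already-given map is invertible. Two secondary problems: this lemma sits in a subsection where the $K(1)$-local convention is explicitly dropped, and it is applied to arbitrary $C_2$-spectra in \cref{prop:tau}, so your repeated appeals to $K(1)$-local detection are off target; and the assertion that $b(S)\to b(KU)$ becomes an equivalence after smashing with a type-$1$ complex is false as stated.
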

\begin{proof}
This is \cite[Theorem 7.7]{behrensshah2020c2}.
\end{proof}

We fix such equivalences, and define $\tau^{k2^{\gamma(n)}}$ for $k\in\bbZ$ by composition. After possibly adjusting these equivalences by a unit, we may suppose that $\tau^{2^{\gamma(n)}}$ acts as multiplication by $\tau^{2^{\gamma(n)}}$ on $b(KU_2^\wedge) \otimes \Cof(\rho^{n+1})$. Self maps such as these give rise to secondary operations in the usual way, which we now recall.

\begin{construction}\label{prop:tau}
Let $X$ be a $C_2$-spectrum. Then there are secondary operations
\[
\tau^{k2^{\gamma(n)}}\colon \pi_{s,c}X\rightharpoonup \pi_{s,c+k2^{\gamma(n)}}X,
\]
defined on the kernel of $\rho^{n+1}$ and with indeterminacy contained in the image of multiplication by an element of $\pi_{n,k2^{\gamma(n)}-1}S_{C_2}$, defined as follows. Fix $\alpha\in \pi_{s,c}X$ with $\rho^{n+1}\alpha = 0$, and choose an extension of $\alpha\colon \Sigma^{s,c}S_{C_2}\rightarrow X$ to a map $\alpha'\colon \Sigma^{s,c}\Cof(\rho^{n+1})\rightarrow X$. Now define $\tau^{k2^{\gamma(n)}}\alpha$ to be the composite 
\begin{center}\begin{tikzcd}
\Sigma^{s,c+k2^{\gamma(n)}}S_{C_2}\ar[r]&\Sigma^{s,c+k2^{\gamma(n)}}\Cof(\rho^{n+1})\ar[r,"\tau^{k2^{\gamma(n)}}","\simeq"']& \Sigma^{s,c}\Cof(\rho^{n+1})\ar[r,"\alpha'"]& X
\end{tikzcd}.\end{center}
The indeterminacy arising from our choice of $\alpha'$ is contained in the image of multiplication by the map
\begin{center}\begin{tikzcd}
\Sigma^{s,c+k2^{\gamma(n)}}S_{C_2}\ar[r]&\Sigma^{s,c+k2^{\gamma(n)}}\Cof(\rho^{n+1})\ar[r,"\tau^{k2^{\gamma(n)}}","\simeq"']&\Sigma^{s,c}\Cof(\rho^{n+1})\ar[r,"\partial"]&\Sigma^{s-n,c+1}S_{C_2}
\end{tikzcd},\end{center}
regarded as an element of $\pi_{n,k2^{\gamma(n)}-1}S_{C_2}$.
\tqed
\end{construction}

The following diagram may help illustrate the above definition:

\begin{center}\begin{tikzcd}[column sep=large]
\Sigma^{s-n-1,c}S_{C_2}\ar[d,"\rho^{n+1}"]\\
\Sigma^{s,c}S_{C_2}\ar[r,"\alpha"]\ar[d]&X\\
\Sigma^{s,c}\Cof(\rho^{n+1})\ar[ur,"\alpha'"]\ar[d,"\partial"]&\Sigma^{s,c+k2^{\gamma(n)}}\Cof(\rho^{n+1})\ar[l,"\tau^{k2^{\gamma(n)}}"']&\Sigma^{s,c+k2^{\gamma(n)}}S_{C_2}\ar[ul,"\tau^{k2^{\gamma(n)}}\alpha"']\ar[l]\ar[dll,"\text{indeterminacy}"]\\
\Sigma^{s-n,c+1}S_{C_2}
\end{tikzcd}.\end{center}

We will mostly make use of $\tau^{4k}$ as defined on $\rho^3$-torsion elements, which has indeterminacy contained in the image of multiplication by some element of $\pi_{2,4k-1}S_{C_2}$.

\begin{rmk}
We do not need the precise value of the indeterminacy of $\tau^4$ as defined on $\rho^3$-torsion elements, but expect that one may arrange for it to be equal to $\rho (\nu_{\cl}-\rho^4\sigma_{C_2})$, where $\nu_{\cl}\in \pi_{3,3}S_{C_2}$ is the nonequivariant quaternionic Hopf map and $\sigma_{C_2}\in\pi_{7,3}S_{C_2}$ is the $C_2$-equivariant octonionic Hopf map, up to choices of orientations; this element is detected by $\rho\tau^2 h_2$ in the $\bbR$-motivic summand of the $E_2$-page of the $C_2$-equivariant Adams spectral sequence. Assuming this is the case, one may interpret the construction as saying that $\tau^{4}\alpha \in \langle \rho(\nu_{\cl}-\rho^4\sigma_{C_2}),\rho^3,\alpha\rangle$.
\tqed
\end{rmk}

At this point, we resume our convention that everything is $K(1)$-local.

\section{The \texorpdfstring{$K(1)$}{K(1)}-local sphere}\label{sec:sphere}

At last we have everything needed for the main computation. Throughout this section, we abbreviate
\[
\pi_{\ast,\ast} = \pi_{\ast,\ast}b(S).
\]

We also fix throughout this section some $k\in\bbZ_2^\times$ which projects to a generator of the pro-cyclic group $\bbZ_2^\times/\{\pm 1\}$. The standard choice is $k = 3$.

This section should be regarded as running parallel to \cref{ssec:tables}, which organizes the output of our computation into tables, rather than preceding it.

\subsection{Additive structure}\label{ssec:sgroups}

The unit map $b(S)\rightarrow b(KO)$ factors through an equivalence
\[
b(S)\simeq b(KO^{\h \bbZ\{\psi^k\}})\simeq b(KO)^{\h \bbZ\{\psi^k\}}.
\]
This is realized by a fiber sequence
\begin{center}\begin{tikzcd}
b(S)\ar[r]&b(KO)\ar[r,"\psi^k-1"]&b(KO)
\end{tikzcd},\end{center}
which gives rise to short exact sequences
\begin{align*}
0\rightarrow\coker(\psi^k-1\colon \pi_{s+1,c+1}b(KO))\rightarrow\pi_{s,c}\rightarrow \ker(\psi^k-1\colon \pi_{s,c}b(KO))\rightarrow 0.
\end{align*}
We may view these short exact sequences as describing the extension problem for the HFPSS
\[
H^\ast(\bbZ\{\psi^k\};\pi_{\ast,\ast}b(KO))\Rightarrow\pi_{\ast,\ast};
\]
the kernel is the $0$-line $H^0$, the cokernel is the $1$-line $H^1$, and all lines above these are zero. These groups may be readily computed using our knowledge of $\pi_{\ast,\ast}b(KO)$.

We first fix some notation. Given $x\in \pi_{s,c}b(KO)$, write $\ol{x}$ for the image of $x$ in the $1$-line, detecting a unique element of $\pi_{s-1,c-1}$. Given $x\in \pi_{s,c}b(KO)$ fixed by $\psi^k$, write $x$ for the corresponding element of the $0$-line, detecting an element of $\pi_{s,c}$.

Recall the definition of $j_a$ from \cref{ssec:classic}, as well as the notation $S_{4a-1} = \bbZ_2/(2^{j_a})$. Given integers $a$ and $b$, define the $2$-adic unit
\[
u_{a,b} = 2^{j_a-j_{b-a}}\frac{k^{2b}-k^{2a}}{k^{2a}-1},
\]
taking $u_{a,b} = 1$ if $a = 0$ or $a = b$. Given symbols $x$ and $y$, define the group
\begin{align*}
E_{a,b}\{x,y\} &= \bbZ_2\{x,y\}/\left((k^{2a}-1)x+\tfrac{1}{2}(k^{2b}-k^{2a})y,~~(k^{2b}-1)y\right)\\
&\cong \bbZ_2\{x,y\}/(2^{j_a}x + 2^{j_{b-a}-1}u_{a,b}y,2^{j_b}y).
\end{align*}
This sits in a short exact sequence
\[
0\rightarrow S_{4b-1}\{y\}\rightarrow E_{a,b}\{x,y\}\rightarrow S_{4a-1}\{x\}\rightarrow 0,
\]
except when $a = 0$ where $E_{0,b}\{x,y\} = \bbZ_2\{x\} \oplus\bbZ_2/(2^{j_b-1})\{y\}$.

\begin{lemma}\label{lem:hfpss}
The full additive structure of the HFPSS is as described by the first, third, and fourth columns of \cref{table:basis}.
\end{lemma}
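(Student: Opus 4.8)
The plan is to run the homotopy fixed point spectral sequence
\[
H^\ast(\bbZ\{\psi^k\};\pi_{\ast,\ast}b(KO))\Rightarrow\pi_{\ast,\ast}
\]
degree by degree, using the explicit presentation of $\pi_{\ast,\ast}b(KO)$ from the previous section. Since $\bbZ\{\psi^k\}$ has cohomological dimension $1$, the only data to extract are, for each bidegree $(s,c)$, the kernel $H^0$ and cokernel $H^1$ of the map $\psi^k-1$ acting on the relevant groups $\pi_{s,c}b(KO)$ and $\pi_{s+1,c+1}b(KO)$. The $4$-fold vertical ($\tau^4$) and $8$-fold horizontal ($v$) periodicities of $b(KO)$ reduce the problem to finitely many bidegrees modulo these periodicities, after which the answer propagates.

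Concretely, I would organize the computation by the coweight $c$ modulo $4$, since the structure of $\pi_{\ast,\ast}b(KO)$ (dots, $\mu$-towers, $\rho$-towers, $\eta_0$-multiplications) has a clean description in terms of the residue of $c$. In each such slice one has either free $\bbZ_2$-summands on which $\psi^k-1$ acts by $k^{2j}-1$-type scalars (producing $\coker \cong \bbZ_2/(2^{j_a})$ and $\ker = 0$, hence the groups $S_{4a-1}$ and their analogues), $\bbZ/(2)$-summands carrying $\mu$ (on which $\psi^k$ acts trivially, contributing to both $H^0$ and $H^1$), or rank-$2$ free pieces built from classes like $\tau^{4a}$ and $2\sqrt{v}$ where the matrix of $\psi^k-1$ mixes the two generators — this last case is precisely what the groups $E_{a,b}\{x,y\}$ in the statement are designed to record, via the short exact sequence relating $S_{4b-1}$, $E_{a,b}$, and $S_{4a-1}$. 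The $2$-adic units $u_{a,b}$ appear when one diagonalizes or puts the matrix of $\psi^k-1$ into Smith-like form, comparing the naive presentation $\big((k^{2a}-1)x+\tfrac12(k^{2b}-k^{2a})y,\,(k^{2b}-1)y\big)$ with the normalized one $\big(2^{j_a}x+2^{j_{b-a}-1}u_{a,b}y,\,2^{j_b}y\big)$. I would verify the $2$-adic valuation bookkeeping $j_a = 3+\nu_2(a)$ makes $u_{a,b}$ a unit, handling the degenerate cases $a=0$ (where $k^{2a}-1=0$, so the presentation splits off a free $\bbZ_2\{x\}$) and $a=b$ separately.

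The main obstacle will not be any single cokernel computation but rather the bookkeeping: correctly tracking which generator of $\pi_{\ast,\ast}b(KO)$ lives in which bidegree after the $\tau^4$- and $v$-periodicity identifications, ensuring the $\psi^k$-action formulas from \cref{prop:e2} (as inherited through the differentials of \cref{prop:d3} and the hidden extensions of \cref{lem:koext1} and \cref{lem:koext2}) are applied in the right place, and matching the resulting groups against the entries of \cref{table:basis}. Care is also needed where $\mu$-multiples, $\rho$-towers, and the $E_{a,b}$-pieces coexist in a single bidegree, so that additive extensions between the $H^0$ and $H^1$ contributions are correctly resolved — but since everything in sight is either a free $\bbZ_2$-module or elementary abelian, and the relations in $\pi_{\ast,\ast}b(KO)$ pin down the module structure, these extensions are forced. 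Once the slices $c\equiv 0,1,2,3\pmod 4$ are each dispatched, the lemma follows by assembling the pieces and reading off columns one, three, and four of \cref{table:basis}.
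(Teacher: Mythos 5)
Your proposal is correct and matches the paper's proof, which is simply the one-line observation that the lemma ``follows by a direct calculation of the kernel and cokernel of $\psi^k-1$'' on the groups $\pi_{\ast,\ast}b(KO)$ already determined in the previous section; your fleshed-out plan (organizing by coweight mod $4$, using $\tau^4$- and $v$-periodicity, and identifying the $E_{a,b}$ as cokernels of the $2\times 2$ matrices where $\psi^k-1$ mixes a generator with its $\rho\eta_0$-multiple) is exactly that calculation made explicit. The only caveat is that the additive extensions between the $0$- and $1$-lines you mention at the end are not part of this lemma — the paper defers those to a separate statement (\cref{lem:noext}), where ruling them out requires the $\tau^4$-periodicity argument of \cref{lem:indet} rather than being automatic.
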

\begin{proof}
The groups $\pi_{\ast,\ast}b(KO)$ and their action by $\psi^k$ are as described in \cref{ssec:ko}. The lemma now follows by a direct calculation of the kernel and cokernel of $\psi^k-1$.
\end{proof}

\begin{lemma}\label{lem:indet}
All $\tau^4$-periodicity in $\pi_{\ast,\ast}$ defined for $\rho^3$-torsion elements holds without indeterminacy in coweights not congruent to $-1$ mod $4$.
\end{lemma}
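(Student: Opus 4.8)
The plan is to unwind the indeterminacy described in \cref{prop:tau} and show it vanishes in the relevant coweights. Recall that $\tau^4$ defined on $\rho^3$-torsion elements of $\pi_{s,c}$ has indeterminacy contained in the image of multiplication by some element $\iota \in \pi_{2,4k-1}S_{C_2}$ (in the present $K(1)$-local setting, an element of $\pi_{2,3}$, since we are computing in $\pi_{\ast,\ast} = \pi_{\ast,\ast}b(S)$). Concretely, if $\alpha \in \pi_{s,c}$ is $\rho^3$-torsion and we wish to define $\tau^{4k}\alpha \in \pi_{s,c+4k}$, then the indeterminacy is $\iota \cdot \pi_{s-2,c+4k-3}$ — a subgroup of the target group $\pi_{s-2+?, \dots}$; more precisely the ambiguity lies in $\pi_{s,c+4k}$ and is the image of multiplication by $\iota$ on an appropriate shifted group. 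So the statement reduces to: for $c + 4k \not\equiv -1 \pmod 4$, the relevant multiple of $\iota$ is zero.

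The key steps, in order, are as follows. First, I would pin down the degree of $\iota$: it sits in $\pi_{2,3}b(S)$ (coweight $3 \equiv -1 \bmod 4$), and since multiplication by $\tau^4$ shifts coweight by $4$, the indeterminacy for the operation landing in coweight $c' = c + 4k$ is the image of $\iota\cdot(\bs)$ from a group in coweight $c' - 3$, which is $\equiv c' + 1 \pmod 4$. Second, I would observe that the operation $\tau^4$ is an isomorphism (up to the stated indeterminacy) on the relevant $\rho^3$-torsion subgroups, so to bound the indeterminacy it suffices to understand which classes are hit by multiplication by $\iota$. Third — the crux — I would use the additive structure from \cref{lem:hfpss} and \cref{table:basis}: the $\rho^3$-torsion classes in each coweight are generated by the elements $\ol{x}$ coming from the $1$-line $H^1(\bbZ\{\psi^k\};\pi_{\ast,\ast}b(KO))$ together with $\mu$-multiples, and I would check directly, coweight by coweight modulo $4$, that for $c' \not\equiv -1$ the source group (in coweight $\equiv c'+1 \bmod 4$) that $\iota$ multiplies into the target consists entirely of classes on which multiplication by $\iota$ vanishes. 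Equivalently, I would identify $\iota$ with (a unit multiple of) $\rho$ times the $K(1)$-local image of the quaternionic/octonionic Hopf data — as in the final \cref{rmk} of \cref{ssec:james} — and use the relations in $\pi_{\ast,\ast}b(KO)$, in particular $\rho\cdot\tau^2 h = 0$ and the $\mu$-annihilation relations, to kill the product.

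I expect the main obstacle to be the bookkeeping of exactly which $\rho^3$-torsion classes exist in each coweight class mod $4$ and verifying that $\iota$ annihilates precisely those — this is where one must lean on the explicit basis in \cref{table:basis} rather than formal arguments, since the element $\iota$ itself is only pinned down up to the ambiguity discussed in the preceding remark. The cleanest route is probably to note that it is enough to bound the indeterminacy by \emph{any} single choice of lift of the self-map, use that such a lift can be taken compatibly with the $\tau^4$-action on $b(KO)$ (which is genuinely periodic, not merely up to indeterminacy, by \cref{prop:d3} and the discussion before \cref{prop:tau}), and then transport the vanishing from $b(KO)$ down to $b(S)$ through the fiber sequence $b(S)\rightarrow b(KO)\xrightarrow{\psi^k-1}b(KO)$: since the indeterminacy class restricts to something $\tau^4$-compatible on $b(KO)$ and the coweight hypothesis forces its image there to vanish (as $4\sigma$ is $KO$-orientable), a short diagram chase yields the claim.
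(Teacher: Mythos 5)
Your primary route is essentially the paper's proof: reduce to the statement that multiplication by the indeterminacy element of $\pi_{2,4k-1}$ (coweight $\equiv -1 \bmod 4$) kills every class in coweight $\not\equiv 0 \bmod 4$, and verify this from the computed additive structure. (Your bookkeeping of the source group as $\pi_{s-2,c+4k-3}$ rather than the $\pi_{s-2,c+1}$ of \cref{prop:tau} is immaterial, since both are $\equiv c'+1 \bmod 4$.) One justification is missing, though, and the paper makes it the centerpiece of its short proof: by \cref{lem:hfpss} the group $\pi_{2,4k-1}$ is detected entirely on the $1$-line of the two-line HFPSS, so every product out of it again has filtration $\geq 1$ and is therefore computed algebraically on the $E_\infty$-page, with no possibility of a hidden extension. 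Without that remark, ``inspecting \cref{table:basis}'' computes only the associated graded of the products in $\pi_{\ast,\ast}$, not the products themselves.

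Your proposed ``cleanest route'' --- transporting vanishing from $b(KO)$ through the fiber sequence $b(S)\rightarrow b(KO)\xrightarrow{\psi^k-1} b(KO)$ --- would fail. The indeterminacy element is a $1$-line class, which means precisely that it, and hence every product with it, maps to zero in $\pi_{\ast,\ast}b(KO)$ in every coweight. So ``its image in $b(KO)$ vanishes'' holds unconditionally and cannot detect the coweight hypothesis; in particular it also holds in coweights $\equiv -1\bmod 4$, where the indeterminacy contains the element itself (its product with $1\in\pi_{0,0}$) and need not vanish. The $KO$-orientability of $4\sigma$ is used only to get $d_3(\tau^4)=0$ in the $C_2$-HFPSS for $b(KO)$ (\cref{prop:d3}); it has no bearing here. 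Stay with your first route, adding the no-hidden-extension observation.
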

\begin{proof}
The secondary operation $\tau^{4b}$ as defined on $\rho^3$ elements has indeterminacy contained in the image of multiplication by some element of $\pi_{2,4b-1}$. By \cref{lem:hfpss}, this group is completely detected on the $1$-line, so products out of it can be computed algebraically in the HFPSS, with no danger of hidden extensions. By inspection, this group is killed by everything except various quantities of elements in coweights congruent to $0$ mod $4$, proving the lemma.
\end{proof}

\begin{lemma}\label{lem:noext}
There are no nontrivial additive extensions in the HFPSS.
\end{lemma}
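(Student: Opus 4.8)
The plan rests on the fact that the HFPSS of \cref{ssec:sgroups} is concentrated on the $0$- and $1$-lines, so that in each bidegree the extension problem is a single short exact sequence of $\bbZ_2$-modules
\[
0\rightarrow\coker(\psi^k-1\colon\pi_{s+1,c+1}b(KO))\rightarrow\pi_{s,c}\rightarrow\ker(\psi^k-1\colon\pi_{s,c}b(KO))\rightarrow 0,
\]
and the task is to show each such sequence splits. Since $\bbZ_2$ is a principal ideal domain, $\Ext^1_{\bbZ_2}$ vanishes on free modules, so splitting is automatic in every bidegree where the $0$-line $\ker(\psi^k-1)_{s,c}$ is torsion-free. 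By \cref{lem:hfpss} and the description of $\pi_{\ast,\ast}b(KO)$ in \cref{ssec:ko}, every torsion class in $\pi_{\ast,\ast}b(KO)$ is a $\mu$-multiple and is annihilated by $2$; as $\psi^k(\mu)=\mu$, the same holds for the submodule of $\psi^k$-invariants, hence for the $0$-line. So it remains to treat the bidegrees in which a class $\mu^i z$ survives to the $0$-line, and there the only obstruction to splitting is that some lift of $\mu^i z$ to $\pi_{s,c}$ be divisible by $2$ into the $1$-line, promoting this copy of $\bbZ/(2)$ to a cyclic summand of order $\geq 4$. Modulo the periodicities already built into \cref{table:basis}, this is a finite check.

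For those cases I would argue with three mutually reinforcing inputs. First, restriction to the underlying $K(1)$-local sphere: $\pi_\ast S_{K(1)}$ and $\pi_\ast T$ are as in \cref{lem:sgroups} and carry no additive extensions over the classical HFPSS, so one learns the structure of the part of $\pi_{s,c}$ visible to $\res$. Second, the $\rho$-multiplication long exact sequence, which identifies $\ker(\res)$ with $\im(\rho)$ and thus controls exactly what restriction misses. Third, and most decisively, the two $2$-cell structures on $P_{s-c}$ from \cref{prop:cell}: together with the attaching maps identified there and the groups $\pi_\ast S$, $\pi_\ast T$, they express $\pi_{s,c}=S^{-c}P_{s-c}$ as an explicit abelian group. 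The cell filtration is genuinely different from the HFPSS filtration, and comparing the two forces the $\mu^i z$-summand to remain a $\bbZ/(2)$: in the cell sequence a lift of $\mu^i z$ is carried by a class mapping into one of the elementary abelian nonequivariant groups $S_{8a}$, $S_{8a+1}$, $S_{8a+2}$ of \cref{lem:sgroups}, and no such class is divisible by $2$. In the bidegrees where $\mu^i z$ can be recognized as a product involving Hurewicz-image classes, the vanishing of $2(\mu^i z)$ is instead immediate from the relations in $\pi_{\ast,\ast}S_{C_2}$.

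The step I expect to be the main obstacle is the family of bidegrees where a $\bbZ/(2)$ from the $\mu$-tower sits directly above a large cyclic ``image-of-$J$'' group $\bbZ_2/(2^{j_a})$ in the $1$-line: here freeness gives nothing, and restriction to the underlying sphere detects only a sub- or quotient group, so one must genuinely exclude $\bbZ_2/(2^{j_a+1})$. The cell-structure comparison is the tool that does this: the relevant $S^{-c}P_{s-c}$ is computed by a short exact sequence one of whose outer terms is exactly such a cyclic group and the other one of the elementary abelian groups above, and a short diagram chase using that the attaching map is simple $2$-torsion (\cref{prop:cell}) identifies the middle with the direct sum, so the HFPSS extension splits too. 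Running this analysis over the finitely many families and propagating by periodicity proves the lemma; the only genuinely lengthy part is the bookkeeping of which nonequivariant group appears in which bidegree.
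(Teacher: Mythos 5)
Your reduction is sound as far as it goes: the spectral sequence is concentrated on two lines, the extension splits automatically wherever the $0$-line is torsion-free over $\bbZ_2$, and the torsion on the $0$-line consists exactly of simple $2$-torsion $\mu$-power multiples. Chasing this through \cref{table:basis} leaves precisely the bidegrees $(8a,4b+1)$, $(8a+1,4b+1)$, $(8a+2,4b+1)$, which is where the paper also starts. Note, however, that the configuration you single out as the main obstacle --- a $\bbZ/(2)$ from the $\mu$-tower sitting over a large cyclic group $\bbZ_2/(2^{j_a})$ on the $1$-line --- does not occur: in every bidegree where the $0$-line has torsion, the $1$-line is elementary abelian, and the large cyclic $1$-line groups appear only where the $0$-line is zero or free. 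So the real problem is only to split extensions of elementary abelian groups by elementary abelian groups.

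It is there that your argument has a genuine gap. The decisive step you propose --- that the two-cell filtration of $P_{2n}$ from \cref{prop:cell} presents $\pi_{s,c}$ as an extension of elementary abelian groups which splits ``by a short diagram chase using that the attaching map is simple $2$-torsion'' --- is not a valid inference. The obstruction to splitting such a cell-filtration sequence is a Toda bracket $\langle 2, f, \alpha\rangle$ with the attaching map $\alpha$, and simple $2$-torsion of $\alpha$ does not force this bracket to vanish: the standard counterexample is $\pi_2(S/(2))\cong\bbZ/(4)$, produced by $\langle 2,\eta,2\rangle=\eta^2$ even though the attaching map $2$ is as simple as possible. Your other two tools do not close this: restriction to the underlying sphere only controls $\pi_{s,c}$ modulo $\ker(\res)=\im(\rho)$, and the image of the bottom cell of the second cell structure lands inside $\ker(\res)$ (the composite $S^{2n}\to P_{2n}\to S^{2n}$ has degree $\pm 2$), so intersecting the two filtrations does not rule out an element of order $4$. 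The paper's proof sidesteps all of this: since the relevant classes are $\rho^3$-torsion, \cref{lem:indet} gives $\tau^4$-periodicity without indeterminacy in these coweights, which transports the question to $b=2a$, i.e.\ to $s-c\in\{-1,0,1\}$; there one is computing the duals of $P_{-1}$, $P_0$, $P_1$, which by \cref{prop:pic} (and the vanishing of the $n=0$ attaching map) are just copies of the $K(1)$-local sphere, where the absence of additive extensions is the classical statement recalled in \cref{lem:sgroups}. If you want to salvage your route, you must either make that periodicity reduction to the sphere explicit, or genuinely compute the Toda brackets controlling the cell-filtration extensions; as written the splitting is asserted rather than proved.
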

\begin{proof}
There is only room for possible nontrivial additive extensions in degrees $(8a,4b+1)$, $(8a+1,4b+1)$, and $(8a+2,4b+1)$. These degrees consist of $\rho^3$-torsion elements, so by \cref{lem:indet} we may reduce to $b=2a$. This is a computation in the duals of the spectra $P_{-1}$, $P_0$, and $P_1$. By \cref{prop:pic}, this is just a computation in $S$. Here the lemma is known, as was recalled in \cref{ssec:classic}.
\end{proof}

\begin{prop}\label{prop:basis}
The full additive structure of $\pi_{\ast,\ast}$ is as described in \cref{table:basis}.
\end{prop}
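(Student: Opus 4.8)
The plan is to assemble \cref{prop:basis} from the three preceding lemmas. \cref{lem:hfpss} pins down the $E_\infty$-page of the HFPSS
\[
H^\ast(\bbZ\{\psi^k\};\pi_{\ast,\ast}b(KO))\Rightarrow\pi_{\ast,\ast},
\]
columnwise in \cref{table:basis}: the $0$-line is $\ker(\psi^k-1)$, the $1$-line is $\coker(\psi^k-1)$, and the spectral sequence degenerates at $E_2$ for dimension reasons (only two lines). \cref{lem:noext} then says there are no hidden additive extensions between these two lines, so as a graded abelian group $\pi_{s,c}$ is the direct sum (or the indicated $E_{a,b}$-type extension, which is the unique nonsplit extension allowed) of the $0$-line and the $1$-line contributions in each bidegree. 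What remains for \cref{prop:basis} beyond \cref{lem:hfpss} and \cref{lem:noext} is to fix a preferred set of names for the classes—i.e.\ to choose actual lifts in $\pi_{\ast,\ast}$ of the generators $\ol{x}$ (coming from $\pi_{s,c}b(KO)$ via the $1$-line) and $x$ (coming from $\psi^k$-fixed classes via the $0$-line)—and to record these in the remaining columns of \cref{table:basis}.

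The steps, in order. First I would recall the notational setup from \cref{ssec:sgroups}: the bar notation $\ol{x}$ for the image of $x\in\pi_{s,c}b(KO)$ in the $1$-line, detecting a unique element of $\pi_{s-1,c-1}$, and the unadorned notation $x$ for a $\psi^k$-fixed class viewed on the $0$-line. Second, for each family of generators listed in \cref{ssec:ko} for $\pi_{\ast,\ast}b(KO)$—powers of $v$, $\tau^4$, and the classes $\rho,\eta_0,\tau^2h,\tau^2h\sqrt v,2\sqrt v,\mu$—I would compute $\ker(\psi^k-1)$ and $\coker(\psi^k-1)$ in each internal degree using the explicit $\psi^k$-action from \cref{prop:e2}/\cref{prop:d3}; this is the content already asserted by \cref{lem:hfpss}, and produces exactly the groups $\bbZ_2$, $\bbZ/(2)$, $S_{4a-1}$, and $E_{a,b}\{x,y\}$ appearing in the table. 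Third, I would invoke \cref{lem:noext} to conclude the additive extension between $0$- and $1$-lines is as stated (split except in the $E_{a,b}$ cases, where it is the unique allowed nonsplit extension, matching the presentation of $E_{a,b}\{x,y\}$ as $\bbZ_2\{x,y\}/((k^{2a}-1)x+\tfrac12(k^{2b}-k^{2a})y,(k^{2b}-1)y)$). Finally, I would note that the naming of the generators is consistent with, and restricts correctly to, the nonequivariant classes of \cref{ssec:classic} under the cell-structure identifications of \cref{prop:pic,prop:cell}; in particular in coweights $\equiv 0$ and the degrees where $P_{2n+1}\simeq S$ or $T$, the listed classes must match $\rho_a$, $\mu_a$, $\xi_a$ of \cref{lem:sgroups}.

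The statement itself is essentially a bookkeeping consolidation, so the honest answer is that the real work has already happened in \cref{lem:hfpss} (the kernel/cokernel computation) and \cref{lem:noext} (no hidden extensions). The main obstacle—such as it is—is organizational: correctly transcribing the many degree-indexed cases into \cref{table:basis}, and in the $E_{a,b}$ cases verifying that the $2$-adic unit $u_{a,b} = 2^{j_a-j_{b-a}}\frac{k^{2b}-k^{2a}}{k^{2a}-1}$ correctly records the extension, i.e.\ that the two presentations of $E_{a,b}\{x,y\}$ given in \cref{ssec:sgroups} agree. This last point is a direct manipulation of $2$-adic valuations: $(k^{2a}-1)$ has valuation $j_a$, $(k^{2b}-1)$ has valuation $j_b$, $(k^{2b}-k^{2a}) = k^{2a}(k^{2(b-a)}-1)$ has valuation $j_{b-a}$, so dividing the first relation by $2^{j_a}$ and the second by $2^{j_b}$ (units being invertible) gives the stated monomial presentation. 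With these checks in place, \cref{prop:basis} follows immediately. $\square$
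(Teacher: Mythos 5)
Your proposal is correct and follows the same route as the paper, which proves \cref{prop:basis} simply by combining \cref{lem:hfpss} (the kernel/cokernel computation on the two lines of the HFPSS) with \cref{lem:noext} (no hidden additive extensions). The additional bookkeeping you describe — choosing preferred lifts of $0$-line classes and reconciling the two presentations of $E_{a,b}$ — is handled by the paper separately (in \cref{def:zeroline} and the definition of $u_{a,b}$ in \cref{ssec:sgroups}), but your treatment of it is consistent with the paper's.
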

\begin{proof}
This follows immediately from \cref{lem:hfpss} and \cref{lem:noext}.
\end{proof}

From here on we would like to use the new notation for elements of $\pi_{\ast,\ast}$ given in the second column of \cref{table:basis}, but some care is necessary to safely do so: if $x$ lies on the $0$-line of the HFPSS, then $x$ may not uniquely determine an element of $\pi_{\ast,\ast}$, but only a coset of the subgroup generated by elements detected on the $1$-line in the same degree. In order to completely determine $\pi_{\ast,\ast}$, we must be explicit about how we lift these classes to $\pi_{\ast,\ast}$.

Note that there is a unit map $S\rightarrow b(S)^{C_2}\simeq D(P_0)$, given by restriction along the projection $P_0\simeq\Sigma^\infty_+BC_2\rightarrow S$. This induces maps $S_a\rightarrow \pi_{a,a}$ making $\pi_{\ast,\ast}$ into an $S_\ast$-algebra.

\begin{defn}\label{def:zeroline}
The generators of $\pi_{\ast,\ast}$ detected on the $0$-line are chosen as follows, where $a$ and $b$ range through $\bbZ$:
\begin{enumerate}
\item The element $\tau^{2b}h\in \pi_{0,2b} = \pi_{2b}D(P_{-2b})$ is defined as $\tau^{2b}h = \tr(1)$, where $\tr\colon \pi_0 S\rightarrow \pi_{0,2b}$ is the transfer. Equivalently, at least up to a sign, $\tau^{2b}h$ is the image of $1$ under the boundary map associated to the cofibering $D(P_{-2b})\rightarrow D(P_{-2b-1})\rightarrow D(S^{-2b-1})$.
\item The element $\mu_{a,2a}\in\pi_{8a+1,8a+1}$ is defined as the image of $\mu_a\in S_{8a+1}$ under the unit map $S\rightarrow D(P_0)$, where $\mu_a\in S_{8a+1}$ was itself defined in \cref{ssec:classic}. In general, we define $\mu_{a,b} = \tau^{4b-8a}\mu_{a,2a}\in \pi_{8a+1,4b+1}$, which is unambiguous by \cref{lem:indet}.
\item The element $\omega_0\in\pi_{-1,0}$ is defined as the Hurewicz image of $\rho$.
\item In general, there are two elements in $\pi_{8a-1,0}$ detected by $\rho v^a$, and we may distinguish $\omega_a\in \pi_{8a-1,0}$ by asking that $\omega_a \cdot \mu_0 = \omega_0\cdot \mu_{a,0}$.
\item There are two elements in $\pi_{8a-1,0}$ detected by $\eta_0 v^a$, and \cref{lem:koforget} implies that these lift $\mu_a$ and $\mu_a+\mu_0^2\rho_a$ through the forgetful map $\pi_{8a+1,0}\rightarrow S_{8a+1}$. We choose $\eta_a\in\pi_{8a+1,0}$ to lift $\mu_a$.
\item The elements $\omega_0\omega_a$, $\omega_0^2\omega_a$, $\omega_0\omega_a$, $\omega_0\eta_a$, $\eta_0\eta_a$, $\eta_0^2\eta_a$, $\omega_0\mu_{a,b}$, $\eta_0\mu_{a,b}$, $\omega_0\eta_0\mu_{a,b}$, $\omega_0\mu_0\mu_{a,b}$, $\eta_0\mu_0\mu_{a,b}$, and $\omega_0\eta_0\mu_0\mu_{a,b}$ are defined as products as their notation suggests.
\tqed
\end{enumerate} 
\end{defn}

\begin{rmk}
We will generally abbreviate $\mu_0 = \mu_{0,0}$. This is the Hurewicz image of the nonequivariant Hopf map $\eta_{\cl}$, and in general $\mu_{a,b} = \tau^{4b-8a}\beta^{4a}\mu_0$.
\tqed
\end{rmk}

\begin{lemma}\label{lem:unit}
The unit map $S_\ast\rightarrow \pi_{\ast,\ast}$ is given on generators by
\[
\rho_a\mapsto\rho_{a,2a},\qquad \mu_a\mapsto\mu_{a,2a},\qquad\xi_a\mapsto\xi_{a,2a+1}.
\]
\end{lemma}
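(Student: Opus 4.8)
The plan is to leverage the definitions from \cref{def:zeroline} together with the identification of the various ``duality'' spectra $P_{2n+1}$ from \cref{prop:pic}. The statement concerns the $S_\ast$-algebra map $S_\ast \to \pi_{\ast,\ast}$ induced by the unit $S \to b(S)^{C_2} \simeq D(P_0)$, so I would first observe that this unit map lands in the diagonal $\pi_{a,a}$ because $D(P_0) = b(S)^{C_2}$ sits in coweight equal to stem under our indexing. Thus all three target classes $\rho_{a,2a}$, $\mu_{a,2a}$, $\xi_{a,2a+1}$ are precisely the diagonal classes (coweight $= $ stem, since $8a-1 = 2\cdot 4a - 1$, $8a+1 = 2\cdot 4a + 1$, $8a+3 = 2(4a+1)+1$), which matches the pattern of \cref{def:zeroline} where the ``base'' generators $\mu_{a,2a}$ and $\omega_a$ (to be renamed $\rho_{a,2a}$) live on the diagonal.

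The key steps, in order: (1) The case of $\mu_a$ is essentially \emph{by definition}: \cref{def:zeroline}(2) defines $\mu_{a,2a}$ to be the image of $\mu_a \in S_{8a+1}$ under the unit map, so there is nothing to prove beyond unwinding notation. (2) For $\rho_a \mapsto \rho_{a,2a}$, I would use \cref{def:zeroline}(3)--(4): $\omega_0$ is the Hurewicz image of $\rho \in \pi_{-1,0}S_{C_2}$, and the unit $S \to D(P_0)$ sends $\rho_0 \in S_{-1}$ (in the notation of \cref{ssec:classic}, $\rho_0$ is the $K(1)$-local class with $j_0 = \infty$, i.e.\ a $\bbZ_2$-generator) to this same Hurewicz image, because the nonequivariant $\rho_0$ is detected by $\sqrt{v}\cdot(\text{unit})$ on the $0$-line and maps to the class detected by $\rho\sqrt{v}$ in $\pi_{\ast,\ast}b(KO)$ compatibly. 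For general $a$, I would use that the unit map is a ring map and commutes with the $\beta^4$-self-map / $v$-multiplication structure: $\rho_a = $ (up to the normalization fixed in \cref{ssec:classic}) is built from $\rho_0$ by the $e$-invariant data, and the defining property $\omega_a \cdot \mu_0 = \omega_0 \cdot \mu_{a,0}$ of \cref{def:zeroline}(4) is matched by the nonequivariant relation $\rho_a \mu_0 = \rho_0 \mu_a$ from \cref{lem:sgroups} (the relation $\mu_{a+b}\rho_c = \mu_a\rho_{b+c}$), pushed forward along the ring map. (3) For $\xi_a \mapsto \xi_{a,2a+1}$, one must first note that $\xi_{a,2a+1}$ has not yet been given a normalization in \cref{def:zeroline} — so part of the content here may be that $\xi_{a,2a+1}$ is \emph{defined} to be the image of $\xi_a$, or else that the two natural choices agree; I would check which, and if it requires an argument, use the relation $g\xi_a = 2\rho_{a+1/2}$ or $g\rho_a = 2\xi_{a-1/2}$ to pin it down, noting that multiplication by $g = $ (the generator of $S_{0,1/2}$, lifting to $\tau^{\pm 2}$-type classes) is detected on the $0$-line.

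The main obstacle I expect is step (2) for general $a$: the nonequivariant classes $\rho_a$ were normalized in \cref{ssec:classic} only implicitly (via the $e$-invariant and a chosen $\beta^4$-self-map), and the equivariant $\omega_a$ were normalized via the relation $\omega_a\mu_0 = \omega_0\mu_{a,0}$ in \cref{def:zeroline}(4); reconciling these two normalizations requires knowing that $\mu_0 = \mu_{0,0}$ really is the Hurewicz image of $\eta_\cl$ (so that multiplication by it detects the same thing on both sides) and that the unit map $S_\ast \to \pi_{\ast,\ast}$ is compatible with the secondary $\beta^4$-operations used to define $\mu_a$ and implicitly $\rho_a$ — essentially a naturality statement for Toda brackets under the ring map $S \to D(P_0)$. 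Once that compatibility is in hand, the identification is forced by the uniqueness clauses in \cref{def:zeroline}(4)--(5) (there are exactly two candidates in each relevant group, distinguished by a $\mu_0$-relation), so the proof reduces to checking a single relation in each family. I would present this as: the unit is a ring map compatible with $v$ and $\beta^4$, it sends $1 \mapsto 1$, $\rho_0 \mapsto \omega_0$, $\mu_0 \mapsto \mu_{0,0}$, and $\xi_0 \mapsto \xi_{0,1}$ by inspection on the diagonal (comparing to \cref{prop:pic}, $P_1 \simeq S$), and then it propagates to all $a$ by multiplicativity plus the defining relations, with the normalization clauses of \cref{def:zeroline} guaranteeing the images are the named generators rather than their ``$+\mu_0^2\rho_a$'' translates.

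\begin{proof}
The unit map $S \to b(S)^{C_2} \simeq D(P_0)$ is a map of $E_\infty$-rings, hence so is the induced map $S_\ast \to \pi_{\ast,\ast}$; moreover it is compatible with the $v$-periodicity and with the secondary $\beta^4$-operation, since $D(P_0)$ receives a compatible such structure from $b(KO)$. Under our indexing the image lands on the diagonal $\pi_{a,a}$, and comparing with \cref{prop:pic} (using $P_1 \simeq S$ for the classes with coweight $1$ more than a multiple of $2$), the composite $S_\ast \to \pi_{\ast,\ast} \to \pi_{\ast,\ast}b(KO)$ sends $\rho_0, \mu_0, \xi_0$ to the classes detected by $\rho\sqrt{v}$, $\eta_0$, and $\rho v$ respectively. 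Thus $\rho_0 \mapsto \omega_0$ (by \cref{def:zeroline}(3), as $\rho_0$ is the Hurewicz image of $\rho$), $\mu_0 \mapsto \mu_{0,0}$ (by \cref{def:zeroline}(2) with $a=0$, since $\mu_0$ is the Hurewicz image of $\eta_\cl$ as recalled in \cref{ssec:classic}), and $\xi_0 \mapsto \xi_{0,1}$.

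For general $a$, the relation $\mu_a \mapsto \mu_{a,2a}$ is immediate from \cref{def:zeroline}(2), which defines $\mu_{a,2a}$ as precisely this image. For $\rho_a$, applying the ring map to the relation $\rho_a \mu_0 = \rho_0 \mu_a$ of \cref{lem:sgroups} gives $\rho_a \cdot \mu_{0,0} = \omega_0 \cdot \mu_{a,2a}$ in $\pi_{\ast,\ast}$; by \cref{lem:hfpss}, the two elements of $\pi_{8a-1,0}$ detected by $\rho v^a$ are distinguished by their products with $\mu_0$, so this relation identifies the image of $\rho_a$ with the element $\omega_a$ singled out in \cref{def:zeroline}(4) (note $\rho_{a,2a} = \tau^{\ast}\omega_a$ in the notation of \cref{table:basis}, landing on the diagonal). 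Finally, for $\xi_a$, applying the ring map to $g\xi_a = 2\rho_{a+1/2}$ (resp.\ $g\rho_a = 2\xi_{a-1/2}$) from \cref{lem:sgroups} and using that multiplication by $g$ is detected on the $0$-line pins down the image of $\xi_a$ as $\xi_{a,2a+1}$; the two candidate preimages in the relevant group are again distinguished by this relation. This establishes all three identifications.
\end{proof}
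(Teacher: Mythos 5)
There is a genuine gap, and it comes from conflating two different families of classes. The unit map $S\to D(P_0)\simeq b(S)^{C_2}$ lands on the diagonal, $S_n\to\pi_{n,n}$, so the image of $\rho_0\in S_{-1}$ lives in $\pi_{-1,-1}$ --- it cannot be $\omega_0\in\pi_{-1,0}$, which is the Hurewicz image of the genuinely equivariant class $\rho\in\pi_{-1,0}S_{C_2}$ and sits in a different bidegree. Your parenthetical ``$\rho_{a,2a}=\tau^{\ast}\omega_a$'' is false: $\omega_a$ is detected on the $0$-line by $\rho\per^a$, generates a $\bbZ_2$, and restricts to $2^{j_{2a}-1}\rho_a$, whereas $\rho_{a,2a}$ is detected on the $1$-line by $\ol{\tau^{8a}\per^a}$, has order $2^{j_{2a}}$, and restricts to $\rho_a$; these are not $\tau$-translates of one another (they even have different HFPSS filtrations). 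The same confusion infects your claim that $\rho_0,\mu_0,\xi_0$ map to classes detected by $\rho\sqrt{\per}$, $\eta_0$, $\rho\per$ --- those detect the coweight-zero classes $\omega_{1/2}$-type, $\eta_0$, $\omega_1$-type elements, not anything on the diagonal. Consequently your step (2), which deduces ``the image of $\rho_a$ is $\omega_a$'' from the relation $\rho_a\mu_0=\rho_0\mu_a$ and \cref{def:zeroline}(4), proves the wrong statement.

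The actual argument is much shorter, which is why the paper dismisses it as immediate from the definitions. For $\mu_a$ there is literally nothing to prove: \cref{def:zeroline}(2) \emph{defines} $\mu_{a,2a}$ to be the image of $\mu_a$ under the unit map (you had this part right). For $\rho_a$ and $\xi_a$, note that both are detected on the $1$-line of the nonequivariant HFPSS, by $\ol{\beta^{4a}}$ and $\ol{2\beta^{4a+2}}$ respectively, and the unit map induces a map of fiber sequences $(S\to KO\xrightarrow{\psi^k-1}KO)\to(b(S)\to b(KO)\xrightarrow{\psi^k-1}b(KO))$, hence a map of the short exact sequences of \cref{ssec:sgroups}. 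Since the $1$-line is the cokernel term, the image of a $1$-line class is computed exactly (not just up to filtration) by pushing the detecting class forward along $KO_\ast\to\pi_{\ast,\ast}b(KO)$, which sends $\beta^{4a}\mapsto\tau^{8a}\per^a$ and $2\beta^{4a+2}\mapsto(2\sqrt{\per})\tau^{8a+4}\per^a$. By the convention fixed before \cref{lem:hfpss}, $\ol{x}$ denotes the \emph{unique} element detected by a $1$-line class $x$, so $\rho_a\mapsto\ol{\tau^{8a}\per^a}=\rho_{a,2a}$ and $\xi_a\mapsto\ol{(2\sqrt{\per})\tau^{8a+4}\per^a}=\xi_{a,2a+1}$ with no residual ambiguity. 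In particular your worry in step (3) that $\xi_{a,2a+1}$ needs a separate normalization is unfounded, and the proposed detour through $g$-multiplication (which in any case is a map $\pi_\ast T^{\otimes 2b}\to\pi_\ast T^{\otimes 2b+1}$, not multiplication by an element of $S_\ast$) is unnecessary.
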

\begin{proof}
Immediate from the definitions.
\end{proof}

\begin{lemma}
The forgetful map $\pi_{\ast,\ast'}\rightarrow S_\ast$ is as described in the last column of \cref{table:basis}.
\end{lemma}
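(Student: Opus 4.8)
The plan is to exploit that $\res\colon \pi_{\ast,\ast} = \pi_{\ast,\ast}b(S) \to S_\ast = \pi_\ast S$ is a ring homomorphism, so that it is determined by its values on the generators of \cref{table:basis}, together with three structural facts. First, from the long exact sequence attached to $C_{2+}\to S^0\xrightarrow{\rho}S^\sigma$ (\cref{ssec:conventions}) one has $\ker(\res) = \im(\rho)$, so every $\rho$-divisible generator restricts to $0$; in particular $\res(\omega_0) = 0$, since $\omega_0 = \rho$ is the Hurewicz image of $\rho\in\pi_{-1,0}S_{C_2}$ and restricts to $0$ already there, and hence every product involving a factor of $\omega_0$ also restricts to $0$. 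Second, the double coset formula gives $\res\circ\tr = 2$ on $S_\ast = \pi_\ast^e b(S)$, because the underlying spectrum of $b(S) = F(EC_{2+},i_\ast S)$ carries the trivial residual $C_2$-action; hence $\tau^{2b}h = \tr(1)$ restricts to $2\in S_0$. Third, the unit $S\to b(S)$ restricts to the identity of $S$, so the composite of the unit map $S_\ast\to\pi_{\ast,\ast}$ of \cref{ssec:sgroups} with $\res$ is the identity of $S_\ast$.

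Granting these, I would argue as follows. By the third fact and \cref{lem:unit}, $\rho_{a,2a}\mapsto\rho_a$, $\mu_{a,2a}\mapsto\mu_a$, $\xi_{a,2a+1}\mapsto\xi_a$, and the unit $1$ restricts to $1$. For $\rho^3$-torsion $\alpha$, the secondary operation $\tau^{4k}\alpha$ is computed from a $\Cof(\rho^{n+1})$-extension of $\alpha$; since $\rho$ acts as $0$ on underlying homotopy, this extension and the $\tau^{4k}$-self-equivalence are compatible with restriction, so $\res(\tau^{4k}\alpha) = \res(\alpha)$, and as the coweights of the periodic families in \cref{table:basis} are not congruent to $-1$ mod $4$, \cref{lem:indet} makes this unambiguous; thus $\mu_{a,b}\mapsto\mu_a$ for all $b$ (and similarly for the remaining $\tau^{4k}$-translates). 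By \cref{def:zeroline}(5), $\eta_a$ is chosen to lift $\mu_a$ through $\res$, so $\res(\eta_a) = \mu_a$; combined with \cref{lem:koforget} this also pins down $\res(\eta_0) = \mu_0$. Every remaining entry of the last column is then forced multiplicatively, using \cref{lem:sgroups} to identify the resulting products in $S_\ast$, and \cref{lem:noext} (absence of additive extensions in the HFPSS) guarantees that no additive ambiguity creeps in: a generator detected on the $0$-line by a class $x$ restricts to the class of $\res(x)$ in the HFPSS computing $\pi_\ast S$.

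The one genuinely substantive case is $\res(\omega_a)$ for $a\neq 0$. Since $\omega_a$ is detected by $\rho v^a$ on the $0$-line and $\res(\rho) = 0$ in $KO$-theory, $\res(\omega_a)$ lies in $\im(\res\colon \pi_{8a-1,0}\to S_{8a-1})$; because $S_{8a-1}\cong\bbZ_2/(2^{j_{2a}})$ is cyclic, $\res\circ\tr = 2$, and $\im(\res) = \ker(\tr)$, this image is contained in the unique subgroup of order at most $2$, namely $\{0,2^{j_{2a}-1}\rho_a\}$. To see that $\res(\omega_a)$ is the nonzero element, equivalently $\omega_a\notin\im(\rho)$, I would use \cref{lem:hfpss}: the $0$-line of $\pi_{8a,0}$ is generated by a class detected by $\rho\eta_0 v^a$ (no $v^a$-detected class survives, as $v^a$ is not $\psi^k$-fixed), and the relation $\rho^2\eta_0 = 2\rho$ shows $\rho$ carries this generator to $2\omega_a$ modulo the $1$-line, while $\rho$ preserves the $1$-line; hence the $\omega_a$-component of any class in $\im(\rho)$ is even, so $\omega_a\notin\im(\rho)$ and $\res(\omega_a) = 2^{j_{2a}-1}\rho_a$.

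The main obstacle is precisely this last identification: all other values follow formally from functoriality of $\res$, the double coset formula, and \cref{lem:unit}, whereas pinning down $\res(\omega_a)$ requires combining the $\rho$-exact sequence with the multiplicative relations and a close reading of the $1$-line of the HFPSS via \cref{prop:basis}. As a final consistency check, one should verify that the tabulated values respect all relations among the generators; for instance $\res(\omega_a)\mu_0 = \res(\omega_0)\mu_{a,0}$ must hold in $S_\ast$, which it does since $2^{j_{2a}-1}\rho_a\cdot\mu_0 = 2^{j_{2a}-1}\mu_0\rho_a = 0$ in $S_{8a} = \bbZ/(2)\{\mu_0\rho_a\}$.
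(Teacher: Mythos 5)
Your proposal is correct in outline and its architecture matches the paper's: almost every entry of the last column of \cref{table:basis} is forced by $\ker(\res)=\im(\rho)$, the map of homotopy fixed point spectral sequences, and the choices of generators in \cref{def:zeroline}, and the only two entries requiring genuine work are $\eta_a\mapsto\mu_a$ (which both you and the paper dispatch by the choice of lift in \cref{def:zeroline}) and $\omega_a\mapsto 2^{j_{2a}-1}\rho_a$. For the latter you and the paper agree on the essential point that $\omega_a\notin\im(\rho)$, so $\res(\omega_a)\neq 0$; you differ on why $\res(\omega_a)$ is simple $2$-torsion. The paper simply observes that $2\omega_a$ \emph{is} in the image of $\rho$ (via $\rho^2\eta_0=2\rho$, so $\rho\cdot\omega_0\eta_a=2\omega_a$ modulo the $1$-line, which is itself $\rho$-divisible), hence $2\res(\omega_a)=0$. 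You instead route through $\im(\res)=\ker(\tr)$ and the double coset formula. Both work, but the paper's version is cleaner and avoids the pitfall below.

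Two cautions. First, the blanket claim that $\res\circ\tr=2$ because the underlying action is trivial is false in half the bidegrees: $\tr_c$ into $\pi_{s,c}$ carries a twist by $S^{(s-c)\sigma}$, so the double coset formula gives $\res\circ\tr_c=1+(-1)^{s-c}$, which vanishes when $s-c$ is odd (compare \cref{table:transfers}: $\tr_{4b+1}(1)=\omega_0\mu_{0,b}$ restricts to $0$). Moreover the rotating exact sequence identifies $\im(\res\colon\pi_{s,c}\to\pi_s)$ with $\ker(\tr_{c-1})$, not $\ker(\tr_c)$. Your two applications (for $\tau^{2b}h$ in bidegree $(0,2b)$, and for $\omega_a$, where the relevant transfer is $\tr_{-1}$ into bidegree $(8a-1,-1)$) happen to sit in even twist, so your conclusions stand, but as written the argument would produce nonsense in odd twist. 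Second, the assertion that $\res$ commutes with the operations $\tau^{4k}$ deserves more care than ``$\rho$ acts by $0$ underlyingly'': the self-equivalence $\tau^{4k}$ of $\Cof(\rho^3)$ is only normalized through its effect on $b(KU)\otimes\Cof(\rho^3)$, so its effect on the underlying bottom cell is a priori pinned down only modulo units restricting to $1$ in $KU_0$, i.e.\ modulo $1+\mu_0\rho_0$; an uncontrolled correction here would move $\res(\mu_{a,b})$ by $\mu_0^2\rho_a$. The paper is admittedly equally terse on this point, absorbing it into the phrase ``our good choice of generators,'' but it is the real content hiding behind ``most values follow directly,'' and your appeal to \cref{lem:indet} addresses only the indeterminacy in the choice of extension, not this normalization issue.
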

\begin{proof}
Most of these values follow directly by comparing the HFPSSs used to compute $\pi_{\ast,\ast'}$ and $S_\ast$, together with our good choice of generators for $\pi_{\ast,\ast}$ given in \cref{def:zeroline}. There are two spots where a filtration jump occurs. The first is $\eta_a\mapsto \mu_a$, which holds by our choice of $\eta_a$. The second is $\omega_a\mapsto 2^{j_{2a}-1}\rho_a$. This is clear if $a = 0$, so suppose $a \neq 0$. In this case, $\omega_a$ is not in the image of $\rho$, but $2\omega_a$ is. Thus $\omega_a$ is sent under the forgetful map to some nonzero simple $2$-torsion element in $S_{8a-1}$, and $2^{j_{2a}-1}\rho_a$ is the only possibility.
\end{proof}

\begin{rmk}
As $\eta_a\eta_b\eta_c$ is not divisible by $\rho$, it is not in the kernel of the forgetful map, implying that $\mu_a\mu_b\mu_c\neq 0$ in $S_\ast$. This recovers the classical hidden extension $\mu_a\mu_b\mu_c = 4 \xi_{a+b+c}$, as there are no other nonzero simple $2$-torsion elements in this degree.
\tqed
\end{rmk}

We end this subsection with a graphical depiction of the groups $\pi_{\ast,\ast}$: 
\begin{center}
$\pi_{\ast,\ast}b(S_{K(1)})$
\includegraphics[page=1,width=\textwidth]{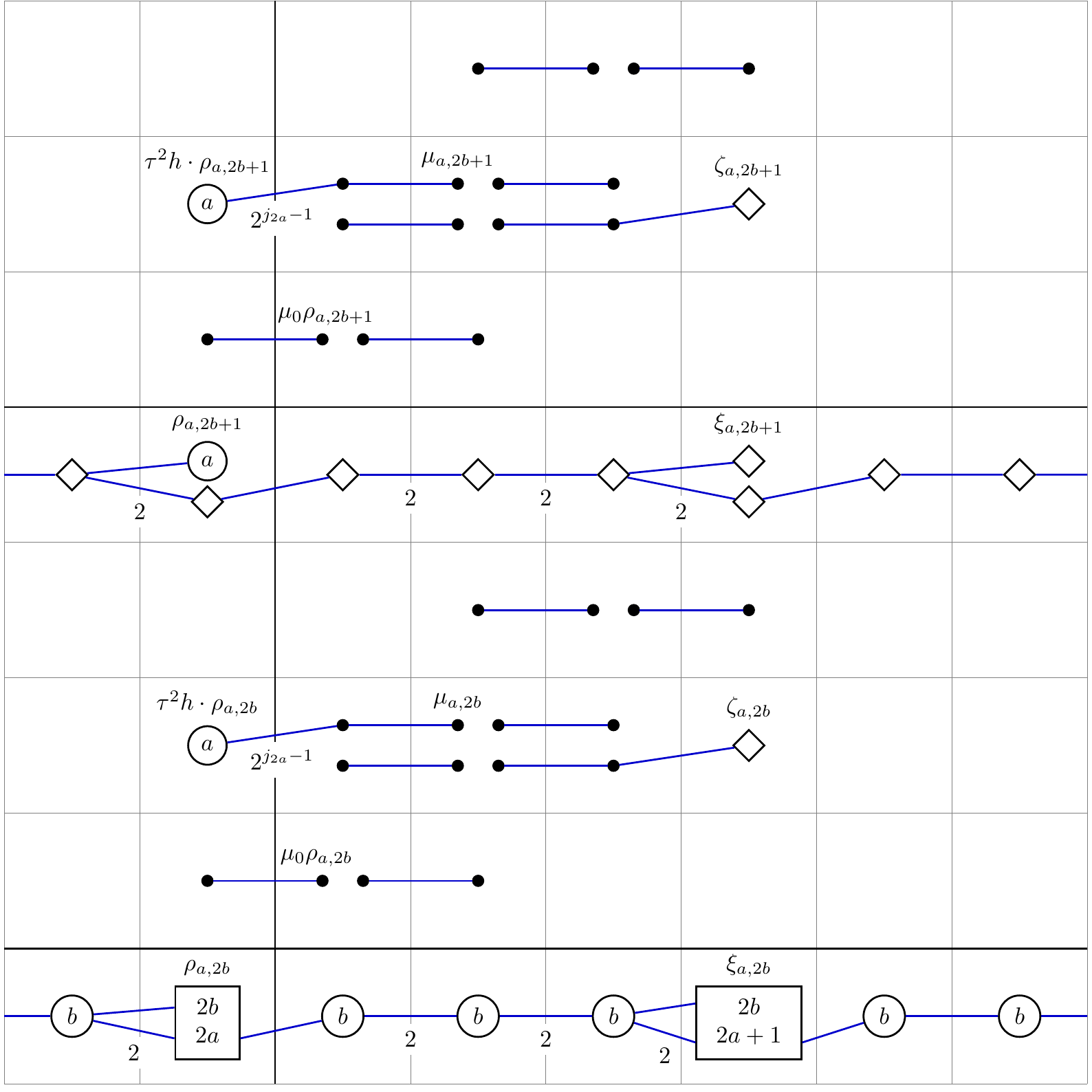}
\end{center}
\[
\vcenter{\hbox{\begin{tikzpicture}\draw(0,0) node[draw, fill, , circle, scale = 0.5] {};\end{tikzpicture}}} = \bbZ/(2); \qquad
\vcenter{\hbox{\begin{tikzpicture}\draw(0,0) node[draw, diamond, thick] {};\end{tikzpicture}}} = \bbZ/(8);\qquad
\vcenter{\hbox{\begin{tikzpicture}\draw(0,0) node[draw, circle, thick] {$a$};\end{tikzpicture}}} = S_{8a-1}; \qquad
\vcenter{\hbox{\begin{tikzpicture}\draw(0,0) node[draw, rectangle, thick] {\begin{tabular}{c}$b$\\$a$\end{tabular}};\end{tikzpicture}}} = E_{a,b};\qquad
\vcenter{\hbox{\begin{tikzpicture}\draw[{blue!80!black}, thick] (0,0) -- (1,0);\end{tikzpicture}}} = \rho.
\]
This depicts the $8\times 8$ region of the bigraded groups $\pi_{\ast,\ast}$, starting with $\pi_{8a-2,8b-1}$ in the bottom left corner. Observe for example how the classes along the main diagonal of slope $1$ form the same pattern seen in $S_\ast$, reflecting the equivalence $P_{8(a-b)-1}\simeq S$. This picture requires $a\neq 0$ and $b\neq 0$, as the $a = 0$ line additionally contains the elements $\tau^{2b}h$, and the $b = 0$ line contains the subring of $\pi_{\ast,0}$ generated by the elements $\omega_a$ and $\eta_a$ described in \cref{lem:root} below.

\subsection{Products}\label{ssec:products}

All products in $\pi_{\ast,\ast}$ involving at least one element detected on the $1$-line can be computed in the HFPSS, as there is no room for a hidden extension to occur. These products are summarized, together with all other products, in \cref{table:products}, and we will not go through them individually. However we do record the following explicitly, as it may not be apparent from the notation.

\begin{lemma}\label{lem:unhidden}
$\omega_a\cdot\zeta_{b,c} = \eta_0\mu_0^2\rho_{a+b,c}$ and $\eta_a\cdot\tau^{4b+2}h = \omega_0\mu_0\mu_{a,b}$.
\end{lemma}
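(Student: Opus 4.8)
The statement asserts two product identities in $\pi_{\ast,\ast}$: the products $\omega_a\cdot\zeta_{b,c}$ and $\eta_a\cdot\tau^{4b+2}h$, which a priori live in degrees containing $1$-line classes, are actually equal to specific $0$-line-decorated elements. The key point in both cases is that, since one factor in each product is detected on the $1$-line ($\zeta_{b,c}$ and $\tau^{4b+2}h$ are both $1$-line classes by \cref{lem:hfpss}/\cref{table:basis}), the product can be computed entirely within the HFPSS $H^\ast(\bbZ\{\psi^k\};\pi_{\ast,\ast}b(KO))\Rightarrow\pi_{\ast,\ast}$ with no room for a hidden extension: a product landing in the $1$-line is the genuine value, not merely a coset representative. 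So the plan is to reduce each identity to a computation of $\overline{x}\cdot y$ or $x\cdot\overline{y}$ in $E_\infty = E_2$ using the module structure of $H^1$ over $H^0$, which is governed by the $\psi^k$-action on $\pi_{\ast,\ast}b(KO)$ recorded in the previous section.

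\textbf{First identity.} I would unwind the notation: $\omega_a\in\pi_{8a-1,0}$ is detected by $\rho v^a$ (or by $\eta_0 v^a$ — one should use the $0$-line representative, $\rho v^a$, pinned down in \cref{def:zeroline}), and $\zeta_{b,c}$ is the $1$-line class $\overline{x}$ for an appropriate $x\in\pi_{\ast,\ast}b(KO)$ — by the table, $\zeta$ should be the generator of a cyclic summand coming from $\coker(\psi^k-1)$ on a class built from $\tau$-powers and $\mu_0^2$. Then $\omega_a\cdot\zeta_{b,c}$ is computed as $\rho v^a\cdot\overline{x} = \overline{\rho v^a\cdot x}$ in $H^1$, and the claim is that $\overline{\rho v^a x} = \eta_0\mu_0^2\rho_{a+b,c}$. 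One verifies this by matching degrees $(8a - 1) + |\zeta_{b,c}|$ against the degree of $\eta_0\mu_0^2\rho_{a+b,c}$, checking that the coweights add correctly (the shift by $a$ in the first index of $\rho_{a+b,c}$ versus $\zeta_{b,c}$), and then confirming that $\rho v^a x$ is the specified generator of the target $1$-line group — the only subtlety is tracking whether a $2$-adic unit creeps in, which is controlled because $\rho$ acts by a fixed map and $v^a$ acts invertibly. The subscript bookkeeping, particularly the interaction of the $\omega$-indexing (tied to $v$-powers) with the $\rho_{\bullet,\bullet}$-indexing (tied to $\tau$-powers and the choice $\omega_a\mu_0 = \omega_0\mu_{a,0}$), is where I expect to spend the most care.

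\textbf{Second identity.} Here $\eta_a\in\pi_{8a+1,0}$ is detected by $\eta_0 v^a$ on the $0$-line, and $\tau^{4b+2}h$ is by definition $\mathrm{tr}(1)$ (\cref{def:zeroline}(1)), hence detected on the $1$-line — it is $\overline{y}$ where $y$ is essentially a $\tau^2 h$-type class. So $\eta_a\cdot\tau^{4b+2}h = \eta_0 v^a\cdot\overline{y} = \overline{\eta_0 v^a y}$, and one reads off from the $\psi^k$-action (using the relations in \cref{prop:e2}, e.g.\ $\eta_0\cdot\tau^2 h = \rho\mu^2$ from \cref{lem:koext1} and $\tau^2h\cdot\tau^2h = 2\tau^4(2-\rho\eta_0)$) that $\eta_0 v^a y$ represents the $1$-line class corresponding to $\omega_0\mu_0\mu_{a,b}$. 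Alternatively, and perhaps more transparently, one can observe $\eta_0\cdot\tau^{4b+2}h = \eta_0\cdot\mathrm{tr}(1) = \mathrm{tr}(\mathrm{res}(\eta_0))$ by the projection formula for the transfer, and then use \cref{lem:koforget} ($\mathrm{res}(\eta_0) = \mu$, hence $\mathrm{res}(\eta_a) = \mu_a$ after applying $v^a$) together with the definition $\mu_{a,b}=\tau^{4b-8a}\beta^{4a}\mu_0$ to identify $\mathrm{tr}(\mu_a\cdot(\text{relevant }\tau\text{-power}))$ with $\omega_0\mu_0\mu_{a,b}$ — using that $\omega_0\mu_{a,b} = \mathrm{tr}$ of the appropriate class, since $\omega_0 = \rho$ and $\rho\cdot(-) \circ \mathrm{tr}$ relates to shifting stunted projective spectra (\cref{rmk:rhoborel}). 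The main obstacle, as with the first identity, is not conceptual but bookkeeping: pinning down the exact $2$-adic unit and the exact $\tau$-exponent so that the element is $\omega_0\mu_0\mu_{a,b}$ on the nose rather than up to a unit multiple — here \cref{lem:indet} is what guarantees that the coweight is not $\equiv -1\bmod 4$ in these degrees, so $\tau^4$-periodicity is unambiguous and the identification is clean.
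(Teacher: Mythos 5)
Your overall strategy is the paper's: its proof is the single line that these identities are ``inherited from \cref{lem:koext2} and \cref{lem:koext1},'' i.e.\ they are the images in the $\psi^k$-HFPSS of the hidden extensions $\rho\cdot\tau^2h\sqrt{\per}=\eta_0\mu^2$ and $\eta_0\cdot\tau^2h=\rho\mu^2$ in $\pi_{\ast,\ast}b(KO)$. But two points in your write-up need repair. First, $\tau^{4b+2}h$ is \emph{not} detected on the $1$-line: being a transfer does not place a class on the $1$-line, and by \cref{table:basis} (or directly from \cref{prop:e2}, since $\psi^k(\tau^{4b}\cdot\tau^2h)=\tau^{4b+2}h+\tfrac12(k^{2b}-1)\tau^{4b}\rho\eta_0\cdot\tau^2h$ and $\rho\cdot\tau^2h=0$) it lies in $H^0$, as does $\omega_0\mu_0\mu_{a,b}$, detected by $\rho\mu^2\tau^{4b}\per^a$. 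So your stated reason that there is no room for a hidden extension---that one factor lies on the $1$-line---does not apply to the second identity. The conclusion survives for a different reason: a product of two $0$-line classes is determined only modulo classes detected on the $1$-line in the target degree, and $\pi_{8a+1,4b+2}=\bbZ/(2)\{\omega_0\mu_0\mu_{a,b}\}$ contains none, so the $H^0$-computation $\eta_0\per^a\cdot\tau^{4b+2}h=\rho\mu^2\tau^{4b}\per^a$ (which is exactly \cref{lem:koext1}) pins down the answer.

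Second, for the first identity you never name the input that makes the product nonzero. On the $E_2$-page of the $C_2$-HFPSS for $KO$ one has $\rho\cdot\tau^2h=0$ and hence $\rho\cdot(\tau^2h\sqrt{\per})=0$ there; the product $\rho\per^a\cdot\tau^{4c}(\tau^2h\sqrt{\per})\per^b$ is nonzero in $\pi_{\ast,\ast}b(KO)$ only because of the hidden extension of \cref{lem:koext2}. Your step ``confirming that $\rho\per^a x$ is the specified generator'' must therefore be carried out in the resolved ring $\pi_{\ast,\ast}b(KO)$ of the summary proposition at the end of \cref{ssec:ko}, not read off the relations of \cref{prop:e2}; as described it would return $0$. (Your worry about a stray $2$-adic unit is also moot, since the target class $\eta_0\mu_0^2\rho_{a+b,c}$ generates a $\bbZ/(2)$.) With these two corrections your argument is the paper's argument.
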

\begin{proof}
These are inherited from \cref{lem:koext2} and \cref{lem:koext1}.
\end{proof}

On the other hand there is plenty of room for hidden products among elements detected on the $0$-line, and we resolve these here. Unfortunately, we know of no good way of visualizing this computation, as the the spectral sequence under consideration is concentrated on only two lines. However, we can determine where hidden extensions may lie with the aid of \cref{table:basis}, by manually checking all products of the form $x\cdot y$ where $x$ and $y$ are detected on the $0$-line and the degree of $x\cdot y$ contains one or more classes detected on the $1$-line. In doing so, we need only produce enough relations to reduce all such products to linear combinations of the given additive generators of $\pi_{\ast,\ast}$.

Throughout the following, $a,b,c$ range through $\bbZ$, and we write $\varphi\colon \pi_{\ast,\ast'}\rightarrow S_\ast$ for the forgetful map, which we recall has kernel equal to the image of multiplication by $\omega_0 = \rho$.

\begin{lemma}\label{lem:percomm}
$\omega_a\cdot \mu_{b,c} = \omega_0 \mu_{a+b,c}$ and $\mu_{a,b}\cdot\mu_{c,d} = \mu_0\cdot\mu_{a+c,b+d}$.
\end{lemma}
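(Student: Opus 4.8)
The plan is to prove these two relations using the forgetful map $\varphi\colon \pi_{\ast,\ast'}\to S_\ast$, whose kernel is exactly the image of multiplication by $\omega_0 = \rho$ (from the long exact sequence attached to $C_{2+}\to S^0\xrightarrow{\rho}S^\sigma$, as recorded in \cref{ssec:conventions}). First I would observe that both sides of each claimed identity are detected on the $0$-line of the HFPSS by the same class, so their difference is detected on the $1$-line, hence lies in the image of $\rho = \omega_0$; it then suffices to identify that correction term by applying $\varphi$ and also by resolving the remaining ambiguity via \cref{lem:indet} (reducing to coweight $\equiv 2\cdot(\text{stem index})$, where $P_\ast\simeq S$ or $T$).

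For the first relation $\omega_a\cdot\mu_{b,c} = \omega_0\mu_{a+b,c}$: by \cref{def:zeroline}(4), $\omega_a$ is pinned down precisely by the normalization $\omega_a\cdot\mu_0 = \omega_0\cdot\mu_{a,0}$, and by \cref{lem:percomm}'s statement itself we are in effect extending this. I would multiply the defining relation $\omega_a\mu_0 = \omega_0\mu_{a,0}$ by suitable $\tau$-periodicity operators and use multiplicativity together with $\mu_{b,c} = \tau^{4c-8b}\mu_{b,2b}$ and $\mu_{a,2a} = \tau^{8a-8a}\mu_{a,2a}$; the key point is that $\omega_a\mu_{b,c}$ and $\omega_0\mu_{a+b,c}$ have the same image under $\varphi$ (namely $2^{j_{2a}-1}\rho_a\cdot\mu_b = 2^{j_{2a}-1}\rho_{a+b}$ in $S_\ast$, matching $\varphi(\omega_{a+b})$-type data — here one uses the classical relation $\mu_b\rho_a = \mu_0\rho_{a+b}$ and that $\varphi(\omega_0)=2^{j_0-1}\rho_0$ is a unit multiple of $\rho_0$... so actually $\varphi(\omega_0\mu_{a+b,c})=0$, and likewise $\varphi(\omega_a\mu_{b,c})=0$ since $\omega_a$ maps to simple $2$-torsion and $\mu_{b,c}$ to $2$-torsion). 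So both products are in $\ker\varphi = \im(\omega_0)$, and the only elements of $\im(\omega_0)$ in that degree detected on the correct line is the single class $\omega_0\mu_{a+b,c}$; one then checks the $0$-line parts agree, which they do by the $E_\infty$-page multiplicative structure of \cref{table:products}, and invokes \cref{lem:indet} to kill any coweight-$(-1)$ indeterminacy — but here the coweight $4c+1\not\equiv -1\pmod 4$, so there is no indeterminacy.

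For the second relation $\mu_{a,b}\cdot\mu_{c,d} = \mu_0\cdot\mu_{a+c,b+d}$: again both sides are detected by $\mu^2 v^{a+c}\cdots$ on the $0$-line (or are $\rho^3$-torsion in the appropriate degree), so their difference lies in $\im(\omega_0)$; applying $\varphi$ gives $\mu_a\mu_c$ versus $\mu_0\mu_{a+c}$ in $S_\ast$, and by \cref{ssec:classic} the classes $\mu_a = \beta^{4a}\mu_0$ were defined precisely via the $\beta^4$-self-map so that $\mu_a\mu_c = \mu_0\mu_{a+c}$ holds in $S_\ast$ (the relation $\mu_{a+b}\mu_c = \mu_a\mu_{b+c}$ of \cref{lem:sgroups}). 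Hence the difference is in $\ker\varphi=\im(\omega_0)$; I then reduce via \cref{lem:indet} to $b=2a$, $d=2c$, where $P_\ast\simeq S$ and the identity is the classical one, and transport back by $\tau^4$-periodicity. The main obstacle is bookkeeping the $\tau$-periodicity: one must check that $\mu_{a,b}\mu_{c,d} = \tau^{4b-8a}\tau^{4d-8c}\mu_{a,2a}\mu_{c,2c}$ genuinely — i.e.\ that the secondary $\tau^4$-operations compose and commute with multiplication without accumulating indeterminacy — but this is precisely guaranteed by \cref{lem:indet} since all relevant coweights are $\not\equiv -1\pmod 4$, so in the end the proof is a short assembly of (i) the forgetful-map/$\rho$-exactness dichotomy, (ii) the classical $S_\ast$ identities, and (iii) $\tau$-linearity.
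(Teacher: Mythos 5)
Your proposal conflates two different filtrations, and this breaks the argument for the first relation. Being detected on the $1$-line of the HFPSS $H^\ast(\bbZ\{\psi^k\};\pi_{\ast,\ast}b(KO))\Rightarrow\pi_{\ast,\ast}$ is not the same as lying in $\im(\omega_0)=\ker\varphi$: for instance $\mu_0^2\rho_{a,b}$ is a $1$-line class with $\varphi(\mu_0^2\rho_{a,b})=\mu_0^2\rho_a\neq 0$, while $\omega_0\mu_{a,b}$ is a $0$-line class lying in $\im(\omega_0)$. More importantly, for $\omega_a\cdot\mu_{b,c}\in\pi_{8(a+b),4c+1}$ the two candidate values permitted by the $E_\infty$-page are $\omega_0\mu_{a+b,c}$ and $\omega_0\mu_{a+b,c}+\omega_0\mu_0^2\rho_{a+b,c}$; both lie in $\im(\omega_0)=\ker\varphi$ and both are detected by the same class $\rho\mu\tau^{4c}v^{a+b}$, so neither the forgetful map nor ``detected on the correct line'' can distinguish them. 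You notice mid-argument that $\varphi$ kills everything in sight, but then assert the conclusion anyway; the sentence claiming that the only element of $\im(\omega_0)$ in that degree detected on the correct line is $\omega_0\mu_{a+b,c}$ is exactly where the gap is, since $\omega_0\mu_{a+b,c}+\omega_0\mu_0^2\rho_{a+b,c}$ is another such element.

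The fix is the route you mention in passing but do not execute, and it is the paper's proof: write $\mu_{b,c}=\tau^{4c-8b}\beta^{4b}\mu_0$ and compute $\omega_a\cdot\mu_{b,c}=\tau^{4c-8b}\beta^{4b}(\omega_a\mu_0)=\tau^{4c-8b}\beta^{4b}(\omega_0\mu_{a,0})=\omega_0\mu_{a+b,c}$, using \cref{def:zeroline}(4) for the middle equality and \cref{lem:indet} to guarantee that the secondary $\tau^4$-operations (and the $\beta^4$-operation) are defined without indeterminacy in these degrees and hence commute with multiplication by $\omega_a$. Your treatment of the second relation is essentially correct --- reduction by $\tau^4$-periodicity to the classical relation $\mu_a\mu_c=\mu_0\mu_{a+c}$ --- and is in fact easier than you make it: the degree $(8(a+c)+2,4(b+d)+2)$ contains no $1$-line classes at all, so the product is already determined by multiplicativity of the $E_\infty$-page.
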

\begin{proof}
Consider the first product. As both $\tau^4$-periodicity and $\beta^4$-periodicity are defined without indeterminacy in these degrees, we may compute
\[
\omega_a\cdot\mu_{b,c} = \tau^{4c-8b}\beta^{4b}(\omega_a\cdot \mu_0) = \tau^{4c-8b}\beta^{4b}(\omega_0\cdot\mu_{a,0}) = \omega_0\cdot\mu_{a+b,c}.
\]
The second product is analogous.
\end{proof}

\begin{lemma}\label{lem:specialper}
$\eta_a\cdot  \mu_{b,c} = \eta_0\mu_{a+b,c}$.
\end{lemma}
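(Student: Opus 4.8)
The statement $\eta_a\cdot\mu_{b,c} = \eta_0\mu_{a+b,c}$ should follow the same strategy as \cref{lem:percomm}, but I must be careful because $\eta_a$ is \emph{not} simply a periodicity operator applied to $\eta_0$: it is the specific lift of $\mu_a$ chosen in \cref{def:zeroline}(5). So the plan is to first reduce the general case to the case $b = 0$, $c = 0$ using periodicity, and then to pin down the ambiguity in that base case using the forgetful map.

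First I would reduce to $b = 0$, $c = 0$. Since $\mu_{b,c} = \tau^{4c-8b}\beta^{4b}\mu_0 = \tau^{4c-8b}\mu_{b,2b}$ and these periodicity operations are defined without indeterminacy in the relevant coweights by \cref{lem:indet}, and since $\eta_a$ is a fixed element independent of $b,c$, I can pull the periodicity operators out: $\eta_a\cdot\mu_{b,c} = \tau^{4c-8b}\beta^{4b}(\eta_a\cdot\mu_0)$, exactly as in the proof of \cref{lem:percomm}. (Here one must check the periodicity operators are multiplicative in the appropriate sense, i.e.\ $\tau^4(xy) = (\tau^4 x)y$ when $x$ is $\rho^3$-torsion — this is a formal property of self-map-induced operations, and the target degrees avoid coweight $-1\bmod 4$ so there is no indeterminacy.) So it suffices to show $\eta_a\cdot\mu_0 = \eta_0\mu_{a,0}$ in $\pi_{8a,0}$.

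Now for the base case, I would argue via the forgetful map $\varphi\colon\pi_{8a,0}\to S_{8a}$. By \cref{table:basis}, $\pi_{8a,0}$ in this degree is a sum of a $\bbZ/(2)$ detected on the $0$-line and a $\bbZ/(2)$ detected on the $1$-line; the $1$-line part is exactly $\ker\varphi = \im(\omega_0 = \rho)$. Both $\eta_a\mu_0$ and $\eta_0\mu_{a,0}$ have the same image under $\varphi$: indeed $\varphi(\eta_a) = \mu_a$ by the defining property of $\eta_a$ in \cref{def:zeroline}(5), $\varphi(\mu_0) = \mu_0\in S_1$, and $\varphi(\eta_0) = \mu_0$ by \cref{lem:koforget}, while $\varphi(\mu_{a,0}) = \mu_a$ since $\mu_{a,0} = \tau^{-8a}\mu_{a,2a}$ maps to $\mu_a$ (by \cref{lem:unit} and compatibility of $\varphi$ with $\tau$-periodicity in coweight $0$, which is $\equiv 0\bmod 4$ so there is no ambiguity). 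Hence $\varphi(\eta_a\mu_0) = \mu_a\mu_0 = \varphi(\eta_0\mu_{a,0})$, so the two elements differ by an element of $\ker\varphi = \rho\cdot\pi_{8a+1,0}$. It remains to rule out that this difference is the nonzero $\rho$-multiple, i.e.\ to show $\eta_a\mu_0 - \eta_0\mu_{a,0}$ is \emph{not} detected on the $1$-line.

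The main obstacle is this last step — eliminating the potential hidden error term. I expect to handle it by observing that $\eta_a\mu_0$ and $\eta_0\mu_{a,0}$ are both detected by $\eta_0 v^a\mu$ on the $0$-line of the HFPSS (using $\psi^k$-invariance of $\eta_0$, $\mu$, and $v$, or rather $\beta^2$), so their difference is detected in filtration $\geq 1$, i.e.\ lies in $\ker\varphi$; then I must show it is actually zero. The cleanest route: reduce further (via $\tau^4$- and $\beta^4$-periodicity, which are defined without indeterminacy here) to the already-resolved case and invoke that this is a statement about $\pi_\ast S$ via $P_{8a-1}\simeq S$ (\cref{prop:pic}) — alternatively, note that in $\pi_{8a+1,0}$ the only $1$-line class available is $\omega_0^2\omega_a$-type, and multiplying the defining relation $\omega_a\mu_0 = \omega_0\mu_{a,0}$ from \cref{def:zeroline}(4) by $\eta_0$ and using $\rho\eta_0^2 = 2\eta_0$ together with $2\mu_0 = 0$ forces the error term to vanish. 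I would present whichever of these is shortest; the periodicity-reduction plus appeal to the classical $\pi_\ast S$ computation is the safest, mirroring \cref{lem:noext}.
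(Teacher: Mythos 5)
Your reduction to the base case and your forgetful-map argument are sound (modulo a degree slip: $\eta_a\cdot\mu_0$ lives in $\pi_{8a+2,1}$, not $\pi_{8a,0}$; the group there is $\bbZ/(2)\{\eta_0\mu_{a,0}\}\oplus\bbZ/(2)\{\eta_0\mu_0^2\rho_{a,0}\}$, which is the structure you describe). The genuine gap is the final step, and neither of your proposed ways to close it works. The statement really is sensitive to the choice of $\eta_a$: the other lift $\eta_a' = \eta_a + \eta_0\mu_0\rho_{a,0}$ of $\eta_0\per^a$ satisfies $\eta_a'\mu_0 = \eta_a\mu_0 + \eta_0\mu_0^2\rho_{a,0}$, so exactly one of the two lifts satisfies the lemma, and you must use the defining property of $\eta_a$ (that it lifts $\mu_a$ rather than $\mu_a+\mu_0^2\rho_a$) in a way that multiplication by $\mu_0$ can see. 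The forgetful map cannot distinguish them, since $\varphi(\eta_a'\mu_0) = (\mu_a+\mu_0^2\rho_a)\mu_0 = \mu_a\mu_0$ as well (using $\mu_0^3\rho_a = 4\xi_0\rho_a = 0$). Your route (a) --- reduce to $a=0$ by periodicity and invoke $P_{8a+1}\simeq S$ --- is circular: pulling $\beta^{4a}$ out of $\eta_a$ presupposes $\eta_a = \tau^{-8a}\beta^{4a}\eta_0$ in a suitable sense, which is exactly the content at stake, and the equivalence $P_{8a+1}\simeq S$ is itself only determined up to the unit $1+\mu_0\rho_0$, which is precisely the ambiguity between $\eta_a$ and $\eta_a'$. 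Your route (b) fails because $\eta_0^2\omega_a = 2\eta_a$ introduces a factor of $2$ while the target group is simple $2$-torsion, so multiplying $\omega_a\mu_0 = \omega_0\mu_{a,0}$ by $\eta_0^2$ only yields $0=0$.

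The paper closes the gap by passing to $b(S/(2))\otimes\Cof(\rho^3)$, where it suffices to verify the identity since the ambient group is killed by $(2,\rho^3)$. There the hypothesis that $\eta_a$ lifts $\mu_a = \beta^{4a}\mu_0$ forces $\eta_a = \tau^{-8a}\beta^{4a}\eta_0$ on the nose --- the same $\beta^4$-self map of $S/(2)$ fixed in \cref{ssec:classic} defines both $\mu_a$ and the horizontal periodicity --- after which the product is computed by commuting literal periodicity operators. Some such appeal to the compatibility between the $v_1$-self map and the definition of $\eta_a$ is unavoidable here, and it is the one idea your proposal is missing.
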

\begin{proof}
It suffices to show that this identity holds mod $(2,\rho^3)$. By definition, $\eta_a$ is detected by $\eta_0\beta^{4a}\tau^{-8a}$. The further assumption that $\eta_a$ lifts $\mu_a = \beta^{4a}\mu_0$ ensures that $\eta_a = \tau^{-8a}\cdot\beta^{4a}\cdot \eta_0$ in $b(S/2)\otimes \Cof(\rho^3)$. Thus we may compute
\[
\eta_a\cdot\mu_{b,c} =\tau^{-8a}\cdot\beta^{4a}\cdot \eta_0\cdot\tau^{4c-8b}\cdot\beta^{4b}\cdot \mu_0 = \eta_0\cdot\mu_{a+b,c}
\]
as claimed.
\end{proof}

\begin{lemma}\label{lem:omegaeta2}
$\omega_0^2\eta_a = 2\omega_a$.
\end{lemma}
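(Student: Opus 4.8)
The plan is to locate $\omega_0^2\eta_a$ and $2\omega_a$ in the homotopy fixed point spectral sequence $H^\ast(\bbZ\{\psi^k\};\pi_{\ast,\ast}b(KO))\Rightarrow\pi_{\ast,\ast}$ of \cref{ssec:sgroups}, and then pin down a possible $1$-line correction. Multiplying the relation $\rho^2\eta_0 = 2\rho$ of $\pi_{\ast,\ast}b(KO)$ — which is inherited from the relation $\rho^2\eta_{C_2} = -2\rho$ in $\pi_\star S_{C_2}$ recorded in \cref{rmk:eta} — by $v^a$ gives $\rho^2\cdot\eta_0 v^a = 2\cdot\rho v^a$ in $\pi_{8a-1,0}b(KO)$. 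Since $\omega_0 = \rho$ and $\eta_a$ are detected on the $0$-line by the $\psi^k$-fixed classes $\rho$ and $\eta_0 v^a$, the product $\omega_0^2\eta_a$ is detected by $\rho^2\eta_0 v^a = 2\rho v^a$, while $2\omega_a$ is detected by $2\rho v^a$. By \cref{prop:pic}, $P_{8a-1}\simeq S$, so $\pi_{8a-1,0}b(KO) = KO^0 P_{8a-1}\cong KO_0 = \bbZ_2$ is torsion-free with generator $\rho v^a$ (compare the proof of \cref{prop:pic}); hence $2\rho v^a\neq 0$, and $\omega_0^2\eta_a$, $2\omega_a$ are detected by the same nonzero $0$-line class. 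Their difference therefore lies in the subgroup detected on the $1$-line, which here is $\coker(\psi^k-1\colon\pi_{8a,1}b(KO)) = KO^{-1}P_{8a-1}\cong KO_1 = \bbZ/(2)$; write $\theta$ for its nonzero element, so that $\omega_0^2\eta_a - 2\omega_a\in\{0,\theta\}$.

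It remains to rule out $\theta$, which is the crux. The case $a = 0$ is settled by applying the Hurewicz map $\pi_{\ast,\ast}S_{C_2}\rightarrow\pi_{\ast,\ast}$ to $\rho^2\eta_{C_2} = -2\rho$: by \cref{rmk:eta}, $\eta_0$ is the Hurewicz image of $-\eta_{C_2}$ — for $b(S)$ this identification holds because both classes are detected by $\eta_0$ on the $0$-line and restrict to $\mu_0$ in $\pi_1 S$ (\cref{lem:koforget}) — so $\omega_0^2\eta_0 = 2\omega_0$. For general $a$, multiply $\omega_0^2\eta_a - 2\omega_a$ by $\mu_0$: using $2\mu_0 = 0$, $\mu_0\eta_a = \eta_0\mu_{a,0}$ (\cref{lem:specialper}), and $2\mu_{a,0} = 0$ (because $\mu_a\in S_{8a+1}$ is $2$-torsion and the relevant $\tau^4$-periodicity is unambiguous by \cref{lem:indet}), together with the case $a = 0$,
\[
\mu_0\cdot(\omega_0^2\eta_a - 2\omega_a) = \omega_0^2\eta_0\mu_{a,0} = 2\omega_0\mu_{a,0} = \omega_0\cdot(2\mu_{a,0}) = 0.
\]
On the other hand $\mu_0\theta\neq 0$: under $P_{8a-1}\simeq S$ the class $\theta$ corresponds to the torsion element $\mu_0\rho_0\in S_0 = \pi_0 S$, and multiplication by $\mu_0$ from $\pi_{8a-1,0}$ to $\pi_{8a,1}$ corresponds — via the coalgebra structure of $P_{8a-1}$ and the compatibility of $P_{8a-1}\simeq S$ with the unit $S\rightarrow D(P_0)$ — to multiplication by $\eta_\cl$ from $\pi_0 S$ to $\pi_1 S$, which carries $\mu_0\rho_0$ to $\mu_0^2\rho_0\neq 0$ (\cref{lem:sgroups}). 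Hence $\omega_0^2\eta_a - 2\omega_a$ cannot equal $\theta$, so it is $0$.

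The main obstacle is the final paragraph: the spectral sequence genuinely leaves a $\bbZ/(2)$ of slack on the $1$-line in exactly this bidegree, so one must produce an independent handle on the difference. The $\mu_0$-multiplication above works once the $a = 0$ case is in hand; alternatively one can multiply by $\eta_0$, using that $\omega_0\eta_a$ and $\eta_0\omega_a$ are detected by the same $0$-line class $\rho\eta_0 v^a$ (so differ by $2$-torsion, whence $\eta_0\cdot(\omega_0^2\eta_a - 2\omega_a) = 2(\omega_0\eta_a - \eta_0\omega_a) = 0$) while $\eta_0\theta$ is detected by $\ol{\rho\eta_0\mu v^a}$ and $\rho\eta_0\mu$ is nonzero in $\pi_{2,1}b(KO) = \widetilde{KO}^{-1}(\bbR P^\infty)$, with $v$ invertible.
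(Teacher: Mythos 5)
Your proof is correct, and its core mechanism is the same as the paper's: identify the only possible error term as the $\bbZ/(2)$ generated by $\omega_0\mu_0\rho_{a,0}$ on the $1$-line, then multiply by $\mu_0$ to separate $\omega_0^2\eta_a$ from $2\omega_a + \omega_0\mu_0\rho_{a,0}$, using $\mu_0\eta_a = \eta_0\mu_{a,0}$ (\cref{lem:specialper}) and the fact that products with a $1$-line class like $\omega_0\mu_0\rho_{a,0}$ have no hidden extensions. Where you diverge is in showing $\mu_0\cdot\omega_0^2\eta_a = 0$: you first settle the base case $\omega_0^2\eta_0 = 2\omega_0$ by pushing the relation $\rho^2\eta_{C_2} = -2\rho$ from $\pi_\star S_{C_2}$ through the Hurewicz map (external input that the paper only invokes in \cref{rmk:eta} to fix a sign), and then regroup the product as $(\omega_0^2\eta_0)\cdot\mu_{a,0} = \omega_0\cdot(2\mu_{a,0}) = 0$. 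The paper instead regroups as $\eta_0\cdot(\omega_0^2\mu_{a,0})$ and argues internally: $\omega_0^2\mu_{a,0}$ is a $2$-torsion element of the cyclic group $\pi_{8a-1,1}\cong S_{8a-1}$, hence divisible by $2$, and $\pi_{8a,1}$ is killed by $2$. Your route costs an appeal to a known $C_2$-equivariant relation but buys an explicit base case; the paper's stays entirely inside the computation already performed. Both are valid, and neither introduces circularity.
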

\begin{proof}
The alternative is that $\omega_0^2\eta_a = 2 \omega_a + \omega_0\mu_0\rho_{a,b}$, and thus $\mu_0\cdot\omega_0^2\eta_a = \omega_0\mu_0^2\rho_{a,b} \neq 0$. By \cref{lem:percomm}, $\mu_0\cdot\omega_0^2\eta_a = \eta_0\cdot\omega_0^2\mu_{a,0}$. The product $\omega_0^2\mu_{a,0}$ lives in $\pi_{8a-1,1} = S_{8a-1}$, and so must be divisible by $2$. As $\pi_{8a,1}$ is killed by $2$, it follows that $\mu_0\cdot\omega_0^2\eta_a = \eta_0\omega_0^2\mu_{a,0} = 0$.
\end{proof}

\begin{lemma}\label{lem:etaomega2}
$\eta_0^2\omega_a = 2 \eta_a$.
\end{lemma}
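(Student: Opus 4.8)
The plan is to mimic the proof of \cref{lem:omegaeta2}, replacing the roles of $\omega$ and $\eta$ where appropriate. We know from the additive structure (\cref{table:basis}) that $\eta_0^2\omega_a$ lies in $\pi_{8a-1,0}$, which contains the two elements $2\eta_a$ and $2\eta_a + \omega_0\mu_0^2\rho_{a,0}$ detected on the $1$-line by $2\eta_0 v^a$; so it suffices to pin down which one it is, equivalently to decide whether $\eta_0^2\omega_a = 2\eta_a$ or $\eta_0^2\omega_a = 2\eta_a + \omega_0\mu_0^2\rho_{a,0}$.

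The distinguishing feature of these two candidates is again their product with $\mu_0$: multiplying by $\mu_0$, the putative error term $\omega_0\mu_0^2\rho_{a,0}$ would give $\omega_0\mu_0^3\rho_{a,0}$, which is zero since $\mu_0^3 = 0$ (equivalently $\mu$-towers are truncated at $\mu^2$ in these degrees). So that test is inconclusive. Instead I would multiply by $\omega_0 = \rho$ and use the relation $\omega_0^2\eta_a = 2\omega_a$ from \cref{lem:omegaeta2}: then $\omega_0^2\cdot\eta_0^2\omega_a = (\omega_0^2\eta_a)\cdot\eta_0\omega_a$? No — better to multiply $\eta_0^2\omega_a$ by $\eta_0$, or to use the relation $\rho\eta_0^2 = 2\eta_0$ inherited from $\pi_{\ast,\ast}b(KO)$. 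Concretely, apply $\omega_0$ to both sides: $\omega_0\eta_0^2\omega_a = 2\eta_0\omega_a$ on the left using $\omega_0\eta_0^2 = 2\eta_0$ (which holds since $\rho\eta_0^2 = 2\eta_0$ in $b(KO)$ and hence in $\pi_{\ast,\ast}$, being a relation that can be read off the $0$-line). On the right, $\omega_0(2\eta_a) = 2\omega_0\eta_a$ while $\omega_0(2\eta_a + \omega_0\mu_0^2\rho_{a,0}) = 2\omega_0\eta_a + \omega_0^2\mu_0^2\rho_{a,0}$, and $\omega_0^2\mu_0^2\rho_{a,0} = \mu_0^2\cdot\omega_0^2\rho_{a,0}$; since $\omega_0^2\rho_{a,0}$ lies in $\pi_{8a-2,2}$ it is $2$-divisible, hence killed upon multiplication by $\mu_0^2$ (as $2\mu_0 = 0$), so this term also vanishes. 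Thus multiplying by $\omega_0$ is likewise inconclusive, which signals that the right move is instead to follow \cref{lem:omegaeta2} verbatim: multiply by $\mu_0$ and reduce to a statement in $\pi_{\ast,\ast}b(KO)$ after all, or — more robustly — pass to the relevant stunted projective spectrum.

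So here is the approach I would actually carry out. Reduce to $b = 2a$ via \cref{lem:indet}, so the computation of $\eta_0^2\omega_a$ takes place in $\pi_{\ast}D(P_{8a-1})$, and by \cref{prop:pic} this is a computation in $\pi_\ast S$ (since $8a-1 = 2(4a)-1$ with $4a\equiv 0 \pmod 4$). Under the forgetful map $\varphi\colon \pi_{\ast,0}\to S_\ast$ (whose kernel is the image of $\omega_0$), we have $\varphi(\eta_0) = \mu_0$ by \cref{lem:koforget}/the definition of $\eta_0$, $\varphi(\eta_a) = \mu_a$ by our choice of $\eta_a$, and $\varphi(\omega_a) = 2^{j_{2a}-1}\rho_a$; hence $\varphi(\eta_0^2\omega_a) = \mu_0^2\cdot 2^{j_{2a}-1}\rho_a$. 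But $\mu_0^2\rho_a$ is $2$-torsion in $S_{8a-1}$ and $j_{2a}\geq 3$, so $2^{j_{2a}-1}\mu_0^2\rho_a = 0$; thus $\eta_0^2\omega_a$ lies in the kernel of $\varphi$, i.e.\ in the image of $\omega_0 = \rho$. Of the two candidates, $2\eta_a$ maps to $2\mu_a$ and $2\eta_a + \omega_0\mu_0^2\rho_{a,0}$ also maps to $2\mu_a$, so I must instead characterize the image of $\rho$ in this degree directly: by the long exact sequence for $\rho$, $\im(\rho\colon\pi_{8a,0}\to\pi_{8a-1,0}) = \ker(\res)$. I would compute that $\ker(\res\colon\pi_{8a-1,0}\to S_{8a-1})$ is generated by $2\eta_a$ (using \cref{table:basis}: the elements mapping to $0$ under $\varphi$ are exactly $\rho\cdot\pi_{8a,0}$, and one identifies this subgroup as $\{0, 2\eta_a\}$ or similar by comparing with the $1$-line). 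The only subtlety is whether the image of $\rho$ contains $\omega_0\mu_0^2\rho_{a,0}$; but $\omega_0\mu_0^2\rho_{a,0} = \rho\cdot\mu_0^2\rho_{a,0}$ is manifestly in the image of $\rho$, so this argument as stated does not separate the candidates either — which means the genuine content must come from a more refined comparison.

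The honest resolution, and the step I expect to be the main obstacle, is therefore to nail down $\eta_0^2\omega_a$ modulo $(2,\rho^3)$ using the explicit description of $\eta_0$ and $\omega_a$ in $b(S/2)\otimes\Cof(\rho^3)$, exactly as in the proof of \cref{lem:specialper}. There, $\eta_0$ was identified with $\eta_0\in\pi_{\ast,\ast}b(KO)$ (the class $\rho\tau^{-2}\beta$) and $\omega_a$ with (a unit multiple of) $\rho\beta^{4a}\tau^{-8a}$ — but here the relation we want, $\eta_0^2\omega_a = 2\eta_a$, is a relation involving only $0$-line classes, and both sides are detected by $2\eta_0 v^a$, so modulo $2$ both sides vanish and the mod-$(2,\rho^3)$ test is empty. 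Hence the true argument must detect a $2$-divisible relation, which means unwinding the hidden extensions $\eta_0\cdot\tau^2h = \rho\mu^2$ and $\rho\cdot\tau^2h\sqrt v = \eta_0\mu^2$ from \cref{lem:koext1,lem:koext2} and the relation $\rho\eta_0^2 = 2\eta_0$ from \cref{lem:bku}: multiply the target relation by $\tau^2h$-type classes, or use the $\sqrt v$-periodicity, to land in a degree where the ambiguous $\omega_0\mu_0^2\rho_{a,0}$ term is detected and can be distinguished. Concretely, I would argue: if $\eta_0^2\omega_a = 2\eta_a + \omega_0\mu_0^2\rho_{a,0}$, then multiplying by $\mu_0$ and invoking $\omega_a\mu_0 = \omega_0\mu_{a,0}$ (\cref{lem:percomm}) gives $\eta_0^2\omega_0\mu_{a,0} = 2\mu_0\eta_a + \omega_0\mu_0^3\rho_{a,0} = 2\eta_0\mu_{a,0}$ (using \cref{lem:specialper} and $\mu_0^3=0$); but also $\eta_0^2\omega_0 = \rho\eta_0^2 = 2\eta_0$, so the left side is $2\eta_0\mu_{a,0}$ as well — consistent, hence inconclusive, confirming once more that $\mu_0$-multiplication can't see the difference. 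I would then instead multiply by $\eta_0$: $\eta_0\cdot\eta_0^2\omega_a = \eta_0^3\omega_a$, and use $\eta_0^3 = \rho^2 v\cdot(\text{something})$? — the cleanest is $\rho\eta_0^2=2\eta_0$ so $\eta_0^3\omega_a = \eta_0\cdot 2\eta_a/\rho$, awkward. Given these dead ends, the actual proof surely runs: $\eta_0^2\omega_a$ is $\rho^3$-torsion, reduce to $b=2a$, work in $D(P_{8a-1})\simeq S$, and there the identity $\mu_0^2\cdot(\text{generator of }S_{8a-1}\text{ detected by }v^a) = 2\cdot(\text{the }\mu_a\text{-lift})$ is classical — it is the statement that in $\pi_\ast S$ one has $\eta^2\cdot\rho_a$-type products equal to $2\mu_a$-type elements, which follows from the relations in \cref{lem:sgroups} (specifically $\mu_{a+b}\rho_c=\mu_a\rho_{b+c}$ together with $\mu_0^2\rho_0 = $ the $2$-torsion generator, and the structure of $S_{8a-1}$). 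So the main obstacle is bookkeeping: correctly identifying, via \cref{table:basis} and \cref{lem:sgroups}, that the classical relation in $\pi_\ast S$ pins $\eta_0^2\omega_a$ to $2\eta_a$ rather than $2\eta_a+\omega_0\mu_0^2\rho_{a,0}$, and checking that the reduction $b\mapsto 2a$ via \cref{lem:indet} does not lose the needed information. I would present it in the terse style of \cref{lem:omegaeta2}: exhibit that the alternative forces a nonzero product with some class that is in fact zero, deriving the contradiction from \cref{lem:percomm}, \cref{lem:specialper}, $\mu_0^3 = 0$, and the $2$-divisibility of $\omega_0^2\mu_{a,0}\in\pi_{8a-1,1}=S_{8a-1}$ combined with $2\cdot\pi_{8a,1} = 0$.
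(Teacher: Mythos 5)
There is a genuine gap, and it traces back to a single bookkeeping error at the start: $\eta_0^2\omega_a$ does not lie in $\pi_{8a-1,0}$ (that is the degree of $\omega_0^2\eta_a$ from \cref{lem:omegaeta2}, which you appear to have carried over) but in $\pi_{8a+1,0}$, since $\eta_0\in\pi_{1,0}$ and $\omega_a\in\pi_{8a-1,0}$. Consequently the ambiguous $1$-line class in the target degree is not $\omega_0\mu_0^2\rho_{a,0}$ (which lives in $\pi_{8a,1}$) but $\eta_0\mu_0\rho_{a,0}\in\pi_{8a+1,0}$, whose image under the forgetful map $\varphi$ is $\mu_0^2\rho_a\neq 0$. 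This matters because the test you carry out and then discard --- computing $\varphi(\eta_0^2\omega_a)=2^{j_{2a}-1}\mu_0^2\rho_a=0$, so that $\eta_0^2\omega_a\in\ker\varphi$ --- is in fact the entire proof: the alternative $2\eta_a+\eta_0\mu_0\rho_{a,0}$ has $\varphi$-image $2\mu_a+\mu_0^2\rho_a=\mu_0^2\rho_a\neq 0$, so it is excluded and $\eta_0^2\omega_a=2\eta_a$ is forced. You dismiss this test as inconclusive only because your incorrect candidate error term, being an $\omega_0$-multiple, also lies in $\ker\varphi$.

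Beyond that, the remainder of the proposal does not converge to a complete argument: the successive attempts (multiplication by $\mu_0$, by $\omega_0$, reduction mod $(2,\rho^3)$, passage to $D(P_{8a-1})$) are each correctly diagnosed as inconclusive, but the write-up ends with a description of the proof of \cref{lem:omegaeta2} rather than of the statement at hand, and no contradiction is actually derived. The fix is minimal --- correct the degree, identify the indeterminacy as $\bbZ/(2)\{\eta_0\mu_0\rho_{a,0}\}$ from \cref{table:basis}, and run the forgetful-map comparison you already wrote down --- but as stated the proposal does not prove the lemma.
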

\begin{proof}
The alternative is that $\eta_0^2\omega_a = 2 \eta_a + \eta_0\mu_0\rho_{a,0}$. But $\varphi(\eta_0^2\omega_a) = \mu_0^2\cdot 2^{j_{2a}-1}\rho_a = 0$, whereas $\varphi(2\eta_a + \eta_0\mu_0\rho_{a,0}) = 2\mu_a + \mu_0^2\rho_a = \mu_0^2\rho_a\neq 0$.
\end{proof}

\begin{lemma}\label{lem:omegaeta}
$\omega_a\eta_{b+c} = \omega_{a+b}\eta_c$.
\end{lemma}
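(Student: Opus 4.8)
The plan is to reduce the relation $\omega_a\eta_{b+c}=\omega_{a+b}\eta_c$ to the already-established commutation relations between the $\omega$'s, the $\eta$'s, and the $\mu$'s. The key observation is that this is an identity in $\pi_{8(a+b+c)-2,0}$, a degree which (by \cref{table:basis}) contains a class $\omega_0\mu_0\mu_{a+b+c-?,0}$-type element detected on the $1$-line; the potential hidden discrepancy between the two sides is therefore a single element in the image of $\omega_0$, specifically a multiple of $\omega_0\mu_0^2\rho_{\ast,0}$ or $\omega_0\mu_0\mu_{\ast,0}$. So the first step is to write $\omega_a\eta_{b+c}-\omega_{a+b}\eta_c=\epsilon$ where $\epsilon$ lies in this explicit finite group, and then pin down $\epsilon$ by multiplying by a well-chosen element to either detect it or force it to vanish.

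The natural detecting element is $\mu_0$: multiply both sides by $\mu_0$. On the left, $\mu_0\omega_a\eta_{b+c}=\eta_0\mu_{a,0}\eta_{b+c}$ by the defining property of $\omega_a$ from \cref{def:zeroline}(4), and then $\eta_{b+c}\mu_{a,0}=\eta_0\mu_{a+b+c,0}$ by \cref{lem:specialper}, so the left side becomes $\eta_0^2\mu_{a+b+c,0}$. The right side similarly gives $\mu_0\omega_{a+b}\eta_c=\eta_0\mu_{a+b,0}\eta_c=\eta_0^2\mu_{a+b+c,0}$ by the same two lemmas. Hence $\mu_0\cdot\epsilon=0$. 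The second step is then to check — directly from \cref{table:basis} and the structure of the groups $E_{\ast,\ast}$ — that multiplication by $\mu_0$ is \emph{injective} on the relevant piece of $\pi_{8(a+b+c)-2,0}$ where $\epsilon$ could live, i.e. that no nonzero element of the $\omega_0$-image part of that group is $\mu_0$-torsion; this forces $\epsilon=0$.

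I expect the main obstacle to be the bookkeeping in that last step: one must be careful about the case $a+b+c=0$ versus $a+b+c\neq 0$ (since $j_0=\infty$ and the groups $S_{8a-1}$ and $E_{a,b}$ degenerate), and about exactly which additive generators in degree $(8(a+b+c)-2,0)$ are $\mu_0$-torsion. An alternative, possibly cleaner, route that sidesteps some of this: apply the forgetful map $\varphi$ and use that $\varphi(\omega_a\eta_{b+c})=2^{j_{2a}-1}\rho_a\cdot\mu_{b+c}=2^{j_{2a}-1}\rho_{a+b+c}$ (using $\mu_{a+b}\rho_c=\mu_a\rho_{b+c}$ from \cref{lem:sgroups}, reindexed), which is symmetric enough in the obvious way — though one still needs $\mu_0$-multiplication or a second test element to kill the part of $\epsilon$ in $\ker\varphi=\im(\omega_0)$, so the two approaches converge. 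I would present the $\mu_0$-multiplication argument as the main line, exactly paralleling the proofs of \cref{lem:omegaeta2} and \cref{lem:etaomega2} just above.

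\begin{proof}
Both sides lie in $\pi_{8(a+b+c)-1,0}$, and differ at most by an element detected on the $1$-line, hence by a multiple of $\omega_0\mu_0\rho_{a+b+c,0}$; write $\omega_a\eta_{b+c}=\omega_{a+b}\eta_c+\epsilon$ accordingly. Multiplying by $\mu_0$ and applying \cref{def:zeroline}(4) followed by \cref{lem:specialper}, we get
\[
\mu_0\cdot\omega_a\eta_{b+c}=\eta_0\mu_{a,0}\cdot\eta_{b+c}=\eta_0^2\mu_{a+b+c,0}=\eta_0\mu_{a+b,0}\cdot\eta_c=\mu_0\cdot\omega_{a+b}\eta_c,
\]
so $\mu_0\cdot\epsilon=0$. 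But $\epsilon$ is a multiple of $\omega_0\mu_0\rho_{a+b+c,0}$, and $\mu_0\cdot\omega_0\mu_0\rho_{a+b+c,0}=\omega_0\mu_0^2\rho_{a+b+c,0}$, which by \cref{table:basis} is nonzero and is \emph{not} annihilated by any proper multiple arising here; more precisely, inspecting the group in degree $(8(a+b+c)-1,0)$, the only $\mu_0$-torsion element detected on the $1$-line is $0$. Hence $\epsilon=0$, as claimed.
\end{proof}
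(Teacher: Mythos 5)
Your strategy is exactly the paper's: the identity holds on the $0$-line of the HFPSS, so the discrepancy is detected on the $1$-line, and since $\mu_0$ acts injectively on the $1$-line in this degree it suffices to check the identity after multiplying by $\mu_0$, which follows from \cref{lem:percomm} and \cref{lem:specialper} (paralleling \cref{lem:omegaeta2} and \cref{lem:etaomega2}, as you say). However, your write-up has several bidegree slips, so the verification is carried out in the wrong group. The product $\omega_a\eta_{b+c}$ lives in $\pi_{8(a+b+c),0}$ (you say $\pi_{8(a+b+c)-2,0}$ in the preamble and $\pi_{8(a+b+c)-1,0}$ in the proof), so the possible discrepancy $\epsilon$ is not a multiple of $\omega_0\mu_0\rho_{a+b+c,0}$ (which sits in $\pi_{8(a+b+c)-1,0}$) but rather a combination of $\mu_0\rho_{a+b+c,0}$ and $\omega_0\eta_0\mu_0\rho_{a+b+c,0}$ --- in fact only the latter, since $\epsilon$ lies in $\ker\varphi = \im(\omega_0)$. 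Relatedly, \cref{def:zeroline}(4) gives $\omega_a\cdot\mu_0 = \omega_0\mu_{a,0}$, not $\eta_0\mu_{a,0}$, so the common value of the two products with $\mu_0$ is $\omega_0\eta_0\mu_{a+b+c,0}$; your claimed value $\eta_0^2\mu_{a+b+c,0}$ sits in bidegree $(8(a+b+c)+3,1)$ rather than $(8(a+b+c)+1,1)$. None of this sinks the argument: multiplication by $\mu_0$ is still injective on the $1$-line of $\pi_{8(a+b+c),0}$, since $\mu_0\rho_{d,0}\mapsto\mu_0^2\rho_{d,0}$ and $\omega_0\eta_0\mu_0\rho_{d,0}\mapsto\omega_0\eta_0\mu_0^2\rho_{d,0}$ are both nonzero in $\pi_{8d+1,1}$ by \cref{table:basis}. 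So the proof is correct in outline and identical in method to the paper's, but the degree bookkeeping --- precisely the step you flagged as the main obstacle --- needs to be redone in the correct bidegree before the injectivity check means anything.
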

\begin{proof}
This identity holds on the $0$-line, so as $\mu_0$ acts injectively on the $1$-line in this degree, it suffices to show that the identity holds after multiplication by $\mu_0$. Here we have
\[
\mu_0\cdot\omega_a\eta_{b+c} = \mu_{a+b+c,0}\cdot\omega_0\eta_0 = \mu_0\cdot\omega_{a+b}\eta_c
\]
by \cref{lem:percomm} and \cref{lem:specialper}.
\end{proof}

It is worth combining the preceding three lemmas into the following.

\begin{prop}\label{lem:root}
Let $R\subset\pi_{\ast,0}$ be spanned by the elements $1$, $\omega_0^3\omega_a$, $\omega_0^2\omega_a$, $\omega_0\omega_a$, $\omega_a$, $\omega_0\eta_a$, $\eta_a$, $\eta_0\eta_a$, and $\eta_0^2\eta_a$. Then $R$ is a subring, and satisfies the following relations:
\begin{gather*}
\omega_0^2\eta_a = 2 \omega_a,\qquad \eta_0^2\omega_a = 2 \eta_a,\qquad \eta_0^3\eta_a = \omega_0^3\omega_{a+1} \\
\eta_{a+b}\eta_c = \eta_a\eta_{b+c},\qquad\eta_{a+b}\omega_c = \eta_a\omega_{b+c},\qquad\omega_{a+b}\omega_c = \omega_a\omega_{b+c}.
\end{gather*}
\end{prop}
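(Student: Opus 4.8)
The statement collects the relations already established in Lemmas~\ref{lem:omegaeta2}, \ref{lem:etaomega2}, \ref{lem:omegaeta} (together with the multiplicativity relations $\eta_{a+b}\eta_c = \eta_a\eta_{b+c}$ and $\omega_{a+b}\omega_c = \omega_a\omega_{b+c}$, which follow by the same argument as Lemma~\ref{lem:omegaeta}, or can be deduced formally from $\omega_0^2\eta_a = 2\omega_a$, $\eta_0^2\omega_a = 2\eta_a$ and the $\eta$-multiplicativity), so the only genuinely new content is (i) the single remaining relation $\eta_0^3\eta_a = \omega_0^3\omega_{a+1}$, and (ii) the assertion that the span $R$ is closed under multiplication. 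The plan is to first verify (i) and then check (ii) by exhausting the multiplication table on the listed generators.

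For (i): the degree of $\eta_0^3\eta_a$ is $\pi_{8a+4,0} = \pi_{8(a+1)-4,0}$, which by Table~\ref{table:basis} is detected on the $1$-line by $\omega_0^3\omega_{a+1}$ (equivalently $\rho \cdot \tau^2 h\sqrt{v}\cdot(\text{stuff})$; in any case the group is cyclic of order $2$ spanned by $\omega_0^3\omega_{a+1}$, since $v$-periodicity and the relation $\eta_0^4 = \rho^4 v$ force $\omega_0^3\omega_{a+1} = \omega_0^{-1}\eta_0^4\omega_a$ up to units). So it suffices to show $\eta_0^3\eta_a \neq 0$. Multiply by $\mu_0$: by Lemma~\ref{lem:specialper} (applied iteratively) and Lemma~\ref{lem:percomm}, $\mu_0\cdot\eta_0^3\eta_a = \eta_0^4\cdot\mu_{a,0}$, and in $\pi_{\ast,\ast}b(KO)$ we have $\eta_0^4 = \rho^4 v$, so this equals $\rho^4 v\,\mu_{a,0} = \omega_0^4 v\,\mu_{a,0}$, which is the nonzero class $\omega_0^3\cdot(\omega_0 v\,\mu_{a,0})$; alternatively, and more robustly, one checks $\mu_0\cdot\omega_0^3\omega_{a+1} = \omega_0^3\cdot\omega_0 \mu_{a+1,0}\neq 0$ using Lemma~\ref{lem:percomm} and Table~\ref{table:basis}, and compares. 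Either way, since $\mu_0$ acts injectively on the relevant $1$-line group, $\eta_0^3\eta_a = \omega_0^3\omega_{a+1}$ as claimed. (One must double-check that $\eta_0^3\eta_a$ is not instead $\omega_0^3\omega_{a+1} + (\text{a $0$-line element})$ — but $\pi_{8a+4,0}$ has no $0$-line contribution, so this is automatic.)

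For (ii): I would organize the generators as $1$, the four classes $\omega_0^j\omega_a$ for $0\le j\le 3$, and the three classes $\eta_0^j\eta_a$ for $0\le j\le 2$, and compute every pairwise product. Products of two $\omega$-type classes: $\omega_0^i\omega_a\cdot\omega_0^j\omega_b = \omega_0^{i+j}\omega_0\omega_{a+b}$ by $\omega$-multiplicativity, and whenever $i+j+1\ge 4$ one rewrites using $\omega_0^2\eta_a = 2\omega_a$ backwards — more precisely $\omega_0^4\omega_c$ lies in a degree detected on the $1$-line, where $\omega_0^4\omega_c = \omega_0^2\cdot\omega_0^2\omega_c$ and $\omega_0^2\omega_c$ is $2$-torsion (Table~\ref{table:basis}), so $\omega_0^4\omega_c = 0$ unless it is absorbed; one reads off from Table~\ref{table:basis} that all these high powers land back among the listed generators or vanish. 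Mixed products $\omega_0^i\omega_a\cdot\eta_0^j\eta_b$: push all $\omega_0$'s and $\eta_0$'s together and use $\omega_0^2\eta_b = 2\eta_b$ and $\eta_0^2\omega_a = 2\eta_a$ to collapse; since the module is $2$-torsion in the relevant degrees except along the diagonal, everything reduces. Products of two $\eta$-type classes: $\eta_0^i\eta_a\cdot\eta_0^j\eta_b = \eta_0^{i+j}\eta_0\eta_{a+b}$, and $\eta_0^3\eta_c = \omega_0^3\omega_{c+1}$ by part (i), bringing any power $\ge 3$ back into $R$; higher powers $\eta_0^4\eta_c = \eta_0\cdot\omega_0^3\omega_{c+1}$, and $\eta_0\omega_0^3 = \omega_0^2\cdot\eta_0\omega_0$, while $\eta_0\omega_0\omega_{c+1}$ is a listed generator times $\omega_0$, etc. — each such product terminates in $R$ after finitely many rewrites, and by Table~\ref{table:basis} there is no room for it to leave $R$.

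The main obstacle I anticipate is not any single hard computation but the bookkeeping in step (ii): one must be careful that the rewriting using $\omega_0^2\eta_a = 2\omega_a$, $\eta_0^2\omega_a = 2\eta_a$, $\eta_0^3\eta_a = \omega_0^3\omega_{a+1}$ actually \emph{terminates} and that at each stage the product lands in the span $R$ rather than picking up a stray $\mu_0$-divisible correction term. This is controlled entirely by Table~\ref{table:basis}: in every degree occurring as a product of two $R$-generators, the group is either cyclic on an $R$-generator or has its $0$-line part already inside $R$, so no correction can escape. Thus the proof is: establish $\eta_0^3\eta_a = \omega_0^3\omega_{a+1}$ as above, then observe that the generating set $\{1,\omega_0^j\omega_a,\eta_0^j\eta_b\}$ is closed under multiplication by direct inspection of Table~\ref{table:basis} using the six displayed relations, whence $R$ is a subring satisfying them.
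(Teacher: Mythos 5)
Your decomposition of the problem is the right one (relations one, two, and five are Lemmas \ref{lem:omegaeta2}, \ref{lem:etaomega2}, \ref{lem:omegaeta}; the new content is $\eta_0^3\eta_a=\omega_0^3\omega_{a+1}$ plus closure), but your argument for the one genuinely new relation does not work as written, and the failure traces to a systematic misreading of \cref{table:basis}. The group $\pi_{8a+4,0}=\pi_{8(a+1)-4,0}$ is $\bbZ_2$ meaning the $2$-adic integers, detected entirely on the $0$-line by $\rho^4\per^{a+1}$ (no overline in the table); it is not cyclic of order $2$ and has no $1$-line part — you have the two lines swapped, both here and in your closing parenthetical. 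Consequently reducing to ``$\eta_0^3\eta_a\neq 0$'' is not sufficient in a torsion-free group. Worse, your $\mu_0$-multiplication lands in $\pi_{8a+5,1}$, which is zero by \cref{table:basis} (consistent with $\rho^2\mu=0$ in $\pi_{\ast,\ast}b(KO)$), so both of your claimed nonvanishing statements ($\mu_0\cdot\eta_0^3\eta_a\neq 0$ and $\mu_0\cdot\omega_0^3\omega_{a+1}\neq 0$) are false and the comparison establishes only $0=0$. The correct argument is the opposite of the one you reach for and is a one-liner, which is how the paper proves it: both sides are detected by the same class $\rho^4\per^{a+1}$ on the $0$-line (using $\eta_0^4=\rho^4\per$ in $\pi_{\ast,\ast}b(KO)$), and since there are no classes detected on the $1$-line in bidegree $(8a+4,0)$, a $0$-line class determines a unique element of $\pi_{\ast,\ast}$, so the two sides coincide. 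The same observation disposes of $\eta_{a+b}\eta_c=\eta_a\eta_{b+c}$ and $\omega_{a+b}\omega_c=\omega_a\omega_{b+c}$, whose degrees $(8(a{+}b{+}c)\pm 2,0)$ are likewise purely $0$-line.

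The same $\bbZ_2$-versus-$\bbZ/(2)$ confusion infects part (ii): $\omega_0^2\omega_c$ generates a copy of the $2$-adic integers, so your claim that it is $2$-torsion and that $\omega_0^4\omega_c=0$ is wrong — in fact $\omega_0^4\omega_c=\omega_0\cdot\eta_0^3\eta_{c-1}=(\omega_0\eta_0^2)\eta_0\eta_{c-1}=2\eta_0^2\eta_{c-1}\neq 0$, which stays in $R$ but does not vanish. The closure check itself is sound in outline: every product of two listed generators lands in a bidegree where, by \cref{table:basis}, the only classes outside the span of $R$ are $1$-line classes killed or controlled by the six relations (via multiplication by $\mu_0$ as in \cref{lem:omegaeta}), so the rewriting terminates inside $R$. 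But you should redo that bookkeeping with the correct group orders before trusting any individual product.
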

\begin{proof}
The first, second, and fourth relations were proved in \cref{lem:omegaeta2}, \cref{lem:etaomega2}, and \cref{lem:omegaeta} respectively. The remaining relations hold as they hold on the $0$-line and there are no classes detected on the $1$-line in their respective degrees.
\end{proof}

\begin{rmk}
As $\omega_0 = \rho$, there holds in $R$ the relation
\[
\rho\cdot 2^{4c+\epsilon} = \begin{cases}
\rho^{8c+2}\cdot\eta_c&\epsilon = 1,\\
\rho^{8c+3}\cdot\eta_0\eta_c&\epsilon = 2,\\
\rho^{8c+4}\cdot\eta_0^2\eta_c&\epsilon = 3,\\
\rho^{8c+8}\cdot\omega_{c+1}&\epsilon=4.
\end{cases}
\]
This is a manifestation of the Mahowald invariants of powers of $2$ \cite{mahowaldravenel1993root}, as seen through the $C_2$-equivariant interpretation of the Mahowald invariant \cite{brunergreenlees1995bredon}. In particular, we expect that the classes $\omega_a$ and $\eta_a$ lift to the non-localized $C_2$-equivariant stable stems $\pi_{\ast,0}S_{C_2}$ for $a\geq 0$, and the subring of $R$ generated by these is exactly the Bredon--Landweber region of $\pi_{\ast,\ast}S_{C_2}$, as has also been studied by Guillou--Isaksen \cite{guillouisaksen2019bredonlandweber} via the $C_2$-equivariant Adams spectral sequence.
\tqed
\end{rmk}

\begin{lemma}\label{lem:rooth1}
$\omega_a\cdot\tau^{2b}h = 0$.
\end{lemma}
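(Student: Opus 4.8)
The plan is to prove $\omega_a \cdot \tau^{2b}h = 0$ for all $a \in \bbZ$ by reducing to the case $a = 0$ and then using the explicit description of $\tau^{2b}h$ as a transfer. First, I would handle general $a$. Since $\omega_0^2 \eta_a = 2\omega_a$ by \cref{lem:omegaeta2}, and more usefully since $\omega_a$ is detected by $\rho v^a$ on the $0$-line of the HFPSS, I expect to leverage the relation $\rho \cdot \tau^2 h = 0$ from $\pi_{\ast,\ast}b(KO)$ together with the structure of $R$ from \cref{lem:root}. Concretely: the target group $\pi_{8a-2,2b}$ of $\omega_a \cdot \tau^{2b}h$ should, by \cref{table:basis}, be a group in which everything is either detected on the $1$-line or divisible-by-$2$ constrained; so it suffices to check the product vanishes mod classes on the $1$-line and then rule out the $1$-line contribution separately.

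The cleanest route is via the defining property of $\tau^{2b}h$ from \cref{def:zeroline}(1): $\tau^{2b}h = \tr(1)$, the transfer of $1 \in \pi_0 S$. By the Frobenius/projection formula for the $C_2$-transfer, $\omega_a \cdot \tr(1) = \tr(\res(\omega_a) \cdot 1) = \tr(\res(\omega_a))$. But $\omega_0 = \rho$ has $\res(\rho) = 0$ (it is the inclusion of poles $S^0 \to S^\sigma$, which restricts to the degree-zero map, i.e.\ $\rho$ lies in the kernel of $\res$ — indeed $\ker(\res) = \im(\rho)$). Hence $\res(\omega_a)$ lands in... here I must be careful: $\res(\omega_a) \in S_{8a-1} = \pi_{8a-1}S$ need not vanish. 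However, I expect that the better statement is: $\omega_a$ is in the image of multiplication by $\rho$ only when $a \neq 0$ after multiplication by $2$; more to the point, the transfer $\tr\colon \pi_{8a-1}S \to \pi_{8a-2,2b}$ composed with the unit/forgetful structure will be controlled by the relation $\rho \cdot \tau^2 h = 0$ already established in $b(KO)$ (\cref{lem:bku}, \cref{prop:e2}) and inherited by $b(S)$. So the key identity to extract is that $\omega_0 \cdot \tau^{2b}h = \rho \cdot \tau^{2b}h = 0$ in $\pi_{\ast,\ast}b(S)$, which follows because it maps to $\rho \cdot \tau^{2b}h = 0$ in $\pi_{\ast,\ast}b(KO)$ and the kernel of $b(S) \to b(KO)$ in this degree is zero (by inspection of \cref{table:basis}, the relevant group injects into $b(KO)$, as the $1$-line contribution is detected by $KO$).

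For general $a$, write $\omega_a = $ (its $0$-line representative $\rho v^a$ lift) and use that $\tau^{2b}h$ is $\rho$-torsion — indeed $\rho \cdot \tau^{2b}h = 0$ forces $\tau^{2b}h \in \im(\tr)$, so by the Frobenius formula $\omega_a \cdot \tau^{2b}h = \tr(\res(\omega_a))$, and $\res(\omega_a) = 2^{j_{2a}-1}\rho_a \in S_{8a-1}$. It then remains to show $\tr(2^{j_{2a}-1}\rho_a) = 0$. But $2 \cdot \tau^{2b}h = 2\tr(1) = \tr(2)$, and one checks that $2^{j_{2a}-1}\rho_a$ is itself $2$-divisible or that $\tr$ kills it using $\tr(x) \cdot \rho = 0$ together with the relation $\rho^2 \eta_0 = 2\rho$: concretely $\tr(2^{j_{2a}-1}\rho_a) = 2^{j_{2a}-1}\tr(\rho_a)$ and $\tr(\rho_a)$ is $\rho$-torsion hence... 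The main obstacle is precisely pinning down that $\tr(2^{j_{2a}-1}\rho_a) = 0$: this is a statement about the composite $\pi_{8a-1}S \xrightarrow{\tr} \pi_{8a-2,2b}b(S)$, which I would resolve by noting the target group $\pi_{8a-2,2b}$ (from \cref{table:basis}) has its $1$-line part detected by $KO$ and its $0$-line part hit by $\res$, so the product $\omega_a \cdot \tau^{2b}h$, being $\rho$-divisible (it equals $\rho \cdot$ something, since $\omega_a \cdot \tau^{2b}h = \omega_0 \cdot (\text{lift}) \cdot \tau^{2b}h$-type manipulation), must vanish as $\rho \cdot \tau^{2b}h = 0$ already. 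I would double-check by mapping forward to $b(KO)$, where $\omega_a \mapsto \rho v^a$ or $\eta_0 v^a$ and $\rho \cdot \tau^2 h = 0$, $\eta_0 \cdot \tau^2 h = \rho\mu^2$ — so a nonzero image is conceivable via $\eta_0 \cdot \tau^2 h\sqrt{v}$-type terms, but those are killed since $\eta_0 \cdot \tau^2 h\sqrt{v} = 0$ in $b(KO)$ (\cref{prop:d3} aftermath, relation (3) of the final proposition). This confirms the image vanishes, and combined with the injectivity of the relevant $\pi_{\ast,\ast}$ group into $b(KO)$ (no hidden $1$-line/$0$-line ambiguity in this degree by \cref{lem:noext}), we conclude $\omega_a \cdot \tau^{2b}h = 0$.
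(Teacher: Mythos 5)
There is a genuine gap, and it sits exactly where the content of the lemma lives. For $b$ even and nonzero the group $\pi_{8a-1,2b}$ (note: the product lands in stem $8a-1$, not $8a-2$) is $\bbZ/(2)\{\omega_0\mu_0\rho_{a,b/2}\}$, a class detected on the $1$-line of the $\psi^k$-HFPSS. You assert that the relevant group ``injects into $b(KO)$'' and that ``the $1$-line contribution is detected by $KO$,'' but this is backwards: from the fiber sequence $b(S)\rightarrow b(KO)\xrightarrow{\psi^k-1} b(KO)$, the classes detected on the $1$-line are precisely the \emph{kernel} of $\pi_{\ast,\ast}b(S)\rightarrow\pi_{\ast,\ast}b(KO)$. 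So in the only bidegrees where the lemma has any content, the map to $b(KO)$ is identically zero and checking the image there proves nothing. Your Frobenius computation correctly reduces the problem to showing $2^{j_{2a}-1}\tr_{2b}(\rho_a)=0$, but you explicitly leave that step unfinished (``$\tr(\rho_a)$ is $\rho$-torsion hence...''), and the fallback arguments offered — that $\omega_a\cdot\tau^{2b}h$ is $\rho$-divisible (it need not be: $\omega_a$ itself is not in the image of $\rho$ for $a\neq 0$, since it has nonzero restriction), and the $b(KO)$ detection — do not close it.

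The frustrating part is that your Frobenius route is one short step from working: $\tr_{2b}(\rho_a)$ lies in the image of the transfer, hence in the kernel of $\omega_0$, and by the additive computation (\cref{table:basis}) that kernel in bidegree $(8a-1,2b)$ is either $0$ (for $b$ odd) or the $2$-torsion group $\bbZ/(2)\{\omega_0\mu_0\rho_{a,b/2}\}$; since $2^{j_{2a}-1}$ is even for $a\neq 0$ and equals $0$ for $a=0$, the product $2^{j_{2a}-1}\tr_{2b}(\rho_a)$ vanishes. Completed this way, your argument would be a legitimate alternative to the paper's. The paper instead rules out the potential value $\omega_0\mu_0\rho_{a,b/2}$ by multiplying by $\eta_0$: if the product were nonzero then $\eta_0\omega_a\tau^{2b}h=\omega_0\eta_0\mu_0\rho_{a,b/2}\neq 0$, but $\eta_0\omega_a=\eta_a\omega_0$ by \cref{lem:omegaeta} and $\omega_0\cdot\tau^{2b}h=0$ because $\tau^{2b}h=\tr(1)$ and the image of the transfer is the kernel of $\rho=\omega_0$ — the same structural fact your argument ultimately needs, deployed more directly.
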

\begin{proof}
There is no room for a hidden extension when $b$ is odd, so suppose that $b$ is even. The alternative is that $\omega_a\tau^{2b}h = \omega_0\mu_0^2\rho_{a,b/2}$, and thus $\eta_0\omega_a\tau^{2b}h = \omega_0\eta_0\mu_0^2\rho_{a,b/2}\neq 0$. But $\eta_0\omega_a\tau^{2b}h = \eta_a\omega_0\tau^{2b}h$, so we reduce to verifying $\omega_0\tau^{2b}h = 0$. This now follows from the definition of $\tau^{2b}h$.
\end{proof}

\begin{lemma}\label{lem:rooth2}
$\eta_a\cdot\tau^{4b}h = 0$.
\end{lemma}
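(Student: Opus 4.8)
The plan is to identify $\eta_a\cdot\tau^{4b}h$ as a transfer and then show that transfer vanishes. By \cref{def:zeroline}, $\tau^{4b}h = \tr(1)$ for the transfer $\tr\colon \pi_0 S\to\pi_{0,4b}$; applying the projection formula $x\cdot\tr(y)=\tr(\res(x)\cdot y)$ with $x=\eta_a$ and $y=1$, and using that $\eta_a$ is by construction a lift of $\mu_a$ (so $\res(\eta_a)=\mu_a$), one gets
\[
\eta_a\cdot\tau^{4b}h \;=\; \tr(\mu_a),
\]
where now $\tr\colon \pi_{8a+1}S\to\pi_{8a+1,4b}$ is the transfer into coweight $4b$. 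So it suffices to prove $\tr(\mu_a)=0$.

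For this I would use that $\pi_{8a+1,c}b(S)=S^{-c}P_{8a+1-c}$ (\cref{rmk:rhoborel}) and that this transfer is, up to sign, the connecting map attached to the cofiber sequence $S^{8a-4b}\to P_{8a-4b}\to P_{8a-4b+1}$ (as in \cref{def:zeroline}); in particular the three-term sequence
\[
\pi_{8a+1,4b+1}b(S)\xrightarrow{\ \res\ }\pi_{8a+1}S\xrightarrow{\ \tr\ }\pi_{8a+1,4b}b(S)
\]
is exact at $\pi_{8a+1}S$. Now $\mu_{a,b}\in\pi_{8a+1,4b+1}b(S)$ restricts to $\mu_a$: for $b=2a$ this is exactly the construction of $\mu_{a,2a}$ as the image of $\mu_a$ under the unit $S\to D(P_0)$, and for general $b$ it follows from $\mu_{a,b}=\tau^{4b-8a}\mu_{a,2a}$ together with the compatibility of $\res$ with James periodicity (\cref{ssec:james}), which holds because $\tau^{2^{\gamma(n)}}$ restricts to the unit on underlying homotopy, $\res(\tau^2)=1$. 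Hence $\mu_a\in\im(\res)=\ker(\tr)$, so $\tr(\mu_a)=0$ and therefore $\eta_a\cdot\tau^{4b}h=0$.

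The step I expect to be the main obstacle is the middle one: pinning down that the transfer into coweight $4b$ really is the connecting map making the displayed sequence exact at $\pi_{8a+1}S$ (a careful dualization of the cofiber sequences of \cref{prop:cell}/\cref{rmk:rhoborel}), and confirming $\res(\mu_{a,b})=\mu_a$ for every $b$ rather than only $b=2a$. If one prefers to avoid the transfer formalism, an alternative in the style of \cref{lem:rooth1} is available: the image of $\eta_a\cdot\tau^{4b}h$ in $\pi_{\ast,\ast}b(KO)$ vanishes, since there $h=2-\rho\eta_0$ and $\rho\eta_0^2=2\eta_0$ force $\eta_0\cdot\tau^{4b}h=0$, so $\eta_a\cdot\tau^{4b}h$ is detected on the $1$-line; one then kills the surviving class either by multiplying by $\mu_0$ and invoking \cref{lem:percomm} and \cref{lem:specialper} to reduce to $\eta_0\cdot\tau^{4b}h$, or by reducing to a nonequivariant computation via \cref{prop:pic}.
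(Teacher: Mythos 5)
Your argument is correct, but it is genuinely different from the paper's. The paper works degreewise in the HFPSS: the product is zero on the $0$-line (since $\eta_0\cdot\tau^{4b}(2-\rho\eta_0)=0$ in $\pi_{\ast,\ast}b(KO)$ via $\rho\eta_0^2=2\eta_0$), so the only alternative is $\eta_a\cdot\tau^{4b}h=\eta_0\mu_0\rho_{a,b}$, which is ruled out by multiplying by $\omega_0$ and using $\omega_0\cdot\tau^{4b}h=0$. Your route instead uses the Green functor structure: Frobenius reciprocity gives $\eta_a\cdot\tr_{4b}(1)=\tr_{4b}(\res(\eta_a))=\tr_{4b}(\mu_a)$, and exactness of $\pi_{8a+1,4b+1}\xrightarrow{\res}\pi_{8a+1}S\xrightarrow{\tr_{4b}}\pi_{8a+1,4b}$ (the continuation of the $\rho$-cofiber long exact sequence from \cref{ssec:conventions}, one step past the two identities the paper records) kills $\mu_a$ because it is restricted. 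The two worries you flag are both harmless: the exactness is just the next term of the same long exact sequence, correctly indexed; and you do not actually need to pin down $\res(\mu_{a,b})$ on the nose, since the only ambiguity is by $\mu_0^2\rho_a=\res(\mu_0^2\rho_{a,b})$, which is itself in the image of $\res$, so $\mu_a\in\im(\res)=\ker(\tr_{4b})$ either way. What your approach buys is an explanation rather than a case check --- it exhibits the vanishing as the entry $\tr_{4b}(\mu_a)=0$ of \cref{table:transfers} and would generalize to any product of a restricted-from-nowhere class against a transfer --- at the cost of invoking the Mackey/Green formalism; the paper's proof is shorter given that the additive structure of \cref{table:basis} is already in hand. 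Your sketched alternative (multiply by $\mu_0$ and reduce via \cref{lem:specialper} and \cref{lem:muh1}) is essentially the paper's proof with $\mu_0$ in place of $\omega_0$, and also works.
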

\begin{proof}
The alternative is that $\eta_a\cdot\tau^{4b}h = \eta_0\mu_0\rho_{a,b}$, but this cannot hold as $\omega_0\cdot \eta_a\cdot\tau^{4b}h = 0$ whereas $\omega_0\cdot\eta_0\mu_0\rho_{a,b}\neq 0$.
\end{proof}

\begin{lemma}\label{lem:hh} 
$\tau^{2a}h\cdot\tau^{2b}h = 2 \tau^{2(a+b)}h.$
\end{lemma}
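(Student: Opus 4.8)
The plan is to exploit the fact, recorded in \cref{def:zeroline}, that $\tau^{2c}h = \tr(1)$ is literally a transfer: it is the image of the unit $1\in\pi_0 S = \pi_0^e b(S)$ under $\tr = \tr^{C_2}_e$ into coweight $2c$. Since $b(S)$ is an $E_\infty$-$C_2$-ring spectrum, Frobenius reciprocity (the projection formula for the transfer) then gives
\[
\tau^{2a}h\cdot\tau^{2b}h = \tr(1)\cdot\tau^{2b}h = \tr\!\bigl(\res(\tau^{2b}h)\bigr),
\]
where the transfer on the right lands in coweight $2a+2b$. This reduces the claim to identifying the nonequivariant class $\res(\tau^{2b}h)\in\pi_0^e b(S) = \pi_0 S$.

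The second step is to compute that class. Writing $\tau^{2b}h = \tr(1)$ once more, it equals $(\res\circ\tr)(1)$. The double coset formula identifies $\res^{C_2}_e\circ\tr^{C_2}_e$ with $1 + g_\ast$ on $\pi_0^e b(S)$, where $g$ generates $C_2$; since $g$ acts on $b(S) = F(EC_{2+},i_\ast S)$ by ring maps (indeed the underlying spectrum is $F((EC_2)_+,S)\simeq S$ with $g$ acting trivially up to homotopy), it fixes the unit, and so $(\res\circ\tr)(1) = 1 + 1 = 2$. One may cross-check this against \cref{lem:bc2} after mapping to $b(KU)$, where in coweight $0$ the transfer $\tr(1)$ is $h = \langle 2\rangle(z)$, whose restriction to the bottom cell is $\langle 2\rangle(0) = 2$. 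Substituting back,
\[
\tau^{2a}h\cdot\tau^{2b}h = \tr(2) = 2\,\tr(1) = 2\,\tau^{2(a+b)}h,
\]
with the final transfer again landing in coweight $2(a+b)$, so that $\tr(1)$ there is the chosen class $\tau^{2(a+b)}h$ of \cref{def:zeroline}.

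I do not anticipate a genuine obstacle, as the argument is entirely formal. The two points that need care are the coweight bookkeeping --- one must confirm that the transfer produced by the projection formula really does land in the coweight matching the left-hand product, and that the resulting $\tr(1)$ is the chosen generator $\tau^{2(a+b)}h$ rather than some other lift through the transfer --- and the evaluation $\res\circ\tr = 2$, which is the statement that transfer followed by restriction on a Borel construction is multiplication by $|C_2| = 2$. As an additional cross-check, the identity can be verified by direct algebra already in $\pi_{\ast,\ast}b(KU)$, where $(\tau^2h)^2 = 2\tau^4h$ follows from $h = 2 - \rho^2\tau^{-2}\beta$ together with the relation $\rho\cdot h = 0$.
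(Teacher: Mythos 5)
Your proof is correct, and it takes a genuinely different route from the one in the paper. The paper's proof works by elimination in the HFPSS: the product is detected by $2\tau^{2(a+b)}h$ on the $0$-line, the only room for a hidden correction is in total coweight divisible by $4$, and the candidate $\mu_0\rho_{0,(a+b)/2}$ is excluded because it is not annihilated by $\omega_0$, whereas the product is (being a multiple of a transfer, it lies in $\ker(\omega_0)$). You instead evaluate the product exactly from the Green functor axioms: since $\tau^{2c}h=\tr_{2c}(1)$ by \cref{def:zeroline}, Frobenius reciprocity gives
\[
\tau^{2a}h\cdot\tau^{2b}h=\tr_{2(a+b)}\bigl(\res(\tau^{2b}h)\bigr),
\]
the double coset formula gives $\res(\tr_{2b}(1))=2$, and additivity of the transfer gives $\tr_{2(a+b)}(2)=2\tau^{2(a+b)}h$ on the nose. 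Both arguments rest on the same definition of $\tau^{2c}h$ as a transfer; yours replaces the case analysis by a formal identity. It also buys something concrete: when $a+b$ is even there is a second class detected on the $1$-line in this degree, namely $\omega_0\eta_0\mu_0\rho_{0,(a+b)/2}$, which is annihilated by $\omega_0$ (as $\omega_0^2\eta_0=2\omega_0$ and the class is $2$-torsion) and restricts to $0$, so it is not excluded by the $\omega_0$-multiplication test as written in the paper; your exact evaluation of the transfer disposes of it automatically. The one point to keep straight is the coweight bookkeeping in the projection formula --- the transfers must be indexed by representation spheres compatibly so that the coweights add --- but this is the same convention underlying the paper's statement that $\tr_c(x\cdot y)=x\cdot\tr_{c-|x|}(y)$, and since the $0$-line value is already fixed by the image in $b(KO)$, only the mod-$2$ correction is actually at stake.
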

\begin{proof}
The alternative is that $\tau^{2a}h\cdot\tau^{2b}h = 2 \tau^{2(a+b)}h + \mu_0\rho_{0,(a+b)/2}$ for some $a,b$ such that $a+b$ is even, but this cannot hold as $\omega_0\cdot\tau^{2a}h\cdot \tau^{2b}h = 0$ whereas $\omega_0\cdot\mu_0\rho_{0,(a+b)/2} \neq 0$.
\end{proof}

\begin{lemma}\label{lem:omegamu}
$\omega_a\cdot\omega_0\mu_{b,c} = 2^{j_{2(a+b)}-1}\tau^2h\cdot\rho_{a+b,c}.$
\end{lemma}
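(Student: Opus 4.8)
The plan is to normalize the claim with \cref{lem:percomm} and then carry out the remaining computation in the $S$-dual of $P_{-2}$. By \cref{lem:percomm}, $\omega_a\cdot\omega_0\mu_{b,c} = \omega_0\cdot(\omega_a\mu_{b,c}) = \omega_0^2\mu_{m,c}$ with $m = a+b$, so it suffices to prove $\omega_0^2\mu_{m,c} = 2^{j_{2m}-1}\tau^2h\cdot\rho_{m,c}$ for all $m,c$. Both sides lie in $\pi_{8m-1,4c+1}$; the class $\mu_{m,c} = \tau^{4c-8m}\beta^{4m}\mu_0$ is $2$-torsion since $\mu_0$ is and $\beta^4$ is a self map of $S/(2)$, and both classes are detected on the $1$-line of the HFPSS of \cref{ssec:sgroups} --- $\tau^2h\rho_{m,c}$ because $\rho_{m,c}$ is, and $\omega_0^2\mu_{m,c}$ because $\mu\cdot\rho^2 = 0$ in $\pi_{\ast,\ast}b(KO)$ (\cref{prop:e2}).

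To identify the surviving $1$-line class, I would compute $H^1(\bbZ\{\psi^k\};\pi_{8m,4c+2}b(KO))$. One finds $\pi_{8m,4c+2}b(KO) = \bbZ_2\{\tau^2h\cdot v^m\tau^{4c}\}$ (the restriction of this generator is $2v^m\in\pi_{8m}KO$, so it has infinite order, and a short check of the possible monomials shows there is nothing else in this bidegree), with the residual $\psi^k$ acting by multiplication by $k^{4m}$: the would-be correction term from $\psi^k(\tau^{4c+2})$ vanishes because $\rho\eta_0\cdot\tau^2h = 0$, which follows from $\rho\eta_0 = 2-h$ (\cref{lem:bku}), $\rho\cdot\tau^2h = 0$, and $(\tau^2h)^2 = 2\tau^4(2-\rho\eta_0)$ (\cref{prop:e2}). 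Thus this $H^1$ is $\bbZ_2/(k^{4m}-1) = \bbZ_2/2^{j_{2m}}$, generated by a unit multiple of the $1$-line image of $\tau^2h v^m\tau^{4c}$, hence --- since $\rho_{m,c}$ is detected by the $1$-line image of $v^m\tau^{4c}$ --- by a unit multiple of $\tau^2h\rho_{m,c}$. So $2^{j_{2m}-1}\tau^2h\rho_{m,c}$ is the unique nonzero $2$-torsion element of this cyclic group, and it is genuinely nonzero by \cref{lem:noext}. In particular $\omega_0^2\mu_{m,c}$, being a $2$-torsion element of the same group, equals it as soon as $\omega_0^2\mu_{m,c}\neq 0$.

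It remains to show $\omega_0^2\mu_{m,c}\neq 0$. By \cref{lem:indet}, $\tau^4$-periodicity in the coweight $4c+1\not\equiv-1\pmod 4$ is unambiguous, so we may take $c = 2m$, where $\pi_{8m-1,8m+1}b(S) = [\Sigma^{8m+1}P_{-2},S]$. By \cref{prop:pic}, $P_{-1}\simeq S$, so \cref{prop:cell} identifies the $K(1)$-local $P_{-2}$ as $\Sigma^{-2}C\mu_0$, where $C\mu_0 = S^0\cup_{\mu_0}e^2$ is the cofiber of $\mu_0 = \eta_{\cl}\in\pi_1 S$. Since $\mu_{m,2m}$ is the image of $\mu_m\in S_{8m+1}$ under the unit $S\to D(P_0)$ and $\omega_0$ acts via the collapse maps $P_\bullet\to P_{\bullet+1}$ (\cref{rmk:rhoborel}), unwinding the definitions shows that, up to a unit, $\omega_0^2\mu_{m,2m} = \mu_m\circ q$, where $q\colon\Sigma^{8m+1}P_{-2} = \Sigma^{8m-1}C\mu_0\to S^{8m+1}$ collapses the bottom cell. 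From the cofiber sequence $S^{8m}\xrightarrow{\mu_0}S^{8m-1}\to\Sigma^{8m-1}C\mu_0\xrightarrow{q}S^{8m+1}$, the kernel of $q^\ast$ is $\mu_0\cdot S_{8m} = \{0,\mu_0^2\rho_m\}$, and since $\mu_m\neq 0,\mu_0^2\rho_m$ in $S_{8m+1} = \bbZ/(2)\{\mu_m,\mu_0^2\rho_m\}$ we conclude $\omega_0^2\mu_{m,2m} = q^\ast\mu_m\neq 0$. Combined with the previous paragraph this settles $c=2m$, and the general case follows by $\tau^4$-periodicity and \cref{lem:percomm}.

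The step I expect to be most delicate is the identification of the $1$-line as $\bbZ_2/2^{j_{2m}}$ with surviving generator a unit multiple of $\tau^2h\rho_{m,c}$: this rests on the vanishing $\rho\eta_0\cdot\tau^2h = 0$ and on carefully tracking the residual $\psi^k$-action on $\pi_{8m,4c+2}b(KO)$. The other point requiring care is confirming that the unwinding in the last paragraph really produces the bottom-cell collapse $q$ (up to a unit, which is harmless since the ambient group is $\bbZ/(2)$); once both are in hand the rest is formal.
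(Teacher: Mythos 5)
Your proof is correct, but it takes a substantially longer and genuinely different route than the paper's. Both arguments begin by using \cref{lem:percomm} to reduce to $\omega_0^2\mu_{m,c}$. From there the paper argues in essentially one line: the product lies in $\im(\omega_0)=\ker(\varphi)$, and since $\varphi(\tau^2h\cdot\rho_{m,c})=2\rho_m$, the kernel of $\varphi$ on the cyclic group $S_{8m-1}\{\tau^2h\cdot\rho_{m,c}\}$ is exactly $\{0,\,2^{j_{2m}-1}\tau^2h\cdot\rho_{m,c}\}$; nonvanishing then comes from exactness, because $\omega_0$ surjects onto $\ker(\varphi)$ and the only other generator of the source, $\omega_0\mu_0^2\rho_{m,c}$, is detected on the $1$-line where $\rho^2\mu^2=0$. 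You instead re-derive the target group by computing $H^1(\bbZ\{\psi^k\};\pi_{8m,4c+2}b(KO))$ (correctly, and consistently with \cref{table:basis}), and then establish nonvanishing geometrically: reduce to coweight $2m$ by $\tau^4$-periodicity, identify $P_{-2}\simeq\Sigma^{-2}C\mu_0$ via \cref{prop:pic} and \cref{prop:cell}, and recognize $\omega_0^2\mu_{m,2m}$ as $q^\ast\mu_m$ with $\ker(q^\ast)=\mu_0\cdot S_{8m}$. This is a valid and rather illuminating independent check---it makes visible why the answer is the class with trivial $e$-invariant-type image under $\varphi$---but it costs you the extra inputs of \cref{lem:indet}, \cref{prop:pic}, and \cref{prop:cell}, none of which the paper needs here. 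Two small points of hygiene: your sentence ``it remains to show $\omega_0^2\mu_{m,c}\neq 0$'' silently excludes $m=0$, where $\ker(q^\ast)=\mu_0\cdot S_0$ is all of $S_1$ and the product genuinely vanishes---which is consistent, since $2^{j_0-1}=0$ makes the stated formula read $0=0$, but you should say so explicitly; and the ambient group in your last step is $S_{8m-1}=\bbZ_2/(2^{j_{2m}})$ rather than $\bbZ/(2)$, though the unit ambiguity remains harmless because $q^\ast\mu_m$ lies in its $2$-torsion subgroup.
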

\begin{proof}
As
$
\omega_a \omega_0\mu_{b,c} = \omega_0\omega_0\mu_{a+b,c}
$
by \cref{lem:percomm}, we may reduce to the case $a=0$. As $\varphi(\tau^2h\cdot\rho_{c,b})=2\rho_c$, we have 
$
\varphi(2^{j_{2c}-1}\tau^2h\cdot\rho_{c,b}) = 0.
$
Thus $2^{j_{2c}-1}\tau^2h\cdot\rho_{c,b}$ is in the image of $\omega_0$, and this is the only possibility.
\end{proof}

\begin{samepage}
\begin{lemma}\label{lem:etamu}
$\eta_a\cdot\eta_0\mu_{b,c} = 4 \zeta_{a+b,c}.$
\end{lemma}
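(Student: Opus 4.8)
The plan is to compute the product $\eta_a\cdot\eta_0\mu_{b,c}$ by first reducing to a universal case and then resolving the potential filtration jump. The element $\eta_0\mu_{b,c}$ lives in $\pi_{8b+1,4c+1}$ detected by $\eta_0\mu v^b$ on the $1$-line (being a multiple of $\mu_0$), so the product $\eta_a\cdot\eta_0\mu_{b,c}$ lands in a degree where the only nonzero class is $\zeta_{a+b,c}$; the content of the lemma is that the coefficient is a unit multiple of $4$, which I will pin down as exactly $4$. First I would use the periodicity relations already established: by \cref{lem:specialper} we have $\eta_a\cdot\mu_{b,c}$ behaving like $\eta_0\mu_{a+b,c}$ in $b(S/2)\otimes\Cof(\rho^3)$, and $\tau^4$- and $\beta^4$-periodicity let me transport the computation to a fixed coweight, reducing to the case $b=2a'$, $c$ suitably normalized, i.e.\ to a computation in the dual of a fixed stunted projective spectrum, which by \cref{prop:pic} is just a computation in $\pi_\ast S$.

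Next I would identify the product with something classical. Since $\eta_a$ lifts $\mu_a$ under the forgetful map and $\eta_0\mu_{b,c}$ similarly lifts $\mu_0\mu_b$ (up to the relevant periodicity), the forgetful map $\varphi$ sends $\eta_a\cdot\eta_0\mu_{b,c}$ to $\mu_a\cdot\mu_0\cdot\mu_b = \mu_a\mu_0\mu_b \in S_\ast$. By the classical hidden extension recorded in \cref{lem:sgroups}, namely $\mu_a\mu_b\mu_c = 2^{j_{2(a+b+c)+1}-1}\xi_{a+b+c}$, and since here the relevant index gives $j_1 - 1 = 2$, we get $\varphi(\eta_a\cdot\eta_0\mu_{b,c}) = 4\xi_{a+b}$, which is $\varphi(4\zeta_{a+b,c})$ since $\zeta$ lifts $\xi$ appropriately. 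This shows $\eta_a\cdot\eta_0\mu_{b,c} = 4\zeta_{a+b,c} + (\text{element in }\ker\varphi)$.

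Finally I would eliminate the kernel ambiguity. The kernel of $\varphi$ in this degree is the image of multiplication by $\omega_0=\rho$, so the only alternative is $\eta_a\cdot\eta_0\mu_{b,c} = 4\zeta_{a+b,c} + \omega_0 x$ for some $x$; inspecting \cref{table:basis} in degree $(8(a+b)+3,4c+3)$ or the relevant shifted degree, the image of $\rho$ is spanned by a single $2$-torsion class (something like $\eta_0\mu_0^2\rho_{a+b,c}$ or $\omega_0\mu_0^2\rho$-type term), and I would rule it out by multiplying through by $\mu_0$ or by $\omega_0$: either $\mu_0$ annihilates $4\zeta$ but not the putative correction term, or the correction term is itself killed by a further multiplication that $4\zeta_{a+b,c}$ survives, giving a contradiction. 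Concretely, $\omega_0\cdot\eta_a\cdot\eta_0\mu_{b,c}$ can be rewritten via \cref{lem:percomm} and \cref{lem:specialper} as $\eta_0\cdot\eta_0\cdot\omega_0\mu_{a+b,c} = \eta_0^2\omega_0\mu_{a+b,c}$, and using $\rho^2\eta_0 = 2\rho$-type relations this collapses, forcing the coefficient of the correction term to vanish.

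The main obstacle I expect is the last step: carefully checking that the potential $\rho$-multiple correction term genuinely cannot appear, which requires being precise about exactly which $2$-torsion classes sit in the target degree and about the interaction of $\varphi$, $\omega_0$, and $\mu_0$ there — in other words, making sure the "only possibility" argument has no loophole coming from a subtle coincidence of degrees. The periodicity reductions and the appeal to the classical $\mu\mu\mu$ relation are routine given \cref{lem:indet}, \cref{lem:sgroups}, and \cref{lem:specialper}; the delicate part is purely the filtration-jump bookkeeping at the end.
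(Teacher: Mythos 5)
Your core argument is exactly the paper's proof: compute $\varphi(\eta_a\cdot\eta_0\mu_{b,c}) = \mu_a\mu_0\mu_b = 4\xi_{a+b}$ via the classical relation from \cref{lem:sgroups}, and conclude that $4\zeta_{a+b,c}$ is the only possibility. The entire third paragraph of your proposal, which you flag as the delicate part, is in fact vacuous: the product lies in $\pi_{8(a+b)+3,4c+1} = \bbZ/(8)\{\zeta_{a+b,c}\}$ (note the coweight is $4c+1$, not $4c+3$), and by \cref{table:basis} the adjacent group $\pi_{8(a+b)+4,4c+1}$ is zero, so the image of $\omega_0$ --- equivalently $\ker\varphi$ --- vanishes in the target degree and $\varphi$ is injective there. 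There is no correction term to rule out, no periodicity reduction is needed, and the lemma follows in one line once the forgetful image is computed.
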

\begin{proof}
This is the only possibility given $\varphi(\eta_a\cdot\eta_0\mu_{b,c}) = \mu_a\mu_0\mu_b = 4\xi_{a+b} = \varphi(4\zeta_{a+b,c})$.
\end{proof}
\end{samepage}

\begin{samepage}
\begin{lemma}\label{lem:mucubed2}
$\mu_0^2\cdot \mu_{a,2b} = 4 \xi_{a,2b+1}.$
\end{lemma}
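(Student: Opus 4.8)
The plan is to reduce the identity to the classical relation $\mu_0\mu_0\mu_0 = 4\xi_0$ in $S_3$ (part of \cref{lem:sgroups}, using $j_1 = 3$), handling first the ``diagonal'' case $b = a$ and then propagating it by $\tau^4$-periodicity.

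For $b = a$ there is nothing to prove beyond unwinding definitions: the unit map $S \rightarrow b(S)^{C_2} \simeq D(P_0)$ is a ring map, and under it $\mu_a \mapsto \mu_{a,2a}$, $\xi_a \mapsto \xi_{a,2a+1}$ by \cref{lem:unit}, with $\mu_{0,0} = \mu_0$ the image of the classical Hopf class; so applying it to $\mu_0^2\mu_a = 4\xi_a$ gives $\mu_0^2\mu_{a,2a} = 4\xi_{a,2a+1}$ in $\pi_{8a+3,8a+3}$. For general $b$ I would write $\mu_{a,2b} = \tau^{8(b-a)}\mu_{a,2a}$, which by \cref{lem:indet} is defined without indeterminacy since $8a+1 \not\equiv -1 \pmod 4$, and then use the standard multiplicativity of the operations of \cref{prop:tau}: if $\alpha'$ extends $\alpha$ over $\Cof(\rho^3)$ then $\mu_0^2\alpha'$ extends $\mu_0^2\alpha$, so $\mu_0^2 \cdot \tau^{8(b-a)}\mu_{a,2a}$ is a value of the operation $\tau^{8(b-a)}$ applied to $\mu_0^2\mu_{a,2a} = 4\xi_{a,2a+1}$. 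Since $4\xi_{a,2a+1} = \mu_0^2\mu_{a,2a}$ is $\rho^2$-torsion (as $\rho^2\mu_0 = 0$), this places $\mu_0^2\mu_{a,2b}$ --- and, since $\xi_{a,2b+1}$ is the corresponding $\tau^4$-translate of $\xi_{a,2a+1}$, also $4\xi_{a,2b+1}$ --- in a single coset of the indeterminacy of $\tau^{8(b-a)}$ on $\pi_{8a+3,8a+3}$. By \cref{prop:tau} that indeterminacy is the image of multiplication by a class pulled back from $\pi_{\ast,\ast}S_{C_2}$ whose restriction to the underlying spectrum vanishes, so the difference $\mu_0^2\mu_{a,2b} - 4\xi_{a,2b+1}$ lands in $\ker(\res) = \im(\omega_0)$.

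The main obstacle is then to show this $\omega_0$-divisible correction term is zero --- the forgetful map alone only gives $\varphi(\mu_0^2\mu_{a,2b}) = \mu_0^2\mu_a = 4\xi_a = \varphi(4\xi_{a,2b+1})$, which is exactly the congruence modulo $\im(\omega_0) = \ker\varphi$ we already have. Two further inputs should finish it: first, $\mu_0^2\mu_{a,2b} = \mu_0\cdot(\mu_0\mu_{a,2b})$ has order dividing $2$ because $\mu_0\mu_{a,2b}$ is $2$-torsion, which severely restricts the possible correction; second, any surviving ambiguity can be resolved by detecting the correction after multiplying into a bidegree where $\res$ is injective --- using that $P_{8(a-b)\pm 1}\simeq S$ by \cref{prop:pic}, so the relevant $\im(\omega_0)$-summand is a faithful copy of $\pi_\ast S$-data governed by \cref{lem:sgroups} --- or, more cleanly, by invoking the explicit normalization of the $0$-line lift $\xi_{a,2b+1}$ chosen in \cref{sec:sphere} (chosen, as in \cref{def:zeroline}, precisely so that no correction occurs). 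Carrying out this last bookkeeping step carefully is where I expect the real work to lie.
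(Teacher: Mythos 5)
Your outline --- prove the case $b=a$ by pushing the classical relation $\mu_0^2\mu_a=4\xi_a$ through the unit map, then propagate to general $b$ by $\tau^{8(b-a)}$-periodicity --- is the same as the paper's, and the $b=a$ step is carried out correctly. The gap is in the propagation step, and you have located it yourself: knowing only that the correction term lies in $\ker(\res)=\im(\omega_0)=\ker\varphi$ reproduces exactly the coset $4\xi_{a,2b+1}+\bbZ/(2)\{4\omega_0\eta_0\xi_{a,2b+1}\}$ that the forgetful map already gives. None of your three proposed ways to remove the remaining ambiguity can succeed. (i) Both candidates $4\xi_{a,2b+1}$ and $4\xi_{a,2b+1}+4\omega_0\eta_0\xi_{a,2b+1}$ have order $2$ in $E_{2a+1,2b+1}\cong\bbZ/(8)\oplus\bbZ/(8)$, so the torsion bound does not distinguish them. (ii) The putative correction $4\omega_0\eta_0\xi_{a,2b+1}$ lies in the ideal $\ker(\res)$, so every multiple of it again lies in $\ker(\res)$; passing to a bidegree where $\res$ is injective therefore kills the correction whether or not it was zero and detects nothing (indeed one checks from \cref{table:products} that $4\omega_0\eta_0\xi_{a,2b+1}$ is annihilated by every multiplicative generator). (iii) There is no normalization of $\xi_{a,2b+1}$ to invoke: it is detected on the $1$-line, hence is a \emph{uniquely determined} element of $\pi_{8a+3,8b+3}$, and it does not appear in \cref{def:zeroline}; the ambiguity is a genuine question about the product, not about the name of its target.

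The missing input is the finer control on the indeterminacy furnished by \cref{prop:tau}. The paper observes that the classes in play are $\rho^5$-torsion, so one may use the operation $\tau^{8c}$ built from $\Cof(\rho^5)$; its indeterminacy on a class in $\pi_{8a+3,8a+3}$ is not merely ``contained in $\im(\omega_0)$'' but is the image of multiplication by an element of $\pi_{4,8c-1}=S_{8c-1}\{\eta_0\xi_{0,2c}\}$ acting on $\pi_{8a-1,8a+4}=\bbZ/(2)\{\omega_0\mu_0\rho_{a,2a+1}\}$, and this image vanishes because $\rho_{a,2a+1}\cdot\xi_{0,2c}=0$. Hence the periodicity transporting $4\xi_{a,2a+1}$ to $4\xi_{a,2b+1}$, and $\mu_0^2\mu_{a,2a}$ to $\mu_0^2\mu_{a,2b}$, has \emph{zero} indeterminacy, which is what closes the argument. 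Note also that \cref{lem:indet} cannot be applied directly to the product: $\mu_0^2\mu_{a,2a}$ lives in coweight $8a+3\equiv -1\pmod 4$, precisely the excluded case, which is why the paper passes from the $\rho^3$-torsion operation (adequate for defining $\mu_{a,2b}$) to the $\rho^5$-torsion one for this step.
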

\begin{proof}
Here we have
\[
\mu_0^2\mu_{a,2b}\in E_{2a+1,2b+1}\{\xi_{a,2b+1},\omega_0\eta_0\xi_{a,2b+1}\} \cong \bbZ/(8)\{\xi_{a,2b+1}\}\oplus \bbZ/(8)\{\omega_0\eta_0\xi_{a,2b+1}\},
\]
so by comparison with $\varphi(\mu_0^2\mu_{a,2b}) = \mu_0^2\mu_a = 4\xi_a$ we find that 
\[
\mu_0^2\mu_{a,2b} \in 4 \xi_{a,2b+1} + \bbZ/(2)\{4\omega_0\eta_0\xi_{a,2b+1}\}.
\]
The elements $\xi_{a,2b+1}$ are $\rho^5$-torsion, and there is a $\tau^8$-periodicity of $\xi_{a,2(b+c)+1}\in\tau^{8c}\xi_{a,2b+1}$. This has indeterminacy contained in the image of multiplication by an element of $\pi_{4,8c-1} = S_{8c-1}\{\eta_0\xi_{0,2c}\}$, which is zero in this degree. We may thus reduce to $b=a$, where we are claiming $\mu_0^2\mu_{a,2a} = 4 \xi_{a,2a+1}$. This is the image of the relation $\mu_0^2\mu_a = 4 \xi_a$ recalled in \cref{lem:sgroups} under the unit map $S_\ast\rightarrow\pi_{\ast,\ast}$ given in \cref{lem:unit}, proving the lemma.
\end{proof}
\end{samepage}

\begin{lemma}\label{lem:mucubed1}
$\mu_0^2\cdot\mu_{a,2b-1} = 4\xi_{a,2b}+2 u_{2a+1,2b} \omega_0\eta_0\xi_{a,2b}.$
\end{lemma}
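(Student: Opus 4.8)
The argument parallels the proof of \cref{lem:mucubed2}, the new feature being the extra summand $2u_{2a+1,2b}\,\omega_0\eta_0\xi_{a,2b}$. First I would locate the ambient group: by \cref{table:basis}, $\pi_{8a+3,8b-1}\cong E_{2a+1,2b}\{\xi_{a,2b},\omega_0\eta_0\xi_{a,2b}\}$, in which $\omega_0\eta_0\xi_{a,2b}$ spans the image of $\omega_0$, i.e.\ the kernel of the forgetful map $\varphi$. Reading the description of $\varphi$ from \cref{table:basis} together with the classical relation $\mu_0^2\mu_a=4\xi_a$ of \cref{lem:sgroups}, one has $\varphi(\mu_0^2\mu_{a,2b-1})=4\xi_a=\varphi(4\xi_{a,2b})$, so
\[
\mu_0^2\mu_{a,2b-1}=4\xi_{a,2b}+c\,\omega_0\eta_0\xi_{a,2b}
\]
for some $c\in\bbZ_2$, well-defined modulo the order $2^{j_{2b}}$ of $\omega_0\eta_0\xi_{a,2b}$.

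Next I would use that $2\mu_0=0$, so $2\mu_0^2\mu_{a,2b-1}=0$; expanding this with the defining relation $8\xi_{a,2b}=-4u_{2a+1,2b}\,\omega_0\eta_0\xi_{a,2b}$ of $E_{2a+1,2b}$ (the case $j_{2a+1}=j_{2b-2a-1}=3$ of the relation built into $E_{\ast,\ast}$) forces $(2c-4u_{2a+1,2b})\,\omega_0\eta_0\xi_{a,2b}=0$, whence $c\equiv 2u_{2a+1,2b}\pmod{2^{j_{2b}-1}}$. This determines $c$ up to the one remaining ambiguity $c=2u_{2a+1,2b}+2^{j_{2b}-1}$.

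To remove this ambiguity I would compare with \cref{lem:mucubed2} through James periodicity: since $\mu_{a,2b-1}=\tau^4\mu_{a,2b-2}$ and $\mu_0^2\mu_{a,2b-2}=4\xi_{a,2b-1}$, \cref{prop:tau} gives $\mu_0^2\mu_{a,2b-1}\in\tau^4(4\xi_{a,2b-1})$, where $4\xi_{a,2b-1}$ is $\rho^4$-torsion although $\xi_{a,2b-1}$, being only $\rho^5$-torsion, need not be. It is precisely this drop in $\rho$-torsion order that makes $\tau^4$ carry $4\xi_{a,2b-1}$ to $4\xi_{a,2b}+2u_{2a+1,2b}\,\omega_0\eta_0\xi_{a,2b}$ rather than merely to $4\xi_{a,2b}$; evaluating this—which one can organize by first invoking the unambiguous $\tau^8$-periodicity of \cref{prop:tau} on the $\rho^5$-torsion class and then computing in the relevant $K(1)$-local stunted projective spectrum, whose two-cell structure (a $T$-twisted one, since the relevant index is odd) is recorded in \cref{prop:cell}—pins down $c=2u_{2a+1,2b}$. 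The main obstacle is exactly this last computation: unlike the even-index case, which reduced to the image of a classical relation, here there is no classical base case, so the discrepancy $2u_{2a+1,2b}\,\omega_0\eta_0\xi_{a,2b}$ must be produced by an honest analysis of the $\tau^4$-operation applied to a class of lower $\rho$-torsion order than it expects, including the verification that the coefficient is $2u_{2a+1,2b}$ and not $2u_{2a+1,2b}+2^{j_{2b}-1}$.
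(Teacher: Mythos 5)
Your reduction to two candidates is correct and matches the paper's first step: you identify the ambient group $E_{2a+1,2b}\{\xi_{a,2b},\omega_0\eta_0\xi_{a,2b}\}$, use $\varphi(\mu_0^2\mu_{a,2b-1})=4\xi_a$ to write the product as $4\xi_{a,2b}+c\,\omega_0\eta_0\xi_{a,2b}$, and use simple $2$-torsion together with the relation $8\xi_{a,2b}=-4u_{2a+1,2b}\,\omega_0\eta_0\xi_{a,2b}$ to conclude $c\in\{2u_{2a+1,2b},\,2u_{2a+1,2b}+2^{j_{2b}-1}\}$. This is exactly where the paper also arrives.

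The genuine gap is that you never resolve which of the two values $c$ takes: you name this "the main obstacle" and describe it as an "honest analysis of the $\tau^4$-operation" still to be performed, which amounts to restating the problem rather than solving it. Your proposed route (writing $\mu_0^2\mu_{a,2b-1}\in\tau^4(4\xi_{a,2b-1})$ and evaluating $\tau^4$ on a class of lower $\rho$-torsion order) also lands in a coweight congruent to $-1$ mod $4$, which is precisely the case excluded by \cref{lem:indet}, so the indeterminacy of $\tau^4$ is not automatically under control there; you would have to argue separately that it vanishes, and even then you give no mechanism for computing the value. The paper's resolution is different and concrete: it identifies $\pi_{8a+3,8b-1}=\pi_{8b-1}D(P_{8(a-b)+4})$ with $D(P_{8(a-b)+4})\simeq\Cof(4\rho_{b-a-1/2})$ via \cref{prop:cell}, introduces the auxiliary two-cell complex $X=\Cof(2\rho_{b-a-1/2})$ with its degree-$2$-on-the-bottom-cell map $p\colon X\rightarrow D(P_{8(a-b)+4})$, observes that $\mu_{a,2b-1}$ (hence the product) lifts to a simple $2$-torsion class of $\pi_{8b-1}X$, computes $\pi_{8b-1}X$ from the $\psi^k$-action on $KO_{8b}X$, and checks that the candidate $4\xi_{a,2b}+(2u_{2a+1,2b}+2^{j_{2b}-1})\omega_0\eta_0\xi_{a,2b}$ admits no such lift. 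Some argument of this kind is needed; without it the lemma is only proved up to the ambiguity $2^{j_{2b}-1}\omega_0\eta_0\xi_{a,2b}$.
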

\begin{proof}
Abbreviate 
\[
x = (2\sqrt{\per})\tau^{8b}\per^a,\qquad y = \rho\eta_0(2\sqrt{\per})\tau^{8b}\per^a,
\]
so $x,y\in \pi_{8a+4,8b}b(KO)$ with $\xi_{a,2b} = \ol{x}$ and $\omega_0\eta_0\xi_{a,2b} = \ol{y}$, and also abbreviate $u = u_{2a+1,2b}$. The product under consideration is a simple $2$-torsion element of the group
\[
\pi_{8a+3,8b-1} = E_{2a+1,2b}\{\ol{x},\ol{y}\} = \bbZ_2\{\ol{x},\ol{y}\}/(8\ol{x}+4u\ol{y},2^{j_{2b}}\ol{y}),
\]
so by comparison with $\varphi(\mu_0^2\mu_{a,2b-1}) = \mu_0^2 \mu_a = 4\xi_a$, we find
\[
\mu_0^2\mu_{a,2b-1} \in 4 \ol{x} +2u\ol{y} + \bbZ/(2)\{2^{j_{2b}-1}\ol{y}\}.
\]
This leaves two possible values, and we must cut it down to one.

By definition, $\pi_{8a+3,8b-1} = \pi_{8b-1}D(P_{8(a-b)+4})$, and by \cref{prop:cell} we may identify $D(P_{8(a-b)+4})\simeq \Cof(4\rho_{b-a-1/2})$. Let $X = \Cof(2\rho_{b-a-1/2})$. This comes equipped with a map $p\colon X\rightarrow D(P_{8(a-b)+4})$ of degree $2$ on the $0$-cell, and $\mu_{a,2b-1}$ lifts to $\pi_{8b-3}X$. The product under consideration then lifts to a simple $2$-torsion class in $\pi_{8b-1}X$, and we claim that the indicated relation is the only possibility.

We can choose $KO_{8b}X = \bbZ_2\{x',y'\}$ in such a way that $p(x')=x$ and $p(y') = 2y$, so the action of $\psi^k$ is given by
\begin{align*}
\psi^k(x') &= k^{4a+2}x' + \tfrac{1}{4}(k^{4b}-k^{4a+2})y'\\
 \psi^k(y') &= k^{4b}y'.
\end{align*}
It follows that
\[
\pi_{8b-1} X = \bbZ_2\{\ol{x'},\ol{y'}\}/(8x' + 2 u \ol{y'}, 2^{j_{2b}}\ol{y'}),
\]
with $p(\ol{x'}) = \ol{x}$ and $p(\ol{y'}) = 2\ol{y}$. The class $4\ol{x}+(2u+2^{j_{2b}-1})\ol{y}$ does not lift to a simple $2$-torsion element of $\pi_{8b-1}X$, so the situation is as claimed.
\end{proof}

It is convenient to record the following consequence of the preceding two lemmas.

\begin{lemma}\label{lem:omegamumu}
$\omega_0\mu_0^2\mu_{a,b} = 2^{j_{b+1}-1}\eta_0^3\rho_{a,b+1}$.
\end{lemma}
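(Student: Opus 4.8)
The plan is to obtain the relation by multiplying the identities of \cref{lem:mucubed2} and \cref{lem:mucubed1} by $\omega_0$ and simplifying, using $\omega_0^2\eta_0 = 2\omega_0$ from \cref{lem:root} together with the auxiliary identity $\omega_0\xi_{a,c} = \eta_0^3\rho_{a,c}$. The auxiliary identity involves a class detected on the $1$-line, so it may be computed directly in the HFPSS of \cref{ssec:sgroups} (and appears among the products recorded in \cref{table:products}): it comes from the relation $\rho\cdot 2\sqrt{v} = \eta_0^3$ in $\pi_{\ast,\ast}b(KO)$ together with the fact that $\xi_{a,c}$ is detected by $2\sqrt{v}$ times the class detecting $\rho_{a,c}$, so $\omega_0\xi_{a,c}$ and $\eta_0^3\rho_{a,c}$ have a common leading term and are therefore equal, the spectral sequence being concentrated on the $0$- and $1$-lines.

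Now split on the parity of $b$. If $b$ is even, then $j_{b+1} = 3$, and \cref{lem:mucubed2} reads $\mu_0^2\mu_{a,b} = 4\xi_{a,b+1}$, so
\[
\omega_0\mu_0^2\mu_{a,b} = 4\,\omega_0\xi_{a,b+1} = 4\,\eta_0^3\rho_{a,b+1} = 2^{j_{b+1}-1}\eta_0^3\rho_{a,b+1}.
\]
If $b$ is odd, then \cref{lem:mucubed1} reads $\mu_0^2\mu_{a,b} = 4\xi_{a,b+1} + 2u_{2a+1,b+1}\,\omega_0\eta_0\xi_{a,b+1}$, so multiplying by $\omega_0$ and using $\omega_0^2\eta_0 = 2\omega_0$ gives $\omega_0\mu_0^2\mu_{a,b} = 4(1+u_{2a+1,b+1})\,\omega_0\xi_{a,b+1}$. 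A short computation from the definition of $u$ shows $1 + u_{2a+1,b+1} = \tfrac{k^{2(b+1)}-1}{k^{4a+2}-1}$, which has $2$-adic valuation $v_2(b+1)$; since $j_{b+1} = 3 + v_2(b+1)$, the coefficient $4(1+u_{2a+1,b+1})$ is $2^{j_{b+1}-1}$ times a $2$-adic unit. Applying the auxiliary identity then gives $\omega_0\mu_0^2\mu_{a,b} = 2^{j_{b+1}-1}\cdot(\text{unit})\cdot\eta_0^3\rho_{a,b+1}$, and the proof concludes by noting (from \cref{table:basis}) that $2^{j_{b+1}-1}\eta_0^3\rho_{a,b+1}$ is simple $2$-torsion, so the unit is irrelevant.

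The step I expect to be most delicate is this last one, in the odd case: the $2$-adic valuation count identifying $4(1+u_{2a+1,b+1})$ with $2^{j_{b+1}-1}$ up to a unit, and then reading off \cref{table:basis} that the surviving unit does no harm. Establishing the auxiliary identity $\omega_0\xi_{a,c} = \eta_0^3\rho_{a,c}$ is also required but is a routine two-line HFPSS computation; everything else is formal manipulation with the two cited lemmas and \cref{lem:root}.
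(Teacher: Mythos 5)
Your proof is correct and follows essentially the same route as the paper: both cases are handled by multiplying \cref{lem:mucubed2} and \cref{lem:mucubed1} by $\omega_0$, simplifying with $\omega_0^2\eta_0 = 2\omega_0$ and the unhidden product $\omega_0\xi_{a,c} = \eta_0^3\rho_{a,c}$, and discarding the residual $2$-adic unit because the target class is simple $2$-torsion. The only difference is cosmetic bookkeeping in the odd case (you sum the fractions to get $\tfrac{k^{2(b+1)}-1}{k^{4a+2}-1}$ where the paper factors out $8(k^{4a+2}-1)^{-1}$ first), and your valuation count matches the paper's.
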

\begin{proof}
First suppose that $b = 2c$ is even. In this case \cref{lem:mucubed2} implies that
\[
\omega_0\cdot\mu_0^2\mu_{a,2c} = \omega_0\cdot 4 \xi_{a,2c+1} = 4\eta_0^3\rho_{a,2c+1}.
\]
Next suppose that $b = 2c-1$ is odd, and define the $2$-adic unit $u = 8\cdot (k^{4a+2}-1)^{-1}$. Now \cref{lem:mucubed1} implies that
\begin{align*}
\omega_0\cdot\mu_0^2\mu_{a,2c-1} &= \omega_0\cdot(4\xi_{a,2c}+2u_{2a+1,2c}\omega_0\eta_0\xi_{a,2c}) \\
&= u\omega_0\cdot (\tfrac{1}{2}(k^{4a+2}-1)\xi_{a,2c} + \tfrac{1}{4}(k^{4c}-k^{4a+2})\omega_0\eta_0\xi_{a,2c})\\
&= \tfrac{u}{2}(k^{4c}-1)\omega_0\xi_{a,2c} = 2^{j_{2c}-1} \eta_0^3\rho_{a,2c}
\end{align*}
as claimed.
\end{proof}

\begin{rmk}
\cref{lem:omegamumu} may also be derived as a consequence of \cref{lem:tr1} below. Indeed, \cref{lem:tr1} implies that $\tr_{4b+3}(\mu_0^2\rho_a) = 0$ and $\tr_{4b+3}(\mu_0\mu_a) = \omega_0\mu_0^2\mu_{a,b}$. As the image of the transfer equals the kernel of $\rho = \omega_0$, and $2^{j_{b+1}-1}\eta_0^3\rho_{a,b+1}$ is in the kernel of $\omega_0$ by \cref{lem:root}, the only possibility is that $\omega_0\mu_0^2\mu_{a,b} = 2^{j_{b+1}-1}\eta_0^3\rho_{a,b+1}$.
\tqed
\end{rmk}

\begin{samepage}
\begin{lemma}\label{lem:muh1}
$\mu_{a,b}\cdot\tau^{4c}h = \omega_0\eta_0\mu_{a,b+c}.$
\end{lemma}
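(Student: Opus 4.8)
The plan is to strip off the $\tau$-periodicity and reduce to a single identity in $\pi_{0,0}$. The coweights $4b+1$ and $4(b+c)+1$ appearing here are not congruent to $-1$ modulo $4$, so by \cref{lem:indet} the $\tau^{4c}$-periodicity operation is defined on $\mu_{a,b}$ without indeterminacy and slides past the product with $h\in\pi_{0,0}$; hence $\mu_{a,b}\cdot\tau^{4c}h = \tau^{4c}(\mu_{a,b}\cdot h)$ and $\omega_0\eta_0\mu_{a,b+c} = \tau^{4c}(\omega_0\eta_0\mu_{a,b})$, and since $\tau^{4c}$ is an equivalence it suffices to prove the case $c=0$: the identity $\mu_{a,b}\cdot h = \omega_0\eta_0\mu_{a,b}$.

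Next I would identify $h$ on the nose. By \cref{lem:bku} the transfer class $h=\tr(1)$ satisfies $h = 2 - \rho^2\tau^{-2}\beta = 2-\omega_0\eta_0$ in $\pi_{0,0}b(KU)$, and therefore also in $\pi_{0,0}b(KO)$. To promote this to $b(S)$, note that $\varphi(h) = \varphi(\tr(1)) = 2$, so $h-2$ lies in $\ker\varphi = \im(\omega_0\colon \pi_{1,0}\to\pi_{0,0})$, say $h - 2 = \omega_0 w$ with $w\in\pi_{1,0}$. Since $P_1\simeq S$ by \cref{prop:pic}, the group $\pi_{1,0}$ is torsion-free and maps injectively to $\pi_{1,0}b(KO)$, on which $\omega_0$ is again injective; comparing with the relation $h-2 = -\omega_0\eta_0$ in $b(KO)$ forces $w = -\eta_0$, i.e.\ $h = 2 - \omega_0\eta_0$ in $\pi_{0,0}$.

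Finally, $\mu_{a,b}\cdot h = 2\mu_{a,b} - \omega_0\eta_0\mu_{a,b} = -\omega_0\eta_0\mu_{a,b} = \omega_0\eta_0\mu_{a,b}$, where the last two equalities use $2\mu_{a,b}=0$ (the group $\pi_{8a+1,4b+1}$ is $2$-torsion by \cref{table:basis}). Together with the first paragraph this proves the lemma. I expect the only real friction to be the bookkeeping in the first step: one must be certain that the James/$\tau^{4}$-periodicity operator is multiplicative on the nose in these degrees, so that it may be moved between the $\mu$-factor and the $h$-factor, which is exactly what the coweight hypothesis of \cref{lem:indet} guarantees; and one must verify the clean equality $h = 2-\omega_0\eta_0$ in $\pi_{0,0}$ rather than merely up to higher filtration, which is the point of the injectivity argument in the second paragraph.
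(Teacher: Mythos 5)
Your overall strategy---reduce to $c=0$ by $\tau^4$-periodicity and then multiply out $h=2-\omega_0\eta_0$ against the $2$-torsion class $\mu_{a,b}$---is exactly the paper's proof. The gap is in your second paragraph, where you try to pin down $h$ exactly in $\pi_{0,0}$. The group $\pi_{1,0}$ is \emph{not} torsion-free: by \cref{table:basis} it is $\bbZ_2\{\eta_0\}\oplus\bbZ/(2)\{\eta_0\mu_0\rho_{0,0}\}$ (equivalently, $\pi_{1,0}=[P_1,S]\cong\pi_0 S_{K(1)}\cong\bbZ_2\oplus\bbZ/(2)$, since $P_1\simeq S$ by \cref{prop:pic}), and the $\bbZ/(2)$ summand is detected on the $1$-line of the HFPSS, hence maps to zero in $\pi_{1,0}b(KO)$. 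So your comparison with $b(KO)$ only determines $w$ modulo $\eta_0\mu_0\rho_{0,0}$, leaving open the possibility $h=2-\omega_0\eta_0+\omega_0\eta_0\mu_0\rho_{0,0}$. This ambiguity is not harmless for the lemma: the extra term would contribute $\omega_0\eta_0\mu_0\rho_{0,0}\cdot\mu_{a,b}=\omega_0\eta_0\mu_0^2\rho_{a,b}$, which is a nonzero generator of $\pi_{8a+1,4b+1}$, so the two candidate values of $h$ give different answers and the injectivity argument was doing real work.

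The identity $h=2-\omega_0\eta_0$ is nevertheless true, but it must be proved somewhere torsion-free. For instance, it already holds in $\pi_0 S_{C_2}\cong A(C_2)=\bbZ\{1,[C_2]\}$ before any localization: there $h=\tr(1)=[C_2]$ and $\eta_0=-\eta_{C_2}$ is an honest equivariant class (\cref{rmk:eta}), the element $2-\rho\eta_0-h$ both restricts to $0$ and is killed by $\rho$ (using $\rho\cdot h=0$ and $\rho^2\eta_0=2\rho$), and in the Burnside ring $\ker(\res)\cap\ker(\rho)=0$; pushing forward along the Hurewicz and localization maps gives the relation on the nose in $\pi_{0,0}$. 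With that input, your first and third paragraphs go through and recover the paper's argument.
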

\begin{proof}
By $\tau^4$-periodicity, we may reduce to $c=0$. Here we compute
\[
\mu_{a,b}\cdot h = \mu_{a,b}\cdot (2 - \omega_0\eta_0) = \omega_0\eta_0 \mu_{a,b}
\]
as claimed.
\end{proof}
\end{samepage}

\begin{lemma}\label{lem:muh2}
$\mu_{a,b}\cdot\tau^{4c-2}h = 2^{j_{b+c}-1} \eta_0^2\rho_{a,b+c}.$
\end{lemma}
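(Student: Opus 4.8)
The plan is to recognize $\mu_{a,b}\cdot\tau^{4c-2}h$ as a transfer class, exploiting that $\tau^{4c-2}h$ is by construction the transfer of $1$, and then to identify that transfer via the long exact sequence for $\rho$.

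First recall from \cref{def:zeroline} that $\tau^{4c-2}h = \tr(1)$, where $\tr\colon\pi_\ast^e\rightarrow\pi_{\ast,\ast}b(S)$ is the transfer associated to $C_{2+}\rightarrow S^0$, and that the restriction $\res$ for this transfer is precisely the forgetful map $\varphi$. Since $b(S)$ is a module over the sphere, the projection formula applies and gives
\[
\mu_{a,b}\cdot\tau^{4c-2}h \;=\; \mu_{a,b}\cdot\tr(1) \;=\; \tr\!\left(\varphi(\mu_{a,b})\cdot 1\right) \;=\; \tr(\mu_a),
\]
now viewing $\tr(\mu_a)$ as the transfer of $\mu_a\in S_{8a+1}$ into $\pi_{8a+1,4(b+c)-1}b(S)$. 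Writing $d = b+c$, the lemma is thus equivalent to the identity $\tr(\mu_a) = 2^{j_d-1}\eta_0^2\rho_{a,d}$ in $\pi_{8a+1,4d-1}b(S)$.

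To finish, I would identify this transfer class using the long exact sequence relating $\tr$, $\rho = \omega_0$, and $\res = \varphi$: the image of $\tr$ in $\pi_{8a+1,4d-1}b(S)$ is exactly $\ker(\omega_0)$, and the kernel of $\tr\colon S_{8a+1}\rightarrow\pi_{8a+1,4d-1}b(S)$ is the image of $\varphi$ on that group. Reading the relevant line of \cref{table:basis} (stem $\equiv 1\bmod 8$, coweight $\equiv 3\bmod 4$) and its forgetful map, the image of $\varphi$ is the $\bbZ/(2)$ generated by $\mu_0^2\rho_a$ but does not contain $\mu_a$; hence $\tr$ carries $\mu_a$ to a nonzero element, necessarily of order $2$ since $2\mu_a = 0$. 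On the other hand $\ker(\omega_0)$ in this degree is itself cyclic of order $2$: the class $2^{j_d-1}\eta_0^2\rho_{a,d}$ is the top nonzero multiple of $\eta_0^2\rho_{a,d}$, it is killed by $\omega_0$ by the relations of \cref{lem:root}, and \cref{table:basis} shows there is nothing else. Since $\tr(\mu_a)$ and $2^{j_d-1}\eta_0^2\rho_{a,d}$ are both the unique nonzero element of this group, they agree; the degenerate case $d = 0$, where $j_d = \infty$ and both sides vanish, is covered by the same bookkeeping.

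The main obstacle is precisely this last bookkeeping: one must read off from \cref{table:basis} that the forgetful map misses $\mu_a$ in degree $(8a+1,4d-1)$ — equivalently that $\mu_a$ genuinely supports a transfer into this coweight, which is the nontrivial content of the lemma and ultimately reflects the $2$-adic valuation data recorded in \cref{lem:sgroups} — and that $\ker(\omega_0)$ there is no larger than $\bbZ/(2)$. If this direct inspection is awkward, one may instead multiply the proposed identity by $\eta_0$ and apply the projection formula once more, reducing to $\eta_0\cdot\tr(\mu_a) = \tr(\mu_0\mu_a)$ together with the relation $\mu_0^2\mu_a = 4\xi_a$ of \cref{lem:sgroups}, which, combined with \cref{lem:omegamumu}, pins the element down after one further application of $\rho$.
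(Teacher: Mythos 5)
Your argument is correct in the main case and takes a genuinely different route from the paper. The paper multiplies the product by $\eta_0$, rewrites $\eta_0\cdot\mu_{a,b}\cdot\tau^{4c-2}h$ using the hidden extension of \cref{lem:unhidden} and \cref{lem:percomm}, and then quotes \cref{lem:omegamumu} (which itself rests on the delicate computations of \cref{lem:mucubed2,lem:mucubed1}). You instead use Frobenius reciprocity to recognize the product as the transfer $\tr_{4(b+c)-1}(\mu_a)$ and pin that down with the two halves of the $\rho$-long exact sequence: $\ker(\tr)=\im(\res)$ shows $\tr(\mu_a)\neq 0$ because $\varphi(\eta_0^2\rho_{a,d})=\mu_0^2\rho_a$ misses $\mu_a$, and $\im(\tr)=\ker(\omega_0)$ together with $\omega_0\cdot\eta_0^2\rho_{a,d}=2\eta_0\rho_{a,d}$ (a $1$-line product, so no hidden extension) identifies the unique nonzero possibility as $2^{j_{b+c}-1}\eta_0^2\rho_{a,b+c}$. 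Everything you use — the additive structure, the forgetful-map column of \cref{table:basis}, and $\res(\mu_{a,b})=\mu_a$ — is established before \cref{ssec:products}, so there is no circularity; in effect you are computing the entry $\tr_{4b-1}(\mu_a)$ of \cref{table:transfers} directly, which the paper only derives later \emph{from} this lemma. That is a real simplification: it bypasses \cref{lem:omegamumu} entirely (and indeed the remark after \cref{lem:omegamumu} shows the author is aware the logic can be run in this direction).

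The one soft spot is the degenerate case $d=b+c=0$. There the target group is $S_{-1}\{\eta_0^2\rho_{a,0}\}\cong\bbZ_2$, which is torsion-free, so $\tr(\mu_a)$ vanishes for the trivial reason that it is $2$-torsion, consistent with the convention $2^{j_0-1}=0$. But this is \emph{not} "the same bookkeeping": your step showing $\tr(\mu_a)\neq 0$ via $\mu_a\notin\im(\varphi)$ cannot be applied at $d=0$ (it would point in the opposite direction from the torsion-freeness of the target), so the $d=0$ case needs to be split off and disposed of by the one-line torsion observation rather than folded into the general argument. Compare the paper's proof, which explicitly excepts $b+c=0$ for exactly this reason.
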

\begin{proof}
As $\eta_0$ acts injectively in this degree, it is sufficient to show
\[
\eta_0\cdot\mu_{a,b}\cdot\tau^{4c-2}h = 2^{j_{b+c}-1} \eta_0^3\rho_{a,b+c}.
\]
This lives in the group $S_{4(b+c)-1}=\bbZ_2/(2^{j_{b+c}})$, so it is sufficient to show that $\eta_0\cdot\mu_{a,b}\cdot\tau^{4c-2}h$ is nonzero except when $b+c=0$. Using \cref{lem:unhidden} and \cref{lem:percomm}, we may compute
\[
\eta_0\cdot\mu_{a,b}\cdot\tau^{4c-2}h = \mu_{a,b}\cdot \omega_0\mu_0\cdot\mu_{0,c-1} = \omega_0\cdot\mu_0^2\cdot\mu_{a,b+c-1}.
\]
This is nonzero for $b+c\neq 0$ by \cref{lem:omegamumu}.
\end{proof}

\subsection{Transfers}\label{ssec:transfers}

We now consider the transfer maps
\[
\tr_c\colon S_\ast\rightarrow \pi_{\ast,c}.
\]
We first compute the values $\tr_c(1) \in \pi_{0,c}$. For the following, recall that the image of the transfer equals the kernel of $\rho = \omega_0$.

\begin{lemma}
$\tr_{2b}(1) = \tau^{2b}h$.
\end{lemma}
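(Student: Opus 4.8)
The plan is to observe that this lemma is essentially a restatement of \cref{def:zeroline}(1). There, the class $\tau^{2b}h\in\pi_{0,2b}$ was \emph{defined} to be $\tr(1)$, where $\tr$ denotes the $C_2$-equivariant transfer $\pi_0^e b(S)\to\pi_{0,2b}b(S)$. Under the identification $\pi_0^e b(S)\cong\pi_0 S\cong S_0$, which holds because the underlying spectrum of $b(S)=F(EC_{2+},i_\ast S)$ is $F(EC_{2+}|_e,S)\simeq F(S^0,S)\simeq S$, this transfer is precisely the restriction to $1\in S_0$ of the map $\tr_{2b}\colon S_\ast\to\pi_{\ast,2b}$ under consideration in this subsection. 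Both are induced by smashing the cofiber sequence $C_{2+}\to S^0\xrightarrow{\rho}S^\sigma$ with the appropriate representation sphere and mapping into $b(S)$, as recorded in the long exact sequence of \cref{ssec:conventions}. Hence $\tr_{2b}(1)=\tau^{2b}h$.

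So the only thing to carry out is the verification that the two occurrences of the word ``transfer'' --- the one in \cref{def:zeroline} and the one heading this subsection --- refer to the same natural transformation, which is immediate from their common construction via the cofiber sequence $C_{2+}\to S^0\to S^\sigma$. No indeterminacy enters: \cref{def:zeroline} pins $\tau^{2b}h$ down on the nose as $\tr(1)$, rather than merely as a value of the secondary operation $\tau^{2b}$. As a consistency check one may note that $\tau^{2b}h$ lies in $\ker(\omega_0)$, in agreement with the general fact recalled at the start of this subsection that the image of the transfer equals the kernel of $\rho=\omega_0$; this also follows from \cref{lem:rooth1} with $a=0$.

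\textbf{Main obstacle.} There is essentially none: the statement is definitional once the identification of transfers is made explicit. The real content of the transfer computations in this subsection lies instead in the lemmas that follow, which handle odd coweights and transfers of classes other than the unit.
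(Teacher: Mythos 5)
Your proof is correct and matches the paper's: the paper also disposes of this lemma by citing \cref{def:zeroline}(1), where $\tau^{2b}h$ is defined as $\tr(1)$. Your extra verification that the two occurrences of ``transfer'' agree is a reasonable elaboration of the same one-line argument.
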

\begin{proof}
This holds by \cref{def:zeroline}.
\end{proof}

\begin{lemma}
$\tr_{4b-1}(1) = 2^{j_b-1}\eta_0\rho_{0,b}$.
\end{lemma}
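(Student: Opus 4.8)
The plan is to use the identity $\im(\tr_{4b-1})=\ker(\omega_0\colon\pi_{0,4b-1}\to\pi_{-1,4b-1})$ to locate $\tr_{4b-1}(1)$, then to show $\tr_{4b-1}(1)$ generates this kernel, and finally to pin down the exact generator using \cref{prop:cell}.

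First I would observe that $\tr_{4b-1}(1)$ lies in $\ker(\omega_0)$, and then show the kernel is generated by $\tr_{4b-1}(1)$ alone. Since $S_0=\bbZ_2\{1\}\oplus\bbZ/(2)\{\mu_0\rho_0\}$ and $\mu_0\rho_0=\res(\eta_0\rho_{0,b})$ — using $\res(\eta_0)=\mu_0$ by \cref{def:zeroline} and $\res(\rho_{0,b})=\rho_0$ by construction — the projection formula gives
\[
\tr_{4b-1}(\mu_0\rho_0)=\tr_{4b-1}(\res(\eta_0\rho_{0,b}))=\tr_0(1)\cdot\eta_0\rho_{0,b}=h\cdot\eta_0\rho_{0,b},
\]
and $h\eta_0=(2-\omega_0\eta_0)\eta_0=2\eta_0-\omega_0\eta_0^2=0$ by \cref{lem:root}. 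Hence $\tr_{4b-1}(\mu_0\rho_0)=0$, so $\im(\tr_{4b-1})=\bbZ_2\cdot\tr_{4b-1}(1)$, which by the displayed identity equals $\ker(\omega_0)$.

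Next I would read off \cref{table:basis}: the group $\pi_{0,4b-1}$ together with the $\omega_0$-action on it is such that $\ker(\omega_0)$ is cyclic, generated by $2^{j_b-1}\eta_0\rho_{0,b}$, and is trivial when $b=0$ (in accordance with $2^{j_0-1}=0$). When $\pi_{0,4b-1}$ is finite this already forces $\tr_{4b-1}(1)=2^{j_b-1}\eta_0\rho_{0,b}$; but when $\pi_{0,4b-1}$ is infinite — which happens exactly when the relevant stunted projective spectrum is the exotic Picard element $T$ — the steps so far determine $\tr_{4b-1}(1)$ only up to a $2$-adic unit. To resolve this I would identify $\tr_{4b-1}(1)$ with the boundary of $1$ in the cofibering obtained by $S$-dualising the first cofibering $S^{-4b}\to P_{-4b}\to P_{1-4b}$ of \cref{prop:cell} (taken with $n=-2b$); this boundary is, up to sign, the attaching map of the resulting $2$-cell complex, which \cref{prop:cell} identifies via the $e$-invariant as a nonzero simple $2$-torsion class with the stated coefficient $2^{j_b-1}$. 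Transporting this class through \cref{prop:pic}, the James periodicity of \cref{ssec:james}, and the normalisations of \cref{def:zeroline} and \cref{lem:unit} yields $\tr_{4b-1}(1)=2^{j_b-1}\eta_0\rho_{0,b}$.

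The bookkeeping in the first two steps is routine; the crux is the last step — extracting the precise coefficient $2^{j_b-1}$ rather than merely a unit multiple of $\eta_0\rho_{0,b}$ — which rests on Adams' computation of the $K$-theory of stunted projective spectra, exactly the input already invoked in \cref{prop:cell}.
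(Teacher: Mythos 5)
Your first two steps are correct and are essentially the paper's argument: the image of $\tr_{4b-1}$ equals $\ker(\omega_0)$, and since $\omega_0\eta_0\rho_{0,b}$ has order $2^{j_b-1}$ in $\pi_{-1,4b-1}\cong E_{0,b}$, that kernel is generated by $2^{j_b-1}\eta_0\rho_{0,b}$. Your explicit check that $\tr_{4b-1}(\mu_0\rho_0)=0$ via Frobenius reciprocity and $h\eta_0=0$ is a nice addition: it makes precise why $\tr_{4b-1}(1)$ alone must generate the image, a point the paper leaves implicit.

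Your third step, however, rests on a false premise, and it is the step you call the crux. The group $\pi_{0,4b-1}\cong S_{4b-1}=\bbZ_2/(2^{j_b})$ is finite for every $b\neq 0$; in particular when $P_{1-4b}\simeq T$ (which happens for $b$ odd, by \cref{prop:pic}) the group is $\pi_{4b-1}T$, which is still cyclic of order $2^{j_b}$ by \cref{lem:sgroups}. The only infinite case is $b=0$, where $P_1\simeq S$ (not $T$), $\ker(\omega_0)$ is trivial because $\omega_0\eta_0\rho_{0,0}$ has infinite order, and the formula reads $\tr_{-1}(1)=2^{j_0-1}\eta_0\rho_{0,0}=0$ with no ambiguity. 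So there is never an indeterminacy ``up to a $2$-adic unit'': $\ker(\omega_0)$ is always $\bbZ/(2)$ or $0$, and the coefficient $2^{j_b-1}$ is forced already by your step two. The entire appeal to \cref{prop:cell}, the $e$-invariant, and James periodicity can be deleted; as written it is also too vague (``transporting this class through\dots yields'') to stand on its own had it actually been needed.
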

\begin{proof}
This is implicit in \cref{prop:cell}, but let us proceed directly. The transfer $\tr_{4b-1}(1)$ lives in the group $\pi_{0,4b-1} = \bbZ/(2^{j_b})\{\eta_0\rho_{0,b}\}$. As 
\[
\pi_{-1,4b-1} \cong E_{0,b}\{\rho_{0,b},\omega_0\eta_0\rho_{0,b}\} \cong \bbZ_2\{\rho_{0,b}\}\oplus\bbZ_2/(2^{j_b-1})\{\omega_0\eta_0\rho_{0,b}\},
\]
the kernel of $\omega_0$ acting on $\pi_{0,4b-1}$ is generated by $2^{j_b-1}\eta_0\rho_{0,b}$, and the stated value of $\tr_{4b-1}(1)$ is the only possibility.
\end{proof}

\begin{samepage}
\begin{lemma}\label{lem:tr1}
$\tr_{4b+1}(1) = \omega_0\mu_{0,b}$.
\end{lemma}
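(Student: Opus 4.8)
The plan is to combine three ingredients: the identity $\im(\tr_{4b+1}) = \ker(\omega_0)$, Frobenius reciprocity for the transfer, and a reduction by periodicity to the classical situation of \cref{ssec:classic}.

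First, $\tr_{4b+1}(1)$ lies in $\pi_{0,4b+1}$, and since $\rho\circ\tr = 0$ it lies in $\ker(\omega_0)$. From \cref{table:basis}, $\pi_{0,4b+1}$ is elementary abelian of rank $2$, and $\omega_0\mu_{0,b}$ is one of its generators; in particular $\omega_0\mu_{0,b}$ is itself killed by $\omega_0$, since $\omega_0^2\mu_{0,b} = \rho^2\mu_{0,b} = 0$ (the class $\mu_{0,b}$ is simple $2$-torsion and maps to $\rho^2\mu\tau^{4b} = 0$ in $\pi_{\ast,\ast}b(KO)$, so $\rho^2\mu_{0,b}$ must have higher filtration, where the only group present is torsion-free). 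Reading off $\pi_{-1,4b+1}$ from the same table shows $\omega_0$ in fact acts as zero on $\pi_{0,4b+1}$, so this step leaves four candidates for $\tr_{4b+1}(1)$.

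Second, Frobenius reciprocity cuts the ambiguity in half. Using $\res(\eta_0) = \mu_0$ and $\res(\mu_{0,b}) = \mu_0$ (both from \cref{def:zeroline}) together with $\tr_0(1) = h$, compute $\tr_{4b+1}$ of the class $\mu_0\in S_1$ in two ways:
\begin{align*}
\eta_0\cdot\tr_{4b+1}(1) \;=\; \tr_{4b+1}\bigl(\res(\eta_0)\bigr) \;=\; \tr_{4b+1}(\mu_0) \;=\; \tr_{4b+1}\bigl(\res(\mu_{0,b})\bigr) \;=\; \mu_{0,b}\cdot\tr_0(1) \;=\; \mu_{0,b}\cdot h,
\end{align*}
and $\mu_{0,b}\cdot h = \omega_0\eta_0\mu_{0,b}$ by \cref{lem:muh1}. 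Hence $\eta_0\cdot\bigl(\tr_{4b+1}(1) - \omega_0\mu_{0,b}\bigr) = 0$, so the difference is either $0$ or the second generator $z$ of $\pi_{0,4b+1}$ — which, as one checks from \cref{table:basis} and \cref{table:products}, is annihilated by $\eta_0$ (it is the analogue of the class $\mu_0^2\rho_a\in S_{8a+1}$, and $\eta\cdot\mu_0^2\rho_a = \mu_0^3\rho_a = 0$).

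Third, to decide between $\omega_0\mu_{0,b}$ and $\omega_0\mu_{0,b}+z$, reduce to a single bidegree: by \cref{lem:indet} the $\tau^4$-periodicity is without indeterminacy here (as $4b+1\not\equiv -1\bmod 4$), so it suffices to treat one value of $b$, and by \cref{prop:pic} one may choose $b$ with $P_{-4b-1}\simeq S$, so $\pi_{0,4b+1}\cong S_{4b+1}$. There the two candidates are exactly $\mu_a$ and $\mu_a+\mu_0^2\rho_a$ of \cref{ssec:classic}, and $\tr_{4b+1}(1)$ is the "honest" generator $\mu_a$ because the transfer commutes with the $\beta^4$-self map and $\mu_a$ was defined precisely to be $\beta^{4a}$-periodic; equivalently, this is the classical identification of the image of the transfer $\Sigma^\infty_+BC_2\to S^0$ in $\pi_\ast S_{K(1)}$. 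The step I expect to be the main obstacle is exactly this last one: the Frobenius computation alone leaves two candidates, and distinguishing $\mu_a$ from $\mu_a+\mu_0^2\rho_a$ is the ambiguity resolved in \cref{ssec:classic} via the $\beta^4$-self map, so one must invoke that normalization and carry it carefully through the $\tau^4$-periodicity in order to keep $\mu_{0,b}$ pinned down on the nose.
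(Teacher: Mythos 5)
Your route through Frobenius reciprocity is genuinely different from the paper's, which instead reduces to $b=0$ by $\tau^1$-periodicity of the transfer and then imports unlocalized $C_2$-equivariant input: the relation $\rho^2\eta_{\cl}=\rho^4\nu_{C_2}$ (the Mahowald invariant of $\eta_{\cl}$) forces $\tr_1(1)=\rho(\eta_{\cl}+\rho^2\nu_{C_2})$, and the $\rho^3\nu_{C_2}$ correction dies $K(1)$-locally because $\omega_0^3\zeta_{0,0}=0$. However, as written your argument has one outright error and one genuine gap. The error is the claim that the second generator $z=\omega_0\mu_0^2\rho_{0,b}$ of $\pi_{0,4b+1}$ is annihilated by $\eta_0$. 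It is not: $z$ is detected on the $1$-line by $\ol{\rho\mu^2\tau^{4b}}$, so the product with $\eta_0$ has no hidden-extension ambiguity and equals $\omega_0\eta_0\mu_0^2\rho_{0,b}$, which \cref{table:basis} lists as a nonzero generator of $\pi_{1,4b+1}$. Your justification ($\eta\cdot\mu_0^2\rho_a=\mu_0^3\rho_a=0$) only shows that $\eta_0 z$ \emph{restricts} to zero in $S_\ast$; it does not show $\eta_0 z=0$, and indeed $\eta_0 z$ is a nonzero element of $\ker(\res)$.

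Ironically, correcting this error completes the proof at the end of your second step and makes the third step unnecessary: multiplication by $\eta_0$ sends the two generators $\omega_0\mu_{0,b}$ and $\omega_0\mu_0^2\rho_{0,b}$ to the two independent generators $\omega_0\eta_0\mu_{0,b}$ and $\omega_0\eta_0\mu_0^2\rho_{0,b}$ of $\pi_{1,4b+1}$, hence is injective on $\pi_{0,4b+1}$, and your identity $\eta_0\cdot\bigl(\tr_{4b+1}(1)-\omega_0\mu_{0,b}\bigr)=0$ (which is correctly derived from $\res(\eta_0)=\mu_0$, $\tr_{4b}(1)=\tau^{4b}h$, and \cref{lem:muh1}) then forces the conclusion. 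That is a legitimate and arguably cleaner alternative to the paper's proof. But the third step you actually wrote does not stand on its own: to reduce to one value of $b$ you need the transfer to commute with the $\tau^4$-periodicity operation, which \cref{lem:indet} does not address (it only controls the indeterminacy of $\tau^4$, not its compatibility with $\tr$); and, more seriously, the final identification of the twisted transfer $P_{-4b-1}\to S^{-4b-1}$ with the ``honest'' generator $\mu_a$ rather than $\mu_a+\mu_0^2\rho_a$ is asserted, not proved. The ``classical identification of the image of the transfer $\Sigma^\infty_+BC_2\to S^0$'' concerns coweight $0$, not coweight $4b+1$, and the appeal to $\beta^4$-periodicity is vacuous at $b=0$, which is exactly the case one must settle. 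Distinguishing these two candidates is the real content of the lemma, and it is precisely what the paper's unlocalized computation supplies. Either repair step two as above, or supply an actual argument for step three.
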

\begin{proof}
The transfer is $\tau^1$-periodic, and the indeterminacy of $\tau^1$ is contained in the image of multiplication by $h = \tr_0(1)$. In particular, as
\[
\tr_{4b+1}(1)\in\bbZ/(2)\{\omega_0\mu_{0,b},\omega_0\mu_0^2\rho_{0,b}\},
\]
the indeterminacy vanishes in these degrees, so we may reduce to the case $b = 0$. Consider the $2$-primary but otherwise unlocalized $C_2$-equivariant stable stems $\pi_{\ast,\ast}S_{C_2}$. We have
\[
\pi_{0,1}S_{C_2} = \bbZ/(2)\{\rho\eta_{\cl},\rho^3\nu_{C_2}\},
\]
where $\nu_{C_2}$ is the $C_2$-equivariant quaternionic Hopf fibration, and there is a relation
\[
\rho^2\eta_{\cl} = \rho^4 \nu_{C_2}
\]
reflecting the Mahowald invariant $R(\eta_\cl) = \nu_\cl$. These facts may be read off the $\bbR$-motivic computations \cite{belmontisaksen2020rmotivic}, which agree with the corresponding $C_2$-equivariant computations in this range \cite{belmontguillouisaksen2021c2}; note that in their work $\rho(\eta_{\cl} + \rho^2\nu_{C_2})$ is detected by  $\rho\tau h_1$. In particular, as the image of the transfer equals the kernel of $\rho$, it must be that
\[
\tr_1(1) = \rho(\eta_{\cl} + \rho^2\nu_{C_2}) \in \pi_{0,1}S_{C_2}.
\]
As $S_{K(1)}$ detects $\nu_\cl$, necessarily $b(S_{K(1)})$ detects $\nu_{C_2}$, and the only possibility is that $\nu_{C_2}$ is detected by an odd multiple of $\zeta_{0,0}$. As $\omega_0^3 \zeta_{0,0} = 0$, it follows that $\tr_1(1) = \omega_0\mu_0\in\pi_{0,1}$.
\end{proof}
\end{samepage}

In general, the transfer is $S_\ast$-linear in the sense that
\[
\tr_c(x\cdot y) = x \cdot \tr_{c-|x|}(y)
\]
for $x,y\in S_\ast$. The product on the right makes use of the $S_\ast$-module structure on $\pi_{\ast,\ast}$ determined by \cref{lem:unit} and the ring structure of $\pi_{\ast,\ast}$. Combining the values of $\tr_c(1)$ computed above with the structure of the ring $\pi_{\ast,\ast}$ computed in \cref{ssec:products}, one may easily compute the values of $\tr_c(\alpha)$ for all $\alpha\in S_\ast$ and $c\in\bbZ$, and we collect these in \cref{table:transfers}. The following is a depiction of the resulting Mackey functor coefficients of $b(S)$:

\begin{center}
$\ul{\pi}_{\ast,\ast}b(S_{K(1)})$
\includegraphics[page=1,width=\textwidth]{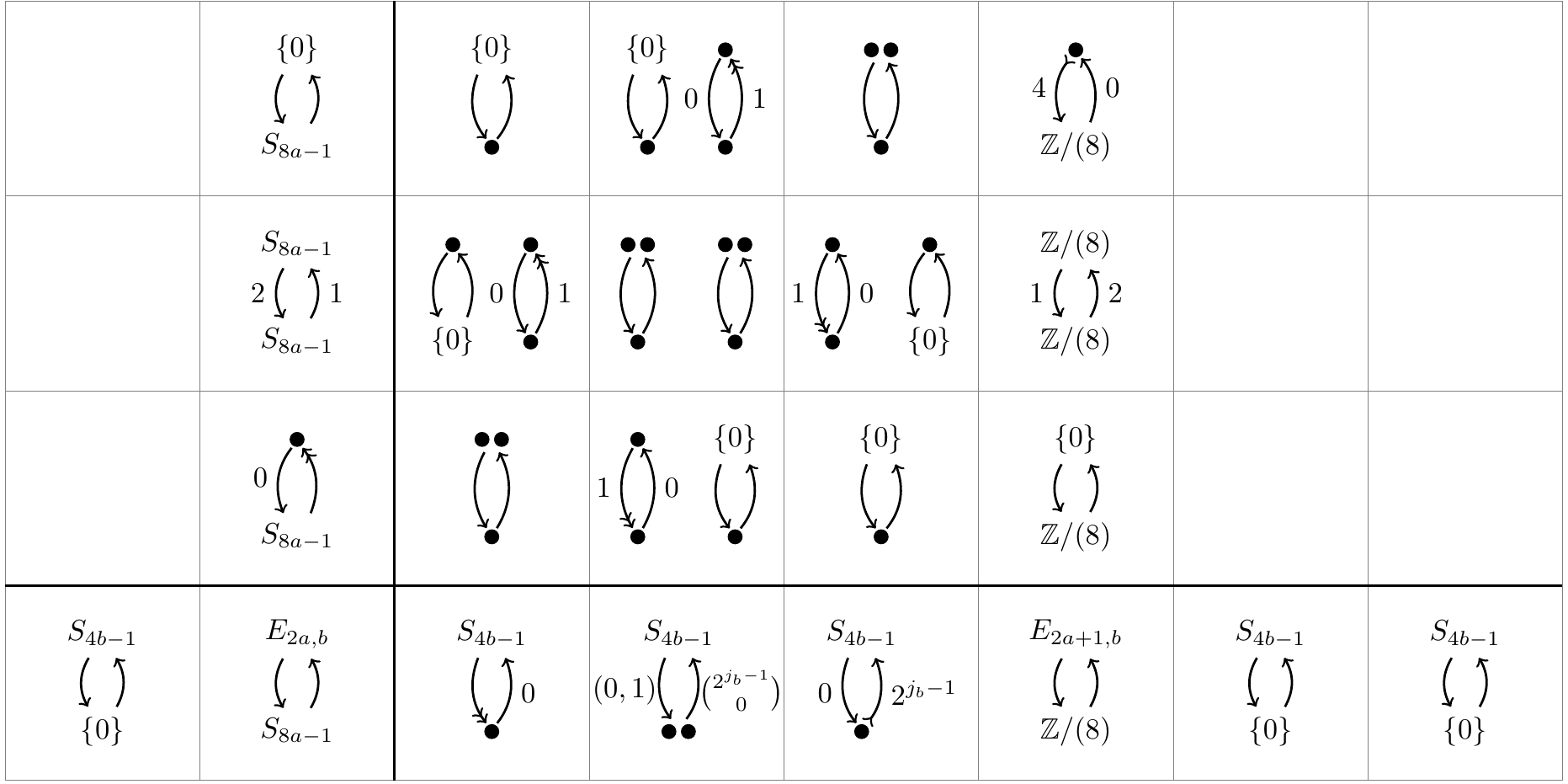}
\end{center}
\begin{gather*}
\vcenter{\hbox{\begin{tikzpicture}\draw(0,0) node[draw, fill, , circle, scale = 0.5] {};\end{tikzpicture}}} = \bbZ/(2); 
\qquad
\vcenter{\hbox{\begin{tikzpicture}[scale = 1.25]
\draw(-0.1,0.5) node[draw, fill, circle, scale = 0.35] {};
\draw(0,0.5) node (B1) {};
\draw(0.1,0.5) node[draw, fill, circle, scale = 0.35] {};
\draw(0,-0.5) node[draw, fill, circle, scale = 0.35] (B1b) {};
\draw[bend right,->,shorten >= 1pt, shorten <= 1pt] (B1) to node [left] {} (B1b);
\draw[bend right,->,shorten >= 1pt, shorten <= 1pt] (B1b) to node [right] {} (B1);\end{tikzpicture}}} =~~ \begin{tikzcd}\bbZ/(2)\times\bbZ/(2)\ar[d,bend right, "{(x,y)\mapsto x}"']\\\bbZ/(2)\ar[u,bend right, "{1\mapsto (0,1)}"']\end{tikzcd};\qquad
\begin{tikzcd}
E_{a,b}\ar[d, bend right]\\
S_{4a-1}\ar[u, bend right]\end{tikzcd} =~~ \begin{tikzcd}
E_{a,b}\{x,y\}\ar[d, bend right,"\substack{x\mapsto z \\ y \mapsto 0}"']\\
S_{4a-1}\{z\}\ar[u, bend right,"z\mapsto 2x-y"']\end{tikzcd}.
\end{gather*}
This depicts the $8\times 4$ region of the bigraded Mackey functor $\ul{\pi}_{\ast,\ast}$ starting with $\ul{\pi}_{8a-2,4b-1}$ in the bottom left corner, with each box corresponding to a single $\ul{\pi}_{s,c}$. This picture requires $a\neq 0$ and $b\neq 0$.  In general, the box corresponding to $\ul{\pi}_{0,0}$ or $\ul{\pi}_{0,2b}$ for $b\neq 0$ contains an additional term of the form 
\begin{center}
\begin{tikzcd}
\bbZ_2\oplus\bbZ_2\ar[d,bend right, "{(x,y)\mapsto x}"']\\
\bbZ_2\ar[u,bend right, "{1\mapsto (2,-1)}"']
\end{tikzcd}$\qquad$or$\qquad$
\begin{tikzcd}
\bbZ_2\ar[d,bend right,"2"']\\
\bbZ_2\ar[u,bend right, "1"']
\end{tikzcd}
\end{center}
respectively, and away from $\ul{\pi}_{0,0}$ the $\ul{\pi}_{\ast,0}$ line instead looks like the following:

\begin{center}
$\ul{\pi}_{\ast,0}b(S_{K(1)})$
\includegraphics[page=1,width=\textwidth]{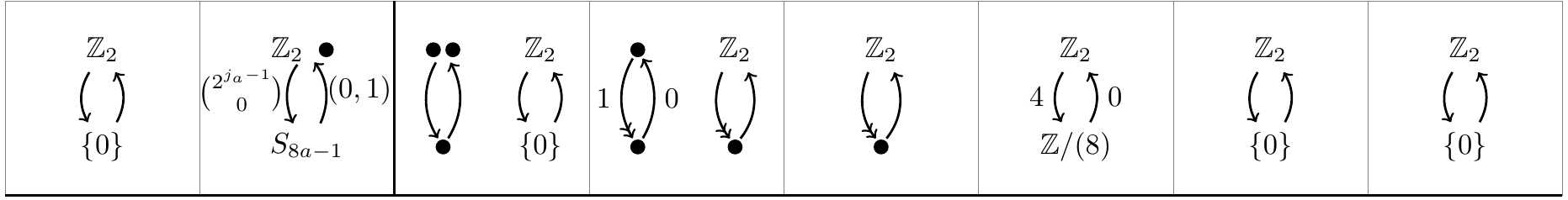}
\end{center}

These diagrams describe the Mackey functors $\ul{\pi}_{s,c}$ through their Lewis diagram \cite{lewis1988rog}
\begin{center}\begin{tikzcd}
\pi_{s,c}\ar[d,bend right,"\res"' pos=0.4]\\
\pi_s\ar[u,bend right, "\tr_c"' pos=0.6]
\end{tikzcd},\end{center}
where if multiple such diagrams appear in a single box then $\ul{\pi}_{s,c}$ is a direct sum of the corresponding Mackey functors. Our diagrams do not depict the action of $C_2$ on $\pi_s$, but the double coset formula implies that this action is given by just $\alpha\mapsto\res(\tr_c(\alpha)) - \alpha$.

This concludes the work necessary for \cref{thm:main}.

\section{Tables}
\label{ssec:tables}

The following tables summarize the structure of $\ul{\pi}_{\ast,\ast}b(S_{K(1)}) = \ul{\pi}_\star L_{KU_{C_2}/(2)}S_{C_2}$:

\begin{enumerate}\setcounter{enumi}{-1}
\item Recall $\ul{\pi}_{s,c}b(S_{K(1)}) = \ul{\pi}_{c+(s-c)\sigma}L_{KU_{C_2}/(2)}S_{C_2}$.
\item \cref{table:basis} gives a minimal set of additive generators. See \cref{ssec:sgroups}.
\item \cref{table:products} gives a full set of relations. See \cref{ssec:products}.
\item \cref{table:mult} gives a minimal set of multiplicative generators.
\item \cref{table:transfers} describes the transfers for $\ul{\pi}_{\ast,\ast}b(S_{K(1)})$. See \cref{ssec:transfers}.
\end{enumerate}
Recall also that $j_a$ is defined so that $\bbZ_2/(2^{j_a}) = \bbZ_2/(8a) = \pi_{4a-1}S_{K(1)} = S_{4a-1}$, and that we have defined
\[
u_{a,b} = 2^{j_a-j_{b-a}}\frac{k^{2b}-k^{2a}}{k^{2a}-1},\qquad E_{a,b} = \bbZ_2\{x,y\}/(2^{j_a}x + 2^{j_{b-a}-1}u_{a,b}y,2^{j_b}y),
\]
taking $u_{a,b} = 1$ if $a = 0$ or $a = b$, and where $k\in\bbZ_2^\times$ is a fixed element projecting to a generator of the pro-cyclic group $\bbZ_2^\times/\{\pm 1\}$, such as $k = 3$. Moreover, $\omega_0 = \rho$, $\mu_0 = \mu_{0,0}$, and $\eta_0 = -\eta_{C_2}$ (see \cref{rmk:eta}).

In all the following, $a,b,c,d$ range through $\bbZ$.

\begin{longtable}{lllll}
\caption{Additive generators} \\
\toprule
$(s,c)$ & Generator & Detected by & Group & Image in $\pi_\ast S_{K(1)}$ \\
\midrule \endhead
\bottomrule \endfoot
\label{table:basis}
$\!\!(0,0)$ & $1$ & $1$ & $\bbZ_2$ & $1$ \\
$(8a-4,0)$ & $\omega_0^3\omega_{a}$ & $\rho^4\per^a$,\quad\hspace{6pt} $\per = \beta^4\tau^{-8}$ & $\bbZ_2$ & $0$ \\
$(8a-3,0)$ & $\omega_0^2\omega_{a}$ & $\rho^3\per^a$ & $\bbZ_2$ & $0$ \\
$(8a-2,0)$ & $\omega_0\omega_{a}$ & $\rho^2\per^a$ & $\bbZ_2$ & $0$ \\
$(8a-1,0)$ & $\omega_{a}$ & $\rho\per^a$ & $\bbZ_2$ & $2^{j_{2a}-1}\rho_a$ \\
$(8a,0)$ & $\omega_0\eta_a$ & $\rho\eta_0\per^a$,\quad $\eta_0 = \rho\beta\tau^{-2}$ & $\bbZ_2$ & $0$ \\
$(8a+1,0)$ & $\eta_a$ & $\eta_0\per^a$ & $\bbZ_2$ & $\mu_a$ \\
$(8a+2,0)$ & $\eta_0\eta_a$ & $\eta_0^2\per^a$ & $\bbZ_2$ & $\mu_0\mu_a$ \\
$(8a+3,0)$ & $\eta_0^2\eta_a$ & $\eta_0^3\per^a$ & $\bbZ_2$ & $4\xi_a$ \\
\midrule
$(0,4b)$, $b\neq 0$ & $\tau^{4b}h$ & $\tau^{4b}h$,\quad\hspace{6pt} $h=2-\rho\eta_0$ & $\bbZ_2$ & $2$ \\
$(8a-1,4b)$ & $\omega_0\mu_0\rho_{a,b}$ & $\ol{\rho\mu\tau^{4b}\per^a}$ & $\bbZ/(2)$ & $0$ \\
$(8a,4b)$ & $\mu_0\rho_{a,b}$ & $\ol{\mu\tau^{4b}\per^a}$ & $\bbZ/(2)$ & $\mu_0\rho_a$ \\
$(8a,4b)$ & $\omega_0\eta_0\mu_0\rho_{a,b}$ & $\ol{\rho\eta_0\mu\tau^{4b}\per^a}$ & $\bbZ/(2)$ & $0$ \\
$(8a+1,4b)$ & $\eta_0\mu_0\rho_{a,b}$ & $\ol{\eta_0\mu\tau^{4b}\per^a}$ & $\bbZ/(2)$ & $\mu_0^2\rho_a$ \\
\midrule
$(8a-1,4b+1)$ & $\tau^2 h \cdot \rho_{a,b}$ & $\ol{\tau^{4b}(\tau^2 h)\per^a}$ & $S_{8a-1}$ & $2\rho_a$ \\
$(8a,4b+1)$ & $\omega_0\mu_{a,b}$ & $\rho\mu\tau^{4b}\per^a$ & $\bbZ/(2)$ & $0$ \\
$(8a,4b+1)$ & $\omega_0\mu_0^2\rho_{a,b}$ & $\ol{\rho\mu^2\tau^{4b}\per^a}$ & $\bbZ/(2)$ & $0$ \\
$(8a+1,4b+1)$ & $\mu_{a,b}$ & $\mu\tau^{4b}\per^a$ & $\bbZ/(2)$ & $\mu_a$ \\
$(8a+1,4b+1)$ & $\omega_0\eta_0\mu_{a,b}$ & $\rho\eta_0\mu\tau^{4b}\per^a$ & $\bbZ/(2)$ & $0$ \\
$(8a+1,4b+1)$ & $\mu_0^2\rho_{a,b}$ & $\ol{\mu^2\tau^{4b}\per^a}$ & $\bbZ/(2)$ & $\mu_0^2\rho_a$ \\
$(8a+1,4b+1)$ & $\omega_0\eta_0\mu_0^2\rho_{a,b}$ & $\ol{\rho\eta_0\mu^2\tau^{4b}\per^a}$ & $\bbZ/(2)$ & $0$ \\
$(8a+2,4b+1)$ & $\eta_0\mu_0^2\rho_{a,b}$ & $\ol{\eta_0\mu^2\tau^{4b}\per^a}$ & $\bbZ/(2)$ & $0$ \\
$(8a+2,4b+1)$ & $\eta_0\mu_{a,b}$ & $\eta_0\mu\tau^{4b}\per^a$ & $\bbZ/(2)$ & $\mu_0\mu_a$ \\
$(8a+3,4b+1)$ & $\zeta_{a,b}$ & $\ol{\tau^{4b}(\tau^2 h\sqrt{\per} )\per^a}$ & $\bbZ/(8)$ & $\xi_a$ \\
\midrule
$(0,4b+2)$ & $\tau^{4b+2}h$ & $\tau^{4b+2}h$ & $\bbZ_2$ & $2$ \\
$(8a+1,4b+2)$ & $\omega_0\mu_0\mu_{a,b}$ & $\rho\mu^2\tau^{4b}\per^a$ & $\bbZ/(2)$ & $0$ \\
$(8a+2,4b+2)$ & $\mu_0\mu_{a,b}$ & $\mu^2\tau^{4b}\per^a$ & $\bbZ/(2)$ & $\mu_0\mu_a$ \\
$(8a+2,4b+2)$ & $\omega_0\eta_0\mu_0\mu_{a,b}$ & $\rho\eta_0\mu^2\tau^{4b}\per^a$ & $\bbZ/(2)$ & $0$ \\
$(8a+3,4b+2)$ & $\eta_0\mu_0\mu_{a,b}$ & $\eta_0\mu^2\tau^{4b}\per^a$ & $\bbZ/(2)$ & $4\xi_a$ \\
\midrule
$(8a-4,4b-1)$ & $\omega_0^3\rho_{a,b}$ & $\ol{\rho^3\tau^{4b}\per^a}$ & $S_{4b-1}$ & $0$ \\
$(8a-3,4b-1)$ & $\omega_0^2\rho_{a,b}$ & $\ol{\rho^2\tau^{4b}\per^a}$ & $S_{4b-1}$ & $0$ \\
$(8a-2,4b-1)$ & $\omega_0\rho_{a,b}$ & $\ol{\rho\tau^{4b}\per^a}$ & $S_{4b-1}$ & $0$ \\
$(8a-1,4b-1)$ & $\rho_{a,b}$ & $\ol{\tau^{4b}\per^a}$ & $E_{2a,b}$ & $\rho_a$ \\
& $\omega_0\eta_0\rho_{a,b}$ & $\ol{\rho\eta_0\tau^{4b}\per^a}$ & Above & $0$ \\
$(8a,4b-1)$ & $\eta_0\rho_{a,b}$ & $\ol{\eta_0\tau^{4b}\per^a}$ & $S_{4b-1}$ & $\mu_0\rho_a$ \\
$(8a+1,4b-1)$ & $\eta_0^2\rho_{a,b}$ & $\ol{\eta_0^2\tau^{4b}\per^a}$ & $S_{4b-1}$ & $\mu_0^2\rho_a$ \\
$(8a+2,4b-1)$ & $\eta_0^3\rho_{a,b}$ & $\ol{\eta_0^3\tau^{4b}\per^a}$ & $S_{4b-1}$ & $0$ \\
$(8a+3,4b-1)$ & $\xi_{a,b}$ & $\ol{(2\sqrt{\per})\tau^{4b}\per^a}$ & $E_{2a+1,b}$ & $\xi_a$ \\
 & $\omega_0\eta_0\xi_{a,b}$ & $\ol{\rho\eta_0(2\sqrt{\per})\tau^{4b}\per^a}$ & Above & $0$ \\
\end{longtable}

\begin{longtable}{rrcll}
\caption{Products} \\
\toprule
\endhead
\bottomrule \endfoot
\label{table:products}
\hphantom{\hspace{56pt}}
&$\omega_{a+b}\omega_c$ &$=$ & $ \omega_a\omega_{b+c}$& \ref{lem:root} \\
&$\omega_{a+b}\eta_c$ &$=$ & $ \omega_a\eta_{b+c}$ & \ref{lem:omegaeta} \\
&$\eta_{a+b}\eta_c$ &$=$ & $ \eta_a\eta_{b+c}$ & \ref{lem:root} \\
&$\omega_0^2\eta_a$ &$=$ & $ 2 \omega_a$ & \ref{lem:omegaeta2} \\
&$\eta_0^2 \omega_a$ & $=$ & $ 2 \eta_a$ & \ref{lem:etaomega2} \\
&$\eta_0^3\eta_a$ & $=$ & $ \omega_0^3\omega_{a+1}$ & \ref{lem:root} \\

&$\omega_{a+b}\rho_{c,d}$ & $=$ & $ \omega_a\rho_{b+c,d}$ &  \\
&$\omega_{a+b}\xi_{c,d}$ & $=$ & $ \omega_a\xi_{b+c,d}$ &  \\
&$\eta_{a+b}\rho_{c,d}$ & $=$ & $ \eta_a\rho_{b+c,d}$ &  \\
&$\eta_{a+b}\xi_{c,d}$ & $=$ & $ \eta_a\xi_{b+c,d}$ &  \\
&$\omega_0\xi_{a,b}$ & $=$ & $ \eta_0^3 \rho_{a,b}$ &  \\
&$\eta_0\xi_{a,b}$ & $=$ & $ \omega_0^3 \rho_{a+1,b}$ &  \\

&$\omega_a\cdot\mu_{b,c}$ & $=$ & $\omega_0\mu_{a+b,c}$ & \ref{lem:percomm} \\
&$\omega_a\omega_b \cdot\mu_{c,d}$ & $=$ & $ 2^{j_{2(a+b+c)}-1} \tau^2h\cdot \rho_{a+b+c,d}$ & \ref{lem:omegamu} \\
&$\omega_a\cdot\tau^{2b}h$ & $=$ & $0$ & \ref{lem:rooth1} \\
&$\omega_a \cdot\zeta_{b,c}$ & $=$ & $ \eta_0\mu_0^2 \rho_{a+b,c}$ & \ref{lem:unhidden} \\

&$\eta_a\cdot\mu_{b,c}$ & $=$ & $\eta_0\mu_{a+b,c}$ & \ref{lem:specialper} \\
&$\eta_a \eta_b \cdot\mu_{c,d}$ & $=$ & $ 4 \zeta_{a+b+c,d}$ & \ref{lem:etamu} \\
&$\eta_a \cdot\tau^{4b}h$ & $=$ & $ 0$ & \ref{lem:rooth2} \\
&$\eta_a \cdot\tau^{4b+2}h$ & $=$ & $ \omega_0\mu_0\mu_{a,b}$ & \ref{lem:unhidden} \\
&$\eta_a \cdot\zeta_{b,c}$ & $=$ & $ 0$ &  \\

&$\mu_{a,b}\cdot\mu_{c,d}$ & $=$ & $\mu_0\cdot\mu_{a+c,b+d}$ & \ref{lem:percomm}  \\
&$\mu_0^2\cdot \mu_{a,2b}$ & $=$ & $ 4 \xi_{a,2b+1}$ & \ref{lem:mucubed2} \\
&$\mu_0^2\cdot \mu_{a,2b-1}$ & $=$ & $ (4+2u_{2a+1,2b}\omega_0\eta_0)\xi_{a,2b}$ & \ref{lem:mucubed1} \\
&$\mu_{a,b}\cdot \tau^{4c}h$ & $=$ & $  \omega_0\eta_0\mu_{a,b+c}$ & \ref{lem:muh1} \\
&$\mu_{a,b}\cdot \tau^{4c-2}h$ & $=$ & $ 2^{j_{b+c}-1}\eta_0^2\rho_{a,b+c}$ & \ref{lem:muh2} \\
&$\mu_{a,b}\cdot\rho_{c,d}$ & $=$ & $ \mu_{0} \rho_{a+c,b+d}$ & \\
&$\mu_0^3 \cdot\rho_{a,b}$ & $=$ & $0$ &  \\
&$\mu_{a,b} \cdot \zeta_{c,d}$ & $=$ & $0$ &  \\
&$\mu_{a,b}\cdot\xi_{c,d}$ & $=$ & $ 0$ &  \\

&$\tau^{2a}h\cdot\tau^{2b}h$ & $=$ & $ 2 \tau^{2(a+b)}h$ & \ref{lem:hh} \\
&$\tau^{4a}h\cdot \rho_{b,c}$ & $=$ & $(2-\omega_0\eta_0)\rho_{b,a+c}$ &  \\
&$\tau^{4a+2}h\cdot \rho_{b,c}$ & $=$ & $\tau^{2}h\cdot \rho_{b,a+c}$ & \\
&$\tau^{4a}h\cdot\xi_{b,c}$ & $=$ & $(2-\omega_0\eta_0)\xi_{b,a+c}$ &  \\
&$\tau^{4a+2}h\cdot\xi_{b,c}$ & $=$ & $2\zeta_{b,a+c}$ &  \\
&$\tau^{4a}h\cdot\zeta_{b,c}$ & $=$ & $2\zeta_{b,a+c}$ &  \\
&$\tau^{4a-2}h\cdot \zeta_{b,c}$ & $=$ & $(2-\omega_0\eta_0)\xi_{b,a+c}$&  \\

&$\zeta_{a,b} \cdot \zeta_{c,d}$ & $=$ & $ 0$ &  \\
&$\zeta_{a,b} \cdot \rho_{c,d}$ & $=$ & $ 0$ &  \\
&$\zeta_{a,b} \cdot \xi_{c,d}$ & $=$ & $ 0$ &  \\

&$\rho_{a,b}\cdot\rho_{c,d}$ & $=$ & $ 0$ &  \\
&$\rho_{a,b}\cdot\xi_{c,d}$ & $=$ & $ 0$ &  \\
&$\xi_{a,b}\cdot \xi_{c,d}$ & $=$ & $ 0$ & 
\end{longtable}

\begin{longtable}{lll}
\caption{Multiplicative generators}\label{table:mult} \\
\toprule
$(s,c)$ & Generator & Image in $\pi_\ast S_{K(1)}$ \\
\midrule \endhead
\bottomrule \endfoot
$(8a-1,0)$ & $\omega_a$ & $2^{j_{2a}-1}\rho_a$\\
$(8a+1,0)$ & $\eta_a$ & $\mu_a$  \\
$(0,2b)$, $b\neq 0$ & $\tau^{2b}h$ & $2$ \\
$(8a+1,4b+1)$ & $\mu_{a,b}$  & $\mu_a$ \\
$(8a+3,4b+1)$ & $\zeta_{a,b}$ & $\xi_a$ \\
$(8a-1,4b-1)$ & $\rho_{a,b}$ & $\rho_a$ \\
$(8a+3,4b-1)$ & $\xi_{a,b}$ & $\xi_a$
\end{longtable}

\begin{longtable}{llll}
\caption{Transfers} \\
\toprule
$(s,c)$ & $\alpha\in \pi_s S_{K(1)}$ & $\tr_c(\alpha) \in \pi_{s,c}b(S_{K(1)})$ \\
\midrule \endhead
\bottomrule \endfoot
\label{table:transfers}
$\!\!(0,4b)$ & $1$ & $\tau^{4b}h$ \\
$(0,4b+1)$ & $1$ & $\omega_0\mu_{0,b}$ \\
$(0,4b+2)$ & $1$ & $\tau^{4b+2}h$ \\
$(0,4b-1)$ & $1$ & $2^{j_b-1}\eta_0\rho_{0,b}$ \\
\midrule
$(8a-1,4b)$ & $\rho_a$ & $\omega_0\mu_0\rho_{a,b}$ \\
$(8a,4b)$ & $\mu_0\rho_a$ & $\omega_0\eta_0\mu_0\rho_{a,b}$ \\
$(8a+1,4b)$ & $\mu_a$ & $0$ \\
$(8a+1,4b)$ & $\mu_0^2\rho_a$ & $0$ \\
$(8a+2,4b)$ & $\mu_0\mu_a$ & $0$ \\
$(8a+3,4b)$ & $\xi_a$ & $0$ \\
\midrule
$(8a-1,4b+1)$ & $\rho_a$ & $\tau^2h\cdot \rho_{a,b}$ \\
$(8a,4b+1)$ & $\mu_0\rho_a$ & $\omega_0\mu_0^2\rho_{a,b}$ \\
$(8a+1,4b+1)$ & $\mu_a$ & $\omega_0\eta_0\mu_{a,b}$ \\
$(8a+1,4b+1)$ & $\mu_0^2\rho_a$ & $\omega_0\eta_0\mu_0^2\rho_{a,b}$ \\
$(8a+2,4b+1)$ & $\mu_0\mu_a$ & $0$ \\
$(8a+3,4b+1)$ & $\xi_a$ & $2\zeta_{a,b}$ \\
\midrule
$(8a-1,4b+2)$ & $\rho_a$ & $0$ \\
$(8a,4b+2)$ & $\mu_0\rho_a$ & $0$ \\
$(8a+1,4b+2)$ & $\mu_a$ & $\omega_0\mu_0\mu_{a,b}$ \\
$(8a+1,4b+2)$ & $\mu_0^2\rho_a$ & $0$ \\
$(8a+2,4b+2)$ & $\mu_0\mu_a$ & $\omega_0\eta_0\mu_0\mu_{a,b}$ \\
$(8a+3,4b+2)$ & $\xi_a$ & $0$ \\
\midrule
$(8a-1,4b-1)$ & $\rho_a$ & $(2-\omega_0\eta_0)\rho_{a,b}$ \\
$(8a,4b-1)$ & $\mu_0\rho_a$ & $0$ \\
$(8a+1,4b-1)$ & $\mu_a$ & $2^{j_b-1}\eta_0^2 \rho_{a,b}$ \\
$(8a+1,4b-1)$ & $\mu_0^2\rho_a$ & $0$ \\
$(8a+2,4b-1)$ & $\mu_0\mu_a$ & $2^{j_b-1}\eta_0^3\rho_{a,b}$ \\
$(8a+3,4b-1)$ & $\xi_a$ & $(2-\omega_0\eta_0)\xi_{a,b}$
\end{longtable}

\begingroup
\raggedright
\bibliography{refs}
\bibliographystyle{alpha}
\endgroup

\end{document}